\documentclass[11pt]{amsart}
\usepackage{amssymb,amsmath,amscd,amsfonts,a4,graphicx,
latexsym,epsfig,color,soul}
\usepackage[all]{xy}

\usepackage{comment}
\usepackage{enumitem}
\usepackage{tikz}

\everymath{\displaystyle}

\newtheorem{teo}{Theorem}[section]
\newtheorem{lem}[teo]{Lemma}
\newtheorem{cor}[teo]{Corollary}

\newtheorem{prop}[teo]{Proposition}
\newtheorem{defi}[teo]{Definition}

\newtheorem{remark}[teo]{Remark}

\newcommand{\fleche}[4]{                     
            \begin{array}{lcll} #1 & \longrightarrow & #2 \\   %
                         #3 &\longmapsto & #4          %
            \end{array}}

\newcommand{\fonc}[5]{                     
            \begin{array}{lcll}#1 :& #2 & \longrightarrow & #3 \\   %
                         &#4 &\longmapsto & #5          %
            \end{array}}

\newcommand{\cvd}{\hfill$\Box$}

\newcommand{\mr}{\mathbb{R}}
\newcommand{\mc}{\mathbb{C}}
\newcommand{\mz}{\mathbb{Z}}
\newcommand{\mh}{\mathbb{H}}

\newcommand{\mn}{\mathbb{N}}

\newcommand{\R}{\mathbb{R}}
\newcommand{\C}{\mathbb{C}}
\newcommand{\Z}{\mathbb{Z}}

\newcommand{\bw}{{\bf w}}
\newcommand{\bu}{{\bf u}}
\newcommand{\bx}{{\bf x}}
\newcommand{\bv}{{\bf v}}
\newcommand{\bl}{{\bf l}}

\newcommand{\Log}{{\rm Log}}

\newcommand{\Hh}{{\mathcal H}}

\newcommand{\rmb}{{\rm \bf b}}
\newcommand{\ra}{\rightarrow}

\def\cvd{\hfill$\Box$}
\def\fin{\hfill$\Box$}

\title[Volume Conjecture and quantum hyperbolic invariants]
{Volume Conjecture and quantum hyperbolic invariants: the figure eight knot complement}

\author{St\'ephane Baseilhac$^1$, Fathi Ben Aribi$^{2}$}

\begin{document}

\maketitle

\centerline{${}^1$ IMAG, Univ Montpellier, CNRS, France}
\centerline{${}^2$ Sorbonne Université, Université Paris Cité, CNRS, IMJ-PRG, F-75005 Paris, France}
\medskip \centerline{${}^1$ stephane.baseilhac@umontpellier.fr\ ,\ ${}^2$ fathi.ben-aribi@imj-prg.fr}

\begin{abstract} We compute the real part of the semi-classical limit of the sequence of quantum hyperbolic invariants (QHI) of the figure-eight knot complement $M$. We show that  it is rigid, in the sense that it does not depend on the choice of holonomy representation of $M$, and it is either $0$ or equal to the hyperbolic volume of $M$ divided by $2\pi$, depending on a parity condition satisfied by logarithms of the holonomy eigenvalues on the canonical longitude, where the logarithms are parameters of the QHI of $M$. Along the way we also survey some relevant general features of the QHI.
\end{abstract}
\bigskip

{\it Keywords: quantum invariants, volume conjecture, hyperbolic manifolds, saddle point method.}

{\it AMS subject classification 2020: 57K16, 57K32, 57R05.} 

\tableofcontents

\section{Introduction}

\subsection{Main result}\label{QHIintro1} In this paper we compute the real part of the semi-classical limit of the sequence of {\it quantum hyperbolic invariants} (or \textit{QHI}) associated to the (oriented) complement $M:=S^3 \setminus K$ of the figure-eight knot $K:=4_1$. We notably obtain exponential growth of the invariants, with rate equal to the hyperbolic volume ${\rm Vol}(M)/2\pi$, or subexponential growth, depending on a choice of parameters of the QHI.  \smallskip

Let us state our main result. Denote by $\lambda_K$ the canonical longitude of $K$, and by $X_{hyp}(M)$ the  irreducible component of the $PSL(2,\mc)$-character variety of $M$ containing the character of the discrete faithful representations. The terms $h_\rho$ and $k_c$ in the statement below will be described later (see Section \ref{sec:QHIcusped}), but should be thought of as additional geometric and topological structures on $M$. In particular $h_\rho$ is a function on the set of curves on $\partial \bar{M}$, the boundary torus at infinity, with values in $(2 i \pi)\mz$. The QHI, denoted $\Hh_{N}$, depend on an odd integer $N\geq 3$ and take values in $\mc$ modulo multiplication by $2N$-th roots of unity.

\begin{teo}\label{asyinvteo} For every character $\rho\in X_{hyp}(M)$ and charge weight $k_c$, the following holds:
	\smallskip
	
	\noindent (a) If $h_\rho(\lambda_{K}) \in 2\pi i(2\mz+1)$, then 
	$$\lim_{N\ra +\infty} \frac{2\pi}{N} \Log \vert \Hh_{N}(M,\rho,h_\rho,k_c)\vert = {\rm Vol}(M).$$ 
	(b) If $h_\rho(\lambda_{K}) \in 4\pi i \mz$, then
	$$\lim_{N\ra +\infty} \frac{2\pi}{N} \Log \vert \Hh_{N}(M,\rho,h_\rho,k_c)\vert = 0.$$
	
\end{teo}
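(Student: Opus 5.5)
We will compute $\Hh_N(M,\rho,h_\rho,k_c)$ explicitly as a state sum on the standard two-tetrahedra ideal triangulation of the figure-eight knot complement, and then extract its exponential growth rate by a saddle point analysis.

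\textbf{Step 1: reduction to a one-variable state sum.} Fix a charged, flattened ideal triangulation $T=\{\Delta_+,\Delta_-\}$ of $M$ adapted to $(\rho,h_\rho,k_c)$. The shape parameters $w_\pm$ are determined by $\rho\in X_{hyp}(M)$ through the edge and cusp equations. Flattenings are logarithms $l_j$ of the shapes whose combinations along the longitude and meridian recover $h_\rho$. Since the QHI are independent of the choice of such data up to $2N$-th roots of unity, we may choose the most convenient representative.

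The invariant is then a trace of a product of two quantum $6j$-type matrices $\mathcal R_N(\Delta_\pm)$. Using the known explicit form of these matrices, built from cyclic quantum dilogarithms $\omega_N(u,v\,|\,n)$ with $u^N=w'$ and $v^N=1-w$ up to $N$-th root corrections fixed by the flattenings, the edge identifications collapse all but one summation index. We obtain
\[
\Hh_N = (\text{normalization}) \sum_{n=0}^{N-1} \zeta^{\alpha n^2+\beta n}\,\frac{\omega_N(u_+,v_+\,|\,n)}{\omega_N(u_-,v_-\,|\,n)} ,
\]
where $\zeta=e^{2i\pi/N}$. Here $\alpha,\beta$ depend on the charges, and the parity of $h_\rho(\lambda_K)/2\pi i$ enters through the shift of the $N$-th roots $u_\pm,v_\pm$ relative to the geometric branch.

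\textbf{Step 2: asymptotics of the summand.} We use the integral/Faddeev representation of the cyclic dilogarithm. For $n=Nt$,
\[
\frac{1}{N}\Log\,\omega_N(u,v\,|\,n)=\frac{1}{2i\pi}\bigl(\mathrm{Li}_2(\cdot)\text{-type term}\bigr)+O(\Log N/N),
\]
uniformly on compact sets away from singular values. This expresses each summand as $\exp(N\Phi(t)/2i\pi+o(N))$, where $\Phi$ is a potential built from $\mathrm{Li}_2$ of the shapes twisted by $e^{2i\pi t}$. We then convert the sum to a contour integral, either by Poisson summation or by viewing it as a Riemann sum of a holomorphic function. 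Next we deform the contour through the critical points of $\Phi$.

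The critical point equation $\Phi'(t)=0$ reproduces the gluing equation, modified by the longitude parity. In case (a), when $h_\rho(\lambda_K)/2\pi i$ is odd, the deformed parameter lands on the geometric solution. There $\mathrm{Re}\,\Phi/(2i\pi)$ equals ${\rm Vol}(M)/2\pi$, via the Bloch--Wigner function: $D(w_+)+D(w_-)$, that is, twice the regular ideal tetrahedron volume. In case (b), the relevant critical value has zero real part, or the critical points cancel. The sum is then dominated by the boundary or by a nondegenerate point of vanishing real part, which gives subexponential growth.

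Rigidity in $\rho$ should follow from Schläfli-type invariance. Along $X_{hyp}(M)$ the real part of the critical value varies by $\mathrm{Re}$ of a term involving $\log|\text{holonomy}|\,d\arg$. Because $h_\rho$ is quantized in $2i\pi\mz$ and appears only through integer windings, this term vanishes.

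\textbf{Main obstacle.} The hardest part is Step 2 in the non-geometric regime. There $\rho$ is not the complete structure, so the shapes are non-real-analytic perturbations and the quantum dilogarithm can approach its poles. One must justify that the contour deformation passes through exactly the claimed saddle. One must also show that no cancellation kills the leading term in case (a); a nonvanishing Hessian/leading coefficient check is needed. In case (b), one must prove the sum cannot grow exponentially. I would first treat the complete structure, where $w_\pm=e^{i\pi/3}$, explicitly, and then extend by a uniform estimate on compact subsets of $X_{hyp}(M)$ together with the rigidity argument.
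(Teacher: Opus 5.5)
There is a genuine gap, and it sits where you let $\rho$ enter the asymptotics.

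The quantum shape parameters are $N$-th roots with fixed colours, $\bu_{k,N}=\exp\bigl(\frac1N(\Log u_k+\pi i a_k)+\pi i a_k\bigr)$. So they converge to $e^{\pi i a_k}$. The classical shapes of $\rho$ therefore enter the log-parameters only at order $1/N$, and after the factor $N$ in the exponent they survive only in subexponential prefactors: the function $\rho(z)$ of \eqref{defrho0} and the coefficient $u_0$ in Lemma \ref{qcintjui25}.

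Consequently the phase function is not ``${\rm Li}_2$ of the shapes twisted by $e^{2i\pi t}$''. It is $\mathcal{L}_\pm(z;\bl_{0,\infty},\bl_{1,\infty})$ with $\bl_{0,\infty}=-2i\pi$, and $\bl_{1,\infty}=i\pi$ or $2i\pi$ according to the parity of $a_1$, i.e.\ of $h_\rho(\lambda_K)/2i\pi$. In case (a) the relevant saddle is $z_0=i\pi/3$, whose critical value has imaginary part $D(e^{i\pi/3})={\rm Vol}(M)/2$, for \emph{every} $\rho$. That is the entire source of rigidity. Each fixed $\rho$ is then handled directly, with no perturbation from $\rho_{hyp}$ and no uniform estimate over $X_{hyp}(M)$.

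Your Schl\"afli argument cannot substitute for this. If the critical value really followed the shapes of $\rho$, its imaginary part would be the volume of the corresponding incomplete structure. That volume genuinely varies along $X_{hyp}(M)$, and the integrality of $h_\rho$ does nothing to cancel the variation. Taken literally, your framework predicts a $\rho$-dependent rate.

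Your Step 1 formula is also not the state sum of this invariant. On the triangulation of Figure \ref{Teight}, the reduced QHI is a double sum that factorizes as $\frac{g_N(\bu_0)g_N(\bv_0^*)^*}{|g_N(1)|^2}\Sigma_N(\bu_0,\bu_1)\Sigma_N(\bv_0^*,\bv_1^*)^*$, one factor per tetrahedron, each contributing ${\rm Vol}(M)/2$. What you call ``normalization'' must be shown to be subexponential: the $g_N$'s, the symmetry defect $(\bu_0^{-1}\bv_0^{-1})^{\frac{N-1}{2}}$, and $\hat S_N(\bl_{0,N})$ from the integral representation. This holds only because ${\rm Li}_2(e^{\bl_{0,\infty}})={\rm Li}_2(1)$ is real.

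Second, the analytic obstacle you identify is misplaced. The singular behaviour does not come from $\rho$ being non-complete. It occurs for every $\rho$, including $\rho_{hyp}$, because $\bu_{0,N}\to 1$. After the residue step the contour $i[s_N^-,2\pi-s_N^+]$, with $s_N^\pm$ of order $1/N$, has endpoints converging to $0$ and $2i\pi$. There ${\rm Li}_2(e^z)$ branches and $\rho(z)$ has poles.

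In case (b) both saddles, $\Log\frac{\sqrt5-1}{2}$ and $\Log\frac{\sqrt5+1}{2}+i\pi$, have real critical values and contribute only $O(N^{-1/2})$. The asymptotics are governed instead by an endpoint contribution near $is_N^-$ of polynomial size. The paper extracts it in two steps:
\begin{itemize}
\item it deforms the contours into regions where $\mathfrak{I}f_\pm<0$ while avoiding the shifted cuts;
\item it then applies a Perron-type estimate that is uniform in the moving endpoint.
\end{itemize}

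This is needed not only for the upper bound you mention, but above all for the lower bound. Statement (b) asserts that the limit \emph{equals} $0$, so you must exclude that $\Hh_N$ is exponentially small. That means ruling out cancellation between the two terms $e^{\frac{N}{2i\pi}\mathcal{L}_-}$ and $u_0e^{\frac{N}{2i\pi}\mathcal{L}_+}$ produced by the factor $1-u_0e^{Nz}$.

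Likewise in case (a) you need the $\mathcal{L}_+$-term to be subexponential. The paper gets this by pushing its contour into $\mathfrak{R}(z)\ll 0$, where $\mathfrak{I}f_+<0$. You flag the case (a) issue but supply no mechanism for either point.
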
 
This result belongs to the realm of the \textit{Volume Conjectures}, a set of conjectures first considered by Kashaev-Murakami-Murakami \cite{Kvol,MM}, which predict that the semi-classical limit of certain sequences of quantum invariants of finite volume complete hyperbolic manifolds is equal to their volume. Theorem \ref{asyinvteo} shows new phenomena, due to the fact that the QHI depend on geometric data: they satisfy some kind of semi-classical rigidity, since the choice of character $\rho$ has no effect on their semi-classical limit, and their growth, exponential or not, is determined by a geometrico-topological datum, the parity of \scalebox{0.8}{$\dfrac{1}{2\pi i}$}$h_\rho(\lambda_{K})$. 

The proof of Theorem \ref{asyinvteo} is based on an (easily obtained) integral representation of the QHI, a description of the so-called {\it quantum gluing varieties}, which form parameter spaces of the QHI and come from previous works of the first author with R. Benedetti, and elementary uses of the saddle point method and Perron's method to estimate the integrals when $N\to +\infty$. The crux of the proof lies in finding deformations of the integration contours to ``good positions'' in order to apply these methods. A technical difficulty is that the endpoints of contours converge to poles of the integrand as $N\to +\infty$. The analysis of case (b) is far more demanding than that of case (a).

Many tools of this proof should generalize to study the QHI asymptotics for arbitrary cusped manifolds, in situations more general than those implemented by the two kinds of values of $h_\rho(\lambda_{K})$ in Theorem \ref{asyinvteo}. For this reason, since the QHI theory is not well known, in some parts of the paper we allow ourselves to present relevant general aspects of this theory, in a hopefully pedestrian way, and introduce notions that should appear to be useful mainly for generalizations of Theorem \ref{asyinvteo} (we do this in Sections \ref{QGVsec}, \ref{LOGW}, \ref{ASYWEIGHTS}, \ref{sec:QHIcusped} and \ref{sec:class_int}, see the plan of the paper below). In particular, we discuss the space of logarithmic limits of log-parameters, on which the QHI asymptotics can be studied, and where the cases (a)-(b) of Theorem \ref{asyinvteo} correspond to very peculiar situations.

We remark that, beyond the semi-classical limit shown in Theorem \ref{asyinvteo}, sub exponential terms can be obtained (see Remark \ref{fulldev}). Also, the alternative between the cases (a) and (b) of Theorem \ref{asyinvteo} seemingly reflects that the QHI could detect lifts to $SL(2,\mc)$ of $PSL(2,\mc)$-characters; we plan to develop this in a future work.

In a forthcoming version of this work we will consider also similar asymptotics of QHI for hyperbolic Dehn fillings of $M$.

In the rest of this introduction we present the QHI and the structure of the proof of Theorem \ref{asyinvteo}.
\smallskip

\noindent {\bf The QHI and their relations to other invariants}. The QHI refer to two main types of invariants: 
\begin{itemize}
	\item either invariants of a pair $(N,L)$, where $N$ is a compact oriented $3$-manifold (with or without boundary) and $L \subset N$ is a properly embedded tangle;
	\item or invariants of a {\it cusped} manifold $M$, i.e., $M$ is an oriented non-compact $3$-manifold admitting a complete hyperbolic structure with finite volume. 
\end{itemize}
In both cases the manifold is endowed with additional geometric structures, including a $PSL(2,\mc)$-character of the fundamental group. We will refer to the first type as \textit{QHI of pairs}. These QHI generalize the Kashaev invariants defined in \cite{K0,K1} (see \cite{Top, LINK}). In particular  the Kashaev invariants $\langle L\rangle_N$ of a link $L$ in $S^3$, which are specializations of the colored Jones polynomials (\cite{MM}), coincide with the QHI of the pair $(S^3, L)$ (\cite{LINK}). The QHI of cusped manifolds have been defined in \cite{GT}, and further refined in \cite{AGT,NA}. By means of a hyperbolic Dehn filling theorem proved in \cite{AGT0}, we can relate some QHI of $M$ with the QHI of pairs $(N,L)$ where $N \setminus L = M$. 

 Both types of QHI are defined by state sums over triangulations very similar to Neumann's simplicial formulas for the Chern-Simons invariant of $PSL(2,\mc)$-characters (\cite{N}); in these state sums the classical dilogarithm functions are replaced with tensors called ``matrix" dilogarithms, satisfying a five term identity similar to the Roger's one for dilogarithms. Because of this strong structural relationship, it is natural that expect that a ``Volume Conjecture for the QHI'' should occur, relating their semi-classical limit with the Chern-Simons invariant of $PSL(2,\mc)$-characters. This was proposed in \cite{KyotoT}; Theorem \ref{asyinvteo} (a) proves it for the figure-eight knot complement. Numerical evidences of the parity phenomenon (a) vs.\!\! (b) in Theorem \ref{asyinvteo} were provided in \cite[Section 9]{AGT} also for the complement of the knot $5_2$.\\
 The QHI of mapping cylinders are easier to deal with, essentially because the mapping cylinder axis trivializes asymmetrical features of the QHI construction. It was shown in \cite{GD} that the {\it reduced} QHI (see Section \ref{sec:QHIcusped}) of mapping tori of pseudo-Anosov diffeomorphisms of a punctured surface $S$ are the trace of intertwiners between {\it local} representations of the quantum Teichm\"uller space of $S$ (in the sense of \cite{B-B-L}). This has been greatly clarified recently by Ishibashi (\cite{Ishibashi}), who provided, in particular, an alternative construction of these reduced QHI based on quantum cluster algebras associated to ``dotted'' ideal triangulations of $S$, in the sense of Kashaev (\cite{KqTeich}). Also, Garoufalidis-Yu (\cite{GY}) obtained a decomposition of 
these reduced QHI (in the case where the holonomy is boundary-parabolic and the log weight $l\kappa=0$) in terms of the Bonahon-Liu-Wong-Yang intertwiners of irreducible representations of the Kauffman bracket skein algebra of $S$ (\cite{B-L}, \cite[Theorem 16]{BWY1}) and the $\mathfrak{gl}_1$-invariants of mapping classes studied by Gocho (\cite{Gocho}) and Murakami-Ohtsuki-Okada (\cite{MOO}).\\ In particular, the results of this paper can be compared with those of Bonahon-Wong-Yang in \cite{BWY2}, which uses a different approach. In another direction, the Andersen--Kashaev \cite{AK} partition functions, which are derived from infinite dimensional quantum Teichm\"uller theory, make a kind of infinite dimensional version of the QHI and share many structural features with them. It is thus an interesting problem to clarify the relations between the results of this paper and those of Guéritaud, Piguet-Nakazawa and the second author in [11], and of Guilloux, Wong and the second author in [BAW, BAGW] concerning FAMED triangulations. 

The relations between the QHI and the quantum invariants of Reshetikhin--Turaev or Turaev-Viro type are very intringuing. As mentioned above, the QHI of $(S^3, L)$ coincide with the Kashaev invariant $\langle 4_1\rangle_N$. As an example of relations between these invariants and the QHI of $S^3\setminus L$ for a hyperbolic link $L$, we exhibit in Appendix \ref{sec:KvsQHI} a discrete Laplace-transform type formula connecting the QHI of $M=S^3\setminus 4_1$ and $\langle 4_1\rangle_N$. This formula seems to generalize to other knots. It may remind the reader of a conjecture of Andersen-Kashaev \cite[Conjecture 1]{AK}. Clarifying such relationships could be useful to understand the Volume Conjecture for the colored Jones polynomials, since the QHI are intrinsically geometric, being rational functions on varieties covering the geometric component of the variety of $PSL(2,\mc)$-characters (see the formula \eqref{splitHNteo1.1} and the comments thereafter).\\ 
Even more intringuing is the problem of extending Zagier's quantum modularity conjecture (\cite{ZagierQM,GarZag}) to the QHI.\\
In this direction, we note that a ``parity'' phenomenon similar to the alternative (a) vs.\!\! (b) in Theorem \ref{asyinvteo} happens with the Turaev-Viro invariants $TV(M,q)$ of cusped manifolds $M$ \cite{CY}, and with the colored Jones polynomials $J_{L,N}(q)$ of hyperbolic links $L$ at roots of unity $q$ \cite{DKY}. Namely, it is conjectured that the sequence of invariants $\textstyle TV(M,\exp(\frac{2i\pi}{N}))$, with $N$ odd, grows exponentially with growth rate Vol$(M)/2\pi$ as $N\ra +\infty$, whereas $\textstyle TV(M,\exp(\frac{i\pi}{N}))$ is expected to have polynomial growth (according to Witten's asymptotic expansion conjecture). Similarly, it is expected that $\textstyle J_{L,N}(\exp(\frac{2i\pi}{N+1/2}))$ has semi-classical limit Vol$(S^3\setminus L)/2\pi$, whereas it is known that for any integer $l$, $\textstyle J_{L,N}(\exp(\frac{2i\pi}{N+l}))$ grows polynomially with $N$.
\smallskip

\noindent {\bf Plan of the paper.} In Section \ref{QHIintro2} we give a state sum formulation of Theorem \ref{asyinvteo} that may be more accessible to readers acquainted with the volume conjectures. Section \ref{sec:sketchproof} presents the structure of the proof of Theorem \ref{asyinvteo}.\\
The geometric setup of the QHI is described in Section \ref{NOTA}. Sections \ref{QGVsec} and \ref{LOGW} form a self-contained toolbox, providing material and, we hope, enough explanations in order to understand the QHI far beyond the case of  $M=S^3\setminus 4_1$, which is developed in Section \ref{4_1}. So Sections \ref{QGVsec} and \ref{LOGW} could be skipped at a first reading. Also, Section \ref{ASYWEIGHTS} may be skipped at a first reading; it introduces a logarithmic limit space for the tower of quantum gluing varieties, which is useful to understand geometrically all asymptotical manipulations to follow.\\ The proof of Theorem \ref{asyinvteo} begins in Section \ref{sec:QHI}. Sections \ref{FUNCNOT} and \ref{sec:asySNfev26} provide basic but fundamental asymptotic results on the functions used in the QHI state sum formulas. Section \ref{sec:QHIcusped} presents general features of the QHI of cusped manifolds; in particular, the notation $\Hh_{N}(M,\rho,h_\rho,k_c)$, which was not used previously in the literature on QHI, is explained. The combination of Sections \ref{sec:statesum}, \ref{INTREPEST} and \ref{loglimmars25} gives an integral representation of the QHI of $M:=S^3\setminus 4_1$.\\ The core of the analysis is developed in Section \ref{sec:SPM} and \ref{sec:rect:contour}. In Section \ref{sec:SPM} we consider as a warm-up situation the asymptotics of simplified, so called ``classical'' integrals. These integrals are defined in Section \ref{sec:class_int}, where it is also shown how the critical sets of the integrands recover a neighborhood of the geometric solution in the gluing variety. The rest of Section \ref{sec:SPM} is devoted to the asymptotics of the classical integrals for some specific choices of logarithmic limits of log-parameters, related to Theorem \ref{asyinvteo}.  In Section \ref{sec:rect:contour} we adapt this analysis to the genuine, ``quantum'' integrals coming from the QHI of $M$.  
\medskip

\noindent {\bf Notations.} In all the paper $N$ is an odd integer greater than $1$. We put $$\zeta := e^{\frac{2\pi i}{N}}\ ,\ \mc^*:= \mc\setminus \{0\}\ ,\ \mc_* := \mc\setminus \{0,1\}.$$
We denote by $\Log$ the complex (Neperian) logarithm with imaginary part in $]-\pi,\pi]$, and by $\arg(z)\in ]-\pi,\pi]$ the principal determination of the argument of $z\in \mc$. It is used in any evaluation of fractional or real powers of non-zero complex numbers; for instance $x^{1/N} := \exp(\Log(x)/N)$. Except for $\mc^*$ and $\mc_*$ just defined, we reserve the notation ${}^*$ to the complex conjugation. The real and imaginary parts of a complex number $z$ are denoted $\mathfrak{R}(z)$ and $\mathfrak{I}(z)$. Finally, we denote by $=_{\mu_N}$ the equality up to multiplication by $N$-th root of $1$.
\medskip

\noindent {\bf Acknowledgments.} The first author had uncountable discussions with Riccardo Benedetti on the topic of this work, which therefore owes much to his ideas. The second author was supported by the FNRS in his ``Research Fellow'' position at UCLouvain under Grant no. 1B03320F, by the ANR project ``SyTriQ'' and by the grant ``Tremplin nouveaux entrants'' at Sorbonne-Université. We thank Elisha Falbel for suggesting the compacity argument in Section \ref{sub:b:+:perron}.

\subsection{Triangulation-version of Theorem \ref{asyinvteo}}\label{QHIintro2} Again denote $M:=S^3 \setminus 4_1$, the figure-eight knot complement. Let $(T,b)$ be its usual geometric triangulation with two tetrahedra $U$ and $V$, and with a \textit{branching} $b$ (i.e. an orientation of edges without cycles) as shown in Figure \ref{Teight}.

\begin{figure}[ht]
	\begin{center}
		\includegraphics[width=8cm]{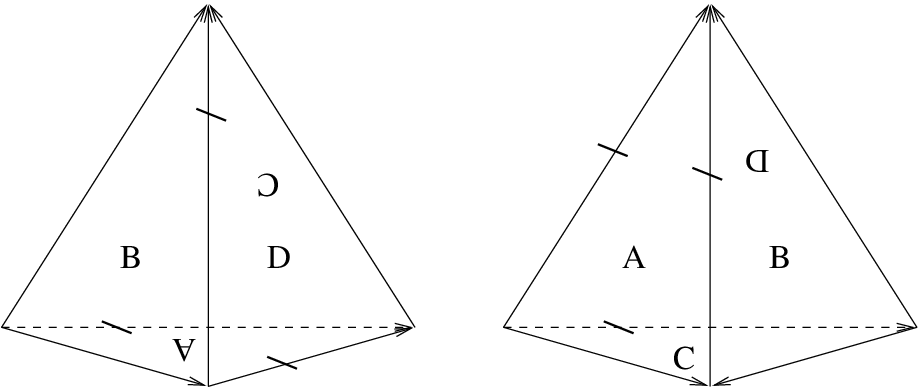}
		\caption{\label{Teight} The face and edge identifications of a geometric branched ideal triangulation of the figure eight knot complement.}
	\end{center}
\end{figure}

For the following notations, see Section \ref{QGVsec} for details. Denote by $u=(u_0,u_1,u_2)$ and $v=(v_0,v_1,v_2)$ triples of complex shape parameters of ideal hyperbolic tetrahedra associated to the edges of $U$ and $V$, respectively (so $u_i\in \mc_*$, $u_{i+1} = (1-u_i)^{-1}$, and similarly for the $v_i$'s, with indices mod $3$). Assume that they satisfy Thurston's gluing equations along the edges of $T$. Denote by $\rho\colon \pi_1(M)\rightarrow PSL(2,\mc)$ the holonomy defined by $(u,v)$. For each odd integer $N \geqslant 3$, we take $N$-th roots $\bu_{j,N}$ and $\bv_{j,N}$ of $u_{j}$ and $v_{j}$, respectively, thus getting new triples $\bu_N:=(\bu_{0,N},\bu_{1,N},\bu_{2,N})$ and $\bv_N=(\bv_{0,N},\bv_{1,N},\bv_{2,N})$. These $N$-th roots satisfy certain compatibility conditions. 

The invariant $\Hh_{N}(M,\rho,h_\rho,k_c)$ in Theorem \ref{asyinvteo} can be computed as the evaluation at some point $(\bu_N,\bv_N)$ of a rational function $\Hh_{N,(T,b,c)}$ on $\mc^6$. This function results from a {\it state sum} associated to $(T,b)$, {\it i.e.}, the total contraction of a tensor network, whose tensors correspond to the tetrahedra $U$, $V$ (as is common in quantum topology) and are made of quantum dilogarithms. The function $\Hh_{N,(T,b,c)}$ depends on a so-called charge $c$, which is a tuple of integers associated to the pairs of opposite edges of the tetrahedra $U$ and $V$, again satisfying certain compatibility conditions. All these ``certain compatibility conditions'' are fixed by the choices of $h_\rho$ and $k_c$ in Theorem \ref{asyinvteo}. We can thus write 
$$ \Hh_{N}(M,\rho,h_\rho,k_c) = \Hh_{N,(T,b,c)}(\bu_N,\bv_N).$$
For details on $\Hh_{N,(T,b,c)}$ the most computation-oriented readers may want to look at the formulas \eqref{ssumrednonred} (where the charge $c$ has been fixed to some specific value), \eqref{functomegadef} and \eqref{form1}. 
\smallskip

We will see that Theorem \ref{asyinvteo} can be reduced to the following result, which will be our main objective:

\begin{teo}[Triangulation-version of Theorem \ref{asyinvteo}]\label{thm:main_triang}
	Let $(\bu_N)_{N}$ and $(\bv_N)_{N}$ be sequences as above. 	Then, for every charge $c$ the state sum $\Hh_{N,(T,b,c)}$ satisfies:
	\smallskip
	
	\noindent (a) If $\lim_{N\ra +\infty}\bu_{1,N}=-1$ and $\lim_{N\ra +\infty}\bv_{1,N}=-1$, then
	$$\lim_{N\ra +\infty} \frac{2\pi}{N} \Log \vert \Hh_{N,(T,b,c)}(\bu_N,\bv_N)\vert = {\rm Vol}(M).$$ 
	(b)  If $\lim_{N\ra +\infty}\bu_{1,N}=1$ and $\lim_{N\ra +\infty}\bv_{1,N}=1$, then
	$$\lim_{N\ra +\infty} \frac{2\pi}{N} \Log \vert \Hh_{N,(T,b,c)}(\bu_N,\bv_N)\vert = 0.$$
	Moreover, (a) and (b) are the only two possibilities.
\end{teo}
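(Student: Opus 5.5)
Our approach is an integral representation of the state sum followed by saddle point and Perron-type estimates. We begin with the last assertion. The shapes $(u,v)$ lie near the geometric solution of the gluing equations, and for the figure-eight knot the edge equations force $u_1$ and $v_1$ (the shapes on the edge class carrying the longitude in our branching) to satisfy compatibility conditions on their $N$-th roots. Concretely, the cusp equation for $\lambda_K$ expresses the holonomy eigenvalue of the longitude as a product of shape parameters, and in particular involves $\bu_{1,N}^{N}$, $\bv_{1,N}^{N}$. The compatibility conditions imposed by $h_\rho$ and $k_c$ then force $\bu_{1,N}^N$ and $\bv_{1,N}^N$ to tend to $\pm 1$ with the same sign; up to the choice of $N$-th root, this means $\bu_{1,N}\to \pm1$ and $\bv_{1,N}\to\pm 1$ with the same sign. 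The sign is determined by the parity of $h_\rho(\lambda_K)/2\pi i$, because the log-branches of the longitude are $h_\rho(\lambda_K)/N$ up to $o(1)$. This gives the dichotomy.

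Next we write $\Hh_{N,(T,b,c)}(\bu_N,\bv_N)$ as a double state sum over $\mz/N\mz$ of products of cyclic quantum dilogarithms $\omega_N$. We replace the sum by a contour integral in two variables using the residue theorem: the summand extends meromorphically through Faddeev-type quantum dilogarithms, and the extra factor $1/(1-e^{2\pi i N x})$ produces the sum as residues. Using the uniform asymptotics $\omega_N(x)\sim \exp(\frac{N}{2\pi i}{\rm Li}_2(\cdot))$ away from poles, the integrand becomes $\exp(N\Phi(x,y)/2\pi i + O(\log N))$, where $\Phi$ is the classical potential. The dependence on the charge $c$ and the exact roots is only through subexponential factors, except for the limits of $\bu_{1,N}$ and $\bv_{1,N}$, which shift the linear terms of $\Phi$ by $\pi i$ or $0$.

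In case (a), the critical points of $\Phi$ recover the gluing equations with the shift $-1$. The relevant critical point is the complete structure, where $\mathfrak{R}(\Phi/2\pi i)$ equals ${\rm Vol}(M)/2\pi$ (twice the Bloch–Wigner value $D(e^{i\pi/3})$). We deform the contour to pass through this nondegenerate saddle, which is a steepest descent point, and apply the two-variable saddle point method, with a check that the Hessian is nonzero. In case (b), the shifted potential has no critical point with positive real growth in the admissible region. We deform to contours on which $\mathfrak{R}$ of the exponent is $\le 0$ and bound the integral by Perron's method near the boundary maxima, which gives growth rate $0$. The lower bound $\ge 0$ comes from showing that the boundary contributions do not cancel, and we extract the leading polynomial term.

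The main obstacle is the contour deformation in case (b). The endpoints of the contours converge to poles of the integrand as $N\to\infty$, so we cannot use uniform dilogarithm asymptotics near them. We would first isolate small neighborhoods of the endpoints and estimate them directly from the product formula of $\omega_N$. We would then use a compactness argument to show that the real part of the exponent is strictly negative on the remaining contour, except at the isolated boundary points handled by Perron's method.
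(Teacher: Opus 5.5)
Your overall route (residue integral representation, saddle point in case (a), endpoint/Perron analysis in case (b)) is the paper's. However, the argument has two genuine gaps.

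\textbf{The dichotomy.} Your conclusion that the sign is fixed by the parity of $h_\rho(\lambda_K)/2\pi i$ is right, but the reasoning is not.
\begin{itemize}
\item $\bu_{1,N}^N=u_1$ is the fixed shape parameter of the given $\rho$ (for instance $e^{i\pi/3}$ at $w_{hyp}$). It does not tend to $\pm1$.
\item Nothing in the statement puts $(u,v)$ near $w_{hyp}$. The theorem is for arbitrary $\rho$, and that is its rigidity content.
\item The sign actually comes from the integer colors. By \eqref{coldef}, $\bu_{1,N}=u_1^{1/N}e^{\pi i(N+1)a_1/N}\to(-1)^{a_1}$, and likewise $\bv_{1,N}\to(-1)^{b_1}$.
\item The relations \eqref{standardf3}, with $h_\rho$ in place of $l\kappa$, give $a_1\equiv b_1\equiv h_\rho(\lambda_K)/2\pi i \pmod 2$.
\end{itemize}

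\textbf{The analytic core.} This is the more serious gap. You do not need two variables: the double sum factors as in \eqref{form1}, so one-variable integrals suffice. The real problem is your ansatz that the integrand is $e^{N\Phi/2\pi i+O(\log N)}$ for a single potential $\Phi$.
\begin{itemize}
\item In the representation \eqref{int1}, normalized by \eqref{situation2} with $a_0$ even so that the only poles inside $C_N$ give the state-sum terms, one has $\bl_{0,N}\to-2i\pi$.
\item Hence $\hat S_N(\bl_{0,N}+z)$ is evaluated outside the strip where the uniform ${\rm Li}_2$ estimate holds. Bringing it back takes $N$ uses of \eqref{eqfunchatS}.
\item This produces the exact factor $1-u_0e^{Nz}$, plus a prefactor $\rho(z)$ which, for $a_0\ge 2$, blows up at $z=0,2i\pi$.
\item So the integrand is $\bigl(e^{\frac{N}{2i\pi}\mathcal{L}_-}-u_0e^{\frac{N}{2i\pi}\mathcal{L}_+}\bigr)\rho(z)(1+o(1))$, with $\mathcal{L}_+=\mathcal{L}_-+2i\pi z$. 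On the imaginary axis both terms have modulus $\asymp e^{\frac{N}{2\pi}{\rm Cl}_2(t)}$.
\end{itemize}
Every lower bound is therefore a non-cancellation statement between these two terms.

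In case (a) this is manageable. The $\mathcal{L}_-$ part has the saddle $i\pi/3$ on the segment, with $\mathfrak{I}\,\mathcal{L}_-(i\pi/3)=D(e^{i\pi/3})={\rm Vol}(M)/2$. The $\mathcal{L}_+$ part, pushed into $\mathfrak{R}(z)<0$, is only polynomially large. You still have to check the latter; your single-$\Phi$ picture does not even contain this term.

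In case (b), your sentence ``the boundary contributions do not cancel'' is the whole difficulty, and you give no mechanism for it.
\begin{itemize}
\item Both $\mathcal{L}_\pm$ have nondegenerate saddles with real critical values: $\pi^2/10$ at $\Log\frac{\sqrt5-1}{2}$ and $9\pi^2/10$ at $\Log\frac{\sqrt5+1}{2}+i\pi$. Each contributes $\asymp N^{-1/2}$.
\item The pieces near $is_N^-$ are governed by the singular $\rho$. They need a Perron estimate that stays uniform as the endpoint tends to the pole.
\item You must then exhibit a nonvanishing leading polynomial term of the total, including the constant $1$ in $\Sigma_N$.
\end{itemize}
The paper's way of settling this has two parts. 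It takes $a_0\ge4$, so that the endpoint terms dominate the $N^{-1/2}$ saddle terms. It then takes $s_N^-=\alpha^-\pi/N$ with $|u_0|e^{-\alpha^-\pi}\ne1$, so that the endpoint terms coming from $\mathcal{L}_-$ and from $u_0\mathcal{L}_+$ cannot cancel. With a single $\Phi$ you cannot see these two competing terms at all.

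Finally, your claim that the exact roots enter only through subexponential factors has two problems.
\begin{itemize}
\item It is unproved. For $g_N(\bu_{0,N})$ and $\hat S_N(\bl_{0,N})$ it requires the uniform estimate of Lemma \ref{estimatehatS} together with ${\rm Li}_2(e^{\bl_{0,\infty}})={\rm Li}_2(1)\in\mr$.
\item It is misleading. Those polynomial factors are exactly what decides case (b).
\end{itemize}
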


\subsection{Structure of the proof of Theorem \ref{asyinvteo}}\label{sec:sketchproof} Let us denote by $PM(N)$ (for \textit{positive monomial}) the set of complex-valued functions of $N$ of the form $$\exp\left (O_{N\to \infty}(1) + \log(N) O_{N\to \infty}(1)\right ).$$
In other words, a function $f(N)$ is in $PM(N)$ if there exist constants $C, C'>0$ and $\alpha,\beta \in \R$ such that
$$ C N^\alpha \leqslant \vert f(N)\vert \leqslant C' N^\beta$$
for any $N$ large enough. 
Consequently, it satisfies $\lim_{N\ra +\infty} \textstyle \frac{2\pi}{N}\Log \left \vert f(N) \right \vert = 0$.

The following basic lemma will be used several times in this paper.
\begin{lem}\label{lem:PM:SE}
Let $f(N)$ and $g(N)$ be functions of $N$ such that $\lim_{N\ra +\infty} \textstyle \frac{2\pi}{N}\Log \left \vert f(N) \right \vert = a > 0$, and $g(N)$ is in $PM(N)$. Then we have
$$\lim_{N\ra +\infty} \dfrac{2\pi}{N}\Log \left \vert g(N)\cdot f(N) \right \vert = a \quad ,\quad  \lim_{N\ra +\infty}  \frac{2\pi}{N}\Log \left \vert f(N) + g(N) \right \vert = a.$$
\end{lem}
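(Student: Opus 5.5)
The plan is to work directly with the definition of $PM(N)$: there are constants $C,C'>0$ and $\alpha,\beta\in\R$ with $C N^\alpha\leqslant |g(N)|\leqslant C'N^\beta$ for $N$ large. Since $\Log$ is applied to positive reals here, we have $\Log|g f| = \Log|g| + \Log|f|$. The bounds give $\Log C + \alpha\log N\leqslant \Log|g(N)|\leqslant \Log C' + \beta \log N$. Multiplying by $2\pi/N$, both sides tend to $0$ because $\log N/N\to 0$. Hence $\frac{2\pi}{N}\Log|g(N)|\to 0$, and adding this to $\frac{2\pi}{N}\Log|f(N)|\to a$ yields the first limit.

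For the sum, we first show that $f$ dominates $g$. Fix $\epsilon$ with $0<\epsilon<a$. For $N$ large we have $|f(N)|\geqslant e^{N(a-\epsilon)/2\pi}$. Since $|g(N)|\leqslant C'N^\beta$ and $N^\beta e^{-N(a-\epsilon)/2\pi}\to 0$, we get $|g(N)|/|f(N)|\to 0$. This is the only place where $a>0$ is used, and it is the key point: if $a=0$, cancellation could occur.

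Next, write $f+g = f\,(1+g/f)$. Then $\Log|f+g| = \Log|f| + \Log|1+g/f|$. Because $|g/f|\to 0$, for $N$ large we have $\frac12\leqslant |1+g/f|\leqslant \frac32$, so $\Log|1+g/f|$ is bounded. Therefore $\frac{2\pi}{N}\Log|1+g/f|\to 0$, and the second limit also equals $a$.

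Note in passing that $1+g/f$ is itself in $PM(N)$, since it is bounded above and below. So the second claim could alternatively be deduced from the first applied to $f\cdot(1+g/f)$.

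There is no real obstacle in this argument. The only care needed is to establish the exponential domination $|g|=o(|f|)$ before factoring, and this follows from $a>0$ together with the polynomial upper bound on $|g|$.
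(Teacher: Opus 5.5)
Your proof is correct and follows essentially the same route as the paper. The first limit follows from $\frac{2\pi}{N}\Log|g(N)|\to 0$. For the second, you use $a>0$ to get $|g(N)|=o(|f(N)|)$ and then bound $|f+g|$ between constant multiples of $|f|$; the paper does this directly with the triangle inequality $\frac12|f|\le|f+g|\le 2|f|$ instead of factoring out $f$, which amounts to the same argument.
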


\begin{proof}
The first property is immediate. For the second one, note that since $a>0$ we have $\textstyle \frac{1}{2}|f(N)|>|g(N)|$ for any $N$ large enough, and thus $\textstyle \left \vert \vert f(N) \vert - \frac{\vert f(N) \vert}{2} \right \vert \leqslant \left \vert f(N) + g(N) \right \vert \leqslant \left \vert f(N)\right \vert + \vert f(N) \vert  $ by triangle inequalities; the limit follows. 
\end{proof}
\medskip

The proof of Theorem \ref{asyinvteo} takes the following steps I -- X (for the sake of brevity we do not define all the terms that appear in the formulas):
\medskip

\begin{enumerate}[label=\Roman*)]
\item (On the QHI of cusped manifolds, Section \ref{sec:QHIcusped}) This part recalls some features of the invariant $\Hh_{N}(M,\rho,h_\rho,k_c)$, using the structures introduced in Section \ref{NOTA}. In particular, we observe that:
\smallskip

\begin{itemize}
\item (Formula \eqref{splitHNteo1.1}) $\Hh_{N}(M,\rho,h_\rho,k_c)$ can be split into the product of two invariants, $\alpha_N(M,\rho,k_c;\mathfrak{s})$ (the \textit{symmetry defect}) and $\Hh_N^{red}(M,\rho,\kappa;\mathfrak{s})$ (the \textit{reduced QHI}).
\item (Formula \eqref{ssumrednonred}) Refomulating this factorization on the triangulation $(T,b)$ of $M:=S^3\setminus K$ shown in Figure \ref{Teight}, one can write the state sum $\Hh_{N,(T,b,c)}(\bu_N,\bv_N)$ of $\Hh_{N}(M,\rho,h_\rho,k_c)$ (the subject of Theorem \ref{thm:main_triang}) as 
$$\Hh_{N,(T,b,c)}(\bu_N,\bv_N) = (\bu_{0,N}^{-1}\bv_{0,N}^{-1})^{\frac{N-1}{2}}\Hh_{N}^{red}(\bu_{0,N},\bu_{1,N},\bv_{0,N},\bv_{1,N}).$$
\end{itemize}
\smallskip

\item (Getting rid of the symmetry defect, Lemma \ref{lem:lim:sym:defect}) With a quick computation, we check that $\textstyle \lim_{N\ra +\infty} (2\pi/N) \Log \left \vert (\bu_{0,N}^{-1}\bv_{0,N}^{-1})^{\frac{N-1}{2}} \right \vert =0$.
\end{enumerate}
\medskip

These two steps imply that in the proof of Theorem \ref{thm:main_triang} we can replace $\Hh_{N,(T,b,c)}(\bu_N,\bv_N)$ with $\Hh_{N}^{red}(\bu_{0,N},\bu_{1,N},\bv_{0,N},\bv_{1,N})$. From known results it has the factorization formula (\ref{form1}):
$$\Hh_{N}^{red}(\bu_{0,N},\bu_{1,N},\bv_{0,N},\bv_{1,N})   = \frac{g_N(\bu_{0,N})g_N(\bv_{0,N}^*)^*}{|g_N(1)|^2}\ \Sigma_N(\bu_{0,N},\bu_{1,N})\ \Sigma_N(\bv_{0,N}^*,\bv_{1,N}^*)^*.$$
\medskip 

\begin{enumerate}
\item[III)] (Getting rid of $g_N(1)$, $g_N(\bu_{0,N})$ and $g_N(\bv_{0,N}^*)$, Formula (\ref{cstBBP}) and Lemma \ref{factorest}) With quick computations, we establish that $|g_N(1)|=\sqrt{N}$, and thus $g_N(1)$ is in $PM(N)$. Also, by using a uniform estimate of the quantum dilogarithm functions (Lemma \ref{estimatehatS}) we show that in the situation of Theorem \ref{thm:main_triang} the functions $g_N(\bu_{0,N})$ and $g_N(\bv_{0,N}^*)$ are in $PM(N)$.

\end{enumerate}
\medskip

Together with the steps I-II, this further step implies that, in order to prove Theorem \ref{thm:main_triang}, it is enough to show that $\textstyle \lim_{N\ra +\infty} (2\pi/N) \Log \left \vert \Sigma_N(.,.)\right \vert$ is equal to either ${\rm Vol}(M)/2$ (case (a)) or $0$ (case (b)), both for $\Sigma_N(\bu_0,\bu_1)$ and for $\Sigma_N(\bv_{0,N}^*,\bv_{1,N}^*)$. 
\medskip 

\begin{enumerate}
\item[IV)] (Writing $\Sigma_N$ as as an integral, Lemma \ref{lemintrep}) Using properties of the quantum dilogarithm $\hat S_N(.)$, we re-write $\Sigma_N(\bu_{0,N},\bu_{1,N})$ as in formula (\ref{int01}) :
$$\Sigma_N(\bu_{0,N},\bu_{1,N}) = 1 + \frac{1}{\hat S_N(\bl_{0,N})}\sum_{\beta=1}^{N-1} e^{\frac{2\pi i}{N} \beta^2 -\bl_1\beta} \textstyle \hat S_N(\bl_{0,N}+2\beta i\pi/N),$$
where $(\bl_{k,N})$ is a sequence of logarithms of $(\bu_{k,N})$ of a specific form. Using the residue theorem, we re-write this formula as in (\ref{int1}):
$$\Sigma_N(\bu_{0,N},\bu_{1,N}) =1+ \frac{1}{\hat S_N(\bl_{0,N})} \frac{N}{4i\pi}\int_{C_N} e^{\frac{N}{2i\pi}(z^2 - \bl_{1,N}z)}\hat S_N(\bl_{0,N} + z)\textstyle \coth(\frac{Nz}{2})dz$$
where $C_N$ is a closed rectangular contour circling the segment $\textstyle [\frac{2i\pi}{N}, 2i\pi - \frac{2i\pi}{N}]$ and close to it. We denote by $is_N^-$ and $2i\pi-is_N^+$ its  intersection points with $i\mr$; by construction $\textstyle s_N^\pm:=\alpha^\pm\frac{\pi}{N}$ with $\alpha^\pm\in ]0,2[$.
\smallskip

\item[V)] (Getting rid of $\hat S_N(\bl_{0,N})$, Lemma \ref{factorest}) With properties of the quantum dilogarithms, we establish that $\hat S_N(\bl_{0,N})$ is in $PM(N)$.
\smallskip

\item[VI)] (Equivalence of Theorem \ref{asyinvteo} and Theorem \ref{thm:main_triang}, Section \ref{loglimmars25}) Using the description of the quantum gluing variety in Section \ref{4_1}, we show that the two cases (a) and (b) in Theorem \ref{asyinvteo} correspond to the cases (a) and (b) in Theorem \ref{thm:main_triang}. Moreover, in the above integral these cases correspond to having $\bl_{k,\infty}:= \textstyle \lim_{N\rightarrow +\infty} \bl_{k,N}$ such that $\bl_{0,\infty} = -2i\pi$, $\bl_{1,\infty} = i\pi$, and $\bl_{0,\infty} = -2i\pi$, $\bl_{1,\infty} =2i\pi$ respectively. The same facts occur for $\Sigma_N(\bv_0^*,\bv_1^*)$.
\end{enumerate}
\medskip

We then turn to the semi-classical limit of the integral
\begin{equation}\label{intclejuil25}\int_{C_N} e^{\frac{N}{2i\pi}(z^2 - \bl_{1,N}z)}\hat S_N(\bl_{0,N} + z)\textstyle \coth(\frac{Nz}{2})dz .
\end{equation}
Somewhat counter-intuitively, we begin with simpler integrals, over the segment $[is_N^-,2i\pi-is_N^+]$ instead of $C_N$, where we fix $\textstyle s_N^-=\alpha^-\frac{\pi}{N}$, $\alpha^-\in ]0,1[$ (whereas $\textstyle s_N^+=\alpha^+\frac{\pi}{N}$ has $\alpha^+\in ]0,2[$ arbitrary), and with integrands defined by using functions $f_{\pm}(z)$, depending on $\bl_{0,\infty}$ and $\bl_{1,\infty}$, which can be regarded as semi-classical approximations of the above integrand when $\bl_{0,\infty}=-2i\pi$ and $\bl_{1,\infty} = i\pi$ or $2i\pi$. 
\begin{enumerate}
\item[VII)] (Saddle point method on a classical integral: contour $[is_N^-,2i\pi-is_N^+]$, case (a): $\bl_{1,\infty} = i\pi$, sections \ref{sub:small:vert:int} and \ref{sub:SPM:a}) In this case we prove that
$$\lim_{N\rightarrow +\infty} \dfrac{2\pi}{N}\Log \left \vert \int_{I_N} e^{\frac{N}{2i\pi} f_{-}(z)}dz \right \vert  = \frac{1}{2}{\rm Vol}(M)\ \ \ {\rm and}\ \ \ \int_{I_N} e^{\frac{N}{2i\pi} f_{+}(z)}dz = O_{N\rightarrow +\infty}(1).$$
\item[VIII)] (Saddle point method on a classical integral: contour $[is_N^-,2i\pi-is_N^+]$,  case (b): $\bl_{1,\infty} = 2i\pi$, sections \ref{sub:small:vert:int} and \ref{sub:SPM:b}) In this case we prove that
$$\int_{I_N} e^{\frac{N}{2i\pi} f_{\pm}(z)}dz  = \dfrac{\text{Constant}_\pm}{\sqrt{N}}(e^{i r_\pm N}+o_{N\to\infty}(1))$$
where $\text{Constant}_-$, $\text{Constant}_+$ are distinct non-zero complex numbers and $r_+,r_-\in \R$ are distinct as well (see \eqref{Cst+}-\eqref{Cst-}).
\smallskip

\item[IX)]  (Lemma \ref{qcintjui25}) We split the contour $C_N$ in its subcontours $C_N^+$, $C_N^-$ lying in the right and left half planes, and we show that the integral \eqref{intclejuil25} is a linear combination of integrals of the form  
$$\int_{C_N^+} e^{\frac{N}{2i\pi} f_{\pm}(z)}\rho(z)\exp\left(\frac{1}{N}{\Psi_{u_0,N}(z)}\right)\left(1+\frac{1}{N}R_N(z)\right) dz,$$
where the functions $f_\pm$ are as in steps VII and VIII, and $\rho(z)$, $\Psi_{u_0,N}(z)$ and $R_N(z)$ are holomorphic functions of $z$ in a domain containing the strip $\mr + i]0,2\pi[$ with some cuts, with moduli bounded by constants independent of $N$ on compact subsets of this strip with cuts. The function $\Psi_{u_0,N}(z)$ measures $\hat S_N(\bl_{0,N} + z)e^{-\frac{N}{2i\pi} f_{\pm}(z)}$; its properties are studied in Lemma \ref{extboundPsiNjanv26}. 

\item[X)]  (Lemma \ref{lem:reformintqjui25} and Corollary \ref{cor:reductionintjui25}) By deforming the contours $C_N^\pm$ to $[is_N^-,2i\pi-is_N^+]$ relatively to the endpoints, one checks that the analysis of steps VII and VIII can be adapted to describe the asymptotics of the integrals of steps IX, and gives similar semi-classical asymptotics. Then, we can deduce easily that
$$ \dfrac{2\pi}{N}\Log \left \vert \int_{C_N} e^{\frac{N}{2i\pi}(z^2 - \bl_{1,N}z)}\hat S_N(\bl_{0,N} + z)\textstyle \coth(\frac{Nz}{2})dz \right \vert $$
and finally
$$\dfrac{2\pi}{N}\Log \left \vert \Sigma_N(\bu_{0,N},\bu_{1,N}) \right\vert \quad \mathrm{and} \quad \dfrac{2\pi}{N}\Log \left \vert \Sigma_N(\bv_{0,N}^*,\bu_{1,N}^*) \right\vert$$
tend to either ${\rm Vol}(M)/2$ when $\bl_{0,\infty} = -2i\pi$ and $\bl_{1,\infty} = i\pi$ (case (a)), or to $0$ when $\bl_{0,\infty} = -2i\pi$ and $\bl_{1,\infty} =2i\pi$ (case (b)).
\end{enumerate}
By the conclusions of steps III and VI, this will complete the proof.

\begin{remark}{\rm The factorization formula of $\Hh_{N}^{red}(\bu_{0,N},\bu_{1,N},\bv_{0,N},\bv_{1,N})$ in step II is specific to the QHI of $M=S^3\setminus K$. On another hand, the integral representation in step IV immediately generalizes to the QHI state sums of arbitrary cusped manifolds, as well as the steps I, III and V.}
\end{remark}

\section{Geometric setup}\label{NOTA} 
We introduce in Sections \ref{QGVsec} and \ref{LOGW} the material required to define the reduced QHI state sums associated to the geometric ideal triangulations of cusped manifolds which support a branching.  The case of the figure-eight knot complement is developed in Section \ref{4_1}. The generalization to arbitrary ({\it i.e.}, not necessarily branched) geometric ideal triangulations, possibly with some flat tetrahedra (see Definition \ref{defgeomtri}), is described in \cite{AGT,NA}. Though any cusped manifold has a geometric ideal triangulation (see Section \ref{sec:tri_GV}), to our knowledge it is still an open question whether or not there is a branched geometric one. For hyperbolic links in $S^3$ which have a reduced alternating diagram, the answer is positive; this follows from the main result of \cite{GMT,SakYok}. 
\smallskip

In this section an odd integer $N\geq 3$ is fixed. Therefore we will systematically omit the subscript ``$N$'' from the notation of the quantum parameters, thus hiding their dependence on $N$ (as they are $N$-th roots). Hence we write $\bw_0$ for $\bw_{0,N}$, and only bold characters indicate the quantum nature of the parameters.

\subsection{The quantum gluing varieties} \label{QGVsec} These form the parameter spaces of the QHI state sum formulas. 

\subsubsection{Quantum $3$-simplices} (see \cite{AGT}, Section 3)\label{sec:q3Delta} A {\it quantum $3$-simplex} is a tuple $(\Delta,b,\bw)$ where:
\begin{itemize}
\item $(\Delta,b)$ is an oriented tetrahedron with ordered vertices, say $v_0,\ldots,v_3$. The letter ``$b$'' denotes the system of edge orientations, called {\it branching}, which encodes the vertex ordering and is defined by the rule: an edge with endpoints $v_i$ and $v_j$ points towards $v_j$ if $j>i$. 

\noindent We call $(\Delta,b)$ a {\it branched tetrahedron}. It has a natural {\it $b$-orientation}, defined by the ordered triple of oriented edges $(v_0v_1)$, $(v_0v_2)$ and $(v_0v_3)$. We encode this $b$-orientation by a sign $*_b(\Delta)\in \{-1,+1\}$: $*_b(\Delta)=+1$ if the $b$-orientation coincides with the given orientation of $\Delta$, $*_b(\Delta)=-1$ otherwise.
\item $\bw:=(\bw_0,\bw_1,\bw_2)\in \mc^*$ satisfies the following requirements: 
\smallskip

\noindent (a) $w:=(w_0,w_1,w_2) := (\bw_0^N,\bw_1^N,\bw_2^N)\in (\mc_*)^3$, and we have the relations
\begin{equation}\label{cyclicshapeoct25}
w_{i+1}=\frac{1}{1-w_i}\quad {\rm (indices\ mod}(3));
\end{equation}
 \noindent (b) We have
 \begin{equation}\label{tetrelbasique}
 \bw_0\bw_1\bw_2 =e^{-*_b(\Delta)\frac{\pi i}{N}}.
 \end{equation}
 We call $w_i$ and $\bw_i$ a {\it shape parameter} and a {\it quantum shape parameter}, respectively.
 \end{itemize} 
 
One can regard $w_0,w_1,w_2$ as the three cross ratios of a $4$-tuple of points in $\mc\mathbb{P}^1=\partial_\infty \mh^3$, ordered up to even permutations. The triple $(w_0,w_1,w_2)$ up to cyclic permutations parametrizes the equivalence class of the $4$-tuple up to the action of Moebius transformations on $\mc\mathbb{P}^1$; equivalently, $(w_0,w_1,w_2)$ up to cyclic permutations parametrizes an orientation preserving isometry class of $3$-dimensional hyperbolic ideal tetrahedra (see {\it e.g.} \cite[Chapter E.6-i]{BP}). Each shape parameter is naturally assigned to a specific pair of opposite edges of $(\Delta,b)$: $w_0$ to the edges $(v_0v_1)$ and $(v_2v_3)$, $w_1$ to the edges $(v_1v_2)$ and $(v_0v_3)$, and $w_2$ to the edges $(v_0v_2)$ and $(v_1v_3)$. This way, e.g., $w_0$ computes the dilation coefficient of the loxodromic element in $PSL(2,\mc)$ fixing $v_0$ and $v_1$ and mapping $v_2$ to $v_3$.

Therefore a quantum $3$-simplex $(\Delta,b,\bw)$ encodes an hyperbolic ideal $3$-simplex up to isometry, plus a choice of $N$-th root $\bw_i$ of each shape parameter $w_i$, assigned to the same pair of opposite edges of $(\Delta,b)$, as described above. 
\medskip

Let $(\Delta,b,\bw)$ be a quantum $3$-simplex. We can write the quantum shape parameters in the form  
\begin{equation}\label{ecoldef}\bw_k = \exp\left(\frac{1}{N}(\Log(w_k) +2\pi i e_k)\right),
\end{equation}
where $e_0,e_1,e_2\in \mz/N$ satisfy
\begin{equation}\label{etetcol}
\sum_{k=0,1,2} (\Log(w_k) +2\pi i e_k) = -*_b\! (\Delta)\pi i\quad {\rm mod}(2\pi i N).
\end{equation}
We call $e_k$ a {\it (edge) $N$-color}. Alternatively, since $N$ is odd the map $\mz \rightarrow \mz/N\mz$, $d\mapsto \textstyle \overline{\frac{(N+1)}{2} d}$, is well-defined and surjective. Therefore it is also possible to write
\begin{equation}\label{coldef}\bw_k = \exp\left(\frac{1}{N}\left(\Log(w_k) +\pi i (N+1)d_k\right)\right),
\end{equation}
where $d_k \in \mz$ and 
\begin{equation}\label{tetcol}
\sum_{k=0,1,2} (\Log(w_k) +\pi i d_k) = -*_b\! (\Delta)\pi i.
\end{equation}
The usefulness of the expression \eqref{coldef} will be explained in Remark \ref{defflatcharge}. We call $d_k$ a {\it (edge) color}, and $\Log(w_k) +\pi i d_k$ a {\it log-parameter}. Since the shape parameters $w_k$ have imaginary parts of the same sign and $\textstyle \sum_{k=0,1,2} \Log(w_k) = {\rm sign}(\mathfrak{I}(w_0))\pi i$, we have
\begin{equation}\label{sumdeven}
d_0+d_1+d_2 = -*_b\! (\Delta)- {\rm sign}(\mathfrak{I}(w_0))\in \{-2,0,2\}.
\end{equation} 

\subsubsection{Gluing varieties and the holonomy map}\label{sec:tri_GV} In this section we explain how the structures carried by quantum $3$-simplices can be globalized on ideal triangulations.
\smallskip

Let $M$ be a cusped manifold, i.e. an oriented non-compact $3$-manifold admitting a complete hyperbolic metric of finite volume. We denote by $\bar{M}$ the compact manifold with interior $M$; $\partial \bar{M}$ is a union of $p$ tori. 

Let  $T$ be an ideal triangulation of $M$, i.e. a gluing by simplicial face pairings of tetrahedra with truncated (``ideal'') vertices, homeomorphic to $M$. 

Denote by $\Delta^1,\ldots,\Delta^n$ the tetrahedra of $T$, with the orientation induced from $M$. We call {\it abstract tetrahedra} and {\it abstract edges} of $T$ the tetrahedra $\Delta^i$ and their edges before their gluings in $T$. Give each tetrahedron $\Delta^i$ a branching $b^i$, as defined in Section \ref{sec:q3Delta}. Denote by $b$ the collection of these local branchings $b^i$. We do not ask that they match along common edges (so we can choose each $b^i$ so that $*_{b^i}(\Delta^i)=1$). We note however that for such global branchings $b$, when they exist (like for $(T,b)$ in Figure 1), the QHI state sums have the simplest form (\cite{AGT}).
\smallskip

\begin{defi} {\rm The {\it quantum gluing variety} $G_N(T,b)$ is the set of tuples $\bw:=(\bw^1,\ldots,\bw^n)\in \mc^{3n}$, where $\bw^i=(\bw_0^i,\bw_1^i,\bw_2^i)$ is a triple of quantum shape parameters on $(\Delta^i,b^i)$, $i=1,\ldots,n$, as in Section \ref{sec:q3Delta}, satisfying the following {\it edge relation} at each edge $e$ of $T$:}
\begin{equation}\label{edgerel}
\prod_{E\ra e} \bw(E)^{*_E(\Delta)} =  e^{-\frac{2\pi i}{N}}.
\end{equation}
{\rm Here the product is over all the abstract edges $E$ identified to $e$ in $T$ (which is denoted $E\ra e$), $\bw(E)$ is the quantum shape parameter of $E$ in the corresponding  tetrahedron $\Delta^i$, and $*_E(\Delta)$ is its branching sign.}
\end{defi}
Note that the branching $b$ enters the definition of $G_N(T,b)$ only in that it specifies a concrete model of the space in $\mc^{3n}$. Different branchings yield isomorphic spaces. In \cite[Section 3B and 4C]{AGT} the space $G_N(T,b)$ was denoted $G_0(T,b,c)_N$. 
\smallskip

For $N=1$, $G(T,b):=G_1(T,b)$ is Thurston's gluing variety associated to $T$, since in this case $\bw^i = w^i$ is a triple of shape parameters $(w_0^i,w_1^i,w_2^i)$ for the tetrahedron $\Delta^i$, satisfying the edge relations $\textstyle \prod_{E\ra e} w(E)^{*_E(\Delta)} =  1$ (with the notations of \eqref{edgerel}). 
\smallskip

It is a well-known fact that $G(T,b)\ne \emptyset$ if all the edges $e$ of $T$ are {\it essential}, {\it i.e.}, $e\cap \bar{M}$ is not homotopic to an arc in $\partial \bar{M}$ by a homotopy fixing the endpoints. In that case we have $G_N(T,b)\ne \emptyset$ (see Theorem \ref{teocoverGN}). Moreover, denote by $\tilde M$ the universal cover of $M$. By ``straightening'' the tetrahedra of $T$ by means of an isometry $\tilde M\ra \mh^3$, one can see that if $G(T,b)\ne \emptyset$, then the character $\rho_{hyp}$ of the discrete faithful representations of $\pi_1(M)$ into $PSL(2,\mc)$ is encoded by a point of $G(T,b)$. 

More generally, when $G(T,b)$ is non-empty it is related to the variety $X(M)$ of {\it $PSL(2,\mc)$-characters of $M$} as follows (see {\it e.g.} \cite{NZ,BP,Martelli}). Recall that $X(M)$ is the affine algebraic set consisting of representations $\pi_1(M)\ra PSL(2,\mc)$ up to the equivalence relation of having the same trace on each element of $\pi_1(M)$. Denote by $\tilde M$ the universal cover of $M$. Any point $w\in G(T,b)$  determines a {\it pseudo-developing map} $D_w\colon \tilde{M} \ra \mh^3$, unique up to post-composition with an isometry, which sends a lift $\tilde{\Delta}^i$ of the tetrahedron $\Delta^i$ to a hyperbolic ideal tetrahedron with shape parameters $(w_0^i,w_1^i,w_2^i)$. A pseudo-developing map $D_w$ defines a representation $\rho_w\colon \pi_1(M) \to PSL(2,\mc)$ by $D_w(\gamma.x) = \rho_w(\gamma)D_w(x)$ for all $\gamma\in \pi_1(M)$, $x\in \tilde{M}$. Different choices of developing maps $D_w$ yield conjugate representations $\rho_w$. The resulting holonomy map (denoting again by $\rho_w$ the character of a representation $\rho_w$)
\begin{equation}\label{defholmapSept24}\fonc{hol}{G(T,b)}{X(M)}{w}{\rho_w}
\end{equation} is regular, and generically $2:1$. More will be said about it in Section \ref{GhypN}. For the moment, let us note that in general $hol$ is not surjective (for instance, by the construction of pseudo-developing maps it is not hard to see that the image of $hol$ cannot contain any character of reducible representations). 
\smallskip

The Thurston gluing variety $G(T,b)$ and the map $hol$ have good properties when $T$ is a geometric triangulation, in the following sense.
\begin{defi}\label{defgeomtri}{\rm We say that $T$ is a {\it geometric triangulation} of $M$ when:
\begin{itemize}
\item either the edges of $T$ are isotopic to geodesics in the complete hyperbolic structure of $M$, and these geodesics triangulate $M$ into tetrahedra with non zero volume and disjoint interiors; 
\item or $T$ is obtained by subdividing the Epstein-Penner canonical polyhedral decomposition of $M$ (then some tetrahedra may be flat, {\it i.e.}, of zero volume).
\end{itemize}}
\end{defi}
In the former case we say that $T$ is a {\it positive} triangulation, as it can be isotoped to a gluing of positively oriented hyperbolic ideal tetrahedra corresponding to the $\Delta^i$, with classical shape parameters satisfying $\mathfrak{I}(w_{hyp}(E)^{*_E(\Delta)})> 0$ for every abstract edge $E$ of $T$. Such triangulations are produced, e.g., by the software program SnapPy \cite{CDGW}. In general, when $T$ is geometric there exists a point $w_{hyp}\in G(T,b)$ such that $$hol(w_{hyp}) = \rho_{hyp}, \ {\rm and\ } \mathfrak{I}(w_{hyp}(E)^{*_E(\Delta)})\geq 0 \ {\rm for\ every\ abstract\ edge\ }E \ {\rm of\ } T.$$ 

For a geometric triangulation $T$ the following facts hold true  (see \cite{NZ},  \cite[Chapter E.6 and Proposition 7.7]{BP}, \cite{PW} and \cite[Proposition 3.5-3.6]{PP},  \cite{Choi} and \cite[Proposition 15.3.4]{Martelli}):
\begin{itemize}
\item[(i)] ${\rm dim}_\mc(G(T,b))=p$, the number of cusps of $M$ (this is also the dimension of the geometric component $X_{hyp}(M)$ of $X(M)$, see Section \ref{GhypN});
\item[(ii)] there is a unique point $w_{hyp}$ as above, it is a smooth point of $G(T,b)$, and a neighborhood of $w_{hyp}$ parametrizes a space of (non complete) hyperbolic structures on $M$.
 \end{itemize} 

Now let $w:=(w^1,\ldots,w^n)\in  G(T,b)$, where $w^i:=(w_0^i,w_1^i,w_2^i)$ as usual. 
\begin{defi} {\rm We say that a tuple $d:=(d^1,\ldots,d^n)\in \mz^{3n}$ is a {\it system of (edge) colors} for $w$ if: 
\begin{itemize}
\item for every tetrahedron $\Delta^i$ of $T$, $i=1,\ldots,n$, we have 
\begin{equation}\label{tetcol2}
\sum_{k=0,1,2} (\Log(w_k^i) +\pi i d_k^i) = -*_b\! (\Delta)\pi i 
\end{equation}
\item for every edge $e$ of $T$ we have (with notations similar to \eqref{edgerel}):
\begin{equation}\label{edgecol}
\sum_{E\ra e} *_E(\Delta)(\Log(w(E)) +\pi i d(E)) =  -2\pi i.
\end{equation}
\end{itemize}
In such a situation we say that $d$ and $w$ are {\it compatible}. }
\end{defi}
The existence and the structure of the set of compatible pairs $(w,d)$ is discussed in Section \ref{LOGW}. By means of the formula \eqref{coldef} relating $d_k^i$, $w_k^i$ and $\bw_k^i$ for every $i$ and $k$, to any compatible pair $(w,d)$ we can associate a point $\bw:=(\bw^1,\ldots,\bw^n)\in G_N(T,b)$, where $\bw^i:=(\bw_0^i,\bw_1^i,\bw_2^i)$. We say that $(w,d)$ and $\bw$ are compatible.

We can also consider {\it systems of (edge) $N$-colors}, {\it i.e.}, tuples $e:=(e^1,\ldots,e^n)\in (\mz/N)^{3n}$, where $e^i=(e_0^i,e_1^i,e_2^i)$, satisfying the relations obtained from \eqref{tetcol2}-\eqref{edgecol} by replacing $d_k^i$ and $d(E)$ with $2e_k^i$ and $2e(E)$ and working {\rm mod}$(2\pi i N)$; clearly the pairs $(w,e)$ are in one-to-one correspondence with the points of $G_N(T,b)$. 

\begin{remark}\label{defflatcharge}{\rm The log-parameters in \eqref{tetcol} are intrinsic to Neumann's simplicial formulas (\cite{N}) of the Chern-Simons invariant of $PSL(2,\mc)$-characters. Indeed, these simplicial formulas are given by functions of the shape parameters $w=(w_k^i)$ and so-called {\it combinatorial flattenings} $f=(f_k^i)\in \mz^{3n}$, which are defined by the following relations, for every tetrahedron $\Delta^i$ and every edge $e$ of $T$ respectively:
\begin{equation}\label{rellogparoct25}
\sum_{k=0,1,2} (\Log(w_k^i) +\pi i f_k^i) = 0\ ,\ \sum_{E\ra e} *_E(\Delta)(\Log(w(E)) +\pi i f(E)) = 0.
\end{equation}
The colors in \eqref{tetcol} can be written
\begin{equation}\label{expdfc}
d_k^i=f_k^i- *_b (\Delta) c_k^i,
\end{equation}
where $c=(c_k^i)\in \mz^{3n}$ is a topological version of combinatorial flattenings, introduced in \cite{N0} and that we call a charge; by definition, it satisfies for every tetrahedron $\Delta^i$ and every edge $e$ of $T$ the equations} 
$$\sum_{k=0,1,2} c_k^i = 1\ ,\ \sum_{E\ra e} c(E) = 2.$$
\end{remark}

\subsection{Weights}\label{LOGW} The structure of the set of systems of edge colors $d$ compatible with a point $w\in G(T,b)$ follows from Neumann's theory of flattenings \cite{N,N0}. This set is a disjoint union of affine spaces over $\mz$-lattices determined by two pairs of cohomology classes on $M$ and $\partial \bar M$, that we call bulk weights and log weights. We refer to \cite[Section 4C]{AGT} for a complete description of these lattices; examples are given in \cite[Section 6.4]{AGT0}, \cite[Section 9]{AGT}, and \cite[Section 9]{NA} (in \cite[Definition 1.7]{NA} we called the log weight \eqref{formulelkappa} below a ``fused weight''). For systems of $N$-edge colors things go similarly. In this section we describe the weights by limiting our discussion to what is strictly necessary for this paper.

\subsubsection{Log weights and boundary weights} \label{sec:landbweights} As usual denote by $\bar M$ the compact manifold with interior $M$; so $\partial \bar M$ is a disjoint union of $c$ tori. Recall the holonomy map \eqref{defholmapSept24}. 
\smallskip

It is well-known that, besides the holonomy $\rho_w$, a point $w\in G(T,b)$ determines a group homomorphism 
\begin{equation}\label{dilationdef}
\fonc{\delta_w}{H_1(\partial \bar M; \mz)}{\mc^*}{\gamma}{\delta_{w}(\gamma).}
\end{equation}
Thus we have a map $\delta\colon G(T,b)\ra H^1(\partial \bar M; \mc^*)$, $w\mapsto \delta_w$. We  call $\delta$ the {\it dilation factor}. The scalar $\delta_w(\gamma)$ is the square of one among the two (reciprocally inverse) eigenvalues of $\rho_w(\gamma)$, and can be computed as a signed monomial in the classical shape parameters $w^i_k$ (see \cite{NZ}, or the formulas in \cite[Lemma E.6.8]{BP}; these formulas immediately follow from \eqref{formulekappa} below). If $T$ is a geometric triangulation, as defined in Section \ref{sec:tri_GV}, and $w$ is close enough to the point $w_{hyp}\in G(T,b)$, then $\delta_w$ is the holonomy of a similarity structure on $\partial \bar M$ (see, e.g., \cite[Proposition E.6.5]{BP}, \cite[Proposition 2.3]{PP}).
\smallskip

Similarly, by adjoining to $w\in G(T,b)$ a compatible system of edge colors $d$, one can define two  classes $h_2\in H^1(M;\mz/2\mz)$ and $l\kappa\in H^1(\partial \bar M; \mc)$, such that for all $\gamma \in H_1(\partial \bar M; \Z)$ we have (where $j^*\colon H^1(\bar{M},\mz/2)\ra H^1(\partial \bar{M},\mz/2)$ is induced by the inclusion $j\colon \partial \bar{M} \ra \bar{M}$):
\begin{align}
l\kappa(\gamma) & = \Log(\delta_{w}(\gamma)) \quad {\rm mod}(i\pi\mz) \label{lkconstraint0}\\
j^*(h_2)(\gamma) & = (l\kappa(\gamma)-\Log(\delta_{w}(\gamma)))/i\pi\quad{\rm mod} (2\mz).\label{h2class}
\end{align}
Let us explain this. To define the classes $l\kappa$ and $h_2$, first we stipulate that $l\kappa(0):=0$ and $h_2(0):=0$, for the $0$ classes in $H_1(\partial \bar M;\mz)$ or $H_1(M;\mz/2)$ respectively. Given a non-zero class $\gamma\in H_1(\partial \bar M;\mz)$, we represent it by a union of oriented simple closed curves in $\partial \bar M$ which are essential ({\it i.e.}, do not bound a disk) and transverse to the $1$-skeleton of $\partial T$, the triangulation of $\partial \bar M$ induced by $T$. We can assume that these curves have no ``back-tracking'', {\it i.e.}, they intersect each triangle of $\partial T$ in a disjoint union of segments. Call such curves {\it normal curves} in $\partial T$. If a triangle $F$ of $\partial T$ is a section of the tetrahedron $\Delta$ of $T$ near one of its vertices, for every vertex $v$ of $F$ denote by $E_v$ the abstract edge of $\Delta$ containing $v$, by $w(E_v)$ the classical shape parameter of $E_v$, by $d(E_v)$ its color, and by $*_v$ the branching sign of $\Delta$. Given a union of normal curves $C$, we write $F\colon C \rightarrow E_v$ to mean that $C$ intersects the triangle $F$ in subsegments joining the two edges of $F$ having $v$ has endpoint. We count these subsegments algebraically, by using the orientation of $C$: if there are $s_+$ (resp. $s_-$) such segments whose orientation is compatible with (resp. opposite to) the orientation of $\partial \bar M$ as viewed from $v$, then we set $ind(C,v) :=s_+-s_-$. Then, for any non zero class $\gamma\in H_1(\partial \bar M;\mz)$ represented by a union of normal curves $C$ in $\partial T$, we set (the sum is over all the triangles of $\partial T$)
\begin{equation}
  l\kappa(\gamma)  := \sum_{F\colon C \ra E_v} *_v \ ind(C,v)\left(\Log(w(E_v)) +\pi \sqrt{-1}d(E_v)\right) \label{formulelkappa}.
  \end{equation}
Similarly, we define the non oriented normal curves in $T$ as transverse to the $2$-skeleton and intersecting each tetrahedron in a disjoint union of segments. For any normal curve in $T$, we write $\Delta\colon C \rightarrow E$ to mean that $C$ intersects the tetrahedron $\Delta$ in subsegments joining the faces of $\Delta$ adjacent to an abstract edge $E$ of $\Delta$. For any such a tetrahedron, we denote by $ind(C,E)$ the number of these segments, and by $d(E)$ the color of $E$. Then, for any non-zero class $\gamma \in H_1(M;\mz/2)$ represented by a union of normal curves $C$ in $T$ we have (the sum is over all the tetrahedra of $T$)
  \begin{equation}
  h_2(\gamma) =\sum_{\Delta\colon C \ra E} \ ind(C,E)d(E) \quad{\rm mod} (2\mz). \label{formulebulkw}
\end{equation}
We call $l\kappa$ the {\it log weight} of $(w,d)$, and $h_2$ the {\it bulk weight}. In the particular case where $M$ is the complement of a knot $K$ in $S^3$, $H^1(\bar M; \mz/2)\cong \mz/2$ is generated by the meridian of $K$ and therefore $h_2$ is determined by $l\kappa$ (and it vanishes on the canonical longitude).
\begin{remark}\label{cfweight}{\rm One can associate classes $k_f\in H^1(\partial\bar{M};\mc)$ and $k_c\in H^1(\partial\bar{M};\mz)$ (and similarly classes $h_{2,f}$, $h_{2,c} \in H_1(M;\mz/2)$), respectively called {\it flattening weight} and {\it charge weight}, to flattenings $f$ and charge $c$ as in Remark \ref{defflatcharge}. Namely, $k_f$ is defined by the formula obtained from \eqref{formulelkappa} by replacing the color $d(E_v)$ with the flattening $f(E_v)$, and $k_c$ by the formula obtained from \eqref{formulelkappa} by replacing $*_v(\Log(w(E_v))+\pi \sqrt{-1}d(E_v))$ with the charge $c(E_v)$. Note that
\begin{equation}\label{fconstraint0}
k_f(\gamma)  = \Log(\delta_{w}(\gamma)) \quad {\rm mod}(i\pi\mz).
\end{equation}
Clearly
\begin{equation}\label{deflkappa}
	l\kappa= k_f-\pi ik_c.
\end{equation}
}\end{remark}

The following statement follows from results in  \cite{N0,N}. In the present notations, the first claim is Propositions 4.8 (2) and 4.9 (2) of \cite{AGT}, formulated for $l\kappa= k_f-\pi ik_c$. The generators in the second claim follow from the last claim of Lemma 6.1 in \cite{N0}, and the rank is a standard computation in the Neumann-Zagier combinatorics (\cite{NZ}). 
\begin{teo}\label{spaced} Let $w\in G(T,b)$. For any classes $h_2\in H^1(M;\mz/2\mz)$ and $l\kappa\in H^1(\partial \bar M; \mc)$ satisfying \eqref{lkconstraint0} and \eqref{h2class} there is a system of edge colors $d$ satisfying \eqref{formulelkappa} and \eqref{formulebulkw}. Moreover, the set formed by these systems of edge colors is an affine space over a $\mz$-lattice of rank $n-p$ (where $p$ is the number of cusps) generated by vectors in one-to-one correspondence with the edges of $T$.
\end{teo}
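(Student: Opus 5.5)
We reduce the statement to Neumann's theory of combinatorial flattenings and charges, via the decomposition $d=f-*_b c$ of Remark \ref{defflatcharge} and the identity $l\kappa=k_f-\pi i k_c$ of Remark \ref{cfweight}. The first claim is existence; the second describes the solution set as an affine lattice.

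\emph{Existence.} Fix $w\in G(T,b)$ and classes $(h_2,l\kappa)$ satisfying \eqref{lkconstraint0} and \eqref{h2class}. By \eqref{lkconstraint0}, we can write $l\kappa=k-\pi i k'$ with $k(\gamma)\equiv \Log\delta_w(\gamma)$ mod $i\pi\mz$ and $k'\in H^1(\partial\bar M;\mz)$. The splitting is not unique: integer multiples of $\pi i$ may be moved between the two summands. We use this freedom to fix the parity of $k'$ compatibly with $j^*(h_2)$, as allowed by \eqref{h2class}.

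Neumann's results \cite{N0,N} then give two existence statements. First, for any $k$ satisfying \eqref{fconstraint0} there is a combinatorial flattening $f$ with $k_f=k$, realizing in addition a prescribed mod $2$ class $h_{2,f}$. Second, for any integral class $k'$ there is a charge $c$ with $k_c=k'$ and prescribed $h_{2,c}$. We set $d:=f-*_b c$. The tetrahedron relations \eqref{tetcol2} and edge relations \eqref{edgecol} follow by adding the relations \eqref{rellogparoct25} for $f$ to $-\pi i$ times the relations $\sum c=1$, $\sum c=2$ for $c$, once the signs $*_b$ are tracked. Formula \eqref{formulelkappa} is linear in the log-parameters, so it gives $l\kappa=k_f-\pi i k_c$. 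Formula \eqref{formulebulkw} is linear mod $2$, so $h_2=h_{2,f}+h_{2,c}$, and we choose the two summands accordingly.

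The main obstacle is the bookkeeping of the parities. One has to check that the mod $2$ invariants of $f$ and of $c$ can be chosen independently enough to hit an arbitrary $h_2$ subject only to the constraint \eqref{h2class} on $\partial\bar M$. This is exactly the content of \cite[Prop.~4.8(2), 4.9(2)]{AGT}, which we cite rather than redo.

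\emph{Structure of the solution set.} If $d,d'$ are two solutions, the difference $\delta=d-d'\in\mz^{3n}$ satisfies the homogeneous relations $\sum_k \delta_k^i=0$ and $\sum_{E\to e}*_E\delta(E)=0$. Moreover, it has vanishing log weight and vanishing bulk weight, with the bulk weight taken mod $2$. By \cite[Lemma 6.1]{N0}, this lattice is generated by the edge vectors: for each edge $e$, the vector obtained by adding $\pm1$ around $e$ on the adjacent abstract edges with the Neumann--Zagier sign pattern. Counting the rank is standard Neumann--Zagier combinatorics \cite{NZ}. The $n$ edge vectors span a lattice of rank $n-p$, because the edge relations have exactly $p$ independent linear dependencies, one coming from each cusp. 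This gives the claimed rank and generators.
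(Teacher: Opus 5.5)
Your proposal is correct and follows the paper's route. The paper proves existence by citing \cite[Prop.~4.8(2) and 4.9(2)]{AGT} through the splitting $l\kappa=k_f-\pi i k_c$ (i.e.\ $d=f-*_b c$), takes the edge generators from the last claim of \cite[Lemma~6.1]{N0}, and gets the rank $n-p$ from Neumann--Zagier combinatorics.

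The only imprecision is that the mod $2$ classes of $f$ and $c$ cannot be prescribed arbitrarily: they must satisfy $j^*(h_{2,f})\equiv (k_f-\Log\delta_w)/\pi i$ and $j^*(h_{2,c})\equiv k_c$ mod $2$. One admissible choice is $k_c=0$, $h_{2,c}=0$, $k_f=l\kappa$, $h_{2,f}=h_2$, for which the remaining compatibility condition is exactly \eqref{h2class}.
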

In such a situation we say that $(h_2, l\kappa)$ and $(w,d)$, or $(h_2, l\kappa)$ and the corresponding tuple of quantum shape parameters $\bw\in G_N(T,b)$ defined by the formula \eqref{coldef}, are {\it compatible}. For simplicity we will often omit $d$ from the notations, and say that $l\kappa$ and $w$, or $l\kappa$ and $\bw$, or $l\kappa$ and the $PSL(2,\mc)$-character $\rho_w = hol(w)$, are compatible.  For a link complement $M$ the map $j^*\colon H^1(\bar{M},\mz/2)\ra H^1(\partial \bar{M},\mz/2)$ is an isomorphism, so $h_2$ is given by $l\kappa$ and $\delta_w$ via the relation \eqref{h2class}.  
\smallskip

Similarly to \eqref{formulelkappa}, denoting by $\bw(E_v)$ the quantum shape parameter of an abstract edge $E_v$, we can associate to any point $\bw\in G_N(T,b)$ a class $\kappa_\bw\in H^1(\partial \bar M; \mc^*)$ defined by
\begin{equation}
 \kappa_\bw(\gamma)  := \prod_{F\colon C \ra E_v} \bw(E_v)^{*_v ind(C,v)} \label{formulekappa}.
\end{equation}
We call $\kappa_\bw$ the {\it boundary weight} of $\bw$. Clearly $\kappa_\bw(\gamma)^{N}= \delta_{w}(\gamma)$, and if $d\in \mz^{3n}$ is a compatible tuple of edge colors with bulk and log weights $h_2$ and $l\kappa$, for every $\gamma\in H_1(\partial \bar M; \Z)$ we have
\begin{equation}\kappa_\bw(\gamma)  = \exp\left(\frac{l\kappa(\gamma)}{N}+(\pi i) j^*(h_2)(\gamma)\right). \label{kappaborddef}
\end{equation}
Consider the covering map
\begin{equation}\label{coverGN}
\fonc{\pi_N}{G_N(T,b)}{G(T,b)}{\bw = (\bw_k^i)}{w = ((\bw_k^i)^N).}
\end{equation}
Theorem \ref{spaced} (or its proof, using systems of $N$-colors) implies:
\begin{teo}\label{teocoverGN} The map $\pi_N$ has degree $N^{n+p}$. 
\end{teo}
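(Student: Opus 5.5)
We need the degree of $\pi_N\colon G_N(T,b)\to G(T,b)$, that is, the number of points of $G_N(T,b)$ over a fixed $w\in G(T,b)$ (generically). The plan is to count the systems of $N$-colors compatible with $w$. By the discussion after the definition of systems of edge colors, the fiber $\pi_N^{-1}(w)$ is in bijection with the set of $e=(e_k^i)\in(\mz/N)^{3n}$ satisfying the mod $N$ versions of \eqref{tetcol2} and \eqref{edgecol}. So it suffices to count solutions of this affine system over $\mz/N$.

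\textbf{Step 1: the fiber is a torsor.} We first show the fiber is nonempty. Theorem \ref{spaced} gives an integral system of colors $d$ for any admissible $(h_2,l\kappa)$. Mapping $d_k\mapsto \overline{\tfrac{N+1}{2}d_k}$ via \eqref{coldef} produces an $N$-color system, since $N$ is odd and $2$ is invertible mod $N$. Two $N$-color systems for the same $w$ then differ by an element $x\in(\mz/N)^{3n}$ satisfying the homogeneous equations: $\sum_k x_k^i=0$ for each tetrahedron and $\sum_{E\to e}*_E x(E)=0$ for each edge. Hence the fiber is a torsor over the kernel $L_N$ of this linear system over $\mz/N$, and $\deg\pi_N=|L_N|$.

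\textbf{Step 2: split the kernel by log weight.} We refine the count using the boundary weight $\kappa_\bw$ of \eqref{formulekappa}. For fixed $w$, the boundary weight can take exactly $N^{2p}$ values. These correspond to the $N$-th roots of $\delta_w$ on the $2p$ generators of $H_1(\partial\bar M;\mz)$, or equivalently, by \eqref{kappaborddef}, to the classes $l\kappa$ modulo $2\pi i N$ compatible with $w$.

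We show that every such value is attained. This follows from Theorem \ref{spaced}, since $l\kappa$ may be shifted by any $2\pi i\cdot(\text{integer class})$, which changes $\kappa_\bw$ by an arbitrary $N$-th root of unity on each generator. Modulo $N$, the bulk weight $h_2$ gives no additional choice, because the sign $(-1)^{h_2}$ can be absorbed by $\tfrac{N+1}{2}$.

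\textbf{Step 3: count within a fixed boundary weight.} Fix $\kappa_\bw$. The $N$-color systems with this boundary weight form a torsor over the reduction mod $N$ of the lattice in Theorem \ref{spaced}, which has rank $n-p$. We need this reduction to have exactly $N^{n-p}$ elements, i.e. that the lattice is a primitive direct summand, so no collapsing or extra torsion appears modulo $N$. This holds because, by Neumann \cite[Lemma 6.1]{N0}, the lattice is spanned by the edge vectors, and the Neumann--Zagier symplectic structure shows the cokernel has only $2$-torsion. Since $N$ is odd, this torsion is harmless.

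Combining Steps 2 and 3 gives
$$|L_N| = N^{2p}\cdot N^{n-p} = N^{n+p}.$$

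\textbf{Main obstacle.} The hard step is Step 3: checking that passing from $\mz$ to $\mz/N$ neither loses nor adds solutions, which amounts to controlling the torsion of the Neumann--Zagier lattice. The first approach is to cite Neumann's result that the relevant homology cokernels are $2$-torsion ($H_1(M;\mz/2)$-type contributions) and invoke the oddness of $N$. If that citation proves insufficient, the fallback is to compute directly: write the tetrahedron and edge equations as a $\mz$-matrix and show that its Smith normal form has invariant factors in $\{1,2\}$.
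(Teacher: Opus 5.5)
Your proposal is correct and is essentially the paper's argument: the paper also derives $N^{n+p}$ from Theorem~\ref{spaced} through the splitting $n+p=(n-p)+2p$. There the factor $N^{2p}$ comes from the free choice of the boundary weight $\kappa_\bw$ (an $N$-th root of $\delta_w$ on each of the $2p$ generators of $H_1(\partial \bar M;\mz)$), and the factor $N^{n-p}$ comes from the rank $n-p$ lattice, run with $N$-colors. Your Step~3 makes explicit what the paper leaves inside ``or its proof, using systems of $N$-colors'': reducing modulo the odd integer $N$ neither collapses the lattice nor creates extra solutions, because only $2$-torsion occurs in Neumann's theory (one harmless slip: boundary weights correspond to log weights modulo $\pi i N$, not $2\pi i N$, since adding $\pi i N$ to $l\kappa$ also flips $j^*(h_2)$ and leaves $\kappa_\bw$ unchanged).
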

Note that the exponent decomposes as $n+p=n-p+2p$, where $n-p$ follows from Theorem  \ref{spaced}, and $2p$ comes from the degree of freedom in the choice of $\kappa_\bw$. 

\subsubsection{The spaces $G_{hyp,N}(T,b)$ and ${}_NX_{hyp}(M)$}\label{GhypN}{\rm (See \cite[Section 4]{AGT}.) Let $T$ be a geometric triangulation of $M$. We saw in Section \ref{sec:tri_GV} that $w_{hyp}$ is a smooth point of $G(T,b)$. Denote its irreducible component by $G_{hyp}(T,b)$. In this section we discuss some of its remarkable properties, and explain how the map \eqref{coverGN} can be given an intrinsic definition over $G_{hyp}(T,b)$.
\smallskip

We need to recall the following results:
\smallskip

(a) The character $\rho_{hyp}$ of the discrete faithful representations of $\pi_1(M)$ into $PSL(2,\mc)$ is a smooth point of $X(M)$, and thus is contained in a unique irreducible component $X_{hyp}(M)$ of $X(M)$; moreover ${\rm dim}_\mc(X_{hyp}(M)) = p$, the number of cusps of $M$ (see \cite[Chapter 4.5]{Shalen}, \cite[Theorem 8.44]{Kapo}). We call $X_{hyp}(M)$ the {\it geometric component} of $X(M)$.
\smallskip

(b) Dunfield \cite{Dun} showed that the (regular) restriction map $r\colon X_{hyp}(M) \ra X(\partial \bar M)$ is a birational isomorphism onto its image, in the case when $M$ has a single cusp.  This result has been extended by Klaff-Tillmann \cite{KT} to an arbitrary number of cusps. 
\smallskip

(c) The $PSL(2,\mc)$-characters of $M$ can be ``augmented'' (\cite{Champ,BD}): an augmented character $\tilde{\rho}$ consists of a character $\rho\in X(M)$ together with a conjugation invariant choice of a fixed point of $\rho(\pi_1(\partial \bar{M}))$, for each peripheral subgroup $\pi_1(\partial \bar{M})$ of $\pi_1(M)$. Such a choice is equivalent to selecting the square of one among the two (reciprocally inverse) eigenvalues of $\rho(\gamma)$, for each peripheral element $\gamma$ of $\pi_1(M)$; hence it defines a class $\delta_{\tilde \rho}\in H^1(\partial \bar M;\mc^*)$. The set of augmented characters $\tilde{X}(M)$ is algebraic, and has a canonical irreducible component $\tilde{X}_{hyp}(M)$ containing the augmentation of $\rho_{hyp}$ (it is unique, since $\rho_{hyp}$ maps elements of $\pi_1(\partial \bar M)$ to parabolic transformations). 
\smallskip

Now, identify $H^1(\partial \bar M;\mc^*)$ with $(\mc^*\times \mc^*)^p$ by fixing a basis $(\mu_1,\lambda_1,\ldots, \mu_p,\lambda_p)$ of $H_1(\partial \bar M;\mz)$. The map $\tilde{X}_{hyp}(M) \rightarrow X_{hyp}(M)$, $\tilde{\rho}= (\rho,\delta_{\tilde \rho}) \mapsto \rho$, is regular, generically $2^p\colon 1$, and the map
$$\fonc{\delta^X}{\tilde{X}_{hyp}(M)}{H^1(\partial \bar M;\mc^*)\cong (\mc^*\times \mc^*)^p}{\tilde{\rho}}{\delta_{\tilde \rho}}$$ is regular. The map $(\mc^*\times \mc^*)^p\ra X(\partial \bar M)=X(T^2)^p$, which sends the $i$-th entry $(m_i^2,\ell_i^2)$, $i=1,\ldots, p$, to the coset in $X(T^2)$ of the pair
$$\left( \pm \begin{pmatrix} m_i & 0 \\ 0 & m_i^{-1}\end{pmatrix}, \pm \begin{pmatrix} \ell_i & 0 \\ 0 & \ell_i^{-1}\end{pmatrix}\right) \in PSL(2,\mc)^2,$$ is generically $2^p\colon 1$. Since $r\colon X_{hyp}(M) \ra X(\partial \bar M)$ has degree one, it follows that $\delta^X$ has degree one, and is therefore a birational isomorphism onto its image. We thus get the bottom right square of the following commutative diagram, whose description we complete below.
$$\xymatrix{ &  {}_NX_{hyp}(M) \ar[d]^{\pi_N^X} \ar[r]^{\kappa^X} & (\mc^* \times \mc^*)^p \ar[d]^{((.)^N \times (.)^N)^p} \\ G_{hyp,N}(T,b)  \ar[d]_{\pi_N} \ar[r] \ar[ru]^{\widetilde{hol}} & \tilde{X}_{hyp}(M) \ar[r]^{\delta^X} \ar[d]^{2^p:1} & (\mc^* \times \mc^*)^p \ar[d]^{2^p:1}\\G_{hyp}(T,b) \ar[r]^{hol_{\vert}}\ar[ru]^{\tilde{hol}_{\vert}} & X_{hyp}(M) \ar[r]^{r} & X(\partial \bar M)}$$

Let $T$ be a geometric triangulation of $M$, and $G_{hyp}(T,b)\subset G(T,b)$ the irreducible component of $w_{hyp}$. Using the dilation factor \eqref{dilationdef}, the holonomy map $hol$ in \eqref{defholmapSept24} lifts to a regular map $\tilde{hol}\colon G(T,b)\rightarrow \tilde{X}(M)$, $w\mapsto (\rho_w,\delta_w)$, such that that $\delta_w = \delta^X \circ \tilde{hol}(w)$. We have:
\begin{prop} {\rm (\cite[Proposition 4.6]{AGT} and \cite[Remark 1.4]{NA}) The restriction  
\begin{equation}\label{tildeholresmars25}
\tilde{hol}_{\vert}\colon G_{hyp}(T,b)\rightarrow \tilde{X}_{hyp}(M)
\end{equation}
is a birational isomorphism, and the dilation factor $\delta\colon G_{hyp}(T,b)\rightarrow H^1(\partial \bar M;\mc^*)\cong (\mc^*\times \mc^*)^p$ is a birational isomorphism onto its image.}
\end{prop}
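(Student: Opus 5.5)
The plan is to build an explicit rational inverse of $\tilde{hol}_{\vert}$, and then get the claim about $\delta$ from the factorization $\delta = \delta^X\circ \tilde{hol}$. Since $\delta^X$ has already been shown to be a birational isomorphism onto its image, the second claim follows from the first. So everything reduces to proving that $\tilde{hol}_{\vert}\colon G_{hyp}(T,b)\ra \tilde{X}_{hyp}(M)$ is birational. Both varieties are irreducible of dimension $p$: for $G_{hyp}(T,b)$ by fact (ii) of Section \ref{sec:tri_GV} (smoothness at $w_{hyp}$ and ${\rm dim}=p$), and for $\tilde{X}_{hyp}(M)$ by (a) and (c).

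\textbf{Step 1: dominance.} I would use the Neumann--Zagier/Thurston deformation theory recalled in (ii). Near $w_{hyp}$, the logarithms of the dilations $\delta_w(\mu_j)$ give local holomorphic coordinates on $G(T,b)$. Hence $\delta$ is a local biholomorphism at $w_{hyp}$ onto a neighbourhood of $\delta^X(\tilde\rho_{hyp})$. Composing with the local inverse of the birational map $\delta^X$, the image of $\tilde{hol}_{\vert}$ contains a nonempty open subset of $\tilde{X}_{hyp}(M)$. Since $\tilde{X}_{hyp}(M)$ is irreducible and $\tilde{hol}_{\vert}$ is regular, $\tilde{hol}_{\vert}$ is dominant.

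\textbf{Step 2: a rational left inverse.} Given an augmented character $\tilde\rho=(\rho,\delta_{\tilde\rho})$ with $\rho$ irreducible, pick a representative representation. The augmentation selects, for each cusp, a $\rho$-equivariant choice of fixed point of the peripheral subgroups. Assign to each ideal vertex of a lift $\tilde\Delta^i\subset\tilde M$ the chosen fixed point of the corresponding peripheral subgroup. The shape parameters $w_k^i$ are then cross-ratios of these four points of $\mc\mathbb{P}^1$, and they are invariant under conjugation of $\rho$. This defines a map $\Phi\colon \tilde X_{hyp}(M)\dashrightarrow \mc^{3n}$. Its values satisfy the edge equations (they come from an equivariant map of vertices), and $\Phi\circ\tilde{hol} = {\rm id}$ wherever the pseudo-developing map $D_w$ is nondegenerate. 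By definition of pseudo-developing maps, $D_w$ sends the ideal vertices precisely to the fixed points selected by $\delta_w$. Together with Step 1, this shows that $\tilde{hol}_{\vert}$ is generically injective and that $\Phi$ restricted to its image inverts it. This gives birationality, provided $\Phi$ is rational.

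\textbf{Main obstacle: rationality of $\Phi$.} The representation, and the fixed points, live a priori only over a finite extension of the function field of $\tilde X_{hyp}(M)$, so it is not immediate that $\Phi$ is rational. I would try the following. The fixed point of $\rho(\gamma)$ attached to the eigenvalue whose square is $\delta_{\tilde\rho}(\gamma)$ is determined by the projector $(\rho(\gamma)-m^{-1})/(m-m^{-1})$ (suitably lifted to $SL(2,\mc)$). Hence cross-ratios of such fixed points can be expressed as rational functions of traces of products of elements of $\pi_1(M)$ and of the eigenvalue data $\delta_{\tilde\rho}$, and these are regular functions on $\tilde X(M)$. Alternatively, one can check that $\Phi$ is invariant under the Galois group of that extension, since conjugating $\rho$ does not change the cross-ratios. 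Either route gives $\Phi\in$ rational maps on $\tilde X_{hyp}(M)$. One must then remove the proper Zariski-closed loci where fixed points coincide, where $\rho$ is reducible, or where the eigenvalue is $\pm1$, which is allowed since we only need a Zariski open set.
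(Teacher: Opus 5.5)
Your proof is correct, but it is organized differently from the paper. The paper gives no self-contained argument. It only points to combining Thurston's hyperbolic Dehn filling theorem (in the form of Petronio--Porti and Martelli) with Dunfield's degree-one argument for $r$, so there the degree is controlled by a rigidity argument.

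You use the deformation theory near $w_{hyp}$ only for dominance. You then get degree one for $\tilde{hol}_{\vert}$ from an elementary fact. An augmented character with $\rho$ irreducible, whose augmentation singles out one fixed point at each cusp, determines the vertex images of any pseudo-developing map up to a common isometry, and hence all shapes. Dunfield's and Klaff--Tillmann's input enters only through the birationality of $\delta^X$, which the paper establishes just before the statement. The claim for $\delta=\delta^X\circ\tilde{hol}$ is then a composition.

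Your route shows that the birationality of $\tilde{hol}_{\vert}$ is essentially formal, and that all the rigidity sits in $r$ (equivalently in $\delta^X$). The paper's route treats both maps uniformly but hides this distinction.

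Two simplifications are worth making.

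First, the step you call the main obstacle is unnecessary. Over $\mc$, a dominant morphism between irreducible varieties of the same dimension whose general fibre is one point has degree $1$, and is therefore birational. So you never need to prove that $\Phi$ is rational. To pass from your injectivity statement to ``general fibre is one point'', note the following. The bad locus of $G_{hyp}(T,b)$ (where $\rho_w$ is reducible, or $\delta_w(\gamma)^2=1$ for all $\gamma$ in some peripheral subgroup) is a proper Zariski-closed subset, so its image has dimension $<p$. A general point of $\tilde X_{hyp}(M)$ therefore has all its preimages in the good locus, where your vertex argument gives uniqueness.

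Second, dominance is cleaner by a dimension count than via ``local inverses'' of the birational map $\delta^X$, which need not exist at $\tilde\rho_{hyp}$. Since $\delta$ is an immersion at $w_{hyp}$, the set $\delta^X(\tilde{hol}(G_{hyp}(T,b)))=\delta(G_{hyp}(T,b))$ has dimension $p$. Hence the irreducible set $\tilde{hol}(G_{hyp}(T,b))$ has dimension $p$. It lies in $\tilde X_{hyp}(M)$, the unique component through $\tilde\rho_{hyp}$, so it is dense there.
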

The proof combines Thurston's hyperbolic Dehn filling theorem in the form of \cite{PP, Martelli} with Dunfield's arguments showing that $r$ has degree one.

Set
\begin{equation}\label{GhypNdef}
G_{hyp,N}(T,b):= \pi_N^{-1}(G_{hyp}(T,b)),
\end{equation} with $\pi_N$ the map \eqref{coverGN}. Define ${}_NX_{hyp}(M)$ as the set of pairs $(\rho,\kappa)$ where $\rho \in X_{hyp}(M)$, $\kappa\in H^1(\partial \bar M;\mc^*)$, and $\kappa^{N}(\gamma)$ is a squared eigenvalue of $\rho(\gamma)$ for all $\gamma\in H_1(\partial \bar M;\mz)$. Thus ${}_NX_{hyp}(M)$ has the structure of a fibered product $X_{hyp}(M) \times_{X(\partial \bar M)} H^1(\partial \bar M;\mc^*)$. Clearly, the map $\pi_N^X\colon {}_NX_{hyp}(M)\ra \tilde X_{hyp}(M)$, $(\rho,\kappa)\mapsto (\rho, \kappa^N)$, has degree $N^{2p}$. Consider the ``boundary weight map'' (see \eqref{kappaborddef})
\begin{equation}\label{defweightSept24}
\fonc{\kappa}{G_{hyp,N}(T,b)}{H^1(\partial \bar M;\mc^*)}{\bw}{\kappa_\bw.}
\end{equation}
Since $\kappa_\bw(\gamma)^{N}= \delta_w(\gamma)$, we can lift $\tilde{hol}_{\vert}$ to a regular map
\begin{equation}\label{grostildeholmars25}
\fonc{\widetilde{hol}}{G_{hyp,N}(T,b)}{{}_NX_{hyp}(M)}{\bw}{(\rho_w,\kappa_\bw).}
\end{equation}
It has degree $N^{n-p}$ (see Theorem \ref{teocoverGN} and the comment thereafter), and by construction $$\pi_N = (\tilde{hol}_{\vert})^*\pi_N^X.$$ Finally $\kappa^X\colon {}_NX_{hyp}(M) \ra \mc^* \times \mc^*$ is defined by $\kappa^X(\rho,\kappa) = (\kappa(\mu_K), \kappa(\lambda_K))$; so $\kappa = \kappa^X \circ \widetilde{hol}$.

\begin{remark}\label{mostrho}{\rm (see \cite[Proposition 4.6 and pages 2017-2019]{AGT} and \cite[Remark 1.4]{NA}). Denote by EP the set of triangulations obtained by subdividing the Epstein-Penner polyhedral decomposition  of $M$. For every $T\in EP$, fix a branchings $b_T$ of $T$ as at the beginning of Section \ref{sec:tri_GV}, and set $Z_T:=G_{hyp}(T,b_T)\subset G(T,b_T)$. There is a maximal Zariski open subset $\Omega_{Z_T}$ in $Z_T$ containing $w_{hyp}$, such that $hol(\Omega_{Z_T})$ is Zariski open in $X_{hyp}(M)$ and $hol\colon \Omega_{Z_T} \ra hol(\Omega_{Z_T})$ is a homeomorphism. Then $$\Omega := \bigcap_{T\in {\rm EP}} hol(\Omega_{Z_T})$$
is a canonical Zariski open subset $\Omega$ of $X_{hyp}(M)$ which contains $\rho_{hyp}$ and whose points can be (uniquely) encoded by points in the components $G_{hyp}(T,b)$, for every $T\in EP$.}
\end{remark} 

\subsection{The case of the figure-eight knot complement}\label{4_1} Fix an orientation of $M=S^3\setminus 4_1$. Denote by $\lambda_{K}$ and $\mu_{K}$ the canonical longitude and the meridian of $K$, oriented so that the ordered pair $(\mu_K,\lambda_K)$ followed by the outward normal to $\partial \bar{M}$ defines a positive frame of $\bar{M}$. We denote also by $\lambda_K$, $\mu_K$ their classes in $H_1(\partial \bar M; \mz)$.

In this section we implement on $M$ the structures defined in Sections \ref{QGVsec} and \ref{sec:landbweights}. We will use the geometric, positive ideal triangulation $T$ of $M$ shown in Figure 1, with branching $b$ as indicated. The two tetrahedra $\Delta^0$, $\Delta^1$ have opposite $b$-orientations, and $\Delta^0$ is the positively $b$-oriented one; note that they were denoted $U$, $V$ in Section \ref{QHIintro2}.

Denote by $u:=(u_0,u_1,u_2)$ and $\bu:=(\bu_0,\bu_1,\bu_2)$ triples of classical and quantum shape parameters on $\Delta^0$, and by $(a_0,a_1,a_2)$ a triple of edge colors compatible with $\bu$. Denote similarly by $v:=(v_0,v_1,v_2)$, $\bv:=(\bv_0,\bv_1,\bv_2)$ and $(b_0,b_1,b_2)$ the corresponding data for $\Delta^1$. 

The Thurston gluing variety $G(T,b)$ is well-known, and can be described as follows. For the computation of the formulas \eqref{eqedgeu0v0} and \eqref{longmerformdec25} below, we refer to \cite[Section 15.1.4]{Martelli}, where we note that both tetrahedra are given positively oriented branchings, the classical shape parameters are ordered differently (the parameters $w$ and $z$ are $u_2$ and $1-v_2 = v_0^{-1}$ in our notations), and the curves $m$ and $l$ are $\mu_K$ and $\mu_K^{-2}\lambda_K$. Now, $G(T,b)$ can be identified with the irreducible curve in $\mc^2$ with coordinates $(u_0,v_0)$ and defining equation 
$$u_1u_2^2v_0^{-2}v_1^{-1}=1,$$
or equivalently
\begin{equation}\label{eqedgeu0v0}
u_0^2v_0^2=(1-u_0)(1-v_0).
\end{equation}
The point $w_{hyp}\in G(T,b)$ with coordinates $(u_0,v_0)=(e^{\pi i/3}, e^{-\pi i/3})$ realizes the complete structure on $M$, and any point $w\in G(T,b)$ sufficiently close to $w_{hyp}$ determines a (non-complete) hyperbolic structure on $M$ (see (ii) in Section \ref{sec:tri_GV}). 

For every point $w=(u,v)\in G(T,b)$ the dilation factor $\delta_{w}\in H^1(\partial \bar M; \mc^*)$ in \eqref{dilationdef} is given by  
\begin{equation}\label{longmerformdec25}
\delta_{w}(\lambda_K) = u_0^2u_2^{-2} = \frac{u_0^4}{(1-u_0)^2}\ ,\ \delta_{w}(\mu_K) = u_2v_2 = \frac{(u_0-1)(v_0-1)}{u_0v_0}.
\end{equation}

The quantum gluing variety $G_N(T,b)$ is the subspace of $\mc^6$ formed by the pairs $(\bu,\bv)$ of triples of quantum shape parameters $\bu:=(\bu_0,\bu_1,\bu_2)$ and  $\bv:=(\bv_0,\bv_1,\bv_2)$ satisfying the tetrahedral and edge relations
\begin{equation}\label{eqGN}
\begin{array}{ccc} \bu_0\bu_1\bu_2 =  e^{-\frac{\pi i}{N}} & ,& \bv_0\bv_1\bv_2 =  e^{+\frac{\pi i}{N}}\\
\bu_1\bu_2^2\bv_0^{-2}\bv_1^{-1} = e^{-\frac{2\pi i}{N}} & ,&  \bu_1\bu_0^2\bv_2^{-2}\bv_1^{-1}= e^{-\frac{2\pi i}{N}}.
\end{array}
\end{equation}
Using the tetrahedral relations it is easy to see that the two edge relations are equivalent. Thus we can identify $G_N(T,b)$ with the set of tuples $(\bu_0,\bu_1,\bv_0,\bv_1)$ satisfying $\bu_1\bu_0^2\bv_2^{-2}\bv_1^{-1}= e^{-\frac{2\pi i}{N}}$, for instance. Since $G(T,b)$ is an irreducible curve, we have
\begin{equation}\label{Ghypcas41mars25}
G_{hyp}(T,b)=G(T,b)\ {\rm and}\ G_{hyp,N}(T,b)=G_N(T,b).
\end{equation}

Given a point $\bw:=(\bu,\bv)\in G_N(T,b)$, let $w:=(u,v)\in G(T,b)$, and $d:=(a,b)$ be a system of edge colors such that $(w,d)$ is compatible with $\bw$; hence they are related by the formula \eqref{coldef}. Consider the log weight $l\kappa$ associated to $(w,d)$ by \eqref{formulelkappa}, and the boundary weight $\kappa:=\kappa_{\bw}$ associated to $\bw$ by \eqref{formulekappa}. Recall the bulk weight $h_2\in H^1(M; \mz/2)$ in \eqref{formulebulkw}, and the relation \eqref{kappaborddef}. As $M$ is a knot complement we have $h_2(\lambda_K)=0$. Therefore
\begin{equation}\label{compbord}
\kappa_\bw(\lambda_K) = \bu_0^{2}\bu_2^{-2}   = e^{\frac{l\kappa(\lambda_K)}{N}}\ ,\ \kappa_\bw(\mu_K) = \bu_2\bv_2  = (-1)^{h_2(\mu_K)} e^{\frac{l\kappa(\mu_K)}{N}}.
\end{equation}
For $k=0,1,2$, let us write $\bu_k$, $\bv_k$  as in \eqref{coldef}, in terms of edge colors and rescaled log-parameters $\bl_{k}^\bu:=N^{-1}(\Log(u_k)+\pi i a_k)$, $\bl_{k}^\bv:=N^{-1}(\Log(v_k)+\pi i b_k)$; so 
\begin{equation}\label{qshapeuvoct25}
\bu_{k} = \exp(\bl_{k}^\bu+\pi i a_k)\ ,\ \bv_{k} = \exp(\bl_{k}^\bv+ \pi i b_k).
\end{equation}
The relations \eqref{tetcol2}, \eqref{edgecol} and \eqref{formulelkappa} provide logarithms of \eqref{eqGN} and \eqref{compbord} of the form 
\begin{equation}\label{eqlog}
\left\lbrace\begin{array}{rcl}
\bl_{0}^\bu+\bl_{1}^\bu+\bl_{2}^\bu & = &\textstyle -\frac{\pi i}{N}+\pi i(a_0+a_1+a_2)\\
\bl_{0}^\bv+\bl_{1}^\bv+\bl_{2}^\bv & = & \textstyle +\frac{\pi i}{N}+\pi i(b_0+b_1+b_2)\\
\bl_{1}^\bu+2\bl_{2}^\bu-2\bl_{0}^\bv-\bl_{1}^\bv & = & \textstyle -\frac{2\pi i}{N}+\pi i(a_1+2a_2-2b_0-b_1)\\
2\bl_{0}^\bu-2\bl_{2}^\bu & = &\textstyle \frac{l\kappa(\lambda_K)}{N}+\pi i(2a_0-2a_2)\\
\bl_{2}^\bu+\bl_{2}^\bv & = & \textstyle \frac{l\kappa(\mu_K)}{N} +\pi i(a_2+b_2).
\end{array}\right.
\end{equation}
Note that $h_2(\mu_K) = a_2+b_2$ mod$(2)$ by \eqref{formulebulkw}; all other sums of edge colors on the right side of the system \eqref{eqlog} are even integers ({\it e.g.}, by \eqref{sumdeven}). 

The system \eqref{eqlog} may be rewritten in a very convenient form by fixing the point $w\in G(T,b)$, and working solely with edge colors. To see this, assume at first that $w=w_{hyp}$, so we have $(u_0,v_0)=(e^{\frac{\pi i}{3}}, e^{-\frac{\pi i}{3}})$. Then by \eqref{tetcol} and \eqref{edgecol} the tetrahedral and edge relations between edge colors are (they correspond to the first three relations in \eqref{eqlog}):
\begin{align}a_2=-2-a_0-a_1\ ,\ b_2=2-b_0-b_1\label{tetrel41}\\
a_1+2a_2-2b_0-b_1 = -4.\quad \quad \quad \label{edgerel41}
\end{align}
As $w_{hyp}$ realizes the complete hyperbolic structure, we have $\delta_{w_{hyp}}(\mu_K)=\delta_{w_{hyp}}(\lambda_K)=1$, and the identity $\kappa^N = \delta_{w_{hyp}}$ implies $l\kappa(\mu_K)\in \pi i \mz$ and $l\kappa(\lambda_K)\in 2\pi i \mz$. The relations \eqref{tetrel41}, \eqref{edgerel41} and the formula \eqref{formulelkappa} give:
\begin{equation}\label{standardf3} a_1=-2a_0+\frac{l\kappa(\lambda_K)}{2\pi i}-2\ ,\ b_0 = \frac{l\kappa(\mu_K)}{\pi i}-a_0\ ,\ b_1 = 2a_0-\frac{2l\kappa(\mu_K)}{\pi i}-\frac{l\kappa(\lambda_K)}{2\pi i}+2.
\end{equation}
Next consider the case when $w\in G(T,b)$, $w\ne w_{hyp}$. If the coordinates $u_0$, $v_0$ of $w$ satisfy $\mathfrak{I}(u_0)>0$ and $\mathfrak{I}(v_0)<0$, the relations \eqref{tetrel41} and \eqref{edgerel41} are unchanged, and those in \eqref{standardf3} still hold true if one replaces $l\kappa(\cdot)$ with $h_\rho(\cdot) =l\kappa(\cdot )-\Log(\delta_w(\cdot))$ (see \eqref{dconstraintclass}). For an arbitrary point $w\in G(T,b)$ the relations are obtained by doing the same replacement in \eqref{standardf3}, possibly adding even integers to the colors $a_k$, $b_k$ (depending on $w$). 

\subsection{The logarithmic limit set}\label{ASYWEIGHTS} Recall the spaces $G_{hyp,N}(T,b)$ and ${}_NX_{hyp}(M)$, where $N\geq 3$ is odd (see Section \ref{GhypN}); we saw that when $M=S^3\setminus 4_1$ these spaces are respectively the solution set of the equations \eqref{eqGN}, and (up to birational isomorphism) the plane curve whose points are  pairs $(\kappa_\bw(\mu_K),\kappa_\bw(\lambda_K))$ as in \eqref{compbord}. Here we introduce a ``limit'' of the tower of spaces $\{G_{hyp,N}(T,b)\}_N$, that will be used in Section \ref{sec:class_int}. It is related in a natural way (via holonomy maps) to a tropical curve associated to the tower of curves $\{{}_NX_{hyp}(M)\}_N$, and to the $PSL(2,\mc)$ $A$-polynomial of $M$ (see Remark \ref{tropremk}). 

We use the notations of the previous section. As we are going to let $N$ vary and consider corresponding sequences of points in the spaces $G_{hyp,N}(T,b)$, we reintroduce the dependence in $N$ in the notations; thus the points of $G_{hyp,N}(T,b)$ are the tuples $(\bu_{0,N},\bu_{1,N},\bv_{0,N},\bv_{1,N})$ satisfying \eqref{eqGN}. Also for ${\bf x}\in \{\bu,\bv\}$ and $k\in\{0,1,2\}$, setting $u_{k,N} := \bu_{k,N}^N$ we get tuples $(u_{0,N},u_{1,N},v_{0,N},v_{1,N})$ in $G_{hyp}(T,b)$, possibly varying with $N$. As in \eqref{ecoldef} we write
\begin{equation}\label{defseqNcolor}\bl_{k,N}^\bu := \frac{1}{N}\left(\Log(u_{k,N})+2\pi i a_{k,N}' \right),\  \bl_{k,N}^\bv := \frac{1}{N}\left(\Log(v_{k,N})+2\pi i b_{k,N}' \right).
\end{equation}
Therefore $a_{k,N}',b_{k,N}'\in \mz/N$ are $N$-colors, and $\bx_{k,N} = \exp(\bl_{k,N}^\bx)$. Put
\begin{equation}\label{deflimlkmars26}
\bl_{k,\infty}^{\bf x} := \lim_{N\rightarrow \infty} \bl_{k,N}^{\bf x}  \in \mc/2\pi i \mz
\end{equation}
assuming the limit exists. In this case, by the relations \eqref{compbord} the sequence $(\kappa_N)_N$ converges in $H^1(\partial \bar M; \mc^*) \cong (\mc^*)^2$. Then we put
$$\kappa_\infty := \lim_{N\rightarrow \infty}\kappa_N  \in H^1(\partial \bar M; \mc^*).$$
Consider the embedding
$$\Theta_N := {\rm Log}\vert \Theta_N\vert \times \mathcal{A}(\Theta_N)\colon G_{hyp,N}(T,b)\rightarrow \mathbb{R}^{4}\times (\mathbb{S}^1)^4$$
where
\begin{align*}
 & \fleche{{\rm Log}\vert \Theta_N\vert\colon G_{hyp,N}(T,b)}{\mathbb{R}^{4}}{(\bu_{0,N},\bu_{1,N},\bv_{0,N},\bv_{1,N})}{\left(\frac{1}{N} {\rm Log}\vert u_{0,N}\vert,\frac{1}{N} {\rm Log}\vert u_{1,N}\vert,\frac{1}{N} {\rm Log}\vert v_{0,N}\vert,\frac{1}{N} {\rm Log}\vert v_{1,N}\vert\right)}\\
& \fleche{\mathcal{A}(\Theta_N)\colon G_{hyp,N}(T,b)}{(\mathbb{S}^1)^4}{(\bu_{0,N},\bu_{1,N},\bv_{0,N},\bv_{1,N})}{\left(e^{2\pi i \frac{a_{0,N}'}{N}},e^{2\pi i \frac{a_{1,N}'}{N}},e^{2\pi i \frac{b_{0,N}'}{N}}, e^{2\pi i \frac{b_{1,N}'}{N}}\right).}
\end{align*}
The maps ${\rm Log}\vert \Theta_N\vert$ factor to maps defined on $G_{hyp}(T,b)$, for which we keep the same notation. The subspace $\mathbb{R}^{4}\times (\mathbb{S}^1)^4\subset \mathbb{R}^{4}\times\mathbb{R}^{4}$ has the metric $d$ induced from the Euclidean distance, and the set of its subsets inherits the corresponding Hausdorff (pseudo-)distance: $\textstyle d(X,Y) = \max(\sup_{x\in X}(d(x,Y)), \sup_{y\in Y}(d(X,y)))$ for all $X,Y\subset \mathbb{R}^{4}\times (\mathbb{S}^1)^4$. 
\begin{lem}\label{lemYo} The sequence $\left(\Theta_N( G_{hyp,N}(T,b))\right)_N$ converges to $ \mathcal{C}(T,b)\times \mathcal{A}(T,b)\subset \mathbb{R}^{4}\times (\mathbb{S}^1)^4$ as $N\rightarrow +\infty$, $N$ odd, where $\mathcal{A}(T,b):= \{(z_1,z_2,z_3,z_4)\in (S^1)^4\ \vert \ z_1^2z_2z_3^2z_4=1\}$, and $\mathcal{C}(T,b)$ is the union of the following four half lines in $\mr^4$ (parametrized in $\mr^2\times \mr^2$):
$$\begin{array}{ccc} L_1 = \{t((-1,0),2(0,1)), t\geq 0\} & ,& L_2 = \{t(2(0,1),(-1,0)), t\geq 0\}\\
L_3 = \{t((-1,0),2(1,-1)), t\geq 0\} & , & L_4 = \{t(2(1,-1),(-1,0)), t\geq 0\}.
\end{array}$$ 
\end{lem}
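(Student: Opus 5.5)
The plan is to treat the two factors of $\Theta_N$ separately. The first factor reduces to computing Bergman's logarithmic limit set of the plane curve $G(T,b)$. The second factor reduces to an equidistribution statement for the $N$-colors. Then I check that the joint image is within Hausdorff distance $o(1)$ of the product.

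\emph{Argument part.} I rewrite the relations \eqref{eqGN} in terms of the $N$-colors using \eqref{defseqNcolor}. The tetrahedral relations let me eliminate $a'_{2,N}$ and $b'_{2,N}$. The remaining edge relation $\bu_1\bu_0^2\bv_2^{-2}\bv_1^{-1}=e^{-2\pi i/N}$ then becomes, after substituting the tetrahedral relations and using that its $N$-th power is the classical edge equation, a single linear congruence
$$2a'_{0,N}+a'_{1,N}+2b'_{0,N}+b'_{1,N}\equiv \frac{\epsilon_N(w)}{2\pi i} \pmod N .$$
Here $\epsilon_N(w)$ is a combination of $\pm\pi i$ and of the $\Log(u_k),\Log(v_k)$ coming from the sum of logs in the edge and tetrahedral relations; it is an integer multiple of $\pi i$ and bounded independently of $N$ and $w$. (The exact sign pattern of the left side should be re-derived; the target subtorus $z_1^2z_2z_3^2z_4=1$ suggests this form.) Since $N$ is odd, $2$ is invertible mod $N$. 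So for every fixed $w$ the admissible tuples $(a'_{0,N},a'_{1,N},b'_{0,N},b'_{1,N})$ form a coset of the subgroup $\{2x_1+x_2+2x_3+x_4\equiv 0\}\subset(\mz/N)^4$, and Theorem \ref{teocoverGN} confirms that all such tuples occur. Dividing by $N$, this coset is a $1/N$-dense subset of $(\mr/\mz)^4$ lying within $O(1/N)$ of the subtorus $\mathcal A(T,b)$, uniformly in $w$. Every point of $\mathcal A(T,b)$ is within $O(1/N)$ of it, because a point of the subtorus with coordinates $x_1,x_2,x_3$ free is approximated by rounding them to multiples of $1/N$ and solving for $x_4$. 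Hence $\mathcal A(\Theta_N)$ of each fiber converges to $\mathcal A(T,b)$ uniformly in $w$. This uniformity is what gives the product structure in the limit.

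\emph{Modulus part.} The map $\Log|\Theta_N|$ factors through $w\in G(T,b)$, which is the curve \eqref{eqedgeu0v0}, with $u_1=(1-u_0)^{-1}$ and $v_1=(1-v_0)^{-1}$. Its image is $\frac1N\,\mathcal{A}$, where $\mathcal{A}$ is the amoeba of the curve in the coordinates $(u_0,u_1,v_0,v_1)$.

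\begin{itemize}
\item[(1)] I list the ends of the curve, i.e.\ the points of its smooth completion where some coordinate tends to $0$ or $\infty$, equivalently where $u_0$ or $v_0$ tends to $0$, $1$ or $\infty$. Puiseux expansions of \eqref{eqedgeu0v0} give four ends:
\begin{itemize}
\item[--] $u_0\to0$: then $v_0-1\sim u_0^2$, giving direction $((-1,0),(0,2))$, i.e.\ $L_1$;
\item[--] $v_0\to0$: by symmetry, $L_2$;
\item[--] $u_0\to\infty$: then $v_0^2\sim-1/u_0$, so with $|v_0|=e^{-t}$ one gets $(\Log|u_0|,\Log|u_1|,\Log|v_0|,\Log|v_1|)=t(2,-2,-1,0)+O(1)$, i.e.\ $L_4$;
\item[--] $v_0\to\infty$: symmetrically, $L_3$.
\end{itemize}
I must also check that $u_0\to1$ or $v_0\to1$ only occurs at these same ends. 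For example, $u_0\to1$ forces $u_0^2v_0^2\to0$, hence $v_0\to0$. Likewise $v_0\to 1$ forces $u_0\to0$.
\item[(2)] Away from these end neighbourhoods the curve is compact in $(\mc_*)^2$, so $\Log|\Theta_N|$ is $O(1/N)$ there.
\item[(3)] On each end neighbourhood, parametrized by a punctured disc, $\Log|\Theta|$ equals $t\cdot(\text{direction})+O(1)$, with $t\in[t_0,\infty)$ taking all values by connectedness. After scaling by $1/N$, each half-line $L_i$ is both approximated and never exceeded by more than $O(1/N)$.
\end{itemize}
Hence $\Log|\Theta_N|(G_{hyp}(T,b))\to\mathcal C(T,b)$ in Hausdorff distance.

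\emph{Combining.} The argument part of each fiber is uniformly $O(1/N)$-close to $\mathcal A(T,b)$, and the modulus part depends only on $w$. So $\Theta_N(G_{hyp,N}(T,b))$ is within $O(1/N)$ of $\Log|\Theta_N|(G_{hyp}(T,b))\times\mathcal A(T,b)$, and this converges to $\mathcal C(T,b)\times\mathcal A(T,b)$. The main obstacle is the bookkeeping in the argument part: making sure the shift $\epsilon_N(w)/2\pi i$ and the choice of principal logarithms cause only bounded, $w$-uniform corrections, including near the ends where $\Log(u_k)$ jumps across branch cuts. I would handle this by noting that only the imaginary parts of the logarithms enter, and these lie in $]-\pi,\pi]$.
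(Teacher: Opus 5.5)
Your proof is correct, and it differs from the paper's in both halves.

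\textbf{Modulus factor.} The paper computes nothing here. It observes that bounded parts of $G_{hyp}(T,b)$ are crushed to $0$ by $\Log|\Theta_N|$, so the limit is the cone over the asymptotic directions, and it takes those directions from Yoshida's Example 3.1 (Neumann--Zagier combinatorics). You instead find the four places at infinity of the explicit curve \eqref{eqedgeu0v0} by Puiseux expansions. You then use that the amoeba of a curve stays within bounded distance of its tropical cone. This makes the argument self-contained and gives an explicit $O(1/N)$ rate.

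\textbf{Argument factor.} The paper assumes the limits \eqref{deflimlkmars26} exist and passes to the limit in \eqref{eqlog} to read off \eqref{eqarginfinity}. That shows limit points lie in $\mathcal{A}(T,b)$; the reverse inclusion rests on the remark that the first three relations are the only constraints. You describe each fiber of $\pi_N$ exactly, at finite $N$, as a coset of $\{2x_1+x_2+2x_3+x_4\equiv 0\}\subset(\mz/N)^4$, with a shift bounded independently of $w$. This proves both inclusions at once.

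More importantly, your uniformity in $w$ makes explicit that the angular constraints do not depend on the classical shapes. The paper notes this only in passing, yet it is exactly what the product statement needs. Convergence of the two projections separately would not by itself give convergence of $\Theta_N(G_{hyp,N}(T,b))$ to $\mathcal{C}(T,b)\times\mathcal{A}(T,b)$. Your combination step (Hausdorff distance of the fibered union at most the supremum over $w$ of the fiberwise distances) handles this cleanly.

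\textbf{Trade-off.} The paper's route is shorter and places the result in the general framework of generalized angle structures and tropical curves, as in Culler--Shalen and Yoshida. Yours is elementary and quantitative, though the Puiseux computation is tied to this particular curve.

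\textbf{Small slips to fix in a write-up.}
\begin{itemize}
\item Over $u_0=0$ there are two places: one with $v_0\to 1$ (where $1-v_0\sim u_0^2$, not $v_0-1\sim u_0^2$, though the sign is irrelevant for moduli), and one with $v_0\sim -u_0^{-2}\to\infty$. The second is your $v_0\to\infty$ end, so your list of four ends is complete, but the first case should read ``$u_0\to0$ with $v_0$ bounded''.
\item For $u_0\to 1$ you must also rule out $v_0\to\infty$. For instance, both roots of the quadratic in $v_0$ have sum and product tending to $0$.
\item The rescaled coset is $O(1/N)$-dense in the subtorus $\mathcal{A}(T,b)$, not in $(\mr/\mz)^4$. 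Your rounding argument proves the correct version.
\end{itemize}
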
  
\proof The convergence of ${\rm Log}\vert \Theta_N\vert(G_{hyp}(T,b))$ to $\mathcal{C}(T,b)$ is the only non obvious fact; it follows from the Neumann-Zagier combinatorics \cite{NZ}, and is shown in Example 3.1 of \cite{Yo} (changing notations $z_1$, $z_2$ to $u_0^{-1}$, $v_0^{-1}$, respectively). The vectors $(-1,0)$, $(0,1)$, $(1,-1)$ used to define the half-lines $L_1,\ldots, L_4$ are the directions where $({\rm Log}\vert u_0\vert,{\rm Log}\vert u_1\vert)$ (resp. $({\rm Log}\vert v_0\vert,{\rm Log}\vert v_1\vert)$) tends as $u_0$ (resp. $v_0$) tends to $0$, $1$ or $\infty$. The images of the bounded subsets of $G_{hyp}(T,b)$ by the maps ${\rm Log}\vert \Theta_N\vert$ converge to $\{0\}\subset \mathbb{R}^{4}$ as $N\rightarrow +\infty$, hence the limit of ${\rm Log}\vert \Theta_N\vert(G_{hyp}(T,b))$ is the cone over its asymptotic directions, and the task is to compute these directions. 

Consider now the sets $\mathcal{A}(\Theta_N)(G_{hyp,N}(T,b))$. By rewriting the system \eqref{eqlog} in terms of the parameters $\bl_{k,N}^\bu$, $\bl_{k,N}^\bv$ in \eqref{defseqNcolor} and assuming the limits \eqref{deflimlkmars26} exist, as $N\rightarrow +\infty$ we get
\begin{equation}\label{eqloginfinity}
\left\lbrace\begin{array}{rcl}
\bl_{0,\infty}^{\bu}+\bl_{1,\infty}^{\bu}+\bl_{2,\infty}^{\bu} & = &0 \quad {\rm mod}(2\pi i)\\
\bl_{0,\infty}^{\bv}+\bl_{1,\infty}^{\bv}+\bl_{2,\infty}^{\bv} & = & 0 \quad {\rm mod}(2\pi i)\\
\bl_{1,\infty}^{\bu}+2\bl_{2,\infty}^{\bu}-2\bl_{0,\infty}^{\bv}-\bl_{1,\infty}^{\bv} & = & 0 \quad {\rm mod}(2\pi i)\\
2\bl_{0,\infty}^{\bu}-2\bl_{2,\infty}^{\bu}& = &\Log(\kappa^\infty(\lambda_K))  \quad {\rm mod}(2\pi i)\\
\bl_{2,\infty}^{\bu}+\bl_{2,\infty}^{\bv} & = & \Log(\kappa^\infty(\mu_K))  \quad {\rm mod}(2\pi i).
\end{array}\right.
\end{equation}
Note that the following limits take values in $\mr/2\pi\mz$:
$$\alpha_k^\infty := \mathfrak{I}(\bl_{k,\infty}^{\bu}) =  \textstyle \lim_{N\rightarrow \infty} \frac{2\pi a_{k,N}'}{N}\ ,\ \beta_k^\infty := \mathfrak{I}(\bl_{k,\infty}^{\bv}) =  \textstyle \lim_{N\rightarrow \infty} \frac{2\pi b_{k,N}'}{N}.$$ In particular the classical shape parameters $u_{k,N}$ and $v_{k,N}$ do not contribute to these limits. Taking imaginary parts in the system \eqref{eqloginfinity}, we find that $\alpha_k^\infty, \beta_k^\infty\in \mr/2\pi \mz$, for $k\in \{0,1,2\}$, satisfy the relations:
\begin{equation}\label{eqarginfinity}
\left\lbrace\begin{array}{rcl}
\alpha_0^\infty+\alpha_1^\infty+\alpha_2^\infty & = &0 \quad {\rm mod}(2\pi )\\
\beta_0^\infty+\beta_1^\infty+\beta_2^\infty & = & 0 \quad {\rm mod}(2\pi )\\
\alpha_1^\infty+2\alpha_2^\infty-2\beta_0^\infty-\beta_1^\infty & = & 0 \quad {\rm mod}(2\pi )\\
2\alpha_0^\infty-2\alpha_2^\infty & = &\arg(\kappa^\infty(\lambda_K))  \quad {\rm mod}(2\pi )\\
\alpha_2^\infty+\beta_2^\infty & = & \arg(\kappa^\infty(\mu_K))  \quad {\rm mod}(2\pi ).
\end{array}\right.
\end{equation}
The first three relations are the only constraints on $\alpha_k^\infty, \beta_k^\infty$, where $k\in \{0,1,2\}$ (in particular there is no contribution of the argument of the classical dilation factors $\arg(\delta_{w_N})$ in $\arg(\kappa^\infty(\cdot))$). Clearly then $(e^{i\alpha_0^\infty}, e^{i\alpha_1^\infty},e^{i\beta_0^\infty},e^{i\beta_1^\infty})\in \mathcal{A}(T,b)$, which shows that the sets $\mathcal{A}(\Theta_N)(G_{hyp,N}(T,b))$ degenerate to $\mathcal{A}(T,b)$ as $N\ra+\infty$.\endproof
\medskip

Assigning angles $(\alpha_0^\infty,\alpha_1^\infty,\alpha_2^\infty - \pi)$ and $(\beta_0^\infty,\beta_1^\infty,\beta_2^\infty-\pi)$ to $\Delta^0$ and $\Delta^1$, the system \eqref{eqarginfinity} shows that $\mathcal{A}(T,b)$ is a space of generalized angle structures on $T$, in a sense similar to \cite{LT}. These angle structures determine an asymptotic angular holonomy $\arg(\kappa^\infty(\cdot )) \in H^1(\partial \bar M;\mr/2\pi)$ by the relations (equivalent to the last two ones in \eqref{eqarginfinity}):
\begin{align}
4\alpha_0^\infty+2\alpha_1^\infty & = \arg(\kappa^\infty(\lambda_K)) \quad {\rm mod}(2\pi ),\label{asycurve}\\
\alpha_0^\infty+\alpha_1^\infty+\beta_0^\infty+\beta_1^\infty & =  -\arg(\kappa^\infty(\mu_K))  \quad {\rm mod}(2\pi ).\label{asycurvemu}
\end{align}

\begin{remark}\label{tropremk} {\rm (1) In the terminology of tropical geometry (\cite{V}) the set ${\rm Log}\vert \Theta_1\vert(G_{hyp}(T,b))$ is the {\it amoeba} of $G_{hyp}(T,b)$, and $\mathcal{C}(T,b)$ the {\it tropical curve} of $G_{hyp}(T,b)$. Also, let us identify $\mr^4$ with the interior of its closed unit ball $D^{4}$ by means of the map $x\mapsto x/(1+ \vert \vert \cdot \vert \vert)$, where $\vert \vert \cdot \vert \vert$ is the euclidian norm in $\mr^4$. Then the closure $\overline{\mathcal{C}(T,b)}$ of $\mathcal{C}(T,b)$ in $D^{4}$ has boundary $\partial_\infty \mathcal{C}(T,b) := \overline{\mathcal{C}(T,b)}\setminus \mathcal{C}(T,b)\subset S^3$,  $\mathcal{C}(T,b)$ is the cone over $\partial_\infty \mathcal{C}(T,b)$, and $\partial_\infty \mathcal{C}(T,b)$ is the {\it logarithmic limit set} of ${\rm Log}\vert \Theta_1\vert(G_{hyp}(T,b))$ in the sense of \cite{Berg}.\\
By means of Culler-Shalen theory (\cite{CS,Shalen}) such spaces play an important role in the detection of boundary slopes of incompressible surfaces in cusped manifolds $M$. These slopes are projective classes $[p\mu_K + q\lambda_K]\in \mathbb{P}^1_\mr(H_1(\partial \bar M;\mr))$. In particular, using also the Neumann-Zagier combinatorics \cite{NZ}, Yoshida showed that for $M=S^3\setminus 4_1$ the logarithmic limit set $\partial_\infty \mathcal{C}(T,b)$ determines the slopes (see \cite{Yo}, p. 160):
\begin{equation}\label{bslopes}
{\rm slope}(L_1) = {\rm slope}(L_3) = [4\mu_K - \lambda_K]\ ,\ {\rm slope}(L_2) = {\rm slope}(L_4) = [4\mu_K + \lambda_K].
\end{equation}
The map $\tilde{hol}\colon G_{hyp}(T,b))\ra \tilde{X}_{hyp}(M)$ of Section \ref{GhypN} identifies these boundary slopes with those detected by the Newton polygon of the factor of the $PSL(2,\mc)$ $A$-polynomial of $M$ corresponding to the curve $\tilde{X}_{hyp}(M)$ \cite{CCGLS,BD,Champ}. We refer to \cite{Tillmann0,Tillmann,Kabaya} for further developments and applications of this method to arbitrary cusped manifolds.\\
(2) As in Lemma \ref{lemYo} one can define a limit space $\mathcal{C}(\tilde X_{hyp}(M))\times (\mathbb{S}^1)^2$ of the tower of curves $\{{}_NX_{hyp}(M)\}_N$; the $(\mathbb{S}^1)^2$ factor is realized by the pairs $(e^{i\arg(\kappa^\infty(\lambda_K))},e^{\arg(\kappa^\infty(\mu_K))})$. As $N\to +\infty$, the maps $\widetilde{hol}\colon G_{hyp,N}(T,b) \rightarrow {}_NX_{hyp}(M)$ of Section \ref{GhypN} yields a map $\mathcal{C}(T,b)\times \mathcal{A}(T,b)\longrightarrow \mathcal{C}(\tilde X_{hyp}(M))\times (\mathbb{S}^1)^2$, where $\mathcal{C}(\tilde X_{hyp}(M))$ is the tropical curve assigned to the amoeba of $\tilde X_{hyp}(M)$, as in (1) above.} 
\end{remark}

\section{The QHI} \label{sec:QHI}

 \subsection{Functions in the QHI state sums}\label{FUNCNOT}  Consider the rational functions $\omega_N(.,.|n)$, $n\in \mn$, defined on the curve $\mathcal{F}_N :=\{(x,y)\in \mc^2\vert \ x^N+y^N=1\}$ by
\begin{equation}\label{functomegadef}
\omega_N(x,y|0)=1\ ,\ \omega_N(x,y|n)=\prod_{j=1}^n \frac{y}{1-x\zeta^{j}}\ {\rm if} \ n\geq 1.
\end{equation}
Since $(1-x)(1-x\zeta)\ldots (1-x\zeta^{N-1}) = 1-x^N=y^N$, we have
$$\omega_N(x,y|N+n) = \omega_N(x,y|n).$$
These functions are closely related to Faddeev's \cite{Fad,FKV} {\it non compact quantum dilogarithm} $\Phi_\rmb$, and therefore to the classical Euler dilogarithm ${\rm Li_2}$. The function $\Phi_\rmb$ is defined as follows; it can be regarded as a kind of double Barnes' sine function (\cite{GR}). Let $\rmb\in \mc$ be such that $\mathfrak{R}(\rmb)\ne 0$. Define
$$\sqrt{\hbar} := \frac{1}{\rmb+\rmb^{-1}}.$$
Denote by $\mr +i0$ the real axis directed from $-\infty$ to $\infty$, slightly deformed near $0$ so as to pass above $0$. Then $\Phi_\rmb$ is the holomorphic function defined in the strip $\vert \mathfrak{I}(z)\vert < \vert \mathfrak{I}(i/2\sqrt{\hbar})\vert$ by the integral formula 
\begin{equation}\label{intrepS}
\Phi_{{\rmb}}(z) =\exp\left(\frac{1}{4} \int_{\mr +i0} \frac{e^{-2izw}\ dw}{w\sinh(\rmb w)\sinh(\rmb^{-1} w)}\right).
\end{equation}
Denote by $\Log(\Phi_{{\rmb}}(z))$ the integral on the RHS. It satisfies the functional equation
\begin{equation}\label{eqfuncphibLog}
\Log\left(\Phi_{{\rmb}}\left(z-i\frac{\rmb^{\pm 1}}{2}\right)\right) = \Log\left(\Phi_{{\rmb}}\left(z+i\frac{\rmb^{\pm 1}}{2}\right)\right) + \Log(1+e^{2\pi\rmb^{\pm 1}z}) .
\end{equation}
Therefore $\Phi_{{\rmb}}$ can be extended to a meromorphic function of $z\in \mc$, satisfying the functional equation
\begin{equation}\label{eqfuncphib}
\Phi_{{\rmb}}\left(z-i\frac{\rmb^{\pm 1}}{2}\right) = (1+e^{2\pi\rmb^{\pm 1}z}) \Phi_{{\rmb}}\left(z+i\frac{\rmb^{\pm 1}}{2}\right).
\end{equation}
The poles and zeros of $\Phi_\rmb$ are the points $\textstyle z=\frac{i}{2\sqrt{\hbar}} +mi\rmb+ni\rmb^{-1}$ and $\textstyle z=-(\frac{i}{2\sqrt{\hbar}}+mi\rmb+ni\rmb^{-1})$ respectively, where $m,n\in \mn$.

We will use the following version of $\Phi_\rmb$. Let $\textstyle \rmb = \frac{1}{\sqrt{N}}$, and set 
\begin{equation}\label{defShat}
\textstyle \hat{S}_N(z) := \Phi_{\frac{1}{\sqrt{N}}}\left(\frac{\sqrt{N}}{2\pi}\left(z-i(\pi-\frac{\pi}{N})\right)\right).
\end{equation}
Denote by $\Log(\hat{S}_N(z))$ the integral obtained from the one in \eqref{intrepS} by the change of variable in \eqref{defShat}. Then $\Log(\hat{S}_N(z))$ is a holomorphic function of $z$ in the strip $\textstyle -\frac{2\pi}{N}<\mathfrak{I}(z) < 2\pi$, and it satisfies the functional equation
\begin{equation}\label{eqfunchatSLog} 
\textstyle \Log(\hat{S}_N(z)) = \Log(\hat{S}_N(z+\frac{2i\pi}{N})) + \Log(1-e^{z+\frac{2i\pi}{N}}).
\end{equation}
\noindent Therefore it can be extended to an analytic function on $\mc$ with cuts $\textstyle [0,+\infty[+i(2\pi + \frac{2\pi p}{N})$, $p\in\mn$, and $\textstyle [0,+\infty[-i(l+1)\frac{2\pi}{N}$, $l\in\mn$. All this implies that $\hat{S}_N$ is holomorphic and without zeros in the strip $\textstyle -\frac{2\pi}{N}<\mathfrak{I}(z) < 2\pi$, it satisfies the functional equation
\begin{equation}\label{eqfunchatS} 
\textstyle\hat S_{N}(z) = \hat S_{N}(z+\frac{2i\pi}{N})(1-e^{z+\frac{2i\pi}{N}}),
\end{equation}
and it can be extended to a meromorphic function on $\mc$ with sets of poles and simple poles
\begin{equation}\label{PP1}
\textstyle P=\{2i\pi+i\frac{2p\pi}{N},\ p\in \mn\}\ ,\ P_1=\{2i\pi+i\frac{2p\pi}{N},\ p\in \{0,\ldots,N-1\}\}
\end{equation}
respectively, and zeroes
$$\textstyle Z=\{-(l+1)i\frac{2\pi}{N},\ l\in \mn\}.$$
Moreover the functional equation \eqref{eqfunchatS} yields
\begin{equation}\label{relomS0}
\omega_N(e^{z},y| n) = \prod_{j=1}^{n} \frac{y}{1-e^{z}\zeta^j} = y^n\ \frac{\hat S_N(z+2n i\pi/N)}{\hat S_N(z)}
\end{equation}
for every $y,z\in \mc$ such that $y^N = 1-e^{Nz}$. 
The function $\Phi_\rmb$ has a number of remarkable properties, see \cite{FKV,AK}. In particular, its ''semi-classical limit'', as $\rmb \ra 0^+$, recovers the Euler dilogarithm function (see \cite{Z} and Lemma \ref{estimatehatS} below),
\begin{equation}\label{Li2def}{\rm Li_2}(x) = -\int_0^x\frac{\Log(1-t)}{t}dt,\quad x\in \mc\setminus [1,+\infty[.
\end{equation}
Basically, the connection between the two functions can be guessed by comparing the formula \eqref{intrepS} with the following integral representation. It is well-known, but we recall a proof for it is short and illuminating.
\begin{lem}\label{Li2intrep} For all $y\in \mr+i(-\pi,\pi)$ we have $\frac{1}{2\pi i}{\rm Li_2}(-e^y) = \frac{1}{4} \int_{\mr +i0} \frac{e^{-i\frac{yv}{\pi}}\ dv}{v^2\sinh(v)}.$
\end{lem}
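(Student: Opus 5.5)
We prove the identity by closing the contour $\mr+i0$ in the upper half $v$-plane and summing residues; the restriction $\mathfrak{R}(y)>0$ below is temporary and is removed by analytic continuation at the end. First we fix where both sides are holomorphic. The integrand $e^{-iyv/\pi}/(v^2\sinh v)$ decays like $e^{-|v|}e^{|\mathfrak{I}(y)||v|/\pi}$ along the real directions. Hence the integral over $\mr+i0$ converges absolutely and locally uniformly for $|\mathfrak{I}(y)|<\pi$, and defines a holomorphic function of $y$ on the strip $\mr+i(-\pi,\pi)$. The left side ${\rm Li_2}(-e^y)$ is also holomorphic there, since $-e^y\notin[1,+\infty[$ exactly when $\mathfrak{I}(y)\notin \pi+2\pi\mz$. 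It therefore suffices to prove the identity on an open subset, say $\{\mathfrak{R}(y)>0,\ |\mathfrak{I}(y)|<\pi\}$, or even only for real $y>0$.

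For such $y$ we have $|e^{-iyv/\pi}|=e^{\mathfrak{R}(y)\mathfrak{I}(v)/\pi}e^{\mathfrak{I}(y)\mathfrak{R}(v)/\pi}$. This is wrong-signed: it grows in the upper half plane. We therefore close the contour in the \emph{lower} half plane. Since $\mr+i0$ passes above $0$, the closed contour encloses the pole at $v=0$ together with the poles at $v=-ik\pi$, $k\geq 1$. Closing downward gives a clockwise orientation, so the integral equals $-2\pi i\sum {\rm Res}$.

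We truncate with rectangles whose bottom side is at $\mathfrak{I}(v)=-(K+\tfrac12)\pi$, where $|\sinh v|\geq c$. On the horizontal side the integrand is $O(K^{-2}e^{-\mathfrak{R}(y)(K+1/2)})$. On the vertical sides at $\mathfrak{R}(v)=\pm R$ it is $O(e^{-R(1-|\mathfrak{I}(y)|/\pi)})$. Both contributions tend to $0$. This estimate of the contributions at infinity is the only point needing care.

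Next we compute the residues. At $v=-ik\pi$ with $k\geq1$, the pole is simple and $\cosh(-ik\pi)=(-1)^k$. The residue is
$$\frac{e^{-iy(-ik\pi)/\pi}}{(-ik\pi)^2(-1)^k}=-\frac{(-1)^ke^{-ky}}{k^2\pi^2}.$$
At $v=0$ the pole is triple. Using $1/\sinh v=v^{-1}-v/6+\dots$, the residue is the $v^2$-coefficient of $e^{-iyv/\pi}(1-v^2/6+\dots)$, namely $-y^2/(2\pi^2)-1/6$.

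Summing the residues, the integral equals
$$-\frac{2\pi i}{4}\Big(-\frac{y^2}{2\pi^2}-\frac16-\frac{1}{\pi^2}\sum_{k\ge1}\frac{(-1)^ke^{-ky}}{k^2}\Big)=\frac{1}{2\pi i}\Big(-\frac{y^2}{2}-\frac{\pi^2}{6}-{\rm Li_2}(-e^{-y})\Big),$$
where we used $\pi^2\cdot\frac{2\pi i}{4}\cdot\frac{1}{\pi^2}\cdot 2\pi i\cdot\frac{1}{2\pi i}$ bookkeeping, i.e.\ $\frac{2\pi i}{4}=-\frac{\pi^2}{2\pi i}$.

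Finally we apply the inversion formula ${\rm Li_2}(-x)+{\rm Li_2}(-1/x)=-\frac{\pi^2}{6}-\frac12\Log^2(x)$ with $x=e^y$. It turns the bracket into ${\rm Li_2}(-e^y)$, which is the claimed identity for $\mathfrak{R}(y)>0$. Analytic continuation then extends it to the whole strip.
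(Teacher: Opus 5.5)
Your proof is correct, and it takes a genuinely different route from the paper's.

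\textbf{What the paper does.} It differentiates twice in $y$ to remove the factor $v^{-2}$. It then evaluates $\int_{\mr+i0} e^{-iyv/\pi}\,dv/\sinh(v)$ in closed form by comparing the contour with its translate by $\pi i$, using $\sinh(v+\pi i)=-\sinh(v)$. This way a single residue, at $v=i\pi$, is enough. It recovers the original integral by integrating $\frac{2ie^y}{\pi(1+e^y)}$ twice from $-\infty$. The integration constants are fixed implicitly, by the vanishing of the integral and of its $y$-derivative as $\mathfrak{R}(y)\to-\infty$, which the paper does not justify.

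\textbf{What you do instead.} You evaluate the integral directly as a full residue series, a Mittag-Leffler type expansion. This fixes all constants automatically and needs no limiting argument in $y$. The price is a uniform estimate on an exhausting family of rectangles, a triple pole at $0$, infinitely many residues, and the inversion formula for ${\rm Li_2}$. The following all check out:
\begin{itemize}
\item the residues at $v=-ik\pi$;
\item the constant $-\frac{y^2}{2\pi^2}-\frac16$ at $v=0$;
\item the identity $\frac{2\pi i}{4}=-\frac{\pi^2}{2\pi i}$;
\item the use of the inversion relation, which is valid because $\Log(e^y)=y$ on the strip.
\end{itemize}

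\textbf{A shorter variant.} Take $\mathfrak{R}(y)<0$ and close \emph{upwards}, as your first sentence actually announces. Then the pole at $0$ lies below $\mr+i0$ and is not enclosed. The residues at $v=ik\pi$, $k\geq1$, are $-\frac{(-1)^ke^{ky}}{k^2\pi^2}$, and you get $-\frac{2i}{\pi}\sum_{k\geq1}\frac{(-e^y)^k}{k^2}=-\frac{2i}{\pi}{\rm Li_2}(-e^y)$ at once. This avoids both the triple pole and the inversion formula.

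\textbf{Minor points.}
\begin{itemize}
\item Your opening sentence says you close in the upper half plane, but you actually close in the lower one.
\item ${\rm Li_2}(-e^y)$ is holomorphic on the strip because $-e^y\notin[1,+\infty[$ \emph{whenever} $\mathfrak{I}(y)\notin\pi+2\pi\mz$, not ``exactly when''.
\item On the horizontal side, which has length $2R$, your pointwise bound should be combined with the integrable factor $e^{\mathfrak{I}(y)\mathfrak{R}(v)/\pi}/\cosh(\mathfrak{R}(v))$. Let $R\to\infty$ before $K\to\infty$.
\end{itemize}
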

\proof The integral above is a holomorphic function of $y$. Taking the derivative twice we get
$$\partial_{y}^2 \left( \int_{\mr +i0} \frac{e^{-i\frac{yv}{\pi}}\ dv}{v^2\sinh(v)}\right)  = -\frac{1}{\pi^2} \int_{\mr +i0} \frac{e^{-i\frac{yv}{\pi}}\ dv}{\sinh(v)} = -\frac{1}{\pi^2 (1+e^{y})}\ \biggl( \int_{\mr +i0} - \int_{\mr +i0+\pi i} \biggr)\ \frac{e^{-i\frac{yv}{\pi}}\ dv}{\sinh(v)}$$
where, using a simple change of variable, we note that
$$\int_{\mr +i0+\pi i} \frac{e^{-i\frac{yv}{\pi}}\ dv}{\sinh(v)} = -e^{y}\ \int_{\mr +i0} \frac{e^{-i\frac{yv}{\pi}}\ dv}{\sinh(v)}.$$
Now, let $R>0$ and consider the contour $\Gamma_R$ formed by the concatenation of the oriented segments $[-R,R]+i0$, $R+i\pi[0,1]$, $-([-R,R]+i0)+i\pi$ and $-R-i\pi [0,1]$. We have
$$\lim_{R \rightarrow +\infty} \int_{\pm R+i\pi[0,1]} \frac{e^{-i\frac{yv}{\pi}}\ dv}{\sinh(v)} = \lim_{R \rightarrow +\infty} \int_0^1 \frac{e^{-i\frac{y}{\pi}(\pm R + \pi it)}}{\sinh(\pm R + \pi it)} \ dt = 0$$
and therefore
\begin{align*}\biggl( \int_{\mr +i0} - \int_{\mr +i0+\pi i} \biggr)\ \frac{e^{-i\frac{yv}{\pi}}\ dv}{\sinh(v)} & = \lim_{R \rightarrow +\infty} \biggl( \int_{\Gamma_R} - \int_{R+i\pi[0,1]} - \int_{-R-i\pi[0,1]}\biggr) \frac{e^{-i\frac{yv}{\pi}}\ dv}{\sinh(v)} \\ & = \lim_{R \rightarrow +\infty} \int_{\Gamma_R} \frac{e^{-i\frac{yv}{\pi}}\ dv}{\sinh(v)}. 
\end{align*}
This integral is constant in $R$, and equal to $\textstyle 2\pi i .{\rm Res}\left( \frac{e^{-i\frac{yv}{\pi}}}{\sinh(v)}, v=i\pi\right) =-2\pi i e^y$ by the residue theorem. We deduce
$$\partial_{y}^2 \left( \int_{\mr +i0} \frac{e^{-i\frac{yv}{\pi}}\ dv}{v^2\sinh(v)}\right) =\frac{2ie^y}{\pi (1+e^{y})}.$$
Finally, we can integrate twice along a path running from $-\infty$ along the real axis up to $\mathfrak{R}(y)$, and then along a straight segment up to $y\in \mr+i(-\pi,\pi)$, since the integrands are continuous in this strip:
\begin{align*}
\partial_{y} \left( \int_{\mr +i0} \frac{e^{-i\frac{yv}{\pi}}\ dv}{v^2\sinh(v)}\right) & =\frac{2i}{\pi} \int_{-\infty}^y \frac{e^s}{1+e^{s}}ds =\frac{2i}{\pi} \Log(1+e^y)\\ \int_{\mr +i0} \frac{e^{-i\frac{yv}{\pi}}\ dv}{v^2\sinh(v)}& = \frac{2i}{\pi} \int_{-\infty}^y \Log(1+e^s) ds = -\frac{2i}{\pi} {\rm Li}_2(-e^y).
\end{align*}
The result follows.\cvd
\medskip

\subsection{Asymptotics of $\hat{S}_N(z)$ and $g_N(z)$}\label{sec:asySNfev26}

Let us start with a straightforward lemma.

\begin{lem}\label{lem:bound:exp:1-exp}
For all $\delta\in ]0,\pi[$, for all $z \in ]-\infty,M] + i([\delta, 2\pi-\delta]+2\pi k)$, $k\in \mz$, we have:
$$
\left \vert
\dfrac{e^z}{1-e^z}
\right \vert \leq \dfrac{e^{M/2}}{\delta\sqrt{1- \frac{\pi^2}{24}}}.
$$
\end{lem}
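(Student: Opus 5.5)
The natural route is a direct modulus computation followed by an elementary lower bound on $\sin$. I would carry it out as below, but step three contains a genuine problem: the constant $\sqrt{1-\pi^2/24}$ does not hold on the whole range $\delta\in\,]0,\pi[$, and the plan proposes a repair.

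\emph{Step one: the key identity.} Write $z=x+iy$ with $x\leq M$. Since $e^z$ is $2\pi i$-periodic, I may take $k=0$, so $y\in[\delta,2\pi-\delta]$. Then
$$|1-e^z|^2=1-2e^x\cos y+e^{2x}=(1-e^x)^2+2e^x(1-\cos y)\ \geq\ 4e^x\sin^2(y/2).$$

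\emph{Step two: the $y$-dependence.} The previous inequality gives
$$\left|\frac{e^z}{1-e^z}\right|\leq \frac{e^{x}}{2e^{x/2}|\sin(y/2)|}=\frac{e^{x/2}}{2|\sin(y/2)|}.$$
The numerator is at most $e^{M/2}$. Since $y/2\in[\delta/2,\pi-\delta/2]$, we have $|\sin(y/2)|\geq\sin(\delta/2)$. Hence
$$\left|\frac{e^z}{1-e^z}\right|\leq \frac{e^{M/2}}{2\sin(\delta/2)}.$$

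\emph{Step three: matching the stated constant.} It remains to show $2\sin(t)\geq 2t\sqrt{1-\pi^2/24}$ with $t=\delta/2$. The first thing I would try is a Taylor bound such as $\sin^2 t\geq t^2(1-t^2/3)$, or $\sin t\geq t(1-t^2/6)$, on $t\in[0,\pi/2]$. These give constants like $1-\pi^2/24$ or $\sqrt{1-\pi^2/12}$. Neither of these is the claimed $\sqrt{1-\pi^2/24}\approx 0.767$.

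In fact the inequality $\sin t/t\geq \sqrt{1-\pi^2/24}$ only holds for $t$ up to about $1.2$, since $\sin t/t$ decreases to $2/\pi\approx0.637$ at $t=\pi/2$. The stated bound fails as $\delta\to\pi$. For example, at $z=i\pi$ the left-hand side equals $1/2$, while the right-hand side tends to $1/(\pi\cdot 0.767)\approx 0.415$.

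\emph{The main obstacle, and the repair.} This is the step I expect to cause trouble. I would either restrict to $\delta\leq\delta_0$ with $\delta_0\approx 2.4$, where the concavity of $\sin$ on $[0,\delta_0/2]$ gives the needed chord inequality, or replace the constant by the Jordan inequality $\sin t\geq 2t/\pi$. The latter yields the uniform bound $\pi e^{M/2}/(2\delta)$ for all $\delta\in\,]0,\pi[$. Either version is uniform in $\delta$ and $M$ in the same way, and that uniformity is all that the later uses of the lemma require.
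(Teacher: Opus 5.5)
Your Steps one and two are the paper's argument. The paper writes $\bigl|e^z/(1-e^z)\bigr|^2=e^x/(2\cosh x-2\cos y)$, bounds $e^x\le e^M$, and maximizes over $x$ at $x=0$, which is your $(1-e^x)^2\ge 0$. It then uses $\sin(y/2)\ge\sin(\delta/2)$, as you do. Your diagnosis in Step three is also correct, and the fault lies in the statement, not in your plan. Take $M=0$, $\delta=3$ and $z=i\pi$, which is admissible since $\pi\in[3,2\pi-3]$. The left side is $1/2$, while the right side is $1/(3\sqrt{1-\pi^2/24})\approx 0.434$.

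The paper's proof breaks at its last inequality. It correctly obtains $\sin(\delta/2)\ge\delta/2-\frac{1}{6}(\delta/2)^3>\frac{\delta}{2}\bigl(1-\frac{\pi^2}{24}\bigr)$. It then asserts $\frac{1}{4(\delta/2-\frac{1}{6}(\delta/2)^3)^2}\le\frac{1}{\delta^2(1-\pi^2/24)}$. That is equivalent to $(1-\delta^2/24)^2\ge 1-\pi^2/24$, which fails for every $\delta$ larger than about $2.36$. Squaring the Taylor bound correctly gives $(1-\pi^2/24)^2$. So the paper's own argument proves the estimate with denominator $\delta(1-\pi^2/24)$, without the square root.

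Your Jordan version $\pi e^{M/2}/(2\delta)$ is sharper. It is in fact optimal among bounds of the form $Ce^{M/2}/\delta$ valid on all of $]0,\pi[$: take $M=0$ and $z=i\delta$, where the left side equals $1/(2\sin(\delta/2))$, and let $\delta\to\pi$.

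You are also right that only the shape $Ce^{M/2}/\delta$ matters later. The lemma is used for the second summand of $\Psi_N$ in \eqref{defPsiNjanv26} and for \eqref{majorjanv26_2}. With your constant, the term $\frac{\pi e^{M/2}}{2\delta\sqrt{1-\pi^2/24}}$ in \eqref{UboundPsi} simply becomes $\frac{\pi^2e^{M/2}}{4\delta}$, and nothing else changes. Of your two repairs, prefer the Jordan one, because it keeps the full range $\delta\in]0,\pi[$ for which Lemma \ref{estimatehatS} and \eqref{majorjanv26_2} are stated.
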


\begin{proof} It is enough to do the case $k=0$. For $\delta\in ]0,\pi[$ and $z = x+iy \in ]-\infty,M] + i[\delta, 2\pi-\delta]$, we have
 \begin{align*}
 \left\vert \dfrac{e^z}{1-e^z}
 \right \vert^2 = 
 \dfrac{e^{2x}}{|1-e^z|^2} = 
\dfrac{e^{2x}}{1+e^{2x}-2e^x \cos(y)} & = 
\dfrac{e^{x}}{2\cosh(x)-2\cos(y)} \\ & \leq 
 \dfrac{e^M}{2\cosh(x)-2\cos(y)}.
 \end{align*}
For $y \in [\delta,2\pi-\delta]$ fixed, 
$\dfrac{1}{2\cosh(x)-2\cos(y)}$ is maximal at $x=0$, where it is equal to
$$\dfrac{1}{2-2\cos(y)}
= \dfrac{1}{2-2+4\sin(y/2)^2}
= \dfrac{1}{4 \sin(y/2)^2} .
$$
Moreover, for $y\in [\delta,2\pi-\delta]$ we have $\sin(y/2)\geq \sin(\delta/2)$, and since $0<\delta/2 <\pi/2$, a Taylor-Lagrange expansion with remainder at order $4$ gives $\textstyle \sin(\delta/2) \geq \delta/2 - \frac{1}{6}(\delta/2)^3>\delta/2\left (1-\frac{\pi^2}{24}\right )>0$. Therefore
$$\dfrac{1}{4 \sin(y/2)^2} \leq \dfrac{1}{4(\delta/2 - \frac{1}{6}(\delta/2)^3)^2} \leq \dfrac{1}{\delta^2(1- \frac{\pi^2}{24})}.$$
The lemma follows.
\end{proof}
 
We will use the following {\it uniform} estimate of the relation between ${\rm Li}_2$ and the semi-classical limit of the function $\hat S_N$.
\begin{lem}\label{estimatehatS} There exists constants $B',B''>0$ such that for all  $\delta\in ]0,\pi[$, $M \in \R$,  large enough $N\geq 3$ odd, and $z\in ]-\infty,M] +i[\textstyle \delta-\frac{\pi}{N}, 2\pi-\delta-\frac{\pi}{N}]$ we have
\begin{equation}\label{equalitylemmaasyoct25}\Log(\hat{S}_N(z)) = \frac{N}{2i\pi}{\rm Li_2}(e^{z}) -\frac{1}{2} \Log(1-e^{z}) + \frac{1}{N}\Psi_N(z),
\end{equation}
where $\Psi_N$ is a holomorphic function of $z$, with formula given by \eqref{defXiNjanv26}-\eqref{defPsiNjanv26}, satisfying
\begin{equation}\label{UboundPsi}
\vert \Psi_N(z)\vert \leq B_\delta+\dfrac{\pi e^{M/2}}{2\delta\sqrt{1- \frac{\pi^2}{24}}},\ \mathrm{with}\ B_\delta := B'/\delta+B''.
\end{equation}  
Moreover, setting $\mathrm{c}_M:=\sqrt{1+e^M}$, we have
 $$\left\vert \mathfrak{R}\left(\Log(\hat{S}_N(z)) -\frac{N}{2i\pi}{\rm Li_2}(e^{z})\right) \right\vert \leq \Log(\mathrm{c}_M) + \frac{B_\delta}{N}$$
 or equivalently
$$ \dfrac{1}{\mathrm{c}_M}\exp\left(-\frac{B_\delta}{N}\right) \leq
\left\vert \hat{S}_N(z)\exp\left( -\frac{N}{2i\pi}{\rm Li_2}(e^z)\right) \right\vert \leq \mathrm{c}_M\exp\left(\frac{B_\delta}{N}\right).$$
\end{lem}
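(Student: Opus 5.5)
We will work directly from the integral representation \eqref{intrepS} after the change of variable \eqref{defShat}. With $\rmb=N^{-1/2}$ and $w=\sqrt{N}\,v$ (or a similar rescaling), the variable of $\Phi_\rmb$ is $x:=\frac{\sqrt N}{2\pi}(z-i\pi+\frac{i\pi}{N})$, and the exponent becomes
$$\Log(\hat S_N(z))=\frac14\int_{\mr+i0}\frac{e^{-2ixw}\,dw}{w\sinh(w/\sqrt N)\sinh(\sqrt N w)}.$$
We rescale $w=v\sqrt{N}/\ldots$ so that the factor $\sinh(\sqrt N w)$ becomes $\sinh(v)$-like, with a parameter $y$ of the form $z-i\pi$ (up to the $\frac{i\pi}{N}$ shift, which is the reason for the $\frac{\pi}{N}$ offsets in the strip). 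We then expand the other factor using
$$\frac{1}{\sinh(\epsilon v)}=\frac{1}{\epsilon v}-\frac{\epsilon v}{6}+\epsilon^3 v^3 r(\epsilon v),\qquad \epsilon=1/N,$$
with $r$ holomorphic near the contour. The leading term produces $\frac{N}{4}\int\frac{e^{-iyv/\pi}dv}{v^2\sinh v}$, which by Lemma \ref{Li2intrep} (with $-e^{y}=e^{z}$ after the $i\pi$ shift) equals $\frac{N}{2i\pi}{\rm Li_2}(e^z)$. The $\frac{i\pi}{N}$ shift in $y$ contributes, by a first order Taylor expansion in $y$, a term $\frac{i\pi}{N}\cdot\partial_y(\cdot)$, and $\partial_y\mathrm{Li_2}(-e^y)=-\Log(1+e^y)$. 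Combined with the second term of the expansion (whose integral $\int e^{-iyv/\pi}dv/\sinh v$ was computed in the proof of Lemma \ref{Li2intrep} to be proportional to $\frac{e^y}{1+e^y}$), this gives exactly $-\frac12\Log(1-e^z)$ plus $O(1/N)$ terms. The precise bookkeeping of constants will be checked against the functional equation \eqref{eqfunchatSLog}, which the right-hand side of \eqref{equalitylemmaasyoct25} must satisfy to order $1/N$. Everything left over is collected into $\frac1N\Psi_N(z)$, defined by the explicit integral formulas.

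For the bound \eqref{UboundPsi} we split $\Psi_N$ into two parts. The first consists of the second-order Taylor remainders in the shift, which involve $\partial_y^2$ of the ${\rm Li}_2$ term, namely $\frac{e^{z}}{1-e^{z}}$ evaluated at points of the strip. Lemma \ref{lem:bound:exp:1-exp} bounds these by $\frac{e^{M/2}}{\delta\sqrt{1-\pi^2/24}}$; this is the source of the explicit second summand in \eqref{UboundPsi}, with the factor $\pi/2$ coming from the shift. The second part is the remainder integral with $r(\epsilon v)$. To bound it, we deform the contour $\mr+i0$ within the strip of holomorphy $|\mathfrak I v|<\pi$, and use that $|e^{-iyv/\pi}/\sinh v|$ decays exponentially as long as $\mathfrak I y$ stays at distance $\geq\delta$ from $\pm\pi$ (i.e. $\mathfrak I z\in[\delta,2\pi-\delta]$). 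This yields a bound of the form $B'/\delta+B''$, uniform in $\mathfrak R z\le M$. Some care is needed because the factor $r(\epsilon v)$ is only controlled for $|\epsilon v|$ small. On the tail $|v|\gtrsim N$ we instead bound the original integrand directly, using the exponential decay of $1/\sinh v$ against $1/\sinh(v/N)$.

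The real-part estimate follows from the decomposition just obtained. We have $|\mathfrak R(\tfrac12\Log(1-e^z))|\le\tfrac12\Log|1-e^z|$, and in the range considered $|1-e^z|$ is bounded above by $1+e^{M}$; the lower bound will be checked separately. This gives the bound $\Log \mathrm{c}_M$. The term $\frac1N|\Psi_N|$ is bounded by $B_\delta/N$ after the part from Lemma \ref{lem:bound:exp:1-exp} is absorbed or dealt with more carefully; that part may require reorganizing which terms go into $\Psi_N$. Exponentiating gives the two-sided inequality.

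The main obstacle is making the remainder bound uniform in $N$ and in $\mathfrak R z\to-\infty$ near the edges of the strip, where the decay of $e^{-iyv/\pi}/\sinh v$ degenerates like $e^{-\delta|v|/\pi}$. The $1/\delta$ dependence must come out exactly from integrating this decay.
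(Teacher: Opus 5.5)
For the identity \eqref{equalitylemmaasyoct25} and the bound \eqref{UboundPsi} you are doing what the paper does, except that the paper imports the first step from Lemma 7.13 of \cite{BGP-N} instead of redoing it. That step goes as follows: with $w=v/\sqrt N$ and $y=z-i\pi+\frac{i\pi}{N}$, write $\frac{1}{\sinh(v/N)}=\frac{N}{v}+\frac{v}{N}\epsilon(\frac vN)$. The leading term is then $\frac{N}{2i\pi}\mathrm{Li}_2(e^{z+i\pi/N})$ by Lemma \ref{Li2intrep}, and the rest is $\frac1N\Xi_N(z)$. The paper then Taylor-expands the $\frac{i\pi}{N}$ shift twice and bounds the second-order term with Lemma \ref{lem:bound:exp:1-exp}.

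Two corrections to your bookkeeping.
\begin{itemize}
\item The term $-\frac12\Log(1-e^z)$ comes entirely from the first-order shift. The $-\frac{v}{6N}$ term does not combine with it. It contributes $\frac{i\pi}{12N}\frac{e^{y}}{1+e^{y}}$, an $O(e^{M/2}/(N\delta))$ piece of $\frac1N\Psi_N$, which belongs with the second summand of \eqref{UboundPsi}, not with $B_\delta$.
\item Your ``main obstacle'' is not one. The function $r(u)=u^{-3}(\frac{1}{\sinh u}-\frac1u+\frac u6)$ is bounded on all of $\mr$ (it decays like $\frac{1}{6u^2}$), and the remaining integrand is regular at $v=0$. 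Integrating on $\mr$ against $|e^{-iyv/\pi}/\sinh v|\lesssim e^{-\delta|v|/\pi}$ gives $O(N^{-3}\delta^{-3})$, which is $O(1/(N\delta))$ once $N\delta\ge\pi$; ``large enough $N$'' allows this. No contour deformation or tail splitting is needed. Keeping the whole $\epsilon(v/N)$ in the remainder, as the paper does, gives the $1/\delta$ directly.
\end{itemize}

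The genuine gap is the final two-sided real-part estimate, and it cannot be closed as you plan. There are two problems.

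First, going through $\Psi_N$ costs you the extra term $\frac{\pi e^{M/2}}{2N\delta\sqrt{1-\pi^2/24}}$, which is not of the form $B_\delta/N$. The paper avoids this by never splitting off $-\frac12\Log(1-e^z)$. By \eqref{Taylor0}, the real part of $\frac{N}{2i\pi}(\mathrm{Li}_2(e^{z+i\pi/N})-\mathrm{Li}_2(e^z))$ is exactly $-\frac12\int_0^1\Log|1-e^{z+i\pi t/N}|\,dt$, so only $\frac1N\mathfrak{R}(\Xi_N)$ remains, and that is bounded by $B_\delta/N$.

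Second, and more seriously, the lower bound on $|1-e^z|$ that you postpone does not exist with a $\delta$-independent constant. Note that $\mathfrak{R}\,\Log(1-e^z)=\Log|1-e^z|$, so you do need both bounds. In fact the right-hand inequality of the statement is false. Take $z=i(\delta-\frac{\pi}{N})$, which lies in the strip as soon as $M\ge 0$. Then $|1-e^{z}|\le\delta$, so \eqref{equalitylemmaasyoct25} and \eqref{UboundPsi} themselves give
$\mathfrak{R}(\Log\hat S_N(z)-\frac{N}{2i\pi}\mathrm{Li}_2(e^z))\ge\frac12\Log\frac1\delta$ minus an $O_{\delta,M}(1/N)$ term. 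This exceeds $\Log(\mathrm{c}_M)+\frac{B_\delta}{N}$ for $\delta<\frac{1}{1+e^M}$ and $N$ large.

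So only the one-sided bound $\mathfrak{R}(\cdots)\ge-\Log(\mathrm{c}_M)-\frac{B_\delta}{N}$ holds, that is, the left-hand inequality $|\hat S_N(z)e^{-\frac{N}{2i\pi}\mathrm{Li}_2(e^z)}|\ge \mathrm{c}_M^{-1}e^{-B_\delta/N}$. That is also all the paper's own argument proves, since its only input is $|1-e^{w}|^{1/2}\le \mathrm{c}_M$.

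The upper bound needs a $\delta$-dependent term. For instance, $|1-e^{w}|\ge|\sin\mathfrak{I}(w)|$ implies $|1-e^w|\ge\sin(\delta-\frac{\pi}{N})$ for $\mathfrak{I}(w)\in[\delta-\frac\pi N,2\pi-\delta]$. This gives $\mathfrak{R}(\cdots)\le\frac12\Log\frac{1}{\sin(\delta-\pi/N)}+\frac{B_\delta}{N}$.
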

\begin{remark}\label{rem:largeN} {\rm (i) The condition ``large enough $N\geq 3$'' is in place to ensure that $\textstyle \delta-\frac{\pi}{N}>0$, so that the half-strip $\textstyle ]-\infty,M] +i[\textstyle \delta-\frac{\pi}{N}, 2\pi-\delta-\frac{\pi}{N}]$ avoids cuts in the domain \eqref{holdomPsiN}. Thus, if $\mathfrak{R}(z)<0$ this condition is unnecessary and $\textstyle \delta-\frac{\pi}{N}\leq 0$ is admissible.\\
(ii) The definition of $\Psi_N$ from $\Xi_N$ (see \eqref{defXiNjanv26}-\eqref{defPsiNjanv26}) shows that the identity \eqref{equalitylemmaasyoct25} holds true for all $z$ in the strip $\mr +i]\textstyle -\frac{\pi}{N}, 2\pi-\frac{\pi}{N}[$.\\
(iii) The lemma generalizes immediately to the situation where one considers distinct constants $\delta^-, \delta^+\in ]0,\pi[$  instead of $\delta$, and takes $z\in ]-\infty,M] +i[\textstyle \delta^--\frac{\pi}{N}, 2\pi-\delta^+-\frac{\pi}{N}]$. Then one has \eqref{UboundPsi} with $\delta:= \min(\delta^-,\delta^+)$. This may be useful when one is willing sharp uniform bounds on specific sets.\\ 
(iv)  We can give explicit numerical values for the constants $B',B''>0$ in Lemma \ref{estimatehatS}, following the computations in \cite[Lemma 7.13]{BGP-N} (see the formula of $B_\delta$ on page 367 of that paper).
}\end{remark}
\proof It follows from Lemmas 7.13 and 7.14 in \cite{BGP-N} by a change of variable and a very small adaptation. Let us give a few details. First we consider the inequalities, since for them the adaptation is simpler to describe.

Lemmas 7.13 and 7.14 in \cite{BGP-N} deal with $\textstyle \Phi_{\rmb}(\frac{y}{2\pi \rmb})$, with the assumptions $\mathfrak{I}(y)\in [\delta,\pi-\delta]$ and $\mathfrak{I}(y)\in [-\pi+\delta,-\delta]$ respectively. From the proof of these lemmas, it is easy to see that these assumptions, keeping the same conclusions, can be extended to $\mathfrak{I}(y)\in [0,\pi-\delta]$ and $\mathfrak{I}(y)\in [-\pi+\delta,0]$ respectively. By setting $\rmb:= 1/\sqrt{N}$ and $y=z-i\pi+i\pi/N$, these conclusions imply immediately that, given $\delta>0$, $N$, and $\textstyle z\in \mr + i[\delta-\frac{\pi}{N}, 2\pi-\frac{\pi}{N}-\delta]$, there exists a constant $B_\delta>0$ as in the statement such that
 $$\left\vert \mathfrak{R}\left(\Log(\hat{S}_N(z)) -\frac{N}{2i\pi}{\rm Li_2}(e^{z+\frac{i\pi}{N}})\right) \right\vert \leq \frac{B_\delta}{N}.$$
 More precisely, denoting 
$$\epsilon(z):= \dfrac{1}{z^2}\left (\dfrac{z}{\sinh(z)}-1\right )$$ (the Taylor remainder at $0$ of order $2$ of the function $z \mapsto z/\sinh(z)$), and $\Omega\subset \C$ a contour equal to $\R$ except near $0$ where it avoids $0$ upwards while staying in a disk $D(0,R)$ with center $0$ and radius $R\in ]0,\pi[$, we have
\begin{equation}\label{eq:BAGPN:Xi}
\Log(\hat{S}_N(z)) -\frac{N}{2i\pi}{\rm Li_2}(e^{z+\frac{i\pi}{N}})
= \dfrac{1}{N} \Xi_N(z)
\end{equation}
for $\textstyle z\in \mr + i[\delta-\frac{\pi}{N}, 2\pi-\frac{\pi}{N}-\delta]$, where 
\begin{equation}\label{defXiNjanv26}
\Xi_N(z) :=
 \int_{v\in \Omega} \epsilon\left (\frac{v}{N}\right ) 
\dfrac{\exp(-i\frac{z v}{\pi}-v+\frac{v}{N})}{4 \sinh(v)}dv
\end{equation}
is holomorphic in the open strip $\mr +i]\textstyle -\frac{\pi}{N}, 2\pi-\frac{\pi}{N}[$ and satisfies $\vert \Xi_N(z)\vert \leq B_\delta$ on the closed substrip $\mr +i[\textstyle \delta-\frac{\pi}{N}, 2\pi-\delta-\frac{\pi}{N}]$. (We stress that in Lemma 7.13 of  \cite{BGP-N} the stated upper bound is $\vert \mathfrak{R}(\Xi_N(z))\vert \leq B_\delta$, where the constant $B_\delta:= B'/\delta+B''$ is described explicitly on page 367, but the conclusion is deduced from the inequality $\vert \mathfrak{R}(\Xi_N(z))\vert \leq\vert \Xi_N(z)\vert$ and an upper bound for $\vert \Xi_N(z)\vert$).

We want to replace the term $\textstyle \frac{N}{2i\pi}{\rm Li_2}(e^{z+\frac{i\pi}{N}})$ with $\textstyle \frac{N}{2i\pi}{\rm Li_2}(e^{z})$. For this, we will use a Taylor expansion.

Recall that we take $z\in ]-\infty,M] +i[\textstyle \delta-\frac{\pi}{N}, 2\pi-\delta-\frac{\pi}{N}]$, with $N$ large enough so that $\textstyle \delta-\frac{\pi}{N}>0$. In particular, the arguments of the logarithms and dilogarithms in the following formulas will avoid the cuts of these functions. Since
\begin{equation}\label{Taylor0}
{\rm Li_2}(e^{z+\frac{i\pi}{N}}) - {\rm Li_2}(e^{z}) = - \frac{i\pi}{N} \int_0^1 \Log(1-e^{z+\frac{i\pi}{N}t}) dt,
\end{equation}
we have
$$ \mathfrak{R}\left(\frac{N}{2i\pi}{\rm Li_2}(e^{z+\frac{i\pi}{N}})
- \frac{N}{2i\pi}{\rm Li_2}(e^{z})\right) = - \frac{1}{2} \int_0^1 \Log\left\vert 1-e^{z+\frac{i\pi}{N}t}\right\vert dt
= -  \int_0^1 \Log\left\vert 1-e^{z+\frac{i\pi}{N}t}\right\vert^{\frac{1}{2}} dt. $$

Now $\vert 1-e^z\vert^{\frac{1}{2}}\leq \sqrt{1+ \vert e^z\vert} \leq \sqrt{1+e^M}$ for all $\textstyle z\in ]-\infty,M] +i[\delta-\frac{\pi}{N}, 2\pi-\delta]$. Define $\mathrm{c}_M:=\sqrt{1+e^M}$. We get, for every $z\in ]-\infty,M] +i[\textstyle \delta-\frac{\pi}{N}, 2\pi-\delta-\frac{\pi}{N}]$:

$$\left\vert \mathfrak{R}\left(\Log(\hat{S}_N(z)) -\frac{N}{2i\pi}{\rm Li_2}(e^{z})\right) \right\vert \leq \Log(\mathrm{c}_M) + \frac{B_\delta}{N}.$$
The pair of inequalities in the statement immediately follows.

Let us now prove the equality \eqref{equalitylemmaasyoct25} and the upper bound \eqref{UboundPsi}.
From (\ref{eq:BAGPN:Xi}) and (\ref{Taylor0}), we obtain
\begin{align*}
\Log(\hat{S}_N(z)) 
&= \frac{N}{2i\pi}{\rm Li_2}(e^{z+\frac{i\pi}{N}})
+ \dfrac{1}{N} \Xi_N(z)\\
&= \frac{N}{2i\pi}{\rm Li_2}(e^{z})  - \frac{1}{2} \int_0^1 \Log(1-e^{z+\frac{i\pi}{N}t}) dt
+ \dfrac{1}{N} \Xi_N(z).
\end{align*}
Another Taylor expansion gives us
\begin{equation}\label{eq:taylor:log:exp}
\Log(1-e^{z+\frac{i\pi}{N}t}) = \Log(1-e^{z}) - \frac{1}{N}\mathcal{E}_N(z,t),
\end{equation}
where $\mathcal{E}_N(z,t):= (i\pi t) \int_0^1 \dfrac{e^{z+\frac{i\pi}{N}ts}}{1-e^{z+\frac{i\pi}{N}ts}}ds$. Hence we obtain
$$
\Log(\hat{S}_N(z)) 
= \frac{N}{2i\pi}{\rm Li_2}(e^{z})  - \frac{1}{2}  \Log(1-e^{z}) 
+ \frac{1}{2N}\int_0^1\mathcal{E}_N(z,t)dt
 + \dfrac{1}{N} \Xi_N(z).
$$
This gives (\ref{equalitylemmaasyoct25}), by setting
\begin{equation}\label{defPsiNjanv26}
\Psi_N(z):= \Xi_N(z)+\dfrac{1}{2}\int_0^1\mathcal{E}_N(z,t)dt
=\Xi_N(z)+\dfrac{i\pi}{2}\int_{t=0}^1 t \int_{s=0}^1\dfrac{e^{z+\frac{i\pi}{N}ts}}{1-e^{z+\frac{i\pi}{N}ts}}dsdt.
\end{equation}
Consider the strip $]-\infty,M] +i[\textstyle \delta-\frac{\pi}{N}, 2\pi-\delta-\frac{\pi}{N}]$. We know that $|\Xi_N(z)|\leq B_\delta$, and from Lemma \ref{lem:bound:exp:1-exp} the second term gives:
$$\left |
\dfrac{i\pi}{2}\int_{t=0}^1 t \int_{s=0}^1\dfrac{e^{z+\frac{i\pi}{N}ts}}{1-e^{z+\frac{i\pi}{N}ts}}dsdt
\right | \leq \dfrac{\pi}{2 } \int_{s=0}^1\left |
 \dfrac{e^{z+\frac{i\pi}{N}ts}}{1-e^{z+\frac{i\pi}{N}ts}} \right | ds
 \leq \dfrac{\pi e^{M/2}}{2\delta\sqrt{1- \frac{\pi^2}{24}}}.$$
By the triangle inequality, \eqref{UboundPsi} follows at once.\cvd
\medskip

Let us draw the following important though easy consequence of Lemma \ref{estimatehatS} and the functional equation \eqref{eqfunchatSLog}. It will be used in Lemma \ref{lem:reformintqjui25}.

By the domains where $\Log(\hat{S}_N(z))$ (deduced after \eqref{eqfunchatSLog}) and ${\rm Li_2}(e^{z})$  and $\Log(1-e^{z})$ are holomorphic functions (here, $\textstyle \mc\setminus \left(\cup_{k\in \mz} \left(2i\pi k +[0,+\infty[\right)\right)$), the identity (\ref{equalitylemmaasyoct25}) immediately shows that $\Psi_N$ has a holomorphic extension on the domain 
\begin{equation}\label{holdomPsiN}
\mc \setminus \left(\bigcup_{p\in \mn} \left([0,+\infty[+i\left(2\pi + \frac{2\pi p}{N}\right)\right) \bigcup \left(\bigcup_{l\in \mn}\left( [0,+\infty[-i(l+1)\frac{2\pi}{N}\right)\right) \bigcup [0,+\infty[ \right).
\end{equation}
In particular $\Psi_N$ is holomorphic on the domain (see Figure \ref{fig:Uet+})
$$U:=\left \lbrace z\in \mc\ \vert\ \mathfrak{R}(z) <0 \right \rbrace \bigcup \left \lbrace z\in \mc\ \vert \ 0 < \mathfrak{I}(z) < 2\pi\right\rbrace.$$
In the following lemma, for every $M\in \mr$, $\alpha\in ]0,1[$, $k\in \mz_{\leq 0}$, $K>2\pi$, and $c>0$ we consider the sets $U_{M,\alpha,k,K,c}\subset \mc$ formed by the points $z$ such that (see Figure \ref{fig:Uet+}):
\begin{itemize}
\item $\mathfrak{R}(z)\leq M$ and $\textstyle (\alpha-1)\frac{\pi}{N}< \mathfrak{I}(z)< 2\pi-(\alpha+1)\frac{\pi}{N}$, or 
\item $\mathfrak{R}(z)<0$ and $\textstyle 2\pi k + (\alpha-1)\frac{\pi}{N}\leq \mathfrak{I}(z)\leq (\alpha-1)\frac{\pi}{N}$, or 
\item $\textstyle \mathfrak{R}(z)< -c$ and $\textstyle 2\pi-(\alpha+1)\frac{\pi}{N}\leq \mathfrak{I}(z)\leq K$.
\end{itemize}
\medskip

\begin{figure}[!h]
\begin{tikzpicture}
\draw[->] (-6,0)--(6,0);
\draw[->] (0,-6)--(0,6);

\draw (0,0-.03) node {\Huge $\cdot$};
\draw (0+.3,0+.3) node {$0$};
\draw (0,2-.03) node {\Huge $\cdot$};
\draw (0+.5,2-.03+.3) node {$2i\pi$};

\draw[color=blue,thick] (0,6)--(0,2)--(6,2);
\draw[color=blue,thick] (0,-6)--(0,0)--(6,0);

	\draw[white,fill=blue!30,opacity=0.5, ] (-6,6)--(0,6)--(0,2)--(6,2)--(6,0)--(0,0)--(0,-6)--(-6,-6);
	
\draw (-2,5) node {\Huge \color{blue} $U$};

	\draw[red ,fill=red!50,opacity=0.5, ] (-6,3.5)--(-2,3.5)--(-2,1.7)--(4,1.65)--(4,-0.15)--(0,-0.15)--(0,-4-0.15)--(-6,-4.15);
	
\draw[color=red] (-2,3.5-.03) node {\Huge $\cdot$};
\draw[color=black] (-2,3.5+.3) node {\scriptsize $-c+i K$};

\draw[color=red] (-2,1.7-.03) node {\Huge $\cdot$};
\draw[color=black] (-3.8,1.6) node { \scriptsize $-c+i \left (2\pi-(\alpha+1)\frac{\pi}{N}\right )$};

\draw[color=red] (4,1.7-.07) node {\Huge $\cdot$};
\draw[color=black] (5.7,1.6) node { \scriptsize $M+i \left (2\pi-(\alpha+1)\frac{\pi}{N}\right )$};

\draw[color=red] (4,-.19) node {\Huge $\cdot$};
\draw[color=black] (5.2,-.5) node { \scriptsize $M+i (\alpha-1)\frac{\pi}{N}$};

\draw[color=red] (0,-.19) node {\Huge $\cdot$};
\draw[color=black] (.9,-.55) node { \scriptsize $i(\alpha-1)\frac{\pi}{N}$};

\draw[color=red] (0,-4-.19) node {\Huge $\cdot$};
\draw[color=black] (1.5,-4-.2) node { \scriptsize $i\left ((\alpha-1)\frac{\pi}{N}+2k\pi\right )$};

\draw (-3,-2) node {\Huge \color{red!40!black} $U_{M,\alpha,k,K,c}$};

\end{tikzpicture}
\caption{The domain $U$ and the sets $U_{M,\alpha,k,K,c}$.}\label{fig:Uet+}
\end{figure}
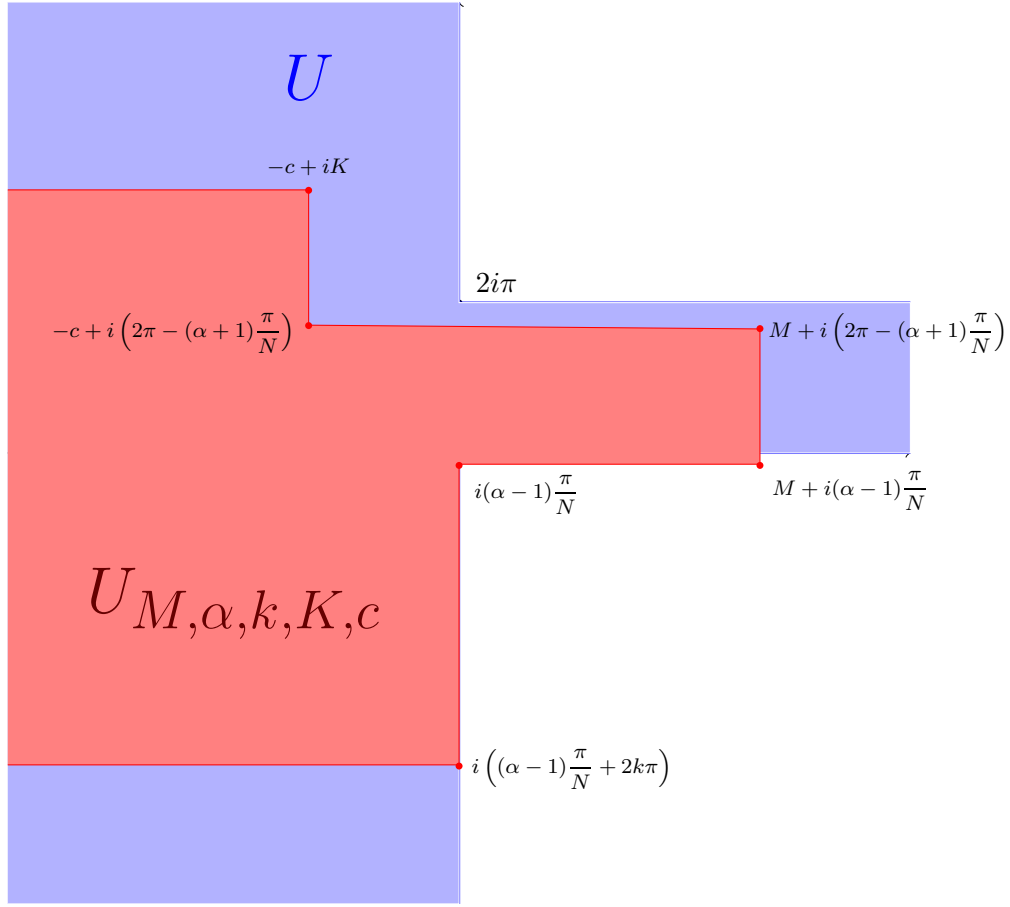
\begin{lem}\label{extboundPsiNjanv26} (i) The sequence of functions $\Psi_N$ converges simply to a holomorphic function on the strip $0 < \mathfrak{I}(z) < 2\pi$.\\
(ii) The function $\textstyle \vert \exp(\frac{1}{N} \Psi_N)\vert$ is bounded from above by a constant independent of $N$ on each set $U_{M,\alpha,k,K,c}$.\\
(iii) The sequence of functions $\textstyle \exp(\frac{1}{N} \Psi_N)$ converges uniformly to the constant function with value $1$ on  compact subsets of $U$.
\end{lem}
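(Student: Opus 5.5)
We work from the two explicit pieces of $\Psi_N$ in \eqref{defPsiNjanv26}, namely $\Xi_N$ from \eqref{defXiNjanv26} and the double integral of $e^{w}/(1-e^{w})$. On the parts of the domain not covered by these formulas we use the identity \eqref{equalitylemmaasyoct25}, read as a definition of $\Psi_N$, together with the functional equation \eqref{eqfunchatSLog}.

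\emph{(i).} Fix $z$ with $0<\mathfrak{I}(z)<2\pi$. For $N$ large, $z$ lies in a closed substrip $\mr+i[\delta-\frac{\pi}{N},2\pi-\delta-\frac{\pi}{N}]$, so the integral formula for $\Xi_N(z)$ is valid. We have $\epsilon(v/N)\to \epsilon(0)=-1/6$ pointwise. The factor $|e^{-izv/\pi-v}/\sinh(v)|$ is integrable on $\Omega$ because $0<\mathfrak{I}(z)<2\pi$, and $\epsilon(v/N)$ is uniformly bounded on $\Omega$ by an estimate as in \cite[Lemma 7.13]{BGP-N}. Dominated convergence then gives
\[
\Xi_N(z)\to -\frac{1}{6}\int_\Omega \frac{e^{-izv/\pi-v}}{4\sinh v}\,dv .
\]
For the second term, $e^{z+\frac{i\pi}{N}ts}\to e^z\neq 1$, so by Lemma \ref{lem:bound:exp:1-exp} and dominated convergence it tends to $\frac{i\pi}{4}\frac{e^z}{1-e^z}$. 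Both limits are holomorphic in the strip: the first is a locally uniform limit of holomorphic functions, and the second is explicit.

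\emph{(ii).} We split $U_{M,\alpha,k,K,c}$ into its three pieces.

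On the first piece, \eqref{UboundPsi} applies with $\delta^\pm$ as in Remark \ref{rem:largeN}(iii). The difficulty is that the bottom and top edges sit at distance $O(1/N)$ from the cuts, so $\delta$ is of order $\alpha\pi/N$. We therefore do not use \eqref{UboundPsi} directly. Instead we return to \eqref{equalitylemmaasyoct25} and bound $\frac{1}{N}\Psi_N$ itself, which is what $|\exp(\frac1N\Psi_N)|$ requires. Since $B_\delta/N=B'/(N\delta)+B''/N$ and $N\delta\asymp \alpha\pi$, the term $\frac1N\Xi_N$ stays bounded. For the second term, $\frac{1}{N}\cdot\frac{\pi e^{M/2}}{\delta}$ is likewise bounded. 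Hence $\Re(\frac1N\Psi_N)$ is bounded on this piece.

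On the second piece ($\mathfrak{R}(z)<0$, below the strip), we iterate \eqref{eqfunchatSLog} to write
\[
\Log\hat S_N(z)=\Log\hat S_N\Bigl(z+\frac{2i\pi m}{N}\Bigr)+\sum_{j=1}^{m}\Log\bigl(1-e^{z+2i\pi j/N}\bigr),
\]
choosing $m\le |k|N+N$ so that $z+2i\pi m/N$ lands in the strip. We then compare this with $\frac{N}{2i\pi}{\rm Li_2}(e^z)-\frac12\Log(1-e^z)$ using Euler--Maclaurin, i.e. a Riemann sum of $\Log(1-e^{w})$ against $\frac{N}{2i\pi}\int \Log(1-e^w)\,dw$, which reproduces ${\rm Li_2}$ up to periodicity in $2\pi i$. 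Because $|e^z|<1$ there are no cuts of $\Log(1-e^w)$ in the way, and the error is $O(|k|)$ uniformly. This gives $|\Psi_N|=O(N|k|)$, hence $|\frac1N\Psi_N|=O(1)$.

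The third piece ($\mathfrak{R}(z)<-c$, above the strip) is handled in the same way by shifting downward.

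\emph{(iii).} A compact subset of $U$ lies either in $\{\mathfrak{R}<0\}$ or at positive distance from the cuts inside the strip. In the strip, \eqref{UboundPsi} with a fixed $\delta$ gives $\frac{1}{N}\Psi_N=O(1/N)$. In the left half-plane, the estimate of (ii) needs sharpening to $\Psi_N=O(1)$ locally. To get this we pair the Euler--Maclaurin error terms with $N$, or we use the holomorphy of $\Psi_N$ on $U$ together with a maximum principle and Vitali's theorem, transporting the $O(1)$ bound from the strip.

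The main obstacle is the uniformity in (ii) near the cut-adjacent edges and in the regions outside the strip. In both places the naive constants blow up like $N$, and one has to check that this growth is exactly compensated by the factor $1/N$.
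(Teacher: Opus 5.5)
Your proof of (i) and of the first piece of (ii) follows the paper: dominated convergence on $\Xi_N$ and on the double integral, then \eqref{UboundPsi} with $\delta\asymp\alpha\pi/N$ so that $B_\delta/N$ stays bounded. One small point in (i): the holomorphy of the limit of $\Xi_N$ comes directly from its integral formula. It does not come from a locally uniform convergence, which you have not shown.

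\textbf{The gap is in the second piece of (ii), exactly where you depart from the paper.} The claim that the Euler--Maclaurin error is $O(\vert k\vert)$ uniformly, hence $\vert\Psi_N\vert=O(N\vert k\vert)$, is false. That piece reaches $\mathfrak{R}(z)=0^-$. There the nodes $z+2i\pi j/N$ come arbitrarily close to the logarithmic singularities $2i\pi\mz$ of $\Log(1-e^w)$, and $\phi''$ blows up. The absence of cuts does not help. In fact, for $\mathfrak{R}(z)<0$ the trapezoidal error over one full period is exactly
\[
\sum_{j=1}^{N}\Log\bigl(1-e^{z}\zeta^{j}\bigr)-\frac{N}{2\pi}\int_0^{2\pi}\Log\bigl(1-e^{\mathfrak{R}(z)+it}\bigr)\,dt=\Log\bigl(1-e^{Nz}\bigr),
\]
that is, $\frac1N\bigl(\Psi_N(z-2i\pi)-\Psi_N(z)\bigr)=\Log(1-e^{Nz})$. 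For $z=x$ this gives $\frac1N\bigl(\Psi_N(x-2i\pi)-\Psi_N(x)\bigr)=\Log(1-e^{Nx})$, which tends to $-\infty$ as $x\to0^-$. Moreover $x-2i\pi$ lies in the second piece as soon as $k\le-1$. So no two-sided bound holds. What is true, and all that $\vert\exp(\frac1N\Psi_N)\vert$ needs, is the one-sided bound $\mathfrak{R}\Log(1-e^{Nz})\le\Log 2$.

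This is precisely the paper's argument, and it is the repair:
\begin{enumerate}
\item Shift by whole periods, using $\prod_{j=1}^N(1-e^z\zeta^j)=1-e^{Nz}$ and the $2i\pi$-periodicity of ${\rm Li}_2(e^z)$ and $\Log(1-e^z)$ on $\mathfrak{R}(z)<0$. This gives $\mathfrak{R}\Psi_N(z-2i\pi)\le\mathfrak{R}\Psi_N(z)+N\Log 2$.
\item Handle the narrow strips with a single $2i\pi/N$ step, again bounding only the real part from above.
\end{enumerate}
This also avoids partial periods, for which your scheme would need its own one-sided estimate.

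For the third piece, shifting downward puts $-\sum_j\Log\vert1-e^{w_j}\vert$ into the estimate. You therefore need the lower bound $\Log\vert1-e^{w}\vert\ge\Log(1-e^{-c})$, and this is exactly where $\mathfrak{R}(z)<-c$ enters. With that margin your Euler--Maclaurin estimate is legitimate, but you should say so.

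The same margin makes your route to (iii) work. Split a compact subset of $U$ into parts in $\{\mathfrak{R}(z)\le-c\}$ and in $\{\delta\le\mathfrak{I}(z)\le2\pi-\delta\}$, and shift the former into the middle of the strip. This gives the stronger $\Psi_N=O(1)$, i.e.\ $\exp(\frac1N\Psi_N)=1+O(1/N)$ locally uniformly. The paper gets (iii) more cheaply by applying Montel/Vitali to $\exp(\frac1N\Psi_N)$, which is locally bounded by (ii) and tends to $1$ pointwise on the strip by (i). That route needs no sharpening to $\Psi_N=O(1)$ and no maximum principle.

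Finally, your first-piece step is the paper's and shares its weak point. It uses \eqref{UboundPsi} with $\delta<\pi/N$, on a strip dipping below $\mathfrak{I}(z)=0$, outside the range where Lemma~\ref{estimatehatS} yields a two-sided bound. There only an upper bound on $\mathfrak{R}(\frac1N\Psi_N)$ can hold; for example $\mathfrak{R}\Psi_N(x)\to-\infty$ as $x\to0^-$. For $M>0$ not even the upper bound holds just below $]0,M]$. There the principal ${\rm Li}_2(e^z)$ crosses its cut, and $\frac1N\mathfrak{R}\Psi_N$ jumps by $+N\mathfrak{R}(z)$, so that region has to be excluded.
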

\begin{remark}{\rm (i) The condition $\textstyle \mathfrak{R}(z)< -c$ for some $c>0$ when $\textstyle 2\pi-(\alpha+1)\frac{\pi}{N}\leq \mathfrak{I}(z)\leq K$ is in place to ensure that the half-space stays at distance from the singular points $2i\pi+ 2i\pi p/N$, $p\in \mn$, of $\Psi_N$.\\ 
(ii) The proof below provides an explicit upper bound of $\textstyle \vert \exp(\frac{1}{N} \Psi_N)\vert$ on $U_{M,\alpha,k,K,c}$.} \end{remark} 

\begin{proof} (i) Any point of $\textstyle \mr +i]0, 2\pi[$ is contained in $\textstyle \mr +i]-\frac{\pi}{N}, 2\pi-\frac{\pi}{N}[$  for $N$ large enough, where $\Psi_N$ has the integral formula given by \eqref{defPsiNjanv26} and \eqref{defXiNjanv26}. By using Lemma \ref{lem:bound:exp:1-exp} and Lebesgue's dominated convergence theorem it is clear that the second summand in the formula \eqref{defPsiNjanv26} converges simply on $\textstyle \mr +i]0, 2\pi[$. Thus it remains to prove it also for $\Xi_N$. 

Recall the function $\epsilon$ in \eqref{defXiNjanv26}. It is easy to see that $\epsilon$ is holomorphic on $\mr\cup D(0,R)$, that $\textstyle \lim_{x\to 0}\epsilon(x) = -\frac{1}{6}$, and that there is a scalar $\sigma_R>0$ such that $\vert \epsilon(x)\vert \leq \sigma_R$ for all $x\in  \Omega \cup D(0,R)$ (see \cite[pp. 364-365]{BGP-N} for details). Let us split $\Xi_N(z)$ as
\begin{multline} \Xi_N(z) = \int_{v\in \Omega\cap D(0,R) } \epsilon\left (\frac{v}{N}\right ) \dfrac{\exp(-i\frac{z v}{\pi}-v+\frac{v}{N})}{4 \sinh(v)}dv \\ + \int_{v\in ]-\infty,-R]} \epsilon\left (\frac{v}{N}\right ) \dfrac{\exp(-i\frac{z v}{\pi}-v+\frac{v}{N})}{4 \sinh(v)}dv + \int_{v\in [R,+\infty[} \epsilon\left (\frac{v}{N}\right ) \dfrac{\exp(-i\frac{z v}{\pi}-v+\frac{v}{N})}{4 \sinh(v)}dv.
\end{multline}
Note that for all $N$ the integrand is a holomorphic function of $z$ and $v$. It has modulus less than $\sigma_R  \textstyle \vert \frac{\exp(-i\frac{z v}{\pi}-v+R)}{4 \sinh(v)} \vert$ for $v\in \Omega\cap D(0,R)$, where this function is continuous and hence integrable. Now set $z=x+iy$, so  that $0<y<2\pi$. For a real $v\ne 0$ we have 
\begin{align*}\left\vert \epsilon\left (\frac{v}{N}\right ) \dfrac{\exp(-i\frac{z v}{\pi}-v+\frac{v}{N})}{4 \sinh(v)} \right\vert & \leq \sigma_R  \left \vert \frac{\exp(-i\frac{z v}{\pi}-v+\frac{v}{N})}{4 \sinh(v)} \right\vert \\ & = \sigma_R  \frac{\exp(v(-1 + \frac{y}{\pi}+\frac{1}{N}))}{4 \sinh(v)}.
\end{align*}
Let $c^-, c^+$ be positive real numbers such that $\textstyle 0<c^-<\frac{y}{\pi}<c^+<2$, and assume that $N$ is large enough so that $\textstyle \frac{y}{\pi}+\frac{1}{N}<c^+$.  Then the RHS is $\leq \textstyle \sigma_R  \frac{\exp(v(c^+ -1))}{4 \sinh(v)}$ on $[+R,+\infty[$ and $\leq \textstyle \sigma_R  \frac{\exp(v(c^- -1))}{4 \sinh(v)}$ on $[-\infty,-R[$, where these functions are integrable on these intervals. Then we can apply Lebesgue's dominated convergence theorem to the three integrals. By summing the results we get
$$\lim_{N\rightarrow +\infty}  \Xi_N(z) = -\frac{1}{6} \int_{v\in \Omega} \dfrac{\exp(-i\frac{z v}{\pi}-v)}{4 \sinh(v)}dv,$$
The arguments above show also that the integral on the RHS is a holomorphic function of $z$ in the strip $\mr+i]0, 2\pi[$, which concludes the proof of (i).\\ 
(ii) First consider $\textstyle \vert \exp(\frac{1}{N} \Psi_N)\vert$ on $\textstyle ]-\infty,M] +i]-\frac{\pi}{N}, 2\pi-\frac{\pi}{N}[$. Let $\alpha>0$ be fixed, and apply \eqref{UboundPsi} with $\delta$ of the form $\delta_N:=\textstyle \alpha\frac{\pi}{N}$. For any $z\in \textstyle ]-\infty,M] +i[\delta_N-\frac{\pi}{N}, 2\pi-\delta_N-\frac{\pi}{N}]$, we find 
\begin{equation}\label{boundPsiNseqstripjanv26}
\frac{1}{N}\vert \Psi_N(z)\vert\leq \frac{B'}{\alpha\pi} +\dfrac{\pi e^{M/2}}{2\alpha\pi\sqrt{1- \frac{\pi^2}{24}}}+\frac{B''}{N},
\end{equation}
which is bounded from above by a constant independent of $N$. This proves the claim on the strip $\textstyle (\alpha-1)\frac{\pi}{N}< \mathfrak{I}(z)< 2\pi-(\alpha+1)\frac{\pi}{N}$.\\
Assume now that $\mathfrak{R}(z)<0$. Denote by $C_{M,\alpha,B',B''}$ the constant on the RHS of \eqref{boundPsiNseqstripjanv26}. By repeated use of \eqref{eqfunchatSLog} we can shift $z$ by $-2i\pi$, thus getting $\textstyle \mathfrak{R}(\Log (\hat S_{N}(z-2i\pi))) = \mathfrak{R}(\Log (\hat S_{N}(z))) + \sum_{k=1}^N \Log\vert 1-e^{z+\frac{2i\pi}{N}}\vert = \mathfrak{R}(\Log (\hat S_{N}(z))) + \Log\vert 1-e^{Nz}\vert$. Then the difference of \eqref{equalitylemmaasyoct25} with the same identity where $z$ is replaced by $z-2i\pi$ has real part 
\begin{equation}\label{shiftby2pi}
\mathfrak{R}(\Psi_N(z)) - \mathfrak{R}(\Psi_N(z-2i\pi)) = -N\Log\vert 1-e^{Nz}\vert.
\end{equation}
Therefore $\mathfrak{R}\left(\Psi_N(z-2i\pi)\right) \leq \mathfrak{R}\left(\Psi_N(z)\right)+N\Log(1+\vert e^{Nz}\vert)\leq \mathfrak{R}\left(\Psi_N(z)\right)+N\Log(2)$, and hence $\textstyle \vert \exp(\frac{1}{N} \Psi_N(z))\vert = \exp(\frac{1}{N} \mathfrak{R}\left(\Psi_N(z)\right))\leq \exp(C_{M,\alpha,B',B''}+k\Log(2))$ for any $z$ in a  half-strip $\textstyle ]-\infty,0[ +i(2\pi k'+[\delta_N-\frac{\pi}{N}, 2\pi-\delta_N-\frac{\pi}{N}])$, where $k\leq k'\leq 0$.

To fix the ideas take now for instance $\textstyle \alpha:= \frac{1}{2}$ (namely, in the following discussion $\alpha$ must be chosen so that the width of the ``narrow'' half-strips defined below is $\textstyle <\frac{2\pi}{N}$, hence the width $2\pi - 2\delta_N$ of the above half-strips is $> \textstyle 2\pi - \frac{2\pi}{N}$, whence $\alpha\in ]0,1[$). Denote by $U'$ the union of the above half-strips, $\textstyle ]-\infty,0[ +i(2\pi k'+[-\frac{\pi}{2N}, 2\pi-\frac{3\pi}{2N}])$, where $k\leq k'\leq 0$. The set $U_{M,\alpha,k,K,c}$ is the union of $U'$ with the ``narrow'' half-strips $\textstyle ]-\infty,0[ +i\left( 2\pi k'+ [-\frac{3\pi}{2N},-\frac{\pi}{2N}]\right)$, $k-1\leq k'\leq 0$, and $\textstyle ]-\infty,-c[ +i[2\pi-\frac{3\pi}{2N}, K]$.

Thus it remains to prove the bound when $z$ lies within these half-strips. Consider first the ``narrow'' ones, with $k-1\leq k'\leq 0$. For any such a point $z$, we have $z+\textstyle{\frac{2i\pi}{N}}\in U'$. Therefore we consider the identity \eqref{eqfunchatSLog}. By means of \eqref{equalitylemmaasyoct25} it yields
\begin{multline*}
\frac{1}{N}  \left(\Psi_N(z-\textstyle{\frac{2i\pi}{N}}) - \Psi_N(z)\right)   \\ = \frac{N}{2i\pi}\left({\rm Li_2}(e^{z})-{\rm Li_2}(e^{z-\frac{2i\pi}{N}}) \right) +\frac{1}{2}\left(\Log(1-e^{z-\frac{2i\pi}{N}})-\Log(1-e^{z}) \right)+ \Log(1-e^{z}).
\end{multline*}
Using the Taylor expansion $\textstyle {\rm Li_2}(e^{z-\frac{2i\pi}{N}}) - {\rm Li_2}(e^{z})  = - \frac{2i\pi}{N} \int_0^1 \Log(1-e^{z-\frac{2i\pi}{N}t}) dt$ and changing $z$ to $\textstyle  z+\frac{2i\pi}{N}$ this can be written
$$\frac{1}{N}  \left(\Psi_N(z) - \Psi_N\left((z+\frac{2i\pi}{N}\right)\right) = \int_0^1 \Log(1-e^{z+\frac{2i\pi}{N}t}) dt +\frac{1}{2}\left(\Log(1-e^{z+\frac{2i\pi}{N}})+\Log(1-e^{z}) \right).$$
Since we assume $\mathfrak{R}(z)<0$, the real part of the RHS is bounded from above by $2\Log(2)$, and hence
\begin{multline*}\left\vert \exp\left(\frac{1}{N} \Psi_N(z)\right)\right\vert \leq \exp\left(\frac{1}{N} \mathfrak{R}\left(\Psi_N\left(z+\frac{2i\pi}{N}\right)\right)+2\log(2)\right)\\ \leq \exp(C_{M,\alpha,B',B''}+(k+2)\Log(2)).
\end{multline*}
Finally, assume that $\mathfrak{R}(z)<-c$, $c>0$. Since $\Log\vert 1-e^{z}\vert\ge \Log\vert 1-\vert e^{z}\vert \vert \geq \Log( 1-e^{-c})$, similarly as above we obtain $\textstyle \vert \exp(\frac{1}{N} \Psi_N(z))\vert \leq \exp(\frac{1}{N} \mathfrak{R}(\Psi_N(z-\frac{2i\pi}{N}))- 2\Log( 1-e^{-c}))\leq \exp(C_{M,\alpha,B',B''}- 2\Log( 1-e^{-c}))$ for $z\in \textstyle ]-\infty,-c[ +i[2\pi-\frac{3\pi}{2N}, 2\pi]$. When $2\pi \leq \mathfrak{I}(z)\leq K$, the bound of $\textstyle \vert \exp(\frac{1}{N} \Psi_N(z))\vert$ is obtained similarly by using \eqref{shiftby2pi}. This concludes the proof of (ii).\\ 
(iii) First note that any compact subset of $U$ is contained in $U_{M,\alpha,k,K,c}$ for some $M,\alpha,k,K$ and $c$, for $N$ large enough. By (ii) above and Montel's theorem, it follows that the sequence of functions $\textstyle \exp(\frac{1}{N} \Psi_N(z))$ is a {\it normal family} (that is, it has a subsequence which converges uniformly on compact subsets) on $U$. Since by (i) it converges simply to $1$ on a subset of $U$ having a limit point, by Vitali's Theorem on the convergence of holomorphic functions, the convergence is uniform on all compact subsets of $U$. This concludes the proof.
\end{proof}
 
\color{black}

Recall that $y^{1/N}:=\exp(\Log(y)/N)$ for any $y\in \mc^*$. Put 
\begin{equation}\label{gfunctiondef}
g_N(z):=\prod_{j=1}^{N-1} (1-z\zeta^{-j})^{j/N}.
\end{equation}
In order to prove Theorem \ref{asyinvteo}, we will need to study asymptotics of $g_N(e^z)$ for $z=\bl_{0,N}$ of the form \eqref{defseqNcolor}. First note that, by collecting conjugate terms it is immediate that $\vert g_N(1)^N\vert =N^{\frac{N}{2}}$, and factorizing by $\zeta^{-j/2}$ each term in the product yields
$$ g_N(1)^N = e^{\frac{\pi i}{12}(N-2)(N-1)(2N-1)} \prod_{j=1}^{N-1}2^j\sin\left(\frac{\pi j}{N}\right)^j$$
where the product is real positive. Therefore 
\begin{equation}\label{cstBBP}
g_N(1)^N  = N^{\frac{N}{2}}e^{\frac{\pi i}{12}(N-2)(N-1)(2N-1)}.
\end{equation}

Before considering $g_N(e^z)$ for more general $z$, let us do some rewriting. We have, for any odd integer $N$ and any $z \notin (2 i \pi/N) \Z$ (so that the following quotients are defined):
\begin{align}  g_N(e^z) & = \prod_{k=1}^{N-1} \bigl(1-e^z\zeta^{-k}\bigr)^{k/N}\nonumber  \\ & =  \prod_{k=1}^{N-1} \biggl(\frac{\hat S_N(z + 2(N-(k+1))i\pi/N)}{\hat S_N(z+ 2(N-k)i\pi/N)}\biggr)^{k/N}\nonumber\\  & =   \hat S_N(z)^{(N-1)/N}\ \prod_{k=1}^{N-1} \hat S_N\left(z+2(N-k)i\pi/N\right)^{-1/N} \nonumber \\ & =  \hat S_N(z) \ \prod_{k=1}^{N} \hat S_N\left(z+2ki\pi/N\right)^{-1/N}.\label{prodformgN} \end{align}

Now, it is tempting to use Lemma \ref{estimatehatS} to compare $g_N(e^z)$ to
\begin{equation}\label{ratiogNclass0mars25}
\exp\left( \frac{N}{2i\pi}{\rm Li_2}(e^z)\right) \cdot 
\prod_{k=1}^{N} \exp\left( \frac{N}{2i\pi}{\rm Li_2}(e^{z+2ki\pi/N})\right)^{-1/N}, 
\end{equation}
an expression that can be simplified thanks to the following factorization formula (see for example \cite[page 7]{Z}):

\begin{prop}
For any $x \in \C$, and any positive integer $N$, we have:
$${\rm Li_2}(x)=N \sum_{y \in \C, \ y^N=x} {\rm Li_2}(y).$$
\end{prop}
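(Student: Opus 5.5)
The plan is to prove the identity first on the unit disk $|x|<1$, where both sides are given by power series, and then to extend it by analytic continuation. We have to be careful about which branch of ${\rm Li_2}$ is meant. The principal branch \eqref{Li2def} is holomorphic on $\mc\setminus[1,+\infty[$. The $N$-th roots of $x$ are $y_j=e^{(\Log(x)+2\pi i j)/N}$ for $j=0,\ldots,N-1$, and each satisfies $|y_j|=|x|^{1/N}$. So when $|x|<1$, every $y_j$ lies in the open unit disk, where ${\rm Li_2}(y)=\sum_{n\geq 1} y^n/n^2$ converges absolutely.

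For $|x|<1$ I would expand and exchange the finite and infinite sums:
$$N\sum_{j=0}^{N-1}{\rm Li_2}(y_j)=N\sum_{n\geq 1}\frac{y_0^n}{n^2}\sum_{j=0}^{N-1}\zeta^{jn}.$$
The inner sum is the character sum of the group of $N$-th roots of unity. It equals $N$ when $N\mid n$ and $0$ otherwise. Only the terms $n=Nm$ survive, which gives
$$N\cdot N\sum_{m\geq 1}\frac{y_0^{Nm}}{N^2m^2}=\sum_{m\geq1}\frac{x^m}{m^2}={\rm Li_2}(x).$$
This settles the disk case.

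To extend the identity, note that the right-hand side is a symmetric function of the roots, so it does not depend on the labelling of the $y_j$. Away from the locus where some root $y_j$ crosses the cut $[1,+\infty[$, it is holomorphic in $x$. A root can lie on $[1,+\infty[$ only if $x=y_j^N$ is real and $\geq 1$. Hence on $\mc\setminus[1,+\infty[$ all roots stay off the cut, and the right-hand side is holomorphic there. It agrees with ${\rm Li_2}(x)$ on the disk, so by the identity theorem it agrees on all of $\mc\setminus[1,+\infty[$.

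The main obstacle is the phrase ``for any $x\in\mc$''. On the cut $[1,+\infty[$ the value depends on the boundary convention chosen. Either one accepts the statement in the sense of the principal branch (with the convention $\Log\in]-\pi,\pi]$), or one checks that boundary case separately. My first attempt at the boundary case would be to compute both sides as limits from above, using ${\rm Li_2}$'s continuity from the side dictated by $\arg\in]-\pi,\pi]$. I expect the paper will only use the identity off the cut, namely in \eqref{ratiogNclass0mars25}, where the arguments $e^{z+2ki\pi/N}$ range exactly over the $N$-th roots of $e^{Nz}$. So it suffices to state it on $\mc\setminus[1,+\infty[$, where the roots all avoid the cut.
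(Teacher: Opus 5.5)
Your proof is correct. The paper gives no argument of its own here: it simply quotes the distribution relation from \cite[page 7]{Z}. Your route supplies the standard proof behind that citation: absolutely convergent power series for $|x|<1$, the filter $\sum_{j=0}^{N-1}\zeta^{jn}$ (equal to $N$ or $0$ according as $N\mid n$ or not), and analytic continuation of the labelling-independent sum to the connected domain $\mc\setminus[1,+\infty[$. This makes the section self-contained and pins down where the identity is meaningful. Since \eqref{Li2def} only defines ${\rm Li_2}$ off $[1,+\infty[$, and $y^N\in[1,+\infty[$ whenever $y\in[1,+\infty[$, your restricted statement is exactly what the paper uses. In Lemma \ref{factorest} the identity is applied to the $N$-th roots $e^{\bl_{0,N}+2ki\pi/N}$ of $u_0\in\mc\setminus\mr$.

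Two minor remarks. First, near $x=0$ there is no holomorphic branch of the $N$-th root, so holomorphy of the right-hand side there comes from your disk computation rather than from the local-branch argument; this is harmless. Second, for the optional boundary case, the side you name is the wrong one. Extending \eqref{Li2def} to $x>1$ with $\Log\in]-\pi,\pi]$ gives $\Log(1-t)=\Log(t-1)+i\pi$ on $]1,x]$. That is the boundary value from the lower half-plane, not from above. Either one-sided convention works, though. Near $]1,+\infty[$ the map $y\mapsto y^N$ preserves the side of the real axis, so as $x'\to x$ from one side, the root of $x'$ close to $x^{1/N}$ approaches it from the same side. The other roots stay away from the cut, so the identity passes to $]1,+\infty[$ by continuity.
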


In particular, for any $z \in \C$ and any positive integer $N$, we have:
$$ \sum_{k=1}^N {\rm Li_2}(e^{z+2ki\pi/N})=\dfrac{1}{N} {\rm Li_2}(e^{Nz}).$$

It immediately follows that $g_N(e^z)$ should be ``comparable" (again, we are speaking non-rigorously for now) to 
\begin{equation}\label{ratiogNclassmars25}
\exp\left( \frac{N}{2i\pi}{\rm Li_2}(e^z)- \frac{1}{2i\pi N}{\rm Li_2}(e^{Nz})\right),
\end{equation}
whose asymptotics look much simpler to study.

Let us now turn back to rigor. We need to be careful about both the requirements that $z \notin (2 i \pi/N) \Z$ and that all the points  $z+2ki\pi/N$ live in a domain where Lemma \ref{estimatehatS} can be applied (and if not, we need to keep track of correction terms).

\begin{lem}\label{factorest} Let $u_0\in \mc\setminus \mr$, and for any odd integer $N$, let $\bl_{0,N}$ be such that $\bu_{0,N}:= e^{\bl_{0,N}}$ satisfies $\bu_{0,N}^N = u_0$, and $\bl_{0,N} = \bl_{0,\infty} + O_{N\to \infty}\left (N^{-1}\right )$, where $\bl_{0,\infty}\in i\mr$. We have:

(i) $\vert g_N(\bu_{0,N})\vert$ is bounded from below and above by the product of $e^{\frac{N}{2\pi} \mathfrak{I}({\rm Li_2}(e^{\bl_{0,\infty}}))}$ with a positive constant independent of $N$. In particular, if ${\rm Li_2}(e^{\bl_{0,\infty}})\in \mr$, then $\vert g_N(\bu_{0,N})\vert$ is bounded from below and above by a positive constant independent of $N$.

(ii) the same result holds true by replacing $g_N(\bu_{0,N})$ with $\hat{S}_N(\bl_{0,N})$. 		
\end{lem}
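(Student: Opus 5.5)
The plan is to prove (ii) first and then deduce (i) from it by the product formula \eqref{prodformgN}. Write $z_N:=\bl_{0,N}$, so that $e^{Nz_N}=u_0$. The hypothesis $\bl_{0,N}=\bl_{0,\infty}+O(N^{-1})$ with $\bl_{0,\infty}\in i\mr$ gives $\mathfrak{R}(z_N)=\Log\vert u_0\vert/N\to 0$, so all the points involved lie in a half-strip $]-\infty,M]+i\mr$ for a fixed $M$. Since $u_0\notin\mr$, we have $N\mathfrak{I}(z_N)\equiv \arg(u_0)\not\equiv 0 \pmod{\pi}$. Hence each of the points $z_N$ and $z_N+2ki\pi/N$, $k=1,\dots,N$, stays at distance at least $\alpha\pi/N$ from the lattice $(2i\pi/N)\mz$, where $\alpha\in]0,1[$ depends only on $\arg(u_0)$. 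This is the same regime as in the proof of Lemma \ref{extboundPsiNjanv26} (ii), with $\delta_N=\alpha\pi/N$.

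For (ii), I first use the functional equation \eqref{eqfunchatSLog}, in its iterated form \eqref{shiftby2pi}, to translate $z_N$ by a multiple of $2i\pi$ into the strip $\mr+i]\delta_N-\frac{\pi}{N},2\pi-\delta_N-\frac{\pi}{N}[$. Each such shift multiplies $\vert\hat S_N\vert$ by $\vert 1-e^{Nz_N}\vert^{\pm1}=\vert 1-u_0\vert^{\pm1}$, a fixed nonzero constant, because $u_0\neq 1$. I then apply \eqref{equalitylemmaasyoct25}. The term $\frac1N\Psi_N$ is bounded by \eqref{boundPsiNseqstripjanv26}. For the leading term, $\mathfrak{R}\bigl(\frac{N}{2i\pi}{\rm Li_2}(e^{z_N})\bigr)=\frac{N}{2\pi}\mathfrak{I}\,{\rm Li_2}(e^{z_N})$, and the Taylor formula \eqref{Taylor0} gives $N\bigl({\rm Li_2}(e^{z_N})-{\rm Li_2}(e^{\bl_{0,\infty}})\bigr)=O(1)$. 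This step needs care when $e^{\bl_{0,\infty}}=1$, because the logarithmic singularity of the integrand in \eqref{Taylor0} is only integrable there. The remaining term is $-\frac12\Log(1-e^{z_N})$. When $e^{\bl_{0,\infty}}\neq1$ it is bounded. When $e^{\bl_{0,\infty}}=1$, which is the situation relevant later ($\bl_{0,\infty}=-2i\pi$), it is of order $\frac12\Log N$. There I have to check whether it is compensated by the error in \eqref{Taylor0} or by the shift; otherwise the conclusion only holds up to a factor in $PM(N)$, which is all that is used afterwards.

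For (i), I start from \eqref{prodformgN}, which writes $g_N(\bu_{0,N})=\hat S_N(z_N)\prod_{k=1}^N\hat S_N(z_N+2ki\pi/N)^{-1/N}$. The factor $\hat S_N(z_N)$ is handled by (ii). Each of the $N$ other factors is rewritten in the same way: shift it by the appropriate multiple of $2i\pi$ using \eqref{shiftby2pi}, then apply \eqref{equalitylemmaasyoct25}. Because of the exponent $1/N$, the bounded errors $\frac1N\Psi_N$ and the shift constants $\vert1-u_0\vert^{\pm1/N}$ contribute only a bounded total over the $N$ factors. The leading terms combine through the factorization proposition, giving $\frac{N}{2i\pi}{\rm Li_2}(e^{z_N})-\frac{1}{2i\pi N}{\rm Li_2}(u_0)$ up to $2i\pi\mz$-corrections coming from the shifts; the second term is $O(1/N)$. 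The half-logarithm terms contribute $-\frac{1}{2N}\sum_k\Log\vert1-e^{z_N+2ki\pi/N}\vert$. Since $\prod_k\vert1-e^{z_N+2ki\pi/N}\vert=\vert1-u_0\vert$, this sum equals $-\frac1{2N}\Log\vert1-u_0\vert\to0$, which removes any Riemann-sum issue.

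I expect the main obstacle to be the bookkeeping near the singular lattice. The factors with $z_N+2ki\pi/N$ within $O(1/N)$ of $2i\pi\mz$ lie just outside the strips where Lemma \ref{estimatehatS} applies directly. I would treat them as in the ``narrow half-strip'' step of Lemma \ref{extboundPsiNjanv26} (ii): apply one extra step of \eqref{eqfunchatSLog} and control $\Log\vert1-e^{w}\vert$ by $\Log(\alpha\pi/N)$. That costs at most $O(\Log N/N)$ per factor, and only $O(1)$ factors are concerned. The other delicate point is the $\frac12\Log(1-e^{z_N})$ term in (ii) in the degenerate case $e^{\bl_{0,\infty}}=1$.
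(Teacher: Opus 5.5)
Your route is the paper's: the product formula \eqref{prodformgN}, $2i\pi$-shifts costing bounded powers of $\vert 1-u_0\vert$, and Lemma \ref{estimatehatS} applied factor by factor with $\delta_N\asymp 1/N$, so that the $N$ factors carrying the exponent $-1/N$ contribute only $O(1)$. Then the factorization of ${\rm Li_2}$ and a Taylor expansion. You work with the identity \eqref{equalitylemmaasyoct25} rather than with the two-sided inequality of Lemma \ref{estimatehatS}, and that is the better choice. When $e^{\bl_{0,\infty}}\neq 1$ your argument is complete. After a $2i\pi$-shift, $\bl_{0,N}$ stays at fixed distance from $2i\pi\mz$, so $\Log(1-e^{\bl_{0,N}})$ and $\frac1N\Psi_N(\bl_{0,N})$ are bounded, and \eqref{Taylor0} gives an $O(1)$ error. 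Treating the few near-lattice factors with one extra step of \eqref{eqfunchatSLog} is also fine, since they carry the exponent $1/N$.

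The point you leave open is a genuine gap, and it cannot be closed: in the degenerate case nothing compensates, and the statement is false. Take $\bl_{0,N}=\frac1N(\Log(u_0)+2i\pi a)$ with $a\geq 1$ fixed. After one $2i\pi$-shift, costing the factor $\vert 1-u_0\vert$, this is exactly the case $\bl_{0,N}=-2i\pi+\frac1N(\Log(u_0)+i\pi a_0)$, $a=a_0/2$, used later in the paper. Put $w:=a+\frac{\Log(u_0)}{2i\pi}$.

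The point $\bl_{0,N}$ lies in the strip of Lemma \ref{estimatehatS} with $\delta=\mathfrak{I}(\bl_{0,N})+\frac{\pi}{N}>\frac{\pi}{N}$, so $\frac1N\Psi_N(\bl_{0,N})=O(1)$. Next, $-\frac12\Log\vert 1-e^{\bl_{0,N}}\vert=\frac12\Log N+O(1)$. Finally, ${\rm Li_2}(e^z)=\frac{\pi^2}{6}+z-z\Log(-z)+O(z^2)$ gives $\frac{N}{2\pi}\mathfrak{I}\,{\rm Li_2}(e^{\bl_{0,N}})=\mathfrak{R}(w)\Log N+O(1)$. Hence
\[
\vert \hat S_N(\bl_{0,N})\vert \asymp N^{\frac12+a+\frac{\arg(u_0)}{2\pi}},
\]
even though ${\rm Li_2}(e^{\bl_{0,\infty}})={\rm Li_2}(1)\in\mr$. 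For any fixed $a\in\mz$, using \eqref{eqfunchatSLog} to move the point, the exponent lies in $]a,a+1[$ because $u_0\notin\mr$, so it is never $0$.

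Your own treatment of $\prod_k\hat S_N(\bl_{0,N}+2ki\pi/N)^{-1/N}$ shows that this product is bounded above and below. So $\vert g_N(\bu_{0,N})\vert$ has the same polynomial growth, consistent with $\vert g_N(1)\vert=\sqrt N$ from \eqref{cstBBP}. In the case $e^{\bl_{0,\infty}}=1$ — the only one the paper uses — the correct statement is therefore your fallback: both quantities lie in $PM(N)$. That is all that step III of Section \ref{sec:sketchproof} and Corollary \ref{cor:reductionintjui25} need.

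The paper's proof obtains boundedness through two faulty steps, which are exactly the ones you flagged. First, the two-sided bound in Lemma \ref{estimatehatS} is derived from $\vert 1-e^z\vert^{1/2}\leq \mathrm{c}_M$ alone. That controls $-\frac12\Log\vert 1-e^z\vert$ only from below, so it misses the $\frac12\Log N$ when $\delta\asymp 1/N$. Second, the claim ${\rm Li_2}(e^{\bl_{0,N}})={\rm Li_2}(e^{\bl_{0,\infty}})+O(1/N)$ fails at $e^{\bl_{0,\infty}}=1$. There the derivative $-\Log(1-e^z)$ of ${\rm Li_2}(e^z)$ is unbounded, and the error is only $O(\Log N/N)$.
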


\proof First assume that $\Im(u_0) > 0$. Set
$$\textstyle \bl_{0,N} = \dfrac{1}{N}(\Log(u_{0})+2\pi i a_{0,N}' )$$
where $a_{0,N}'\in \{0, 1, \ldots, N-1\}$. Define
\begin{equation}\label{defdeltaM}
\delta_N:= \dfrac{1}{N}\min\{\Im(\Log(u_0)), \pi - \Im(\Log(u_0))\} \ \ \text{and} \ \ M:= |\Re(\Log(u_0))|.
\end{equation}
We want to apply the reasoning preceding Lemma \ref{factorest} to the terms $\hat S_N\left(z+2ki\pi/N\right)$ on the right side of \eqref{prodformgN}, taking $z=\bl_{0,N}$. We first notice that $\delta_N>0$, and $\bl_{0,N} \notin (2 i \pi/N) \Z$ as required (since $u_0\ne 1$). We then immediately see that
\begin{equation}\label{chaine}
\bl_{0,N}, \bl_{0,N} + \frac{2i\pi}{N}, \ldots,  \bl_{0,N} + \frac{2i\pi(N-1-a'_{0,N})}{N} \in [-M,M]+i[\delta_N-\frac{\pi}{N},2\pi - \delta_N-\frac{\pi}{N}]
\end{equation}
and
$$ \bl_{0,N} + \frac{2i\pi(N-a'_{0,N})}{N}, \ldots,  \bl_{0,N} + \frac{2i\pi(N-1)}{N} \in [-M,M]+i[2\pi+\delta_N-\frac{\pi}{N},4\pi - \delta_N-\frac{\pi}{N}].$$
These two sequences of scalars realize all the arguments on the right side of \eqref{prodformgN} but the last of the product, $ \bl_{0,N} + 2i\pi$. We can directly apply the inequalities of Lemma \ref{estimatehatS} for $z$ being any one among the $N-a'_{0,N}$ values of the sequence \eqref{chaine}. The error bound is 
\begin{equation}\label{errorboundgNoct25}
\Log(\mathrm{c}_M)+ \dfrac{B_{\delta_N}}{N} = \Log(\mathrm{c}_M) + \dfrac{B'}{\delta_N N} + \dfrac{B''}{N},
\end{equation}
which by \eqref{defdeltaM} is bounded from above by a constant independent of $N$. For $\bl_{0,N} + 2i\pi$ or any of the $a'_{0,N}$ values of the sequence below \eqref{chaine}, we need to modify the corresponding terms $\hat S_N\left(z+2ki\pi/N\right)$ on the right side of \eqref{prodformgN}, by re-shifting $z_k:=z+2ki\pi/N$ using $N$ times the formula \eqref{eqfunchatS}. We have
$$\hat{S}_N(z_k) = \dfrac{\hat{S}_N(z_k-2i\pi)}{1-e^{Nz_k}},$$
each factor $1-e^{Nz_k} = 1-u_0$ is raised at the power $-1/N$ in \eqref{prodformgN}, and these factors are no more numerous than $N$. So the re-shiftings add at worst a bounded non zero term to the error bound \eqref{errorboundgNoct25}, which changes only $\mathrm{c}_M$. Eventually, the ratio of \eqref{prodformgN} over \eqref{ratiogNclass0mars25}, or equivalently \eqref{ratiogNclassmars25}, is bounded from above and below by the product of $N+1$ positive constants independent of $N$, with $N$ of them raised at the power $1/N$. Hence the ratio \eqref{prodformgN}/\eqref{ratiogNclassmars25} is bounded by a constant. To conclude, it therefore suffices to prove that
\begin{equation}\label{ratiojuil25}
\exp\left( \frac{N}{2i\pi}{\rm Li_2}(e^{\bl_{0,N}})- \frac{1}{2i\pi N}{\rm Li_2}(e^{N\bl_{0,N}})
\right)
\end{equation}
is bounded from above and below by constants independent of $N$ multiplied with $e^{\frac{N}{2\pi} \mathfrak{I}({\rm Li_2}(e^{\bl_{0,\infty}}))}$. But ${\rm Li_2}(e^{N\bl_{0,N}})={\rm Li_2}(u_0)$, and $\bl_{0,N} = \bl_{0,\infty} + O_{N \to \infty}\left (N^{-1}\right )$ implies
$$ {\rm Li_2}(e^{\bl_{0,N}}) ={\rm Li_2}(e^{\bl_{0,\infty}})+O_{N \to \infty}(1/N)$$
by a Taylor expansion as in \eqref{Taylor0} (which is valid, since ${\rm Li_2}$ is differentiable on $\mc \setminus [1,+\infty[$, and $e^{N\bl_{0,N}}=u_0\notin ]1,+\infty[$ implies $e^{\bl_{0,N}} \notin ]1,+\infty[$). It follows
$$\exp\left( \frac{N}{2i\pi}{\rm Li_2}(e^{\bl_{0,N}})\right )= \exp\left(\frac{N}{2\pi} \mathfrak{I}({\rm Li_2}(e^{\bl_{0,\infty}}))\right)\exp (O_{N \to \infty}(1)).$$
This proves the last claim, and concludes the proof of the lemma when $\Im(u_0) > 0$. Similar arguments apply when $\Im(u_0) < 0$: take $\delta_N=\pi/N$, and start the sequence \eqref{chaine} at $\bl_{0,N}+ 2i\pi/N$. Only the arguments $\bl_{0,N}$ and $\bl_{0,N} + 2i\pi$ on the right side of \eqref{prodformgN} are missing from this sequence and the following one. With these changes one can conclude by reasoning as when $\Im(u_0) > 0$. \cvd

\subsection{The QHI of cusped manifolds}\label{sec:QHIcusped} The aim of this section is to introduce some general features of the QHI, provide links to the literature (thus explaining notations), and to explain the splitting formula of the QHI into the symmetry defects and the reduced QHI. We refer to \cite{GT} for the definition of the QHI of cusped manifolds by means of state sums over geometric branched ideal triangulations, and to \cite{AGT} for the general situation of non necessarily branched triangulations. The QHI of cusped manifolds used in this paper are those of \cite[Theorem 1.1]{AGT}, in the case where the bulk weights $h_f,h_c$ are zero, or are determined by the boundary weights $k_f,k_c$ (as for link complements). (By \cite[Corollary 1.8 (1)]{NA}, another choice of bulk weight would at worst modify these QHI by multiplication with a $4N$-th root of unity, which has no consequence on the results of this paper.)

Let $M$ be a cusped manifold. As usual we denote by $\bar{M}$ the compact manifold with interior $M$, $X(M)$ is the variety of {\it $PSL(2,\mc)$-characters of $M$}, and $X_{hyp}(M) \subset X(M)$ is the ``geometric'' irreducible component, containing the character $\rho_{hyp}$ of the complete hyperbolic holonomy (see Section \ref{GhypN}).

The QHI of $M$ are sequences of complex numbers $\Hh_N(M, \rho,k_c,k_f)$ modulo multiplication by $2N$-th roots of unity, indexed by the odd positive integers $N\geq 3$. The arguments of $\Hh_N(M, \rho,k_c,k_f)$ are:
\begin{itemize}
	\item[(i)] a character $\rho\in X_{hyp}(M)$, 
	\item[(ii)] classes $k_c\in H^1(\partial \bar{M},\mz)$ and $k_f\in H^1(\partial \bar{M},\mc)$ satisfying \eqref{fconstraint0}. 
\end{itemize}
The invariant $\Hh_N(M, \rho,k_c,k_f)$ is {\it computed} by the total contractions, denoted $\Hh_{N, (T,b,c)}$ and called {\it QHI state sums},  of certain tensor networks associated to the ideal triangulations $T$ of $M$, similarly as in the Turaev-Viro TQFTs. The QHI state sums are functions of combinatorial data on $T$ encoding $\rho\in X(M)$ and $k_c,k_f$ as in (i)-(ii): $\rho$ is encoded by a point $w_\rho$ of the gluing variety $G(T,b)$ (for some $b$), which is possible for most characters $\rho$ in $X_{hyp}(M)$ (see Remark \ref{mostrho}), and the classes $k_f$ and $k_c$ are a flattening weight and a charge weight, encoded by flattenings $f$ and charges $c$ as in Remark \ref{defflatcharge}-\ref{cfweight}. When $\rho$ is contained in $X_{hyp}(M)$ all the choices of $(T,b,w_\rho,c,f)$ eventually give the QHI state sums the same value, which by definition is $\Hh_N(M, \rho,k_c,k_f)$.

It is shown in \cite{NA} that, up to a possible stronger ambiguity modulo multiplication by $4N$th-roots of unity, which we denote by $=_{\mu_{4N}}$, we have a factorization into refined invariants (recall that $l\kappa := k_f-i\pi k_c$)
\begin{equation}\label{factQHIalphared}
	\Hh_N(M,\rho,k_f,k_c) =_{\mu_{4N}} \alpha_N(M,\rho,k_c;\mathfrak{s})\Hh_N^{red}(M,\rho,l\kappa;\mathfrak{s}),
\end{equation}
called respectively the {\it symmetry defect} and the {\it reduced} QHI. These refined invariants depend on an additional structure $\mathfrak{s}$ on $M$, called a {\it non ambiguous} structure, which is of topological nature ($\mathfrak{s}$ is an equivalence class of ideal triangulations under a restricted set of branched Pachner $2-3$ moves). The argument $l\kappa$ on the right side is the log weight, see \eqref{formulelkappa} and \eqref{deflkappa} (in \cite{NA} it was denoted $\kappa$, and called a fused weight). 

\begin{remark}\label{NAdefectmars25} {\rm When $M$ fibers over $S^1$, {\it any} fibration $\psi\colon M\ra S^1$ (with oriented fibers) defines a canonical non ambiguous structure $\mathfrak{s}_\psi$ on $M$, carried by the ideal triangulations of $M$ which are layered triangulations for the fibration (\cite[Prop. 6.3(1)]{NA}). Such triangulations are taut, which allows to associate to them a canonical charge, as in Remark \ref{defflatcharge}, whence a canonical charge weight $k_\psi$, as in Remark \ref{cfweight}, to the fibration (\cite[Prop. 4.3(2) and 4.8(2)]{NA}).  From the definition of the symmetry defect (\cite[page 808]{NA}), it is quite immediate to check that $\alpha_N(M,\rho,k_\psi;\mathfrak{s}_\psi)=_{4N} 1$ for every $\rho$, whence $\Hh_N(M,\rho,k_f,k_\psi) =_{\mu_{4N}} \Hh_N^{red}(M,\rho,k_f-i\pi k_\psi;\mathfrak{s}_\psi)$. Note, however, that the classes $k_\psi$ are very peculiar, and a layered triangulation can carry several (possibly taut) non ambiguous structures, that the symmetry defect $\alpha_N(M,\rho,k_c;\mathfrak{s})$ can distinguish (see the examples in \cite[Section 9]{NA}).} 
\end{remark}

Recall the relations \eqref{lkconstraint0}-\eqref{h2class}. By setting 
\begin{equation}\label{dconstraintclass} h_\rho(\gamma) := l\kappa(\gamma)-\Log(\delta_{w_\rho}(\gamma)) \in i\pi\mz
\end{equation}
and using that $l\kappa = k_f-i\pi k_c$ (see \eqref{deflkappa}), we can explain the notations of Theorem \ref{asyinvteo}:
\begin{equation}\label{HNdefh}
	\Hh_{N}(M,\rho,h_\rho,k_c) := \Hh_N(M,\rho,k_f,k_c).
\end{equation}

Now, for every class $a \in H^1(\partial \bar M; \pi i N \mz)$ we have $\Hh_N^{red}(M,\rho,l\kappa + a;\mathfrak{s}) = \Hh_N^{red}(M,\rho,l\kappa;\mathfrak{s})$ (see \cite[Theorem 1.6 (3)]{NA}). From this and \eqref{h2class} it follows that $\Hh_N^{red}(M,\rho,l\kappa;\mathfrak{s})$, regarded as a function of $l\kappa$, eventually depends only the boundary weight $\kappa\in H^1(\partial \bar M; \mc^*)$, defined in \eqref{kappaborddef}. We shall therefore denote
\begin{equation}\label{HNredkappamars25}
\Hh_N^{red}(M,\rho,\kappa;\mathfrak{s}):=\Hh_N^{red}(M,\rho,l\kappa;\mathfrak{s}).
\end{equation}
Recall that, since $\kappa$ is given by its values on a basis of $H_1(\partial \bar{M};\mz)$, the relations \eqref{lkconstraint0}-\eqref{h2class} show that $\kappa$ is a choice of $N$-th roots of the values of $\delta_{w_\rho}$ on these basis elements. Combining \eqref{factQHIalphared}, \eqref{HNdefh} and \eqref{HNredkappamars25} we can write
\begin{equation}\label{splitHNteo1.1}
\Hh_{N}(M,\rho,h_\rho,k_c) =_{\mu_{4N}} \alpha_N(M,\rho,k_c;\mathfrak{s})\Hh_N^{red}(M,\rho,\kappa;\mathfrak{s}).
\end{equation}
The invariant $\Hh_N(M,\rho,h_\rho,k_c)$ or $\Hh_N^{red}(M,\rho,\kappa;\mathfrak{s})$, for fixed $k_c$ and varying couple $(\rho,k_f)$, or fixed $\mathfrak{s}$ and varying couple $(\rho,\kappa)$ respectively, defines a rational function on the variety ${}_NX_{hyp}(M)$ of Section \ref{GhypN}  (\cite[Theorem 1.1]{AGT}, \cite[Corollary 1.8]{NA}). This function is obtained from the QHI state sums $\Hh_{N,(T,b,c)}$: these are rational functions on the space $G_{hyp,N}(T,b)$ defined in \eqref{GhypNdef}, and factor through the map $\widetilde{hol}$ in \eqref{grostildeholmars25}. 

\subsubsection{The case of $M=S^3\setminus 4_1$}\label{caseMsplitmars25} It follows immediately from definitions (see \cite[Section 8]{NA}) that $\alpha_N(M,\rho,k_c;\mathfrak{s})$ is in $PM(N)$, as defined in Section \ref{sec:sketchproof}, for every fixed $\rho\in X_{hyp}(M)$, $k_c\in H^1(\partial \bar M;\mz)$ and $\mathfrak{s}$. Because of the splitting \eqref{splitHNteo1.1}, the proof of Theorem \ref{asyinvteo} is therefore reduced to the asymptotic analysis of $\Hh_N^{red}(M,\rho,\kappa;\mathfrak{s})$ as $N\ra +\infty$.

Let us give the details of this argument for $M=S^3\setminus 4_1$. The invariant $\Hh_N^{red}(M,\rho,\kappa;\mathfrak{s})$ has a simple state sum $\Hh_{N}^{red}(\bu_{0,N},\bu_{1,N},\bv_{0,N},\bv_{1,N})$ on the branched triangulation $(T,b)$ of Figure \ref{Teight}, given in \eqref{form1} below. In the notations of Section \ref{4_1} the couple $(\rho,\kappa)$ is encoded by the parameters 
$$\bu_{k,N} = \exp\left(\frac{1}{N}\left(\Log(u_k)+ \pi i (N+1)a_k\right)\right)\ ,\ \bv_{k,N} = \exp\left(\frac{1}{N}\left(\Log(v_k) + \pi i (N+1)b_k\right)\right)$$
for $k\in \{0,1\}$, where $u_k,v_k\in \mc_*$ are fixed shape parameters of the tetrahedra of $T$. The non ambiguous structure $\mathfrak{s}$ is defined by the branching $b$ of $T$, and was studied in \cite[Section 9.2]{NA} (where it was denoted $[(T,\omega_1)]$). The symmetry defect $\alpha_N(M,\rho,k_c;\mathfrak{s})$ is computed on $(T,b)$ by the monomial $(\bu_{0,N}^{-1}\bv_{0,N}^{-1})^{\frac{N-1}{2}}$ for a certain charge $c$.

From \eqref{splitHNteo1.1} and this discussion it follows that the state sum expression of $\Hh_{N}(M,\rho,h_\rho,k_c)$ on $(T,b)$ is \begin{equation}\label{ssumrednonred}
	\Hh_{N,(T,b,c)}(\bu_N,\bv_N) := (\bu_{0,N}^{-1}\bv_{0,N}^{-1})^{\frac{N-1}{2}}\Hh_{N}^{red}(\bu_{0,N},\bu_{1,N},\bv_{0,N},\bv_{1,N}).
\end{equation}

\begin{lem}\label{lem:lim:sym:defect} We have:
$$\lim_{N\ra +\infty} \dfrac{2\pi}{N}\Log \left \vert (\bu_{0,N}^{-1}\bv_{0,N}^{-1})^{\frac{N-1}{2}} \right \vert =0.$$
\end{lem}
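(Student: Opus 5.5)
We write $\bu_{0,N}=\exp(\bl_{0,N}^\bu)$ and $\bv_{0,N}=\exp(\bl_{0,N}^\bv)$ with $\bl_{0,N}^\bu=N^{-1}(\Log(u_0)+\pi i(N+1)a_0)$ and $\bl_{0,N}^\bv=N^{-1}(\Log(v_0)+\pi i(N+1)b_0)$, as in \eqref{coldef} and \eqref{qshapeuvoct25}. The only feature of the power $(\bu_{0,N}^{-1}\bv_{0,N}^{-1})^{\frac{N-1}{2}}$ that enters its modulus is the real part of the exponent. Whatever determination of the power is used (as a monomial, or via the principal $\Log$ convention of the paper), its modulus is
$$\left\vert (\bu_{0,N}^{-1}\bv_{0,N}^{-1})^{\frac{N-1}{2}}\right\vert = \exp\left(-\frac{N-1}{2}\left(\mathfrak{R}(\bl_{0,N}^\bu)+\mathfrak{R}(\bl_{0,N}^\bv)\right)\right).$$
This holds because $\vert e^{w}\vert=e^{\mathfrak{R}(w)}$, and because adding multiples of $2\pi i$ to a logarithm does not change its real part.

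Next we compute these real parts. The color terms $\pi i(N+1)a_0$ and $\pi i (N+1)b_0$ are purely imaginary, so
$$\mathfrak{R}(\bl_{0,N}^\bu)=\frac{1}{N}\Log\vert u_0\vert,\qquad \mathfrak{R}(\bl_{0,N}^\bv)=\frac{1}{N}\Log\vert v_0\vert.$$
This gives
$$\frac{2\pi}{N}\Log\left\vert (\bu_{0,N}^{-1}\bv_{0,N}^{-1})^{\frac{N-1}{2}}\right\vert = -\frac{\pi(N-1)}{N^2}\Log\vert u_0v_0\vert .$$

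The shape parameters $u_0,v_0\in\mc_*$ are fixed by the character $\rho$, so they do not depend on $N$, and $\Log\vert u_0v_0\vert$ is a finite constant. The right-hand side is therefore $O(1/N)$, and it tends to $0$. The same computation applies verbatim to the $N$-colour normalization \eqref{defseqNcolor}, and also if $u_{0,N}$ and $v_{0,N}$ are allowed to vary in a bounded subset of $\mc_*$. In fact the modulus itself tends to $\vert u_0v_0\vert^{-1/2}$, so the monomial even lies in $PM(N)$.

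There is no real obstacle here. The one point to check carefully is that the ambiguity in the choice of root or logarithm affects only the argument and never the modulus; the exponent $\frac{N-1}{2}$ is an integer because $N$ is odd, so this is clear.
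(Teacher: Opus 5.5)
Your proof is correct and follows essentially the same route as the paper: both write $\bu_{0,N},\bv_{0,N}$ via the log-parameters $\frac{1}{N}(\Log(u_0)+\pi i(N+1)a_0)$ and $\frac{1}{N}(\Log(v_0)+\pi i(N+1)b_0)$, observe that only their real parts contribute to the modulus, and obtain $-\pi\frac{N-1}{N^2}\Log\vert u_0v_0\vert\to 0$. Your additional remark, that the modulus itself tends to $\vert u_0v_0\vert^{-1/2}$ and hence the monomial lies in $PM(N)$, is also correct.
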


\begin{proof} Indeed
\begin{align*}
	\dfrac{2\pi}{N}\Log \left \vert (\bu_{0,N}^{-1}\bv_{0,N}^{-1})^{\frac{N-1}{2}} \right \vert 
	&= \dfrac{2\pi}{N} \frac{N-1}{2}\Log \left \vert (\bu_{0,N}^{-1}\bv_{0,N}^{-1}) \right \vert 
	\\
	&= -\pi \frac{N-1}{N^2}\mathfrak{R}(\Log (u_{0})+\Log (v_{0}))
\end{align*}
using the above expressions of $\bu_{k,N}$, $\bv_{k,N}$ in the last equality. The result follows.\end{proof}

\subsection{The QHI state sum of $M=S^3\setminus 4_1$}\label{sec:statesum} In this section we describe the formula of $\Hh_{N}^{red}(M,\rho,\kappa; \mathfrak{s})$ in the case of the figure-eight knot complement $M$, using the branched ideal triangulation $(T,b)$ of Figure \ref{Teight}.

Recall the lifted holonomy map $\widetilde{hol}$ in \eqref{grostildeholmars25}; by \eqref{Ghypcas41mars25} we have $G_{hyp,N}(T,b)=G_N(T,b)$. For any $\rho \in hol(G(T,b))\subset X_{hyp}(M)$ and compatible boundary weight $\kappa$, let $(\bu_0,\bu_1,\bv_0,\bv_1)\in G_N(T,b)$ (as in Section \ref{4_1}) be such that $\widetilde{hol}(\bu_0,\bu_1,\bv_0,\bv_1)=(\rho,\kappa)$.  Denote by $\mathfrak{s}$ the non ambiguous structure carried by $(T,b)$. 

By \cite[Section 6.4, page 912]{AGT0} and the formula \eqref{splitHNteo1.1}, the reduced QHI of $(M,\rho,\kappa;\mathfrak{s})$ can be expressed by the state sum 
\begin{align}
\Hh_{N}^{red}(M,\rho,\kappa; \mathfrak{s}) & =  \Hh_{N}^{red}(\bu_0,\bu_1,\bv_0,\bv_1)\notag \\
& =  \frac{g_N(\bu_0)g_N(\bv_0^*)^*}{|g(1)|^2}\sum_{\alpha,\beta=0}^{N-1}
  \zeta^{\beta^2-\alpha^2} \omega_N(\bu_0,\bu_1^{-1}\vert \beta)\
  \omega_N(\bv_0^*,\bv_1^*{}^{-1}\vert \alpha)^*\nonumber \\
& = \frac{g_N(\bu_0)g_N(\bv_0^*)^*}{|g(1)|^2}\ \Sigma_N(\bu_0,\bu_1)\ \Sigma_N(\bv_0^*,\bv_1^*)^*  \label{form1} .
\end{align}
where $^*$ denotes the complex conjugation, the functions $\omega_N$ and $g_N$ were defined in \eqref{functomegadef} and \eqref{gfunctiondef}, and we put
\begin{equation}\label{sumS}
\Sigma_N(\bu_0,\bu_1) = \sum_{\beta=0}^{N-1} \zeta^{\beta^2}
\omega_N(\bu_0,\bu_1^{-1}\vert \beta).
\end{equation} 
 
\subsection{Integral representation} \label{INTREPEST} As we are going to consider the functions in \eqref{form1} when $N\rightarrow +\infty$, we re-introduce now the hidden dependence on $N$ in the notations, thus letting $(\bu_N)_N:=((\bu_{0,N},\bu_{1,N},\bu_{2,N}))_N$ be a sequence of triples of quantum shape parameters on $\Delta^0$ as in Section \ref{4_1}. Contrary to the more general situation considered in Section \ref{ASYWEIGHTS}, here we assume that $\bu_{k,N}^N$ is constant. Thus, as in \eqref{qshapeuvoct25} we write \begin{equation}\label{recapuvarmars25} \bu_{k,N} = \exp(\bl_{k,N}),\ \bl_{k,N}:= \frac{1}{N}(\Log(u_{k}) +\pi ia_{k}) + \pi i a_k, a_{k}\in \mz.\end{equation}
Using the formula \eqref{relomS0} one can write \eqref{sumS} as
\begin{equation}
\Sigma_N(\bu_{0,N},\bu_{1,N}) = 1 + \frac{1}{\hat S_N(\bl_{0,N})}\sum_{\beta=1}^{N-1} e^{\frac{2\pi i}{N} \beta^2 -\bl_{1,N}\beta} \textstyle \hat S_N(\bl_{0,N}+2\beta i\pi/N).\label{int01}
\end{equation}
Note that for $\beta\in \mz$ we have $e^{\bl_{1,N}\beta} = e^{(\bl_{1,N}+2i\pi k)\beta}$ for any $k \in \mz$, so the formula \eqref{int01} is invariant under translation of $\bl_{1,N}$ by $2i\pi$. In fact, because the QHI solely depend on the quantum shape parameters $\bu_{k,N}$ and $\bv_{k,N}$, by adding even integers to $a_0$ or $a_1$ if necessary we can assume that in the above formula we have
\begin{equation}\label{situation2}
\forall N,\ \mathfrak{I}(\bl_{0,N})\in ]-2\pi,0]\ ,\ \mathfrak{I}(\bl_{1,N})\in ]0,2\pi].
\end{equation}

Recall that the poles of $\hat S_N(z)$ are the points $z=2i\pi+2pi\pi/N$, $p\in \mn$, the simple poles being those with $p\in \{0,\ldots,N-1\}$ (see \eqref{PP1}). On another hand the poles of $\coth(Nz/2)$ are the points $z=2i\pi \beta/N$, $\beta\in \mz$, and they are all simple poles, with residue $2/N$. Because $\mathfrak{I}(\bl_{0,N})\subset ]-2\pi,0]$, it follows that the function
$$\textstyle\phi\colon z\mapsto \hat S_N(\bl_{0,N}+z)\coth(\frac{Nz}{2})\frac{N}{2}$$
is meromorphic in the half-space $\{\textstyle \mathfrak{I}(z) < 2\pi\}$, and has poles $\textstyle z_\beta:=2i\pi \beta/N$ where $\beta\in\mz$, $\beta\leq N-1$. These poles are simple, and ${\rm Res}(\phi,z_\beta) =\hat S_N(\bl_{0,N}+2\beta i\pi/N)$. 

Let $C_N$ be the boundary of a rectangle with counter clockwise orientation, sides parallel to the real and imaginary axes, containing the segment $\textstyle i[\frac{2\pi}{N},2\pi-\frac{2\pi}{N}]$ in its interior, and close to it so that we have constants $\varepsilon>0$ and $s_N^-,s_N^+\in ]0,\textstyle \frac{2\pi}{N}[$ satisfying  
\begin{equation}\label{choiceCN}
C_N \subset \{\vert \mathfrak{R}(z) \vert \leq \varepsilon\}\cap \{s_N^-\leq \mathfrak{I}(z) \leq 2\pi-s_N^+\}\ . \end{equation}
See Figure \ref{fig:contours:CN+-}. An immediate application of Cauchy's residue theorem gives:
\begin{lem}\label{lemintrep} Assuming \eqref{situation2} and \eqref{choiceCN}, we have
\begin{equation}\label{int1}
\Sigma_N(\bu_{0,N},\bu_{1,N}) =1+ \frac{1}{\hat S_N(\bl_{0,N})} \frac{N}{4i\pi}\int_{C_N} e^{\frac{N}{2i\pi}(z^2 - \bl_{1,N}z)}\hat S_N(\bl_{0,N} + z)\textstyle \coth(\frac{Nz}{2})dz.
\end{equation}
\end{lem}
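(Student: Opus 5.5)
The plan is to apply Cauchy's residue theorem to the meromorphic function
$$F(z):=\frac{N}{4i\pi}\,e^{\frac{N}{2i\pi}(z^2-\bl_{1,N}z)}\hat S_N(\bl_{0,N}+z)\coth\!\left(\tfrac{Nz}{2}\right)$$
on the rectangle bounded by $C_N$, and then compare the result with \eqref{int01}.

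First, I will locate the poles of $F$ inside $C_N$. The exponential factor is entire. Since $\mathfrak{I}(\bl_{0,N})\in\,]-2\pi,0]$ by \eqref{situation2}, the function $\hat S_N(\bl_{0,N}+z)$ is holomorphic for $\mathfrak{I}(z)<2\pi$. Indeed, its poles lie at $\bl_{0,N}+z\in P$, which forces $\mathfrak{I}(z)\geq 2\pi-\mathfrak{I}(\bl_{0,N})\geq 2\pi$. The rectangle lies in $\{\mathfrak{I}(z)\leq 2\pi-s_N^+\}$, so the only possible poles come from $\coth(Nz/2)$. These are at $z_\beta=2i\pi\beta/N$, and each is simple with residue $2/N$ for $\coth(Nz/2)$.

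Second, I will use \eqref{choiceCN} to decide which $z_\beta$ lie inside. The rectangle contains the segment $i[\frac{2\pi}{N},2\pi-\frac{2\pi}{N}]$, and it lies in the horizontal band $s_N^-\leq\mathfrak{I}(z)\leq 2\pi-s_N^+$ with $s_N^\pm\in\,]0,\frac{2\pi}{N}[$. Hence the enclosed poles are exactly $z_\beta$ for $\beta=1,\ldots,N-1$, and none of them lies on $C_N$. The pole $z_0=0$ is excluded because $s_N^->0$, and $z_N=2i\pi$ is excluded because $s_N^+>0$. The width $\varepsilon$ is irrelevant, since all poles are on $i\mr$.

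Third, I will compute the residues. At $z_\beta$ we have $(z_\beta^2-\bl_{1,N}z_\beta)\frac{N}{2i\pi}=\frac{2i\pi}{N}\beta^2-\bl_{1,N}\beta$, because $\frac{N}{2i\pi}\cdot\frac{(2i\pi)^2\beta^2}{N^2}=\frac{2i\pi\beta^2}{N}$. Therefore
$${\rm Res}(F,z_\beta)=\frac{N}{4i\pi}\cdot\frac{2}{N}\,e^{\frac{2\pi i}{N}\beta^2-\bl_{1,N}\beta}\hat S_N(\bl_{0,N}+2\beta i\pi/N)=\frac{1}{2i\pi}\,e^{\frac{2\pi i}{N}\beta^2-\bl_{1,N}\beta}\hat S_N(\bl_{0,N}+2\beta i\pi/N).$$
The counterclockwise integral $\int_{C_N}F\,dz$ is $2i\pi$ times the sum of these residues, which is exactly the sum $\sum_{\beta=1}^{N-1}$ appearing in \eqref{int01}. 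Dividing by $\hat S_N(\bl_{0,N})$ and adding the term $1$ gives \eqref{int1}. This step requires $\hat S_N(\bl_{0,N})\neq0$, which holds because $\hat S_N$ has no zeros in the strip $-\frac{2\pi}{N}<\mathfrak{I}(z)<2\pi$ shifted appropriately. In any case, it is already implicit in \eqref{int01}.

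The only step that needs care is the pole bookkeeping in the second paragraph. One must check that $s_N^\pm<\frac{2\pi}{N}$ keeps $z_1$ and $z_{N-1}$ inside while $z_0$ and $z_N$ stay outside. One must also check that no pole of $\hat S_N$ enters the rectangle, and this is guaranteed by the normalization \eqref{situation2}.
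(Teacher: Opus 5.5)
Your proposal is correct and is exactly the paper's argument: the paper also applies Cauchy's residue theorem. It uses \eqref{situation2} to see that $\hat S_N(\bl_{0,N}+z)$ is holomorphic on $\{\mathfrak{I}(z)<2\pi\}$, so that the only poles are the simple poles $z_\beta=2i\pi\beta/N$ of $\coth(Nz/2)$, and uses \eqref{choiceCN} to see that exactly $\beta=1,\ldots,N-1$ are enclosed by $C_N$.

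Your justification of $\hat S_N(\bl_{0,N})\neq 0$ via ``the strip shifted appropriately'' is vague, since $\bl_{0,N}$ need not lie in that strip. The clean reason is that the zeros of $\hat S_N$ are the points $-(l+1)\frac{2i\pi}{N}$, where $e^{Nz}=1$, whereas $e^{N\bl_{0,N}}=u_0\neq 1$. As you note, this nonvanishing is in any case already presupposed by \eqref{int01}.
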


\subsection{Some logarithmic limits of log-parameters}\label{loglimmars25} In this section we focus on the peculiar logarithmic limits of log-parameters \eqref{deflimlkmars26} selected by the two situations (a)-(b) of Theorem \ref{asyinvteo}. These log-parameters have the form \eqref{recapuvarmars25} (at least for $N$ large enough); that is, they have constant shape parameters $u_k\in \mc_*$ and constant edge colors $a_k\in\mz$. In this situation, and assuming \eqref{situation2}, as usual we put, for $k\in\{0,1\}$:
$$\bl_{k,\infty}^\bu := \lim_{N\ra +\infty} \bl_{k,N}.$$
The edge colors $a_0,a_1,a_2$ solve the linear system \eqref{tetrel41}-\eqref{edgerel41}-\eqref{standardf3}, where $a_0$ is a free variable. Then, taking $a_0$ even and $\geq 2$ if necessary (so that $\textstyle \mathfrak{I}(\Log(u_{0}) +\pi ia_{0})>0$), the assumption \eqref{situation2} implies
\begin{equation}\label{l0inftymars25}
\bl_{0,\infty}^\bu = -2i\pi.
\end{equation}
By the equations \eqref{standardf3} and the comments thereafter, when $a_0$ is positive and big enough, $a_1$ is negative. Then, again by \eqref{situation2}, we have the following alternative:
\begin{equation}\label{l1inftymars25}
\bl_{1,\infty}^\bu = \left \{\begin{matrix}
i\pi & \text{(case} (a): a_1 \ \text{is\ odd}),\\
2i\pi & \text{(case} (b): a_1 \ \text{is\ even}).
\end{matrix}
\right.
\end{equation}
Note that \eqref{l0inftymars25}-\eqref{l1inftymars25} describe the possible limits of the parameters of the integral representation of $\Sigma_N(\bu_{0,N},\bu_{1,N})$ in \eqref{int1}. 

Next consider $\Sigma_N(\bv_{0,N}^*,\bv_{1,N}^*)$. As usual set $\bv_{k,N} := \exp(\bl_{k,N}^\bv)$, $\textstyle \bl_{k,N}^\bv:= \frac{1}{N}(\Log(v_{k}) +\pi ib_{k}) + \pi i b_k$, $\bl_{k,\infty}^\bv := \textstyle \lim_{N\ra +\infty} \bl_{k,N}^\bv$. Then $\textstyle \bl_{k,N}^{\bv^*}= \frac{1}{N}(\Log(v_{k})^* -\pi ib_{k}) - \pi i b_k$, and we consider $\bl_{k,\infty}^{\bv^*} := \textstyle \lim_{N\ra +\infty} \bl_{k,N}^{\bv^*}$ assuming $\mathfrak{I}((\bl_{k,N}^{\bv^*}))$ satisfies \eqref{situation2}. By using the expressions of $b_0$, $b_1$ in the equations \eqref{standardf3}, the same analysis as above shows that, again for $a_0$ even, positive, and big enough, we have
\begin{equation}\label{l01inftymars25}
\bl_{0,\infty}^{\bv^*} = -2i\pi
\quad ,\quad \bl_{1,\infty}^{\bv^*} = \left \{\begin{matrix}
i\pi & \text{(case} (a): a_1 \ \text{is\ odd}),\\
2i\pi & \text{(case} (b): a_1 \ \text{is\ even}).
\end{matrix}
\right.
\end{equation}
These values give the possible limits of the parameters of the integral representation of $\Sigma_N(\bv_{0,N}^*,\bv_{1,N}^*)$.

\begin{lem}\label{rem:reduction:sigma:integral} The two cases (a) and (b) of Theorem \ref{asyinvteo} correspond respectively to the cases (a) and (b) of Theorem \ref{thm:main_triang} and to the cases (a):  $\bl_{0,\infty}^\bx = -2i\pi$, $\bl_{1,\infty}^\bx = i\pi$ and (b): $\bl_{0,\infty}^\bx = -2i\pi$, $\bl_{1,\infty}^\bx = 2i\pi$, for both $\bx = \bu$ and $\bv^*$. 
\end{lem}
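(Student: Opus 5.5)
The plan is to translate the parity of $h_\rho(\lambda_K)/2\pi i$ into the parity of the edge color $a_1$, and then read off the limits of $\bu_{1,N}$ and $\bv_{1,N}$ from \eqref{l1inftymars25}--\eqref{l01inftymars25}.

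\textbf{Step 1: relate $a_1$ to $h_\rho(\lambda_K)$.} By \eqref{standardf3}, with $l\kappa$ replaced by $h_\rho=l\kappa-\Log(\delta_{w_\rho})$ as explained after \eqref{standardf3}, we have
$$a_1=-2a_0+\frac{h_\rho(\lambda_K)}{2\pi i}-2,$$
possibly up to an even integer depending on $w$. Hence $a_1\equiv \frac{h_\rho(\lambda_K)}{2\pi i}\pmod 2$. Since $h_\rho(\lambda_K)\in 2\pi i\mz$ (this follows from \eqref{h2class} and $h_2(\lambda_K)=0$), exactly two cases occur. Case (a), $h_\rho(\lambda_K)\in 2\pi i(2\mz+1)$, is equivalent to $a_1$ odd. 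Case (b), $h_\rho(\lambda_K)\in 4\pi i\mz$, is equivalent to $a_1$ even. This dichotomy also gives the ``only two possibilities'' clause.

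\textbf{Step 2: compute the limits of $\bl_{k,N}$.} With the normalization \eqref{situation2}, take $a_0$ even and large; this is allowed since only the quantum parameters matter. Then \eqref{l0inftymars25} gives $\bl^\bu_{0,\infty}=-2i\pi$, and \eqref{l1inftymars25} gives $\bl^\bu_{1,\infty}=i\pi$ in case (a) and $2i\pi$ in case (b).

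To check the latter from \eqref{recapuvarmars25}, note that $\bl_{1,N}\to \pi i a_1$ modulo $2\pi i$. Reducing into $]0,2\pi]$ gives $i\pi$ when $a_1$ is odd and $2i\pi$ when $a_1$ is even.

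Therefore $\bu_{1,N}=e^{\bl_{1,N}}\to -1$ in case (a) and $\to 1$ in case (b).

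\textbf{Step 3: the $\bv$ side.} Use the expression of $b_1$ in \eqref{standardf3}, namely $b_1=2a_0-\frac{2l\kappa(\mu_K)}{\pi i}-\frac{l\kappa(\lambda_K)}{2\pi i}+2$, again with $l\kappa$ replaced by $h_\rho$. This gives $b_1\equiv a_1\pmod 2$, because $\frac{2h_\rho(\mu_K)}{\pi i}$ is even and $2a_0+2$ is even. So $\bv_{1,N}\to(-1)^{b_1}=(-1)^{a_1}$. The conjugate parameters then satisfy \eqref{l01inftymars25}: $\bl^{\bv^*}_{1,\infty}=i\pi$ or $2i\pi$ in the same two cases.

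\textbf{Main obstacle.} The delicate point is the bookkeeping of normalizations. For a general $w\neq w_{hyp}$, even integers may be added to the colors, depending on the sign of $\mathfrak{I}(u_0)$ and $\mathfrak{I}(v_0)$, and the conjugation in the $\bv$ factor must be tracked. I would check that these even shifts preserve parities and that the sign changes under conjugation do not alter the limit modulo $2\pi i$. Both hold because every correction term is an even multiple of $\pi i$.
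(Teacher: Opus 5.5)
Your proposal is the paper's proof, spelled out. The paper likewise reads off from \eqref{standardf3}, with $l\kappa$ replaced by $h_\rho$, that $a_1$ and $b_1$ have the parity of $h_\rho(\lambda_K)/2i\pi$, and then concludes from \eqref{l0inftymars25}--\eqref{l1inftymars25}--\eqref{l01inftymars25}.

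One point that both you and the paper leave implicit is the value $\bl_{0,\infty}^{\bv^*}=-2i\pi$ in \eqref{l01inftymars25}. It requires $b_0=h_\rho(\mu_K)/\pi i-a_0$ to be even, i.e.\ $h_\rho(\mu_K)\in 2\pi i\mz$, which is the introduction's convention that $h_\rho$ is $2\pi i\mz$-valued. This is stronger than the condition $h_\rho(\mu_K)\in i\pi\mz$ that you use for $b_1$. Indeed \eqref{eqGN} and \eqref{compbord} give $\kappa_\bw(\mu_K)=\bu_0\bv_0$, so $\bu_{0,N}\bv_{0,N}\to(-1)^{h_2(\mu_K)}$, and $\bu_{0,N}$, $\bv_{0,N}$ cannot both tend to $1$ unless that stronger condition holds.
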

\begin{proof} By the equations \eqref{standardf3} the parity of $a_1$ and $b_1$ is the parity of $h_\rho(\lambda_K)/2i\pi$. The conclusion then follows from \eqref{l0inftymars25}-\eqref{l1inftymars25}-\eqref{l01inftymars25}.
\end{proof}

\section{Asymptotics of classical integrals on a vertical contour}\label{sec:SPM}

The analysis of the integral on the RHS of \eqref{int1},  with the logarithmic limits $\bl_{0,\infty}$, $\bl_{1,\infty}$ specified by Lemma \ref{rem:reduction:sigma:integral}, will be done in Section \ref{sec:rect:contour}. Here, as a warm-up situation we focus on simpler integrals $I_{\pm,N}(\bl_{0,\infty},\bl_{1,\infty})$, that we call ``classical'' integrals, with integrands given by an exponential term taken from the asymptotics of the integrand in \eqref{int1} as $N\to +\infty$. These classical integrals are defined in Section \ref{sec:class_int}; there, we provide also motivations for the rest of Section \ref{sec:SPM}. In particular, in Lemma \ref{lemcritpts} we will see that the critical sets of the integrands recover a neighborhood of the geometric point $w_{hyp}$ of the gluing variety $G(T,b)$. The rest of Section \ref{sec:SPM} is devoted to the asymptotics of $I_{\pm,N}(\bl_{0,\infty},\bl_{1,\infty})$, again with the logarithmic limits specified by Lemma \ref{rem:reduction:sigma:integral}; the proofs will make no use of Lemma \ref{lemcritpts} (all related facts are checked in the given situations).

\subsection{Classical integrals and their critical sets}\label{sec:class_int} Recall the contour $C_N$ from  \eqref{choiceCN}. By the assumption \eqref{situation2}, when $z\in C_N$ both cases $0< \mathfrak{I}(\bl_{0,\infty}+z) < 2\pi$ and $-2\pi < \mathfrak{I}(\bl_{0,\infty}+z) < 0$ may happen. If $0< \mathfrak{I}(\bl_{0,\infty}+z) < 2\pi$, then $\hat S_N(\bl_{0,\infty}+z)$ is given by the formulas \eqref{intrepS}-\eqref{defShat}, and Lemma \ref{estimatehatS} yields
\begin{equation}\label{equiv1}
\hat S_N(\bl_{0,\infty} + z)= e^{\frac{N}{2i\pi} {\rm Li}_2(e^{\bl_{0,\infty} + z})}\mathcal{O}_{N\ra \infty}(1).
\end{equation}
If $-2\pi < \mathfrak{I}(\bl_{0,\infty}+z) < 0$, we get $\hat S_N(\bl_{0,\infty}+z) = (1-e^{N(\bl_{0,\infty}+z)})\hat S_N(\bl_{0,\infty}+z+2\pi i)$ by using $N$ times the functional equation \eqref{eqfunchatS}; so in this case we have 
\begin{equation}\label{equiv2}
\hat S_N(\bl_{0,\infty} + z)= \left\lbrace \begin{array}{rl} e^{N\bl_{0,\infty}}e^{\frac{N}{2i\pi} ({\rm Li}_2(e^{\bl_{0,\infty} + z})+2i\pi z)}\mathcal{O}_{N\ra \infty}(1) & {\rm if}\ \mathfrak{R}(\bl_{0,\infty} + z)>0\\ e^{\frac{N}{2i\pi} {\rm Li}_2(e^{\bl_{0,\infty} + z})}\mathcal{O}_{N\ra \infty}(1) & {\rm if}\ \mathfrak{R}(\bl_{0,\infty} + z)<0. \end{array}\right.
\end{equation}
It is easily checked that $\textstyle \coth(\frac{Nz}{2})$ is bounded on $C_N$, and therefore the integrand on the RHS of \eqref{int1} satisfies, for $z\in C_N$: 
\begin{multline}\label{equiv3mars26}
e^{\frac{N}{2i\pi}(z^2 - \bl_{1,N}z)}\hat S_N(\bl_{0,N} + z)\textstyle \coth(\frac{Nz}{2}) =\\ \left\lbrace \begin{array}{rl} e^{N\bl_{0,\infty}}e^{\frac{N}{2i\pi} \mathcal{L}_+(z;\bl_{0,\infty},\bl_{1,\infty})}\mathcal{O}_{N\ra \infty}(1) & {\rm if}\ \mathfrak{R}(\bl_{0,\infty} + z)>0 \ {\rm and}\\
& -2\pi < \mathfrak{I}(\bl_{0,\infty}+z) < 0,\\ e^{\frac{N}{2i\pi} \mathcal{L}_-(z;\bl_{0,\infty},\bl_{1,\infty})}\mathcal{O}_{N\ra \infty}(1) & {\rm otherwise}, \end{array}\right. 
\end{multline}
where 
\begin{equation}\label{defL+-}
\left\lbrace\begin{array}{ll}
\mathcal{L}_-(z;\bl_{0,\infty},\bl_{1,\infty}) := {\rm Li}_2(e^{\bl_{0,\infty} + z})+z^2- \bl_{1,\infty} z,& \\
\mathcal{L}_+(z;\bl_{0,\infty},\bl_{1,\infty}) := {\rm Li}_2(e^{\bl_{0,\infty} + z})+z^2 -(\bl_{1,\infty}-2i\pi) z. &
\end{array}\right.
\end{equation}
Note that the two halves of $C_N$, in the left and right half spaces of $\mc$, are homotopic to $i[s_N^-,2\pi-s_N^+]$ (with opposite orientations) relatively to their endpoints. We call 
\begin{equation}
I_{\pm,N}(\bl_{0,\infty},\bl_{1,\infty}) = \int_{i[s_N^-,2\pi-s_N^+]} e^{\frac{N}{2i\pi}\mathcal{L}_\pm(z;\bl_{0,\infty},\bl_{1,\infty})}dz\label{I+}
\end{equation}
the {\it classical integrals}.\\
Because of \eqref{equiv3mars26}, it is reasonable to expect that the asymptotics of the integrals $I_{\pm,N}(\bl_{0,\infty},\bl_{1,\infty})$ have a lot to do with those of the integral on the RHS of \eqref{int1} (see Lemma \ref{qcintjui25} for a rigourous statement). Then, in view of the saddle point method (see Section \ref{sub:SPM:PV}), we are naturally let to consider the critical sets of the functions $z\mapsto \mathcal{L}_\pm(z;\bl_{0,\infty}^\bx,\bl_{1,\infty}^\bx)$, where the parameters $\bl_{0,\infty}^\bx$, $\bl_{1,\infty}^\bx$, with $\bx=\bu$ and $\bv$, are associated to the tetrehedra $\Delta^0$ and $\Delta^1$, as usual. In particular, by Lemma \ref{lemYo} it is interesting to consider the situations where $(e^{\bl_{0,\infty}^{\bu}},e^{\bl_{1,\infty}^{\bu}},e^{\bl_{0,\infty}^{\bv}},e^{\bl_{1,\infty}^{\bv}}) \in \{0\}\times \mathcal{A}(T,b)\subset\mathcal{C}(T,b)\times \mathcal{A}(T,b)$. By \eqref{eqloginfinity} and \eqref{asycurvemu}, such a tuple $(\bl_{0,\infty}^{\bu}, \bl_{1,\infty}^{\bu},\bl_{0,\infty}^{\bv}, \bl_{1,\infty}^{\bv})$ satisfies the sole relation
\begin{equation}\label{asycurvelog}
\bl_{1,\infty}^{\bu}+2\bl_{2,\infty}^{\bu}-2\bl_{0,\infty}^{\bv}-\bl_{1,\infty}^{\bv} = 0 \quad {\rm mod}(2\pi i),
\end{equation}
and it determines an asymptotic angular holonomy by the system 
\begin{equation}\label{asycurvelog2}
\left\lbrace\begin{array}{ll} 
\bl_{0,\infty}^{\bu}+\bl_{1,\infty}^{\bu}+\bl_{0,\infty}^{\bv}+\bl_{1,\infty}^{\bv}  & = - i\arg(\kappa^\infty(\mu_K))  \quad {\rm mod}(2\pi i)\\
4\bl_{0,\infty}^{\bu}+2\bl_{1,\infty}^{\bu}  & = i\arg(\kappa^\infty(\lambda_K)) \quad {\rm mod}(2\pi i).\end{array}\right.
\end{equation} 
Denote by $\mathcal{A}'(T,b)$ the set of solutions of the equation \eqref{asycurvelog}. Let us define a sign
\begin{equation}\label{signcriteqhol}
\star:=\left\lbrace\begin{array}{ll} +1 & {\rm if} \ \ \textstyle 2\bl_{0}^{\bu,\infty}+\bl_{1}^{\bu,\infty} = \frac{i\arg(\kappa^\infty(\lambda_K))}{2}\quad  {\rm mod}(2\pi i)\\ -1 & {\rm  if} \ \ \textstyle 2\bl_{0}^{\bu,\infty}+\bl_{1}^{\bu,\infty}  = \frac{i\arg(\kappa^\infty(\lambda_K))}{2} +\pi i\quad {\rm mod}(2\pi i).\end{array}\right.
\end{equation}
\begin{lem}\label{lemcritpts} The function $\mathcal{L}_\pm(\cdot ;\bl_{0,\infty},\bl_{1,\infty})$ has a unique critical point $z_0:=z_0^\pm(\bl_{0,\infty},\bl_{1,\infty})$, and this point satisfies the equation:
\begin{equation}\label{criteqhol}
\star e^{-\frac{i\arg(\kappa^\infty(\lambda_K))}{2}}e^{2(\bl_{0,\infty}+z_0)} +e^{\bl_{0,\infty}+z_0}-1=0.
\end{equation}
Moreover, the image of the map 
$$\fonc{\Xi}{\mathcal{A}'(T,b)}{\mc^2}{(\bl_{0,\infty}^{\bu}, \bl_{1,\infty}^{\bu},\bl_{0,\infty}^{\bv}, \bl_{1,\infty}^{\bv})}{\left(e^{\bl_{0,\infty}^{\bu}+z_0^-(\bl_{0,\infty}^{\bu}, \bl_{1,\infty}^{\bu})},e^{\bl_{0,\infty}^{\bv}+z_0^+(\bl_{0,\infty}^{\bv}, \bl_{1,\infty}^{\bv})}\right)}$$
is contained in $G(T,b)$ and covers a neighborhood of $w_{hyp}$, the geometric solution to the gluing equations.
\end{lem}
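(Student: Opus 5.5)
Differentiating $\mathcal{L}_\pm$ is the natural starting point. Since $\frac{d}{dz}{\rm Li_2}(e^{w+z}) = -\Log(1-e^{w+z})$, the critical point equation reads
$$-\Log(1-e^{\bl_{0,\infty}+z}) + 2z - \bl_{1,\infty}' = 0,$$
where $\bl_{1,\infty}' = \bl_{1,\infty}$ for $\mathcal{L}_-$ and $\bl_{1,\infty}' = \bl_{1,\infty}-2i\pi$ for $\mathcal{L}_+$. Exponentiating gives $e^{2z}e^{-\bl_{1,\infty}'} = 1-e^{\bl_{0,\infty}+z}$, and the shift by $2i\pi$ disappears. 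Multiplying by $e^{2\bl_{0,\infty}}$ and writing $X := e^{\bl_{0,\infty}+z}$ turns this into
$$e^{-(2\bl_{0,\infty}+\bl_{1,\infty})}X^2 + X - 1 = 0 .$$
By the definition \eqref{signcriteqhol} of $\star$ we have $e^{-(2\bl_{0,\infty}+\bl_{1,\infty})} = \star e^{-i\arg(\kappa^\infty(\lambda_K))/2}$, which is exactly \eqref{criteqhol}. For uniqueness, I would argue that the exponentiated equation has at most two solutions $X$, i.e. two families $z$ modulo $2i\pi$. The logarithmic equation, with the principal branch and the relevant strip (the one containing the contour and fixed by \eqref{situation2}, where $\Log(1-e^{\bl_{0,\infty}+z})$ is single valued), should single out exactly one root and one lift. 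This requires a branch check of $\Log(1-X)$ against $2z-\bl_{1,\infty}'$. I would carry it out by comparing imaginary parts: for the two roots $X_1,X_2$ (with $X_1X_2 = -\star e^{i\arg/2}$), only one of them makes $\mathfrak{I}(2z-\bl_{1,\infty}') \in ]-\pi,\pi]$ compatible.

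For the second claim I would write $u_0 := X^{\bu} = e^{\bl^{\bu}_{0,\infty}+z_0^-}$ and $v_0 := X^{\bv} = e^{\bl^{\bv}_{0,\infty}+z_0^+}$. Each satisfies its quadratic,
$$c_{\bu} u_0^2 = 1-u_0, \qquad c_{\bv} v_0^2 = 1-v_0,$$
with $c_{\bx} = e^{-(2\bl_{0,\infty}^{\bx}+\bl_{1,\infty}^{\bx})}$. Multiplying, we get $c_{\bu}c_{\bv}u_0^2v_0^2 = (1-u_0)(1-v_0)$, so it is enough to show $c_{\bu}c_{\bv}=1$. The gluing equation \eqref{eqedgeu0v0} then follows.

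The relation $c_{\bu}c_{\bv}=1$, i.e. $2\bl^{\bu}_{0}+\bl^{\bu}_{1}+2\bl^{\bv}_{0}+\bl^{\bv}_{1} \equiv 0 \pmod{2i\pi}$, must come from \eqref{asycurvelog} combined with the tetrahedral relation $\bl_2 \equiv -\bl_0-\bl_1$ from \eqref{eqloginfinity}. Substituting gives $-2\bl_0^{\bu}-\bl_1^{\bu}-2\bl_0^{\bv}-\bl_1^{\bv} \equiv 0$, as required. I expect the bookkeeping to have to be adjusted here. The factor $e^{-2i\pi}$ from the $\mathcal{L}_+$ shift is harmless. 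However, $\bv$ enters via the conjugate parameters in the paper, so I may need $\bv$ versus $\bv^*$ and the sign $\star$ consistently chosen for both tetrahedra. I would fix these conventions first.

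Finally, surjectivity near $w_{hyp}$. As $(\bl^{\bu}_{0},\bl^{\bu}_{1},\bl^{\bv}_{0},\bl^{\bv}_{1})$ ranges over $\mathcal{A}'(T,b)$, the parameter $c_{\bu}$ takes every value in $\mc^*$: pick $\bl^{\bu}$ freely, then complete with $\bl^{\bv}$ solving \eqref{asycurvelog}. Hence $u_0$ ranges over all roots of $cu^2+u-1$, $c\in\mc^*$, which is every $u_0\neq 0,1$ via $c=(1-u_0)/u_0^2$. Given $u_0$ near $e^{i\pi/3}$, the value $v_0$ is then a root of $c^{-1}v^2+v-1$. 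I would check that the root selected by $z_0^+$ is the one near $e^{-i\pi/3}$; at $w_{hyp}$ one has $c_{\bu} = e^{-i\pi/3}/e^{2i\pi/3}\cdot$(appropriate) $=-1$ up to the sign conventions, matching $u_0^2 = u_0-1$ satisfied by $e^{\pm i\pi/3}$. By continuity of the selected branch, a neighborhood of $w_{hyp}$ is then covered. The main obstacle I expect is this branch selection: proving uniqueness of the critical point and showing that the chosen roots are the geometric ones near $w_{hyp}$, rather than their partners $u_0' = -u_0^{-1}c^{-1}$.
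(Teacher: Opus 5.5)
Your first two claims follow the paper's proof. Exponentiating the critical-point equations removes the $2i\pi$ shift and gives a single quadratic. The sign $\star$ of \eqref{signcriteqhol} turns $e^{-(2\bl_{0,\infty}+\bl_{1,\infty})}$ into $\star e^{-i\arg(\kappa^\infty(\lambda_K))/2}$. Multiplying the $\bu$- and $\bv$-quadratics and using $c_{\bu}c_{\bv}=1$ (from \eqref{asycurvelog} and the tetrahedral relation) gives \eqref{eqedgeu0v0}. No $\bv$ versus $\bv^*$ issue arises there, because any critical point of either $\mathcal{L}_\pm$ satisfies the same quadratic.

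For uniqueness you do not need any strip: \eqref{situation2} constrains $\bl_{k,N}$, not $z_0$, and the claim is global. The paper's argument runs as follows.
\begin{itemize}
\item Replacing $z$ by $z+2i\pi k$ shifts $\mathfrak{I}(2z)$ by $4\pi k$, so each root has at most one admissible lift.
\item The branch conditions for $\mathcal{L}_-$ and $\mathcal{L}_+$ are the complementary windows $]-\pi,\pi]$ and $]-3\pi,-\pi]$ for $\mathfrak{I}(2z-\bl_{1,\infty})$, which together make up one period of length $4\pi$.
\item Doubling the product relation $\arg X_1+\arg X_2\equiv \mathfrak{I}(2\bl_{0,\infty}+\bl_{1,\infty})+\pi \pmod{2\pi}$ puts the two roots in opposite windows.
\end{itemize}
So one root gives the critical point of $\mathcal{L}_-$ and the other that of $\mathcal{L}_+$, away from boundary cases.

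For the covering claim you take a different route from the paper. The paper evaluates $\Xi$ at one point of $\mathcal{A}'(T,b)$, asserts that the Jacobian there has real rank $2$, and uses smoothness of $G(T,b)$ at $w_{hyp}$ to conclude that $\Xi$ is a submersion. Your explicit inversion, via $c=(1-u_0)/u_0^2$ and $1/c=(1-v_0)/v_0^2$, would give more: essentially the whole curve. But the step ``hence $u_0$ ranges over all roots'' is not justified as written. It is exactly the branch selection you left open, and this is the real gap.

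The missing observation is that \eqref{defL+-} gives $\mathcal{L}_+(\cdot;\bl_{0,\infty},\bl_{1,\infty})=\mathcal{L}_-(\cdot;\bl_{0,\infty},\bl_{1,\infty}-2i\pi)$. Hence $z_0^+(\bl_{0,\infty},\bl_{1,\infty})=z_0^-(\bl_{0,\infty},\bl_{1,\infty}-2i\pi)$, and by the window analysis this picks the other root of the same quadratic. Since \eqref{asycurvelog} only holds modulo $2i\pi$, you can shift $\bl^{\bu}_{1,\infty}$ and $\bl^{\bv}_{1,\infty}$ independently by $2i\pi$ inside $\mathcal{A}'(T,b)$ without changing $c_{\bu}$ or $c_{\bv}$. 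So on each side either root can be attained, and every point of $G(T,b)$ off the window boundaries has a preimage. This replaces your continuity argument entirely.

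Your caution about base points is justified. At $(-2i\pi,i\pi,2i\pi,-i\pi)$, the point the paper uses, one finds $z_0^+(2i\pi,-i\pi)=-5i\pi/3$. So $\Xi$ there equals $(e^{i\pi/3},e^{i\pi/3})$, not $w_{hyp}$. The shifted point $(-2i\pi,i\pi,2i\pi,i\pi)$ also lies in $\mathcal{A}'(T,b)$, has $z_0^+(2i\pi,i\pi)=-i\pi/3$, and maps to $w_{hyp}$. The paper's Jacobian argument needs the same change of base point.
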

\proof We have
\begin{equation}\label{crit1} 
\begin{array}{l}
(\partial_z\mathcal{L}_-)(z_0;\bl_{0,\infty},\bl_{1,\infty}) =0\Longleftrightarrow 2z_0-\Log(1-e^{\bl_{0,\infty}+z_0})- \bl_{1,\infty}=0 ,\\
(\partial_z\mathcal{L}_+)(z_0;\bl_{0,\infty},\bl_{1,\infty}) =0\Longleftrightarrow 2z_0-\Log(1-e^{\bl_{0,\infty}+z_0})+2i\pi-\bl_{1,\infty}=0 .
\end{array}
\end{equation}
Passing to the exponential, both equations yield the following degree two equation in $e^{z_0}$:
\begin{equation}\label{criteq}
e^{-\bl_{1,\infty}}e^{2z_0} +e^{\bl_{0,\infty}}e^{z_0}-1=0.
\end{equation}
The two roots have product $-e^{\bl_{1,\infty}}$, hence their arguments sum to $\mathfrak{I}(\bl_{1,\infty})+\pi$ mod$(2\pi)$. On the other hand, the first equation of \eqref{crit1} imposes $2\mathfrak{I}(z_0)\in ]-\pi+\mathfrak{I}(\bl_{1,\infty});\pi+\mathfrak{I}(\bl_{1,\infty})]$, and the second imposes $2\mathfrak{I}(z_0)\in ]-3\pi+\mathfrak{I}(\bl_{1,\infty});-\pi+\mathfrak{I}(\bl_{1,\infty})]$. Combining the two facts, we see that $\mathcal{L}_+(z;\bl_{0,\infty},\bl_{1,\infty})$ and $\mathcal{L}_-(z;\bl_{0,\infty},\bl_{1,\infty})$ have exactly one critical point for every fixed values of $\bl_{0,\infty}$ and $\bl_{1,\infty}$. The identity \eqref{criteqhol} follows immediately from \eqref{criteq} and the second relation in \eqref{asycurvelog2}. This proves the first claim. \\
From \eqref{criteq} we get
\begin{equation}\label{criteq2oct24}
e^{-\bl_{1,\infty}-2\bl_{0,\infty}} = e^{-2(z_0+\bl_{0,\infty})}(1-e^{z_0+\bl_{0,\infty}}).
\end{equation}
It is also an immediate check that this identity and the first of \eqref{asycurvelog} imply that $(u_0,v_0):=(e^{\bl_0^{\bu,\infty}+z_0^\pm(\bl_0^{\bu,\infty}, \bl_1^{\bu,\infty})},e^{\bl_0^{\bv,\infty}+z_0^\pm(\bl_0^{\bv,\infty}, \bl_1^{\bv,\infty})})$ is a solution of the defining equation \eqref{eqedgeu0v0} of $G(T,b)$. This proves the second claim.
 
 Finally note that $\Xi(-2\pi i, \pi i,2\pi i, -\pi i) = (e^{\pi i/3}, e^{- \pi i/3})$. A straightforward computation shows that the Jacobian of $\Xi$ at $(-2\pi i, \pi i,2\pi i, -\pi i)$ has real rank $2$. Since $G(T,b)$ is a curve and $(e^{\pi i/3}, e^{- \pi i/3})$ is a smooth point, this shows that $\Xi$ is a submersion, and the last claim follows.\cvd
\medskip

We note the following simple caracterization of critical points $z_0$ which are saddle points of order $1$:
$$\partial_z^2\mathcal{L}_\pm(z_0;\bl_{0,\infty},\bl_{1,\infty}) = 1+(1-e^{{\rm l}_0 +z_0})^{-1} \ne 0.$$

\subsection{The saddle-point method}\label{sub:SPM:PV} Let $\lambda$ denote a piecewise continuously differentiable path contained in an open
set $\Delta \subset \mathbb{C}$, and let $z \mapsto g(z)$ and $z \mapsto f(z)$ denote two holomorphic functions on $\Delta$.
We consider the asymptotics of the integral
\begin{equation}\label{intIndec25}
I_n := \int_{\lambda} g(z) e^{n (-i) f(z)} dz\quad , \ n\to +\infty.
\end{equation}
Let us recall a concise version of the one-dimensional saddle point method theorem (see also \cite{O'Sul}). Note that usually the condition (iii) below is written as a maximum of the real part of the potential function. Here we consider $n(-i)f(z)$ instead of $nf(z)$, so we look for a maximum of the imaginary part instead (this is mostly in order to work with ${\rm Li}_2$ instead of $i{\rm Li}_2$ in the following sections).

\begin{prop}[\cite{PV}, Section 2]\label{prop:SPM:PV} Assume there exists an interior point $z_0 \in \lambda$ such that 
	\begin{enumerate}[label=(\roman*)]
\item $f'(z_0)=0$,
\item $f''(z_0) \neq 0$,
\item $\Im f(z) < \Im f(z_0)$ for all $z \neq z_0$ on the closure of $\lambda$ in $\mathbb{C}\cup\{\infty\}$,
	\end{enumerate}
and $g(z_0)\neq 0$. Then we have:
$$ I_n = \dfrac{\sqrt{2\pi}}{\sqrt{n}} e^{i \theta} g(z_0) \dfrac{1}{\sqrt{|f''(z_0)|}}
e^{n (-i) f(z_0)} \left ( 1+ o_{n \to \infty}(1) \right ),
$$
for some explicit $\theta \in \mathbb{R}$. In particular 
$$ \lim_{n\ra +\infty}  \dfrac{1}{n}\Log \left \vert I_n \right \vert = \Im f(z_0).
$$
\end{prop}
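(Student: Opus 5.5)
The plan is the classical Laplace/steepest descent argument, carried out in three steps: localize at $z_0$, straighten the phase by a holomorphic Morse change of variable, and evaluate a Gaussian integral. Write $F(z):=-if(z)$, so that $\mathfrak{R}F=\mathfrak{I}f$ and $|e^{nF}|=e^{n\,\mathfrak{I}f}$. Throughout I assume that $g$ is integrable along $\lambda$, which is automatic if $\lambda$ has finite length. If $\lambda$ reaches $\infty$, I use the fact that condition (iii) is imposed on the closure in $\mathbb{C}\cup\{\infty\}$, which keeps $\mathfrak{I}f$ bounded away from $\mathfrak{I}f(z_0)$ near the ends, and I assume in addition that $\int_\lambda |g|\,e^{n_0\,\mathfrak{I}f}|dz|<\infty$ for some $n_0$.

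\textbf{Step 1 (localization).} Fix a small disk $D=D(z_0,r)\subset\Delta$ and split $\lambda=\lambda_{\rm in}\cup\lambda_{\rm out}$ with $\lambda_{\rm in}=\lambda\cap D$. The closure of $\lambda_{\rm out}$ in $\mathbb{C}\cup\{\infty\}$ is compact and avoids $z_0$. By (iii) and upper semicontinuity there is therefore $\eta>0$ with $\mathfrak{I}f\le \mathfrak{I}f(z_0)-\eta$ on $\lambda_{\rm out}$. For $n\ge n_0$ this gives
\[
\Bigl|\int_{\lambda_{\rm out}} g\,e^{nF}\,dz\Bigr|\le e^{n(\mathfrak{I}f(z_0)-\eta)}\,e^{-n_0(\mathfrak{I}f(z_0)-\eta)}\int_{\lambda}|g|\,e^{n_0\mathfrak{I}f}\,|dz|,
\]
which is exponentially smaller than the expected main term $n^{-1/2}e^{n\,\mathfrak{I}f(z_0)}$.

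\textbf{Step 2 (Morse chart and contour deformation).} Since $f'(z_0)=0\neq f''(z_0)$, the holomorphic Morse lemma gives a biholomorphism $w=\phi(z)$ near $z_0$ with $\phi(z_0)=0$ and
\[
F(z)-F(z_0)=-w^2,\qquad \phi'(z_0)^2=-F''(z_0)/2=if''(z_0)/2 .
\]
In the $w$-chart the set $\{\mathfrak{I}f<\mathfrak{I}f(z_0)\}$ consists of the two opposite sectors $|\arg(\pm w)|<\pi/4$, and the steepest descent path through $z_0$ is $\phi^{-1}([-\delta,\delta])$. Condition (iii) says that $\lambda_{\rm in}\setminus\{z_0\}$ lies in these valleys and that its two endpoints on $\partial D$ satisfy $\mathfrak{I}f<\mathfrak{I}f(z_0)$. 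I then use Cauchy's theorem inside $D$ to replace $\lambda_{\rm in}$ by the concatenation of three pieces: a short arc from the entry point of $\lambda_{\rm in}$ to $\phi^{-1}(-\delta)$, the segment $\phi^{-1}([-\delta,\delta])$, and a short arc from $\phi^{-1}(\delta)$ to the exit point. These arcs are chosen to stay strictly inside the valleys, so they are again exponentially negligible by the argument of Step 1.

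\textbf{Step 3 (Gaussian evaluation).} On the segment the integral becomes
\[
e^{nF(z_0)}\int_{-\delta}^{\delta} G(w)\,e^{-nw^2}\,dw,\qquad G(w):=g(\phi^{-1}(w))\,(\phi^{-1})'(w),
\]
where $G$ is holomorphic with $G(0)=g(z_0)/\phi'(z_0)\neq0$. Writing $G(w)=G(0)+O(w)$ and substituting $w=t/\sqrt n$, this equals
\[
e^{nF(z_0)}\sqrt{\pi/n}\,\bigl(G(0)+o(1)\bigr).
\]
Since $|\phi'(z_0)|=\sqrt{|f''(z_0)|/2}$, the prefactor is $\sqrt{2\pi/n}\,|f''(z_0)|^{-1/2}e^{i\theta}g(z_0)$, where $\theta=-\arg\phi'(z_0)$. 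This $\theta$ is determined by $\arg f''(z_0)$ up to the sign fixed by the direction in which $\lambda$ traverses the saddle. Adding the exponentially small contributions from Steps 1 and 2 yields the stated asymptotic formula. Taking $\frac1n\Log|\cdot|$ gives the limit $\mathfrak{I}f(z_0)$, because $g(z_0)\ne0$ makes the leading coefficient nonzero and $|e^{nF(z_0)}|=e^{n\,\mathfrak{I}f(z_0)}$.

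The main obstacle is the topological part of Step 2. One must check that $\lambda_{\rm in}$ genuinely runs from one valley to the \emph{opposite} one, so that the deformed contour passes through the saddle along the descent direction. If $\lambda$ entered and left through the same valley, the local contribution would be exponentially small and the stated leading term would be wrong. I would handle this as follows. A piecewise $C^1$ path along which $\mathfrak{I}f$ has a strict maximum at the interior point $z_0$ has one-sided tangents there lying in the closed descent cones. I would argue, or take as the implicit standing hypothesis of \cite{PV} (satisfied in all later applications, where $\lambda$ is a segment), that the incoming and outgoing tangents lie in opposite cones.
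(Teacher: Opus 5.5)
This is the standard Morse-lemma/Gaussian argument, which is essentially what the cited source does (the paper only quotes the result and gives no proof of its own), and it is correct once the hypothesis you flag is added. You cannot derive that hypothesis from (i)--(iii): for $f(z)=-iz^2$, $g\equiv 1$, $\lambda=[1,0]\cup[0,e^{i\pi/8}]$ all stated conditions hold at $z_0=0$, yet $|I_n|\le \frac{\pi}{8}e^{-n/\sqrt{2}}$. So it must be assumed, and it is what the condition $k_2=k_1+1$ in the paper's later appeal to \cite[Corollary 1.4]{O'Sul} encodes. It does hold in every application, but not because $\lambda$ is a segment: in case (b) the contours $C'_3\cup C'_4$ have a corner at the saddle, entering through one valley and leaving through the opposite one.
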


In the following sections we will use this result to study the integrals $I_{\pm,N}(\bl_{0,\infty},\bl_{1,\infty})$ in \eqref{I+} for the particular values $\bl_{0,\infty}=-2i \pi$ and $\bl_{1,\infty}\in \{i \pi,2i\pi\}$ obtained in Section \ref{loglimmars25}. Therefore we set $$n:=\frac{N}{2\pi}$$
(as usual with $N\geq 3$ odd); $g$ will be the constant map equal to $1$, and $f$ one of the following two functions, introduced in \eqref{defL+-}:
\begin{equation}\label{defL+-modif}
	f(z) = \left\lbrace
	\begin{array}{ll}
		f_-(z):= \mathcal{L}_-(z;\bl_{0,\infty},\bl_{1,\infty}) := {\rm Li}_2(e^{\bl_{0,\infty} + z})+z^2- \bl_{1,\infty} z& 
		\\
		f_+(z) := \mathcal{L}_+(z;\bl_{0,\infty},\bl_{1,\infty}) := {\rm Li}_2(e^{\bl_{0,\infty} + z})+z^2 -(\bl_{1,\infty}-2i\pi) z . 
		\end{array}\right.
\end{equation}
With these notations, and for $\textstyle s_N^-,s_N^+\in ]0,\frac{2\pi}{N}[$, we are going to describe some asymptotics of the integrals
\begin{equation}\label{notInjui25} I_{n,\pm} :=  \int_{i[s_N^-,2\pi-s_N^+]}  e^{n(-i) f_{\pm}(z)} dz.
\end{equation}
Note that the endpoints $s_N^-,s_N^+$ of the contour are moving to $0$ or $2\pi i$ as $N\rightarrow +\infty$, which lie on branch cuts of the functions $f_\pm$. Eventually, our analysis will be achieved by taking $s_N^-$ not too small, namely (see the end of Section \ref{smallIsN})
\begin{equation}\label{sNvalue}
s_N^- := \alpha \frac{\pi}{N},\ \alpha\in ]0,1[.
\end{equation}
We begin in Section \ref{sub:small:vert:int} with the asymptotics of integrals along small contours in $i\mr$, near $0$ or $2i\pi$. These asymptotics will be used several times in the following sections.

\subsection{Asymptotics of integrals $\textstyle \int e^{\scriptscriptstyle{\frac{N}{2\pi} {\rm Cl}_2(t)}}dt$ }\label{sub:small:vert:int}

\subsubsection{Along $[\pi,2\pi]+2k\pi$, $k\in \mz$.}\label{smallpijuil25}

Recall that Clausen's function
\begin{equation}\label{Clausenf} {\rm Cl}_2(t) = -\int_0^t \Log\vert 2\sin(t/2)\vert dt
\end{equation}
is $2\pi$-periodic and odd (\cite{Z}). Therefore it is enough to make our analysis over $[-\pi,0]$. Since ${\rm Cl}'_2(t)= -\Log\vert 2\sin(t/2)\vert$ for all $t \notin 2\pi \Z$, we have ${\rm Cl}''_2(t)= -\cot(t/2)/2$ for the same $t$, which implies that the Clausen function is convex on $[-\pi,0]$. It vanishes at $-\pi$ and $0$, is negative on $]-\pi,0[$, and has a strict minimum ${\rm Cl}_2\left (-\pi/3\right )= -1.10149...$ that we will denote $-m$.

Since the graph of a convex function lies under its chords, it follows
$${\rm Cl}_2(t)\leqslant -\dfrac{3 m}{2\pi}t-\dfrac{3m}{2}\ \text{for\ all}\ t \in \left [-\pi,-\frac{\pi}{3}\right ], \ \text{and}\ 
{\rm Cl}_2(t)\leqslant \dfrac{3 m}{\pi}t \ \text{for\ all}\ t \in \left [-\frac{\pi}{3},0\right ].$$

Hence we can compute the upper bound:
\begin{align*}
0 < \int_{-\pi}^0 e^{\frac{N}{2\pi} {\rm Cl}_2(t)}dt 
&= \int_{-\pi}^{-\pi/3} e^{\frac{N}{2\pi} {\rm Cl}_2(t)}dt 
+\int_{-\pi/3}^0 e^{\frac{N}{2\pi} {\rm Cl}_2(t)}dt \\
&\leqslant \int_{-\pi}^{-\pi/3} e^{\frac{N}{2\pi} \left (-\frac{3 m}{2\pi}t-\frac{3m}{2}\right )}dt 
+\int_{-\pi/3}^0 e^{\frac{N}{2\pi} \frac{3 m}{\pi}t}dt \\
&= 
e^{-\frac{3Nm}{4\pi}}
\frac{(-4)\pi^2}{3 m N}
\left (e^{\frac{Nm}{4\pi}}- e^{\frac{3Nm}{4\pi}}\right )
+ \frac{2\pi^2}{3 m N}\left (1- e^{-\frac{Nm}{2\pi}}\right )\\
&= \frac{2\pi^2}{ m N}\left (1+o_{N\to \infty}(1)\right )\\
&= o_{N\to \infty}\left (\frac{1}{\sqrt{N}}\right ).
\end{align*}

Since $e^{\frac{N}{2\pi} {\rm Cl}_2}\geqslant 0$, this generalizes to
$$\int_{I} e^{\frac{N}{2\pi} {\rm Cl}_2(t)}dt= o_{N\to \infty}\left (\frac{1}{\sqrt{N}}\right )$$
for any sub-part $I$ of $[-\pi,0]$ or $[\pi,2\pi]
$.

\subsubsection{Along $[0,s_N^-]$} \label{smallIsN} The Clausen function is positive and increasing on $[0,\pi/3]$, so to get an upper bound it is enough to discuss the asymptotical behaviour of the constant integral 
$$\int_{[0, s_N^-]} e^{\frac{N}{2\pi} {\rm Cl}_2(s_N^-)} = s_N^- \cdot e^{\frac{N}{2\pi} {\rm Cl}_2(s_N^-)}. $$ 
We want to prove that $s_N^- \cdot e^{\frac{N}{2\pi} {\rm Cl}_2(s_N^-)} = o_{N \to \infty}\left ( \frac{1}{\sqrt{N}}\right )$.

Recall that $\textstyle s_N^-\in ]0,\frac{2\pi}{N}[$. For all $x\in (0,1)$ we have
$$ \frac{x}{2} < x-\frac{x}{3} < x - \frac{x^3}{3} < \sin(x) < x.$$
Thus, for any $N\geqslant 7$ and any $t\in (0,s_N^-]$ we have
$$ \frac{t}{2} = \frac{2(\frac{t}{2})}{2} < 2 \sin\left (\frac{t}{2}\right )< 2\left (\frac{t}{2}\right ) = t,$$
and therefore $\Log(t)-\Log(2) < \Log \left (2 \sin\left (t/2\right )\right ) < \Log(t)$. By integrating the opposite of this inequality, we obtain for all $t \in (0,s_N^-]$,
$$t -t\Log(t) <  {\rm Cl}_2(t) < \Log(2) t + t - t \Log(t).$$
(This notably implies that ${\rm Cl}_2(t) \sim_{t \to 0^+} -t\Log(t)$ but we will use the more precise double inequality in what follows.) By evaluating the previous double inequality at $t=s_N^-$ we get
$$-s_N\Log(s_N^-) <  {\rm Cl}_2(s_N^-) < \Log(2) s_N^- + s_N^- - s_N^- \Log(s_N^-)$$
and thus 
$$ s_N^- \cdot e^{\frac{N}{2\pi} {\rm Cl}_2(s_N^-)} <  s_N^-\cdot e^{\frac{N}{2\pi}s_N\left (\Log(2)+1-\Log(s_N^-)\right )}.$$
In particular, if $s_N^-=\alpha \frac{\pi}{N}$, where $\alpha \in ]0,1[$, then we get
$$ s_N^- \cdot e^{\frac{N}{2\pi} {\rm Cl}_2(s_N^-)} <  \alpha \frac{\pi}{N} \cdot e^{\frac{\alpha}{2}\left (\Log(2)+1+\Log(N) - \Log(\pi)-\Log(\alpha)\right )}= \dfrac{\text{Constant}}{N^{1-\frac{\alpha}{2}}}=o_{N\to \infty}\left (\frac{1}{\sqrt{N}}\right ).$$

\subsection{The limit in case (a)}\label{sub:SPM:a} Recall \eqref{l0inftymars25}-\eqref{l1inftymars25} and \eqref{defL+-modif}-\eqref{notInjui25}. In case (a) we have $\bl_{0,\infty}=-2i\pi$ and $\bl_{1,\infty}=i\pi$, so
\begin{equation}\label{defL+-modifcasea}
\left\lbrace
	\begin{array}{ll}
		f_-(z)= \mathcal{L}_-(z;-2i\pi,i\pi) := {\rm Li}_2(e^{z})+z^2 - i\pi z,& 
		\\
		f_+(z) = \mathcal{L}_+(z;-2i\pi,i\pi) := {\rm Li}_2(e^{z})+z^2 +i\pi  z.
		\end{array}\right.
\end{equation}

We will prove, using Proposition \ref{prop:SPM:PV} for $I_{n,-}$, that
\begin{equation}\label{asyIjui25}
\lim_{n\ra +\infty}  \dfrac{1}{n}\Log \left \vert I_{n,-} \right \vert = \dfrac{1}{2} {\rm Vol}(M)
  \ \ \text{and} \ \ 
I_{n,+} = O_{N\ra \infty}(1). 
\end{equation}
By Lemma \ref{lem:PM:SE} this will imply asymptotics of linear combinations of $I_{n,-}$ and $I_{n,+}$, such as
\begin{equation}\label{asyIjui25b}
\lim_{n\ra +\infty}  \dfrac{1}{n}\Log \left \vert I_{n,-} \pm I_{n,+} \right \vert = \dfrac{1}{2} {\rm Vol}(M).
\end{equation}

\subsubsection{Case (a), integral $I_{n,-}$}\label{sub:a:-}
Let us prove that $\lim_{n\ra +\infty}  \dfrac{1}{n}\Log \left \vert I_{n,-} \right \vert = \dfrac{1}{2} {\rm Vol}(M)$.

First we show that $f_-(z)=  {\rm Li}_2(e^{z})+z^2 - i\pi z$ and $\textstyle z_0:= {i \frac{\pi}{3}}$ satisfy the properties of Proposition \ref{prop:SPM:PV} for the contour $\lambda=i[\eta,2\pi-\eta']$, where $\eta,\eta' \in(0,1)$.

We have $\partial_z {\rm Li}_2(z) = -\Log(1-z)/z$, so $\partial_z {\rm Li}_2(e^{z}) = -\Log(1-e^{z})e^{-z}.e^{z}= -\Log(1-e^{z})$, and therefore
$$(f_-)'(z)= 2z -\Log(1-e^{z}) - i \pi,\ \text{and}\ (f_-)''(z)= 2 + \frac{e^z}{1-e^z}= 1 + \frac{1}{1-e^z}.$$ 
It follows $(f_-)'(z_0)=  (2i\pi/3)  -\Log(1-e^{i\pi/3}) - i \pi= (-i \pi/3) - \Log(e^{-i \frac{\pi}{3}})=0$, which checks (i). Also $(f_-)''(z_0)= 1 + (1-e^{i \frac{\pi}{3}})^{-1}=1+e^{i \frac{\pi}{3}} \neq 0$, which checks (ii).

Let us check property (iii). We parametrize the contour $\lambda=i[\eta,2\pi-\eta']$ by $z=i t$, where $t \in [\eta,2\pi-\eta']$. Let us prove that $[0,2\pi] \ni t \mapsto \Im(f_-)(it)$ has a strict maximum at $\textstyle\frac{\pi}{3}$.

Recall (\cite{Z}) that
\begin{equation}\label{BWdilogmars25}
\forall y \in \C, \ \mathfrak{I}({\rm Li}_2(y))   = -\arg(1-y)\Log\vert y\vert + D(y),
\end{equation}
where $\arg$ is the branch of the argument with values in $]-\pi,\pi]$, and $D(y)$ is the Bloch-Wigner dilogarithm, which computes the algebraic volume of an oriented hyperbolic ideal tetrahedron with cross--ratio modulus $y$. Hence we have
$$\Im(f_-)(it) = \Im\left ({\rm Li}_2(e^{it})+(it)^2 - i\pi it\right )
=\Im\left ({\rm Li}_2(e^{it})\right ) = D(e^{it})= {\rm Cl}_2(t),$$
where ${\rm Cl}_2$ is Clausen's function \eqref{Clausenf}. Since it achieves its strict local maxima at the points $\pi/3 + 2\pi n$, $n\in \mz$, (iii) follows.

We can now apply Proposition \ref{prop:SPM:PV}
and conclude that
 $$\lim_{n\ra +\infty}  \dfrac{1}{n}\Log \left \vert \int_{[i\eta,2i\pi-i\eta']}  e^{n(-i) f_{-}(z)} dz \right \vert 
 = \Im (f_-)(z_0) = D\left (e^{i\frac{\pi}{3}}\right )
 = \dfrac{1}{2} {\rm Vol}(M),$$
as the volume of the figure-eight knot complement is well-known to be twice the volume of a regular ideal tetrahedron, with angles $\textstyle \frac{\pi}{3}$.

Finally, consider the decomposition
\begin{align*}
	I_{n,-} &=  \int_{i[s_N^-,2\pi-s_N^+]}  e^{n(-i) f_{-}(z)} dz\\
	&=
	\int_{i[ s_N^-,\eta ]}  e^{n(-i) f_{-}(z)} dz
	+\int_{i[\eta,2\pi-\eta']}  e^{n(-i) f_{-}(z)} dz
	+\int_{i[2\pi-\eta',2\pi- s_N^+]}  e^{n(-i) f_{-}(z)} dz.
\end{align*}
By  the results of Section \ref{sub:small:vert:int}, an immediate triangle inequality shows that
the first and third integrals are $O_{n\to\infty}(\exp(n K))$ where $K>0$ is a value of ${\rm Cl}_2(t)$ outside of its strict maximum, and thus $ K<{\rm Vol}(M)/2$. Hence
 $$\lim_{n\ra +\infty}  \dfrac{1}{n}\Log \left \vert I_{n,-} \right \vert 
= \dfrac{1}{2} {\rm Vol}(M).$$

\subsubsection{Case (a), integral $I_{n,+}$}\label{sec:case(a)In+}
Let us prove that $I_{n,+} = O_{N\ra \infty}(1)$.

One can easily compute that the function $f_+(z)=  {\rm Li}_2(e^{z})+z^2 +i\pi  z$ admits a unique critical point at $\textstyle z=-i \frac{\pi}{3}$ (on which $\Im f_+ |_{i\R}$  has a minimum, not a maximum), but we will not use this.

We deform the contour $i[s_N^-,2\pi-s_N^+]$. Let $x$ be a large negative real number, and define $C^\infty$ as the path from $is_N^-$ to $2i\pi -is_N^+$ formed by the concatenation of the open oriented intervals $C_{s_N^-}:=i(s_N^-,0)$, $C^\infty_0:=(0,x)\subset \mr_-$, $C^\infty_x:=x+ i(0,2\pi)$, $C^\infty_{2\pi}:=(x,0)+i2\pi$, and $C'_{s_N^+}:=(2i\pi,2i\pi-is_N^+)$ (see Figure \ref{fig:contour:a:+}). Since $C^\infty$ and $i[s_N^-,2\pi-s_N^+]$ are homotopic relatively to their endpoints we have 
\begin{equation}\label{defneg}
	I_{n,+} = \int_{C^\infty} e^{\frac{N}{2\pi i}\mathcal{L}_+(z;-2\pi i,\pi i)}dz.
\end{equation}
\begin{figure}[!h]
\begin{tikzpicture}
\draw[color=lightgray,->] (-5,0)--(2,0);
\draw[color=lightgray,->] (0,-2)--(0,4);
\draw (-4,-.05) node {\Huge $\cdot$};
\draw (-4,-.25) node {$x$};
\draw (-4,-.05+2) node {\Huge $\cdot$};
\draw (-4,+.25+2) node {$x+2i\pi$};
\draw (+.025,-.05) node {\Huge $\cdot$};
\draw (+.25,-.25) node {$0$};
\draw (+.025,.4-.05) node {\Huge $\cdot$};
\draw (+.5,.4) node {$is_N^-$};
\draw (+.025,2-.4-.05) node {\Huge $\cdot$};
\draw (+1,2-.4) node {$2i\pi-is_N^+$};
\draw (+.025,2-.05) node {\Huge $\cdot$};
\draw (+.25+.2,2) node {$2i \pi$};
\draw[color=blue,thick] (0,0)--(-4,0);
\draw[color=blue] (-2,0-.3) node {$C^\infty_0$};
\draw[color=blue] (-2,0-.3-.5) node {$\Im(f_+)<0$};
\draw[color=blue,thick] (0,0+2)--(-4,0+2);
\draw[color=blue] (-2,0+2+.3) node {$C^\infty_{2\pi}$};
\draw[color=blue] (-2,0+2+.3+.5) node {$\Im(f_+)<0$};
\draw[color=blue,thick] (-4,0)--(-4,2);
\draw[color=blue] (-4-.5,1.2) node {$C^\infty_x$};
\draw[color=blue] (-4-1,.7) node {$\Im(f_+)<0$};
\draw[color=blue,thick] (0,2)--(0,2-.4);
\draw[color=blue] (-.5,1.6) node {$C'_{s_N^+}$};
\draw[color=red,thick] (0,0)--(0,.4);
\draw[color=red] (-.5,.4) node {$C_{s_N^-}$};
\end{tikzpicture}
\caption{The deformed contour $C^\infty=C_{s_N^-} \cup C^\infty_0 \cup C^\infty_x \cup C^\infty_{2\pi} \cup C'_{s_N^+}$ (blue parts are where $\Im(f_+)<0$ and the red part is where $\Im(f_+)$ is positive but very small)
	}\label{fig:contour:a:+}
\end{figure}
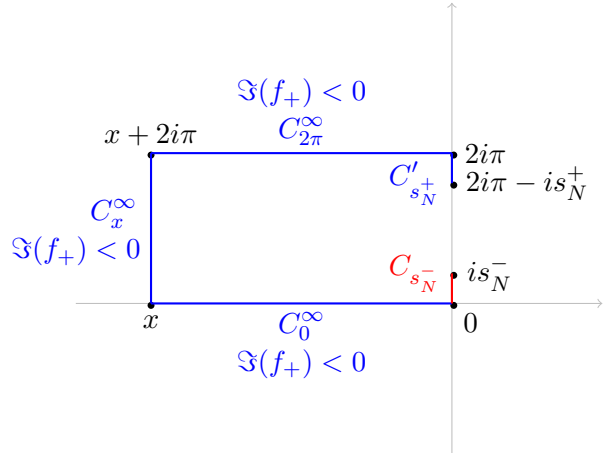

From the formula $f_+(z)=\mathcal{L}_+(z;-2\pi i,\pi i) = {\rm Li}_2(e^{z})+z^2+\pi iz$ we compute
$$\Im(f_+(z))= \Im({\rm Li}_2(e^{z}))+2 \Re(z)\Im(z) + \pi \Re(z).$$

For $z \in C_0^\infty \subset \R_-$ we thus have $\Im(f_+(z))= \Im({\rm Li}_2(e^{z}))+ \pi \Re(z)= \pi \Re(z)<0$ since $\Re(z)<0$ and the dilogarithm of a real number $<1$ is real.

For $z \in C_{2\pi}^\infty \subset \R_-+ i2\pi$ we have $\Im(f_+(z))= \Im({\rm Li}_2(e^{z}))+5\pi\Re(z)=5\pi\Re(z) <0$ since again $\Re(z)<0$ and the dilogarithm of  $e^z \in \exp(]-\infty;0[ + 2i \pi) \subset \R\setminus ]1;+\infty[$ is real.

For $z=x+it \in C^\infty_x$ we have $\Im(f_+(x+it))= \Im({\rm Li}_2(e^{x+it}))+2 x t + \pi x.$ Moreover ${\rm Li}_2(y)\sim y$ as $y\ra 0$ in $\mc$, and 
 therefore there exists $x_0 <0$ such that
  for every $t\in [0,2\pi]$ and $x\leq x_0$ we have
  $\textstyle |{\rm Li}_2(e^{x+it})|<\frac{1}{2}$.
  Hence, taking for instance $x \leqslant x_0-1$, for every $t\in [0,2\pi]$ we have
$$\Im(f_+(x+it))= \Im({\rm Li}_2(e^{x+it}))+2 x t + \pi x
\leqslant \dfrac{1}{2} + \pi(-1) <0.
$$
We conclude that for $|x|$ large enough, $\Im(f_+)<0$ on the contour $C^\infty_x$.

Now we remark that, having fixed for example $x=x_0-1<0$, and denoting by $\gamma\colon [0,1]\to \mc$ a parametrization of the path $C^\infty_0 \cup C^\infty_x \cup C^\infty_{2\pi} $, we have
\begin{align*}
	\left \vert \int_{C^\infty} e^{\frac{N}{2\pi i}f_+(z)}dz\right \vert
& \leqslant \int_{0}^1 |e^{\frac{N}{2\pi i}f_+(\gamma(t))} \gamma'(t)|dt \\
& \leqslant \int_{0}^1 e^{\frac{N}{2\pi}\Im f_+(\gamma(t))} |\gamma'(t)| dt\\
& \leqslant \int_{0}^1 e^{\frac{N}{2\pi}0}|\gamma'(t)| dt=2|x|+2\pi = O_{N\ra \infty}(1).
\end{align*}

Finally, the remaining two integrals on the small vertical contours $C_{s_N^-}, C'_{s_N^+}$ are also $O_{N\ra \infty}(1)$ from the conclusions of Section \ref{sub:small:vert:int}. Hence $I_{n,+} = O_{N\ra \infty}(1)$.

\subsection{The limit in case (b)}\label{sub:SPM:b}
For case (b) we have $\bl_{0,\infty}=-2i\pi$ and $\bl_{1,\infty}=2i\pi$. Hence
\begin{equation}\label{defL+-modifcaseb}
	\left\lbrace
	\begin{array}{ll}
		f_-(z)= \mathcal{L}_-(z;-2i\pi,2i\pi) := {\rm Li}_2(e^{z})+z^2 - 2i\pi z,& 
		\\
		f_+(z) = \mathcal{L}_+(z;-2i\pi,2i\pi) := {\rm Li}_2(e^{z})+z^2.& 
	\end{array}\right.
\end{equation}

We will again use Proposition \ref{prop:SPM:PV} to prove that $I_{n,-}$ and $I_{n,+}$ are 
of the form
\begin{equation}\label{asyIjui25c}
I_{n,-} = \dfrac{\text{Constant}_-}{\sqrt{N}}(e^{i r_+ N}+o_{N\to\infty}(1)), \ \
I_{n,+} = \dfrac{\text{Constant}_+}{\sqrt{N}}(e^{i r_- N}+o_{N\to\infty}(1)),
\end{equation}
where $\text{Constant}_-$ and $\text{Constant}_+$ are distinct non-zero complex numbers and $r_+,r_-\in \R$ are distinct as well (see \eqref{Cst+},\eqref{Cst-}).
In particular, this will imply that, e.g.,
\begin{equation}\label{asyIjui25d}
\lim_{n\ra +\infty}  \dfrac{1}{n}\Log \left \vert I_{n,-} \right \vert =
\lim_{n\ra +\infty}  \dfrac{1}{n}\Log \left \vert I_{n,+} \right \vert = 
\lim_{n\ra +\infty}  \dfrac{1}{n}\Log \left \vert I_{n,-} \pm I_{n,+} \right \vert = 0.
\end{equation}

\subsubsection{Case (b), integral $I_{n,+}$}\label{sub:b:+:perron}
Let us study the asymptotics of $I_{n,+}=  \int_{i[s_N^-,2\pi-s_N^+]}  e^{\frac{N}{2i\pi} f_{+}(z)} dz$.

From the formula $f_+(z)=\mathcal{L}_+(z;-2\pi i,2\pi i) = {\rm Li}_2(e^{z})+z^2$
we compute
\begin{align*}\Im(f_+(z))&= \Im({\rm Li}_2(e^{z}))+2 \Re(z)\Im(z),\\
(f_+)'(z)& =2z- \Log(1-e^z),\\
(f_+)''(z)& =2 + \dfrac{e^z}{1-e^z} = 1 + \dfrac{1}{1-e^z},\\
f_+(x+it)& = {\rm Li}_2(e^{x+it})+(x+it)^2 ,\\
\dfrac{\partial}{\partial x} \Im (f_+(x+it)) & = \Im \left (\dfrac{\partial}{\partial x}f_+(x+it)\right ) = 2t - \Im \left (\Log(1-e^{x+it})\right )
= 2t - \arg\left (1-e^{x+it}\right ),\\
\dfrac{\partial}{\partial t} \Im (f_+(x+it)) & = \Im \left (\dfrac{\partial}{\partial t}f_+(x+it)\right ) = 2x - \Re \left (\Log(1-e^{x+it})\right )
= 2x - \Log \vert 1-e^{x+it} \vert.
\end{align*}
One can easily compute that $(f_+)'(z_0)=0$ if and only if 
$$z_0 = a:= \Log ((-1+\sqrt{5})/{2}) \approx -0.48.$$
We will deform the contour $i[s_N^-,2\pi-s_N^+]$ (in blue in Figure \ref{fig:contour:b:+:perron}) to a new contour $C'$ (in dotted blue in Figure \ref{fig:contour:b:+:perron}) on which $\Im(f_+)$ is negative everywhere but in the point $a$ where  $\Im(f_+)(a)=0$ and on the small subcontour $[0,is_N]$ where it is nonnegative. Let us prove this is possible.

We construct $C'$ in six parts:
\begin{itemize}
\item $C'_1$ is vertical, going from $i s_N^-$ to $0$.
\item $C'_2$ is vertical, going from $0$ to $-i \eta$.
\item $C'_3$ is almost horizontal, going from $-i \eta$ to $a$.
\item $C'_4$ is vertical, going from $a$ to $a+2i\pi-i\eta'$.
\item $C'_5$ is horizontal, going from $a+2i\pi-i\eta'$ to $2i\pi-i\eta'$.
\item $C'_6$ is vertical, going from $2i\pi-i\eta'$ to $2i\pi-is_N^+$.
\end{itemize}

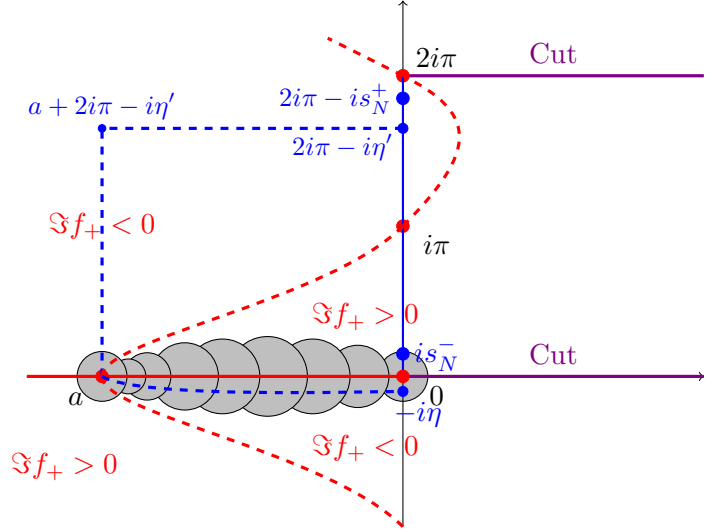
\begin{figure}[!h]
	\begin{tikzpicture}
		
		\draw[black,fill=gray!50,opacity=0.5, ] (0,0) circle (2ex); 		
		\draw[black,fill=gray!50,opacity=0.5, ] (-.6,0) circle (2.5ex);  
		\draw[black,fill=gray!50,opacity=0.5, ] (-1.2,0) circle (3ex);  
		\draw[black,fill=gray!50,opacity=0.5, ] (-1.8,0) circle (3.2ex);  
		\draw[black,fill=gray!50,opacity=0.5, ] (-2.4,0) circle (3ex);  
		\draw[black,fill=gray!50,opacity=0.5, ] (-2.9,0) circle (2.7ex);  
		\draw[black,fill=gray!50,opacity=0.5, ] (-3.4,0) circle (1.9ex);  
		\draw[black,fill=gray!50,opacity=0.5, ] (-3.65,0) circle (1.4ex);  
		\draw[black,fill=gray!50,opacity=0.5, ] (-4,0) circle (2ex);

		\draw[red] (-.5,.8) node {$\Im f_+ >0$};
		\draw[red] (-.5,-.95) node {$\Im f_+ <0$};		
		\draw[red] (-4.5,-1.2) node {$\Im f_+ >0$};		
		\draw[red] (-4,2) node {$\Im f_+ <0$};		
		
		\draw[color=black,->] (-5,0)--(4,0);		
		\draw[color=black,->] (0,-2)--(0,5);
		
		\draw[color=violet , very thick ] (0,0)--(4,0);		
		\draw[color=violet ] (2,.3) node {Cut};
		
		\draw[color=violet , very thick ] (0,0+4)--(4,0+4);		
		\draw[color=violet ] (2,.3+4) node {Cut};		
		
		\draw (-4-.35,-.3) node {$a$};
		\draw[red,fill=red] (-4,0) circle (.5ex); 
		\draw (-4,+.25+4-.65) node {\small \color{blue} $a+2i\pi-i\eta'$};
		\draw[blue,fill=blue] (-4,4-.7) circle (.3ex); 		
		\draw (+.25+.2,-.25) node {$0$};
		\draw[red,fill=red] (0,2) circle (.5ex); 
		\draw (+.25+.2,0.3) node {\color{blue}$i s_N^-$};
		\draw[blue,fill=blue] (0,.3) circle (.5ex); 		
		\draw (+.25+.2,2-.25) node {$i \pi$};
		\draw[red,fill=red] (0,4) circle (.5ex); 
		\draw (-.9,4-.3) node {\small \color{blue}$2i\pi - i s_N^+$};
		\draw[blue,fill=blue] (0,4-.3) circle (.5ex); 		
		\draw (-.8,4-.95) node {\small \color{blue}$2i\pi - i \eta'$};
		\draw[blue,fill=blue] (0,4-.7) circle (.4ex); 		
		\draw (+.25+.2,4+.25) node {$2i \pi$};
		\draw[color=red,very thick] (0,0)--(-5,0);
		\draw[color=blue,very thick,dashed] (0,0+4-.7)--(-4,0+4-.7);
		\draw[color=blue,very thick,dashed ] (-4,0)--(-4,4-.7);
		\draw[color=blue,thick] (0,0)--(0,4);
		\draw[color=red, very thick, dashed] (-4,0) ..controls +(0,-.5) and +(-1,1).. (0,-2);
		\draw[color=red,very thick, dashed] (-4,0) ..controls +(0,.5) and +(-1,-1).. (0,2);
		\draw[color=red,very thick, dashed] (0,2) ..controls +(1,1) and +(1,-.5).. (0,4);
		\draw[color=red, very thick,dashed] (0,4) ..controls +(-1,.5) and +(1,-.5).. (-1,4.5);
		
		\draw[color=blue , very thick, dashed] (-4,0) ..controls +(0.5,-.3) and +(-.5,0).. (0,-.2);
		\draw[blue,fill=blue] (0,-.2) circle (.4ex); 
		\draw (+.2,-0.5) node {\color{blue}$-i \eta $};		
		
		\draw[red,fill=red] (0,0) circle (.5ex);

	\end{tikzpicture}
	\caption{The deformed contour (in dashed blue) through zones where $\Im f_+<0$ and the point $a$ (in red) where $\Im f_+=0$.}\label{fig:contour:b:+:perron}
\end{figure}

First,  $\Im(f_+)$ is zero on the half-line $(-\infty,0]$ since ${\rm Li}_2$ takes real values on $[0,1]$. Moreover,  $\Im(f_+)(z)$ vanishes at $z=2i\pi$ (and also at $z=i\pi$ but we will not use this). The set of points where $\Im(f_+)$ vanishes is drawn in red in Figure \ref{fig:contour:b:+:perron}; the red dotted line's general form was computed via \textit{Mathematica}, but we will not use its exact position in our arguments.
\smallskip

\underline{Defining $C'_1$, and asymptotics:} We define $C'_1:=[0,i s_N^-]$ oriented downwards. Recall that for all $t\in \R$ and $z:=it$ we have $\Im({\rm Li}_2(e^z))={\rm Cl}_2(t)$, thus $\Im(f_+(it))={\rm Cl}_2(t)$. From Section \ref{smallIsN} we have:
$$ \int_{C'_1} e^{\frac{N}{2 i \pi}f_+(z)}dz =
\int_{0}^{s_N^-} e^{\frac{N}{2 \pi}\Im(f_+(it))}dt=
\int_{0}^{s_N^-}e^{\frac{N}{2 \pi}{\rm Cl}_2(t)}dt = O_{N\to \infty}\left (\frac{1}{N^{1-\frac{\alpha}{2}}}\right ) = o_{N\to \infty}\left (\frac{1}{\sqrt{N}}\right ).$$

\underline{Defining $C'_2$, and asymptotics:} We define $C'_2:=[-i\eta,0]$ oriented downwards. From Section \ref{smallpijuil25}, since $[-\eta,0] \subset [-\pi,0]$ we have:
$$ \left |\int_{C'_2} e^{\frac{N}{2 i \pi}f_+(z)}dz\right | \leqslant
\int_{-\eta}^{0} e^{\frac{N}{2 \pi}\Im(f_+(it))}dt=
\int_{-\eta}^{0}e^{\frac{N}{2 \pi}{\rm Cl}_2(t)}dt = o_{N\to \infty}\left (\frac{1}{\sqrt{N}}\right ).$$

\underline{Defining $C'_3$:}
The critical point $z=a$ of $f_+$ is simple, since $(f_+)''(z)$ only vanishes at $z=\Log(2)$. Thus, by standard complex analysis, there is a disk neighbourhood $D_a$ of $a$ (drawn as a grey disk) split into four zones by two transverse level lines where $\Im(f_+)=0$, and $\Im(f_+)$ is alternatively positive and negative in these four zones by circling around $a$. We already know that one level line is the horizontal line (drawn in full red), thus the other level line (part of the dotted red curve) crosses transversely this horizontal line.

Now, for any point $x \in (a,0]$ we have
$$\dfrac{\partial}{\partial t} \Im (f_+(x+it))\vert_{t=0} =  2x - \Log \vert 1-e^{x} \vert
=  2x - \Log (1-e^{x} )>0,$$
by definition of $a$ and the fact that
$ 2x - \Log \vert 1-e^{x} \vert>0 \Leftrightarrow (e^x)^2+(e^x)-1>0$.
Hence, $\Im(f_+)$ is increasing on every small enough vertical line (oriented upwards) above every $x\in(a,0]$. This confirms that a negative zone for $\Im(f_+)$ is below the red horizontal line and a positive zone is above it.

We can now define the part $C'_3$ of the new contour $C'$ (in Figure \ref{fig:contour:b:+:perron} this is the part which is between $-i\eta$ and $a$, oriented from right to left, where the value of $\eta>0$ will be decided in what follows). For any $x \in (a,0]$, define a small enough disk $D_x$ around $x$ so that the real line cuts $D_x$ into two half-disks, where $\Im(f_+)$ is negative on the bottom half-disk and positive on the upper half-disk. Such a small disk exists since $\textstyle \frac{\partial}{\partial t} \Im (f_+(x+it))\vert_{t=0}>0$.

The union $\textstyle D_a \cup \bigcup_{a<x \leqslant 0} D_x$ covers the compact set $[a,0]$, so there is a finite family of points $x_0=a<x_1<x_2<\ldots<x_p=0$ such that $\textstyle D_a \cup \bigcup_{1 \leqslant i \leqslant p} D_{x_i}$ covers $[a,0]$. Take $\eta>0$ small enough to be smaller than the radius of $D_{x_p}=D_0$.
 We can now define the part $C'_3$ as a smooth curve going from $-i\eta$ to $a$ while remaining in all bottom half-disks; it can even be chosen outside of $D_a$ as an horizontal line of imaginary part $-\eta$ (thus higher than the opposite of half the minimum of all radii of $D_{x_i}$), and smoothed out afterwards.

\smallskip

\underline{Defining $C'_4$:}
We define $C'_4:= [a,a+2i\pi-i\eta']$, oriented upwards. Let us prove that $\Im(f_+)<0$ on the vertical line $(a,a+2i\pi]$.
To do this we look for all $t \in (0,2\pi)$ at the real derivative
$$\dfrac{\partial}{\partial t} \Im (f_+(a+it))
=  2a - \Log \vert 1-e^{a+it} \vert
=  \Log (1-e^{a}) - \Log \vert 1-e^{a+it} \vert.
$$
We remark that this quantity is negative for all $t \in (0,2\pi)$, by a simple geometric argument: $1-e^a$ is the distance between $1$ and $e^a \in (0,1)$, which is smaller than the distance $\vert 1-e^{a+it} \vert$ between $1$ and any other point $e^{a+it}$ of the  circle of center $0$ and radius $e^a$ (since $1$ lives outside of the closed disk $\bar{D}(0,e^a)$). Hence, $\Im(f_+)$ decreases on $[a,a+2i\pi]$ oriented upwards, and thus $\Im(f_+)<0$ on the vertical line $(a,a+2i\pi]$.
\smallskip

\underline{Asymptotics on $C'_3 \cup C'_4$:}
From what precedes, we have  all the tools to apply the saddle point method (see Section \ref{sub:SPM:PV}) on the contour $\lambda=C'_3\cup C'_4$ for $g(z)=1, f(z)=f_+(z)$ and $z_0=a$. Since $N = 2\pi n$, we obtain:
$$ \int_{C'_3\cup C'_4} e^{\frac{N}{2 i \pi}f_+(z)}dz = \dfrac{{2\pi}}{\sqrt{N}} e^{i \theta}\cdot 1 \cdot \dfrac{1}{\sqrt{|f_+''(a)|}}
e^{\frac{N}{2i\pi} f_+(a)} \left ( 1+ o_{N \to \infty}(1) \right ).$$
For the curious reader, we remark that (\cite{Z}) $${\rm Li}_2(e^a)={\rm Li}_2\left (\frac{\sqrt{5}-1}{2}\right )= \frac{\pi^2}{10}-\Log^2\left (\frac{\sqrt{5}+1}{2}\right ),$$
and thus $f_+(a)={\rm Li}_2(e^a) +a^2$ has the unexpectedly simple expression
$\textstyle f_+(a)= \frac{\pi^2}{10} $. For our purposes, we only need to know that this is a real number, i.e. $\Im(f_+(a))=0$ which we knew already.

Moreover, $(f_+)''(a)=1+\frac{1}{1-e^a}= \frac{5+\sqrt{5}}{2}$. Hence we have
$$ \int_{C'_3\cup C'_4} e^{\frac{N}{2 i \pi}f_+(z)}dz = \dfrac{\text{Constant}_+}{\sqrt{N}}(e^{i r_+ N}+o_{N\to\infty}(1)),$$
where $\text{Constant}_+ \in \C^*$ has modulus $2\pi \left (\frac{5+\sqrt{5}}{2}\right )^{-\frac{1}{2}}=3.303...$ and $r_+=-\frac{\pi}{20} \in \R$.
\smallskip

\underline{Defining $C'_5$, and asymptotics:} We define $C'_5:=[a+2i\pi-i\eta',2i\pi-i\eta']$ oriented from left to right.
Since $\partial_x \Im (f_+(x+it)) = 2t - \arg\left (1-e^{x+it}\right )$ for all $x+it$ in the domain of definition of $f_+$, and since $\arg$ lies in $(-\pi,\pi]$, we deduce that for any fixed $t>\pi/2$ the function $\left (x\mapsto \Im (f_+(x+it))\right )$ is strictly increasing on the horizontal line of altitude $t$ oriented from left to right.

In particular for $t=2\pi - \eta'$, $\Im (f_+)$ is maximal on $C'_5$ at its right endpoint $2i\pi-i\eta'$ where it takes the value ${\rm Cl}_2(2\pi-\eta')<0$.
Hence 
$$ \left |\int_{C'_5} e^{\frac{N}{2 i \pi}f_+(z)}dz\right | \leqslant
|a|e^{\frac{N}{2 \pi}{\rm Cl}_2(2\pi-\eta')} = o_{N\to \infty}\left (\frac{1}{\sqrt{N}}\right ).$$

\underline{Defining $C'_6$, and asymptotics:} We define $C'_6:=i[2\pi-\eta',2\pi-s_N^+]$ oriented upwards. From Section \ref{sub:small:vert:int}, since $[2\pi-\eta',2\pi-s_N^+] \subset [\pi,2\pi]$ we have:
$$ \left |\int_{C'_6} e^{\frac{N}{2 i \pi}f_+(z)}dz\right | \leqslant
\int_{2\pi-\eta'}^{2\pi-s_N^+} e^{\frac{N}{2 \pi}\Im(f_+(it))}dt=
\int_{2\pi-\eta'}^{2\pi-s_N^+} e^{\frac{N}{2 \pi}{\rm Cl}_2(t)}dt = o_{N\to \infty}\left (\frac{1}{\sqrt{N}}\right ).$$

This concludes the construction of the new contour $C'$, and by summing the integrals on each part, we obtain
$$ I_{n,+}=\int_{C'} e^{\frac{N}{2 i \pi}f_+(z)}dz = \dfrac{\text{Constant}_+}{\sqrt{N}}(e^{i r_+ N}+o_{N\to\infty}(1)),$$
where
\begin{equation}\label{Cst+}
\vert \mathrm{Constant}_+ \vert  = 2\pi \left (\frac{5+\sqrt{5}}{2}\right )^{-\frac{1}{2}}=3.303...,\ r_+=-\frac{\pi}{20}\in \R.
\end{equation} 

\subsubsection{Case (b), integral $I_{n,-}$}\label{sec:bIn-janv26} From the formula $f_-(z)=\mathcal{L}_-(z;-2\pi i,2i\pi) = {\rm Li}_2(e^{z})+z^2 -2i\pi z$ we compute
\begin{align*}\Im(f_-(z))&= \Im({\rm Li}_2(e^{z}))+2 \Re(z)\Im(z)-2\pi\Re(z),\\
(f_-)'(z)&=2z- \Log(1-e^z)-2i\pi,\\
(f_-)''(z)& =2 + \dfrac{e^z}{1-e^z} = 1 + \dfrac{1}{1-e^z},\\
f_-(x+it)&= {\rm Li}_2(e^{x+it})+(x+it)^2 -2i\pi(x+it),\\
\dfrac{\partial}{\partial x} \Im (f_-(x+it)) & = \Im \left (\dfrac{\partial}{\partial x}f_-(x+it)\right ) = 2(t-\pi) - \Im \left (\Log(1-e^{x+it})\right )
\\ &= 2(t-\pi) - \arg\left (1-e^{x+it}\right ),\\
\dfrac{\partial}{\partial t} \Im (f_-(x+it)) & = \Im \left (\dfrac{\partial}{\partial t}f_-(x+it)\right ) = 2x- \Re \left (\Log(1-e^{x+it})\right )
= 2x - \Log \vert 1-e^{x+it} \vert.
\end{align*}
One can easily compute that $(f_-)'(z_0)=0$ if and only if $$z_0 = b+i\pi:= \Log((1+\sqrt{5})/2)+i\pi \approx 0.48+i\pi.$$

As in the previous section, we will deform the contour $i[s_N^-,2\pi-s_N^+]$ (in blue in Figure \ref{fig:contourC':b:+}) to a new contour $C'$ (in dotted blue in Figure \ref{fig:contourC':b:+}). In Figure \ref{fig:contourC':b:+}, red dots and lines are points where one can easily prove that $\Im(f_-)=0$, and the red dotted curve is suggested by \textit{Mathematica}.

We construct $C'$ in five parts:
\begin{itemize}
	\item $C'_1$ is horizontal, going from $i s_N^-$ to $1+is_N^-$.
	\item $C'_2$ is vertical, going from $1+is_N^-$ to $1+i(\pi-\epsilon)$, where $\epsilon>0$ is small enough.
	\item $C'_3$ is almost horizontal, going from $1+i(\pi-\epsilon)$ to the saddle point $b+i\pi$.
	\item $C'_4$ is almost horizontal, going from the saddle point $b+i\pi$ to $i(\pi+\epsilon)$.
	\item $C'_5$ is vertical, going from $i(\pi+\epsilon)$ to $2i\pi-is_N^+$.
\end{itemize}

\begin{figure}[!h]
	\begin{tikzpicture}
	\begin{scope}[xshift=4cm,yshift=2cm]
		\draw[black,fill=gray!50,opacity=0.5, ] (0,0) circle (2ex); 		
		\draw[black,fill=gray!50,opacity=0.5, ] (-.6,0) circle (2.5ex);  
		\draw[black,fill=gray!50,opacity=0.5, ] (-1.2,0) circle (3ex);  
		\draw[black,fill=gray!50,opacity=0.5, ] (-1.8,0) circle (3.2ex);  
		\draw[black,fill=gray!50,opacity=0.5, ] (-2.4,0) circle (3ex);  
		\draw[black,fill=gray!50,opacity=0.5, ] (-2.9,0) circle (2.7ex);  
		\draw[black,fill=gray!50,opacity=0.5, ] (-3.4,0) circle (1.9ex);  
		\draw[black,fill=gray!50,opacity=0.5, ] (-3.65,0) circle (1.4ex);  
		\draw[black,fill=gray!50,opacity=0.5, ] (-4,0) circle (2ex);
	\end{scope}

		\draw[red] (1,3) node {$\Im f_- <0$};		
		\draw[red] (3.2,1.2) node {$\Im f_- <0$};				
		\draw[red] (3,3) node {$\Im f_- >0$};		
		\draw[red] (1,1) node {$\Im f_- >0$};				
%
%

		\draw[color=black,->] (-2,0)--(8,0);		
		\draw[color=black,->] (0,-2)--(0,5);
		
		\draw[color=violet , very thick ] (0,0)--(8,0);		
		\draw[color=violet ] (2,-.3) node {Cut};
		
		\draw[color=violet , very thick ] (0,0+4)--(8,0+4);		
		\draw[color=violet ] (2,.3+4) node {Cut};		
		
		\draw (2.25+.75,2+.3) node {$b+i\pi$};
		\draw[red,fill=red] (2.25,2) circle (.5ex); 
		\draw (+.25+.2,-.25) node {$0$};
		\draw[red,fill=red] (0,2) circle (.5ex); 
		\draw (-.25-.2,0.3) node {\color{blue}$i s_N^-$};
		\draw[blue,fill=blue] (0,.3) circle (.5ex); 		
		\draw (-.25-.2,2-.25) node {$i \pi$};
		\draw[red,fill=red] (0,4) circle (.5ex); 
		\draw (-.9,4-.3) node {\small \color{blue}$2i\pi - i s_N^+$};
		\draw[blue,fill=blue] (0,4-.3) circle (.5ex); 		
		\draw (+.25+.2,4+.25) node {$2i \pi$};
		\draw[color=red,very thick] (0,2)--(8,2);
		\draw[color=blue,thick] (0,0)--(0,4);
		\draw[color=red, very thick, dashed] (0,0) ..controls +(3,0) and +(3,0).. (0,4);

		
		\draw[red,fill=red] (0,0) circle (.5ex); 
		
		\draw[color=blue,very thick,dashed] (0,0.3)--(4,.3);
		
		\draw (4+.9,0.3) node {\color{blue}$1+i s_N^-$};
		\draw[blue,fill=blue] (4,.3) circle (.5ex); 	

		\draw[color=blue,very thick,dashed] (4,.3)--(4,2-.15);		
		\draw[blue,fill=blue] (4,2-.15) circle (.3ex); 			
		\draw (4+1.1,2-.3) node {\color{blue}\small $1+i (\pi-\epsilon)$};		
		\draw[color=blue,very thick,dashed] (4,2-.15)--(2.4,2-.15)--(2.15,2.15)--(0,2.15);		
		\draw[blue,fill=blue] (0,2+.15) circle (.3ex); 					
		\draw (-.95,2+.3) node {\color{blue}\small $i (\pi+\epsilon)$};		
		
	\end{tikzpicture}
	\caption{The deformed contour $C'$}\label{fig:contourC':b:+}	
\end{figure}
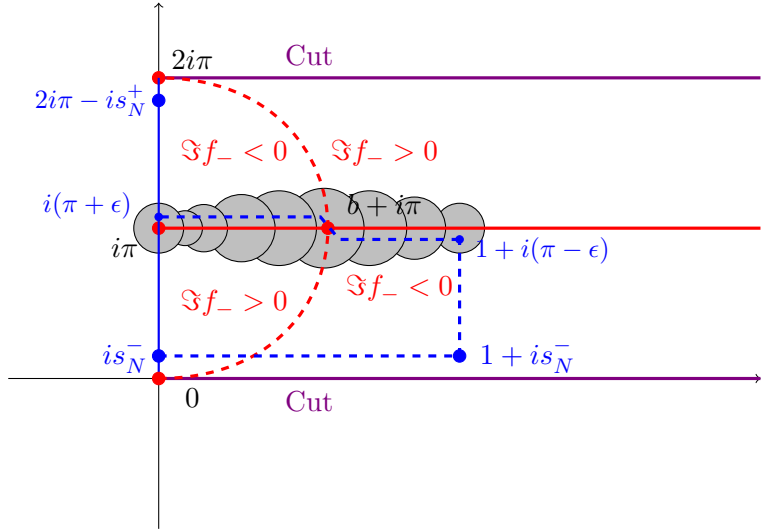
\smallskip

\underline{Asymptotics on $C'_1$:} On the horizontal line of altitude $s_N^-$, for all $x\geqslant 0$, we have
$$\dfrac{\partial}{\partial x} \Im (f_-(x+is_N^-)) = 2s_N^--2\pi - \arg\left (1-e^{x+is_N^-}\right )<-\frac{\pi}{2},$$
where the last inequality comes from $|\arg|\leqslant \pi$ and the fact that $s_N^-$ is small. Thus for all  $x> 0$ we have
$$\Im (f_-(x+is_N^-)) < \Im (f_-(0+is_N^-))- \frac{\pi}{2}x = {\rm Cl}_2(s_N^-)-\frac{\pi}{2}x.$$
Since ${\rm Cl}_2(s_N^-)< {\rm Cl}_2(\pi/3) \approx 1.10149$, it follows that for all $x \geqslant 1$ we have $\Im (f_-(x+is_N^-))\leqslant 
\Im (f_-(1+is_N^-))< 0$. Moreover, on $C'_1=[is_N^-,1+is_N^-]$ we have:
\begin{align*}
 \left |\int_{C'_1} e^{\frac{N}{2 i \pi}f_-(z)}dz\right | 
 &\leqslant
\int_0^{1}e^{\frac{N}{2  \pi}\Im (f_-(x+is_N^-))}dx\\
&\leqslant  \int_0^{1}e^{\frac{N}{2  \pi}\left ({\rm Cl}_2(s_N^-)-\frac{\pi}{2}x\right )}dx\\
&= e^{\frac{N}{2  \pi}{\rm Cl}_2(s_N^-)} \cdot  \int_0^{1}e^{\frac{N}{2 
		 \pi}\left (-\frac{\pi}{2}x\right )}dx \\
	 &= e^{\frac{N}{2  \pi}{\rm Cl}_2(s_N^-)} \cdot 
	 \dfrac{4}{N}\left (1-e^{-\frac{N}{4}}\right )\\
&=o_{N\to \infty}\left (\frac{1}{\sqrt{N}}\right ),\end{align*}
where the last asymptotics comes from $s_N^- e^{\frac{N}{2\pi}{\rm Cl}_2(s_N^-)}=
o_{N\to \infty}(1/\sqrt{N})$ established in Section \ref{smallIsN}.
\smallskip

\underline{Asymptotics on $C'_3 \cup C'_4$:} We proceed similarly as we did in the previous section for $C'_3$, constructing this time two almost horizontal curves $C'_3, C'_4$, going respectively from $1+i\pi-i\epsilon$ to $b+i\pi$ and from $b+i\pi$ to $i\pi+i\epsilon$. For this we construct a small disk $D_x$ around each point $x+i\pi$ of $[i\pi,1+i\pi]$, where $\Im(f_-)$ is negative on the bottom half-disk and
positive on the upper half-disk if $x>b$, and conversely if $x<b$. This is possible, because for any $x\geqslant0$ the vertical derivative at $x+i\pi$ is
$$\dfrac{\partial}{\partial t} \Im (f_-(x+it)) \vert_{t=\pi}
= 2x - \Log \vert 1-e^{x+i\pi} \vert=2x-\Log(1+e^x).$$
The reader will recognize the golden ratio, and agree that $2x-\Log(1+e^x)<0$ for $x \in [0,b)$ and $2x-\Log(1+e^x)>0$ for $x>b$. This justifies the location of the zones of signs of $\Im(f_-)$ in Figure \ref{fig:contourC':b:+} (the {\it valleys} and the {\it hills} of $f_-$ at $b$), whence the construction of the disks $D_x$. By compacity we can reduce to a finite family of such disks which cover $[i\pi,1+i\pi]$; they are drawn in grey in Figure \ref{fig:contourC':b:+}. Then we define $\epsilon>0$ small enough to be smaller than the radii of all grey disks.

Finally, in order to apply the saddle point method of Proposition \ref{prop:SPM:PV} for the contour $\lambda=C'_3\cup C'_4$, with $g=1$, $f=f_-$, and $z_0=b+i\pi$, we only need to check that $f''(z_0) \neq 0$. We have 
$$(f_-)''(b+i\pi)=1+\frac{1}{1-e^{b+i\pi}}= 1+ \frac{1}{1+\frac{1+\sqrt{5}}{2}}=\frac{5-\sqrt{5}}{2} =1.38... \neq 0$$ so we are good to go. For the sake of completeness, we can compute $f_-(b+i\pi) = 9\pi^2/10$, once again from the special values of the dilogarithm. Hence we get
$$ \int_{C'_3\cup C'_4} e^{\frac{N}{2 i \pi}f_-(z)}dz = \dfrac{\text{Constant}_-}{\sqrt{N}}(e^{i r_- N}+o_{N\to\infty}(1)),$$
where $\text{Constant}_- \in \C^*$ has modulus $2\pi \left (\frac{5-\sqrt{5}}{2}\right )^{-\frac{1}{2}}=5.34...$ and $r_-=-\frac{9 \pi}{20} \in \R$.
\smallskip

\underline{Asymptotics on $C'_2$:} Now that $\epsilon>0$ has been fixed, we define $C'_2$ as the vertical contour from $1+is_N^-$ to $1+i\pi-i\epsilon$.
To see that $\Im(f_-)$ is strictly increasing on $C'_2$ going upwards, one can for example prove that the vertical derivative is positive:
$$\dfrac{\partial}{\partial t} \Im (f_-(x+it)) \vert_{x=1}
= 2 - \Log \vert 1-e^{1+it} \vert \geqslant 2-\Log(1+e)>0,$$
where the last inequality stems from the fact that $1>b$. Hence the negative number $\Im \left (f_-\left (1+i\pi-i\epsilon\right )\right )$ is an upper bound for $\Im(f_-)$ on $C'_2$, and
$$ \left |\int_{C'_2} e^{\frac{N}{2 i \pi}f_-(z)}dz\right | \leqslant
\pi \cdot e^{\frac{N}{2 \pi}\Im \left (f_-\left (c+i\pi-i\epsilon\right )\right )} = o_{N\to \infty}\left (\frac{1}{\sqrt{N}}\right ).$$

\underline{Asymptotics on $C'_5$:} We define $C'_5:=i[\pi+\epsilon,2\pi-s_N^+]$, oriented upwards. From Section \ref{sub:small:vert:int}, since $i[\pi+\epsilon,2\pi-s_N^+] \subset [\pi,2\pi]$, we have:
$$ \left |\int_{C'_5} e^{\frac{N}{2 i \pi}f_-(z)}dz\right | \leqslant
\int_{\pi+\epsilon}^{2\pi-s_N^+} e^{\frac{N}{2 \pi}\Im(f_-(it))}dt=
\int_{\pi+\epsilon}^{2\pi-s_N^+} e^{\frac{N}{2 \pi}{\rm Cl}_2(t)}dt = o_{N\to \infty}\left (\frac{1}{\sqrt{N}}\right ).$$

This concludes the construction of the new contour $C'$, and by summing the integrals on each part, we obtain
$$ I_{n,-}=\int_{C'} e^{\frac{N}{2 i \pi}f_-(z)}dz = \dfrac{\text{Constant}_-}{\sqrt{N}}(e^{i r_- N}+o_{N\to\infty}(1)),$$
where
\begin{equation}\label{Cst-}
\vert \mathrm{Constant}_- \vert = 2\pi \left (\frac{5-\sqrt{5}}{2}\right )^{-\frac{1}{2}}=5.34..., \ r_-=-\frac{9 \pi}{20} \in \R.
\end{equation}

\section{Asymptotics of the quantum integral}\label{sec:rect:contour}

Recall that in Section \ref{sec:SPM} we defined the functions
\begin{itemize}
\item (case (a), function $f_-$) $\mathcal{L}_-(z;-2i\pi,i\pi) = {\rm Li}_2(e^{z})+z^2 - i\pi z$,
\item (case (a), function $f_+$)  $\mathcal{L}_+(z;-2i\pi,i\pi) = {\rm Li}_2(e^{z})+z^2 +i\pi  z$,
\item (case (b), function $f_-$) $\mathcal{L}_-(z;-2i\pi,2i\pi) = {\rm Li}_2(e^{z})+z^2 - 2i\pi z$,
\item (case (b), function $f_+$) $ \mathcal{L}_+(z;-2i\pi,2i\pi) = {\rm Li}_2(e^{z})+z^2$,
\end{itemize}
and we studied the asymptotics of the integrals $\textstyle \int_{[is_N^-,2i\pi-is_N^+]} e^{\frac{N}{2i\pi} \mathcal{L}_{\pm}(z,-2i\pi,\ell)}dz$ for $\ell=i\pi$ (case (a)) or $\ell=2i\pi$ (case (b)).

In this section we extend this study to obtain asymptotics of the integral on the right side of \eqref{int1}, $\textstyle \int_{C_N} e^{\frac{N}{2i\pi}(z^2 - \bl_{1,N}z)}\hat S_N(\bl_{0,N} + z)\textstyle \coth(\frac{Nz}{2}) dz$, thus finishing the proof of Theorem \ref{asyinvteo} (see the steps IX and X in Section \ref{sec:sketchproof}).

Because the integrand depends on $N$ in a complicated way, we need a preliminary result, Lemma \ref{qcintjui25} below, to approximate the integral. 

To state this lemma, denote by $C^+_N$ and $C^-_N$ the intersections of the contour $C_N$ (defined in \eqref{choiceCN}) with the right and left half-spaces; see Figure \ref{fig:contours:CN+-}. Recall that we assume $\textstyle s_N^- := \alpha \frac{\pi}{N}$ with $\alpha\in ]0,1[$ (see \eqref{sNvalue}), and $s_N^+\in ]0,\textstyle \frac{2\pi}{N}[$. Consider the following shift of the function $\Psi_N$ that we studied in the lemmas \ref{estimatehatS}-\ref{extboundPsiNjanv26}: \begin{equation}\label{defPsiu0N}
{\Psi_{u_0,N}(z)} := \Psi_N\left(\frac{1}{N}\Log(u_{0}) +z\right).
\end{equation}
It is holomorphic on the domain \eqref{holdomPsiN} shifted by $\textstyle -\frac{1}{N}\Log(u_{0})$. Finally, recall the log-parameters ${\bf l}_k' := \Log(u_{k}) +i\pi a_k$ from \eqref{tetcol}, and that the edge colors $a_0,a_1\in \mz$ were specified in Sections \ref{4_1} and \ref{loglimmars25}. Define
\begin{equation}\label{defrho0}
\rho(z) := e^{-\frac{{\bf l}_1'}{2i\pi}z}(1-e^{z})^{-\frac{{\bf l}_0'}{2i\pi}-\frac{1}{2}}.
\end{equation}
Note that our choice of $a_0\geq 2$ makes $\textstyle \mathfrak{R}(\frac{{\bf l}_0'}{2i\pi}+\frac{1}{2}) >0$, so $\rho(z)$ has poles at the points $z\in (2i\pi)\mz$. This will have important consequences in the analysis of the integrals in Lemma \ref{qcintjui25}, which will be done in Lemma \ref{lem:reformintqjui25}.

\begin{figure}[!h]
\scalebox{.8}{
	\begin{tikzpicture}
		\draw[color=lightgray,->] (-5,0)--(5,0);
		\draw[color=lightgray,->] (0,-1)--(0,6.5);
		\draw (4,-.05) node {\Huge $\cdot$};
		\draw (4,-.25) node {$\varepsilon$};
		\draw (+.025,-.05) node {\Huge $\cdot$};
		\draw (+.25,-.25) node {$0$};
		\draw (+.025,6-.05) node {\Huge $\cdot$};
		\draw (+.25+.2,6) node {$2 \pi i$};
		\draw (+.025,4.5-.05) node {\Huge $\cdot$};
		\draw (.55+.25+.2,4.5) node {$\textstyle 2\pi i - \frac{2\pi}{N}i$};		
		\draw (+.025,1.5-.05) node {\Huge $\cdot$};
		\draw (.3+.25+.2,1.5) node {$\textstyle \frac{2\pi}{N}i$};	
		
		\draw[color=red,->,thick] (0,1)--(3,1)--(3,5)--(0,5)	;
		\draw[color=blue] (+.025,1-.05) node {\Huge $\cdot$};		
		\draw[color=red] (3.5,3-.3) node {$C^+_N$};		
		\draw[color=red,<-,thick] (0,1)--(-3,1)--(-3,5)--(0,5)	;		
		\draw[color=blue] (+.025,5-.05) node {\Huge $\cdot$};		
		\draw[color=red] (-3.5,3-.3) node {$C^-_N$};
		
%
		
		\draw[color=blue] (-.3,1-.4) node{$i s_N^-$};	

		\draw[color=blue] (-.8,5+.3) node{$2\pi i - i s_N^+$};
		
		\draw [color=blue,very thick] (0,1)--(0,5);		
		
	\end{tikzpicture}
	}
	\caption{The contour $C_N$ and its subcontours  $C^+_N$ and $C^-_N$}
	\label{fig:contours:CN+-}
\end{figure}
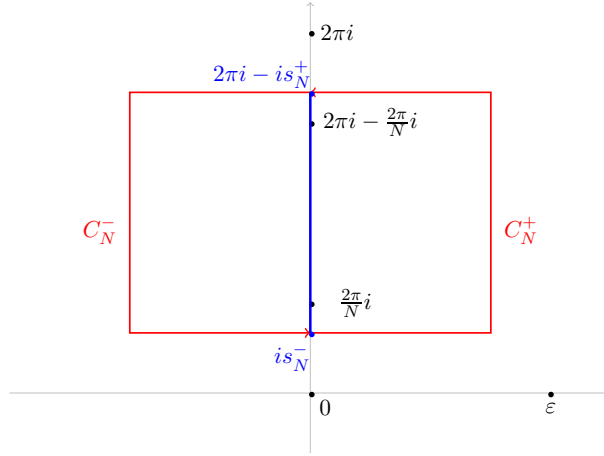

\begin{lem}\label{qcintjui25} If $\textstyle \bl_{0,\infty} = -2i\pi$ and $\textstyle \bl_{1,\infty} =: \ell\in \{\pi i, 2\pi i\}$ (as in Lemma \ref{rem:reduction:sigma:integral}) we have
	\begin{multline*} \int_{C_N} e^{\frac{N}{2i\pi}(z^2 - \bl_{1,N}z)}\hat S_N(\bl_{0,N} + z)\textstyle \coth(\frac{Nz}{2}) dz   \\
= \int_{C^+_N} \left( e^{\frac{N}{2i\pi} \mathcal{L}_{-}(z;-2i\pi,\ell)} - u_0 e^{\frac{N}{2i\pi} \mathcal{L}_{+}(z;-2i\pi,\ell)}\right) \rho(z) \exp\left(\frac{1}{N}{\Psi_{u_0,N}(z)}\right) \left(1 + \frac{1}{N}R_N(z)\right) dz \\ -\int_{C^-_N} \left( e^{\frac{N}{2i\pi} \mathcal{L}_{-}(z;-2i\pi,\ell)}  - u_0 e^{\frac{N}{2i\pi} \mathcal{L}_{+}(z;-2i\pi,\ell)}\right)\rho(z)  \exp\left(\frac{1}{N}{\Psi_{u_0,N}(z)}\right) \left(1 + \frac{1}{N}R_N(z)\right) dz\\
= 2\int_{C^+_N} \left( e^{\frac{N}{2i\pi} \mathcal{L}_{-}(z;-2i\pi,\ell)} - u_0 e^{\frac{N}{2i\pi} \mathcal{L}_{+}(z;-2i\pi,\ell)}\right) \rho(z) \exp\left(\frac{1}{N}{\Psi_{u_0,N}(z)}\right) \left(1 + \frac{1}{N}R_N(z)\right) dz,
	\end{multline*}
where the function $R_N$, with formula \eqref{defRNjanv26}, is holomorphic on the domain
\begin{equation}\label{domRN}
\mc \setminus \left(\bigcup_{k\in \mz} \left(2i\pi k  -\frac{1}{N}\Log(u_{0})+[0,+\infty[\right)\bigcup \left(\bigcup_{k\in \mz} \left(2i\pi k +[0,+\infty[\right)\right)\right),
\end{equation}
and $\vert R_N\vert$ is bounded from above by a constant independent of $N$ on any compact subset of a strip $\textstyle \mr + i(2\pi k + [\delta + \frac{\pi}{N}, 2\pi-\delta - \frac{\pi}{N}])$, $\textstyle \delta\in ]0,\pi[$, $k\in \mz$, contained in this domain. 
\end{lem}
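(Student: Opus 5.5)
The plan is to rewrite the integrand pointwise on $C_N$, using exact identities for each factor, and then to use the symmetry of the contour. Put $w:=\bl_{0,N}+z$. By \eqref{recapuvarmars25} we have $\bl_{0,N}=\frac{1}{N}\Log(u_0)+\frac{1}{N}\pi i a_0+\pi i a_0$, with $a_0$ even. Lemma \ref{estimatehatS} and Remark \ref{rem:largeN}(ii) apply in the strip $\mr+i]-\frac{\pi}{N},2\pi-\frac{\pi}{N}[$. The first step is to use the functional equation \eqref{eqfunchatS}, $N$ times, to move $w$ by a multiple of $2i\pi$ into this strip. Since $\mathfrak{I}(\bl_{0,\infty})=-2\pi$ and $\mathfrak{I}(z)\in[s_N^-,2\pi-s_N^+]$, the relevant shift is $w\mapsto w+2i\pi$. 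The factor produced is $1-e^{N w}=1-u_0e^{Nz}$, up to the root of unity coming from $e^{N\bl_{0,N}}$; I expect this root of unity to be trivial with the color conventions, and will check it.

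Once in the strip, I plan to write
\[
\Log \hat S_N(w+2i\pi)=\frac{N}{2i\pi}{\rm Li_2}(e^{w})-\frac12\Log(1-e^{w})+\frac1N\Psi_N(w+2i\pi),
\]
and then Taylor expand around $w_\infty:=z+\bl_{0,\infty}+2i\pi=z$. Write $w=z+\frac{1}{N}{\bf l}_0'$ modulo $2i\pi$, which is exactly why $\Psi_{u_0,N}$ and ${\bf l}_0'$ appear. Expanding ${\rm Li}_2$ to second order gives three pieces:
\begin{itemize}
\item $\frac{N}{2i\pi}{\rm Li_2}(e^z)$;
\item the first-order term $-\frac{{\bf l}_0'}{2i\pi}\Log(1-e^z)$, which combines with $-\frac12\Log(1-e^z)$ to give the factor $(1-e^z)^{-\frac{{\bf l}_0'}{2i\pi}-\frac12}$;
\item a remainder of size $O(1/N)$.
\end{itemize}
Similarly, $\bl_{1,N}=\ell+\frac{1}{N}{\bf l}_1'$ (again modulo harmless $2i\pi$ shifts), so $e^{\frac{N}{2i\pi}(z^2-\bl_{1,N}z)}=e^{\frac{N}{2i\pi}(z^2-\ell z)}e^{-\frac{{\bf l}_1'}{2i\pi}z}$. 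This yields the factor $\rho(z)$.

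Expanding $(1-u_0e^{Nz})\coth(\frac{Nz}{2})$ should give the two exponentials $e^{\frac{N}{2i\pi}\mathcal{L}_\mp}$. On $C_N^+$, where $|e^{-Nz}|$ is small, $\coth(\frac{Nz}{2})=1+O(e^{-N\varepsilon})$, and the term $-u_0e^{Nz}$ is exactly the shift $-\ell z\mapsto-(\ell-2i\pi)z$, since $e^{Nz}=e^{\frac{N}{2i\pi}2i\pi z}$. On $C_N^-$ the roles are reversed: there $\coth=-1+\dots$ and $e^{Nz}$ is small. The factor $(1-u_0e^{Nz})\coth(\frac{Nz}{2})$ has to be rewritten exactly, for instance as $\pm\,(1-u_0e^{Nz})\,\frac{1+e^{\mp Nz}}{1-e^{\mp Nz}}$. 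All the exact correction factors, namely the second-order Taylor remainders and the $\coth$ corrections, go into $1+\frac1NR_N$. In writing $R_N$ down explicitly I must make sure it has no singularity beyond the cuts in \eqref{domRN}; the cuts come from $\Log(1-e^z)$ and from the shifted $\Log(1-e^{z+\frac1N\Log u_0})$.

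The hardest step is the bound on $R_N$. The Taylor remainders are integrals of $\frac{e^{\cdot}}{1-e^{\cdot}}$ along segments of length $O(1/N)$, so Lemma \ref{lem:bound:exp:1-exp} bounds them uniformly on compacts of the strips $\mr+i(2\pi k+[\delta+\frac{\pi}{N},2\pi-\delta-\frac{\pi}{N}])$. The $\coth$ terms are not uniformly $O(1/N)$ near $\mathfrak{R}z=0$. This is why the expression should be rearranged exactly, rather than approximated, as a combination of the two exponentials. I expect the rearrangement to be exact because $\coth(\frac{Nz}{2})(1-u_0e^{Nz})$ equals $(1+u_0)\coth(\frac{Nz}{2})-u_0$ plus lower terms, or a similar identity, and I will try this identity first.

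Finally, the equality of the $C_N^+$ and $-C_N^-$ integrals should follow from Cauchy's theorem. Both contours are homotopic to $i[s_N^-,2\pi-s_N^+]$ relative to their endpoints, and on that segment the two sign choices agree once the integrand is written in the exact common form. The needed holomorphy holds in the domain \eqref{domRN}, away from the poles of $\rho$ at $(2i\pi)\mz$, which lie outside the region swept by the homotopy.
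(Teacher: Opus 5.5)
Your outline matches the paper's: the functional equation, then Lemma \ref{estimatehatS}, then first-order Taylor terms assembling into $\rho$, then $1-u_0e^{Nz}$ splitting into the $\mathcal{L}_-$ and $\mathcal{L}_+$ terms via $e^{Nz}=e^{\frac{N}{2i\pi}2i\pi z}$, then $\coth\approx\mathrm{sign}(\mathfrak{R}z)$, then Cauchy. Your bookkeeping of the exponent of $\rho$ and of $1-u_0e^{Nz}$ is also right (there is no root of unity, since $a_0$ is even). But two steps fail as you set them up.

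\emph{The shift.} Applying \eqref{eqfunchatS} $N$ times lands you at $z+\frac1N{\bf l}_0'$.
\begin{itemize}
\item With the standing choice $a_0\geq 2$ (the one that gives $\rho$ its poles), $\frac1N\mathfrak{I}({\bf l}_0')>\frac{\pi}{N}$ while $s_N^+<\frac{2\pi}{N}$. So near the top of $C_N$ the point $z+\frac1N{\bf l}_0'$ has imaginary part $>2\pi-\frac{\pi}{N}$, outside the range where Lemma \ref{estimatehatS} controls $\Psi_N$.
\item For $a_0\geq4$, the image of the right side of $C_N^+$ even crosses the cut $2i\pi+[0,+\infty[$ of $\Log\hat S_N$ and $\Psi_N$, and passes within $O(\frac1N)$ of the poles of $\hat S_N$.
\item You would also obtain $\Psi_N(z+\frac1N{\bf l}_0')$, not $\Psi_{u_0,N}(z)=\Psi_N(z+\frac1N\Log(u_0))$.
\end{itemize}
No shift by a multiple of $2i\pi$ works. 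The paper applies \eqref{eqfunchatS} only $N-\frac{a_0}{2}$ times, landing at $\frac1N\Log(u_0)+z$. This point stays in the admissible strip precisely because of the choice \eqref{fixsN+-} of $s_N^\pm$ (see \eqref{eq:ineq:u0:depends:on:N}). The $\frac{a_0}{2}$ factors missing from $\prod_{k=1}^{N}(1-e^{\bl_{0,N}+z+\frac{2i\pi k}{N}})=1-u_0e^{Nz}$ then give $(1-e^z)^{-a_0/2}(1+\frac1N O(1))$ away from $2i\pi\mz$. This combines with $-\frac12(1+\frac{\Log(u_0)}{i\pi})\Log(1-e^z)$ into the exponent of $\rho$. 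One caveat: those missing factors vanish at $2i\pi-\frac1N({\bf l}_0'-2i\pi j)$, $0\leq j<\frac{a_0}{2}$, some of which lie inside $C_N$. The resulting poles of the separate $\mathcal{L}_\mp$-terms cancel only in their combination.

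\emph{The $\coth$ factor and the symmetry step.} You are right that $\coth(\frac{Nz}{2})\neq\mathrm{sign}(\mathfrak{R}z)(1+O(\frac1N))$ on the horizontal sides of $C_N$. But your proposed identity is false: the two sides differ by $-u_0\frac{e^{2Nz}+e^{Nz}+2}{e^{Nz}-1}$, which dominates on $C_N^+$.

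More seriously, the final Cauchy step cannot work. For the first equality to be exact, your integrand must be $G\coth(\frac{Nz}{2})$ on $C_N^+$ and $-G\coth(\frac{Nz}{2})$ on $C_N^-$, with $G$ the $\coth$-free part (holomorphic near the closed rectangle). The second equality then amounts to $\int_{C_N^+}G\coth(\frac{Nz}{2})\,dz=\int_{C_N^-}G\coth(\frac{Nz}{2})\,dz$. But $\coth(\frac{Nz}{2})$ has simple poles at $\frac{2i\pi\beta}{N}$, $1\leq\beta\leq N-1$, all on $i[s_N^-,2\pi-s_N^+]$; these are exactly the residues that produce $\Sigma_N$ in Lemma \ref{lemintrep}. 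So the obstruction to the homotopy is these poles, not those of $\rho$. The claimed equality already fails for $G\equiv1$: since $N$ is odd, the difference is $\frac4N\Log\frac{\sin(\alpha^+\pi/2)}{\sin(\alpha^-\pi/2)}$. What Cauchy's theorem actually gives is
\[
\int_{C_N}G\coth(\tfrac{Nz}{2})\,dz=2\int_{C_N^+}G\,dz+\int_{C_N^+}G\bigl(\coth(\tfrac{Nz}{2})-1\bigr)dz+\int_{C_N^-}G\bigl(\coth(\tfrac{Nz}{2})+1\bigr)dz .
\]
The corrections are not negligible. Write $G=(1-u_0e^{Nz})H$, with $H$ the factor carrying $e^{\frac{N}{2i\pi}\mathcal{L}_-}$. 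Then the first correction equals $-2u_0\int_{C_N^+}H\,dz+2(1-u_0)\int_{C_N^+}\frac{H}{e^{Nz}-1}\,dz$, which is comparable to the $\mathcal{L}_-$-part of the main term.

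The paper's last step makes the same move as yours. Its \eqref{defRNjanv26} puts $\coth(\frac{Nz}{2})/\mathrm{sign}(\mathfrak{R}z)$ into $R_N$, which is then treated as holomorphic on \eqref{domRN} in order to homotope $C_N^\pm$ onto the segment. With that factor included, $R_N$ is neither holomorphic there nor bounded near $i\mr$. So you cannot close this step by following the paper. What holds is an identity with correction terms, as above, and these must then be estimated; this matters especially in case (b), where all contributions are of polynomial size.
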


\proof \underline{Step 1: Rephrasing $\textstyle  \hat S_N(\bl_{0,N} + z)$.}

Recall from \eqref{recapuvarmars25}-\eqref{situation2}-\eqref{l0inftymars25} that $\textstyle \bl_{0,N}= -2i\pi + \frac{1}{N}(\Log(u_{0}) +\pi ia_{0})$ where $a_0$ is even and $\mathfrak{I}(\Log(u_{0}) +\pi ia_{0})>0$. We can use the functional equation $\textstyle\hat S_{N}(z) = \hat S_{N}(z+\frac{2i\pi}{N})(1-e^{z+\frac{2i\pi}{N}})$ (see \eqref{eqfunchatS}) in order to shift $N-\textstyle \frac{a_0}{2}$ times the argument $\bl_{0,N} + z$: since $\textstyle \bl_{0,N} + z+\frac{2i\pi}{N}(N-\frac{a_0}{2}) = \frac{1}{N}\Log(u_{0}) +z$ we have
\begin{equation}\label{prodShatdec25prooflem51}
\hat S_N(\bl_{0,N} + z) = \hat S_N\left(\frac{1}{N}\Log(u_{0}) +z\right) \prod_{k=1}^{N-a_0/2}\left(1-e^{\bl_{0,N} + z+\frac{2i\pi}{N}k}\right).
\end{equation}

\

\underline{Step 2: Approximating $\textstyle \hat S_N\left(\frac{1}{N}\Log(u_{0}) +z\right)$.}

Let us approximate $\textstyle \hat S_N\left(\frac{1}{N}\Log(u_{0}) +z\right)$. Since  $-\pi < \mathfrak{I}\left(\Log(u_{0})\right)<  \pi$, for every $z\in C_N$ we have
\begin{equation}\label{stripjuil25}
-\frac{\pi}{N}+s_N^-\leq \mathfrak{I}\left(\frac{1}{N}\Log(u_{0}) +z\right) \leq 2\pi -s_N^+ + \frac{\pi}{N}.
\end{equation}
We can get finer bounds, depending on $u_0$. Indeed, assume that $\Im(\Log(u_0))$ satisfies 
\begin{equation}\label{uniformu0janv26}
-\pi  \upsilon < \Im(\Log(u_0)) < \pi \upsilon',
\end{equation} where $0<\upsilon,\upsilon'<1$. Let $\upsilon''>0$ be such that $\upsilon'<\upsilon''<1$. Take
\begin{equation}\label{fixsN+-}
\left\lbrace \begin{array}{l} s_N^- := \alpha^- \frac{\pi}{N}, \mathrm{with\ } \alpha^-:= \frac{1+\upsilon}{2},\\ \\ s_N^+ := \alpha^+ \frac{\pi}{N}, \mathrm{with\ } \alpha^+:= 1+\upsilon''.
\end{array}\right.
\end{equation}
Then the inequalities \eqref{stripjuil25} become:
\begin{equation}\label{eq:ineq:u0:depends:on:N}
 0<\frac{(1-\upsilon)\pi}{2N}= \frac{-\pi\upsilon}{N}+s_N^-\leq \mathfrak{I}\left(\frac{1}{N}\Log(u_{0}) +z\right) \leq 2\pi -s_N^+ + \frac{\upsilon'\pi}{N} < 2\pi -\frac{(\upsilon''-\upsilon')\pi}{N}-\frac{\pi}{N}.
\end{equation}
Note that the choices of $\textstyle s_N^-= \alpha^- \frac{\pi}{N}$, with $\alpha^-\in ]0,1[$, and of $\textstyle s_N^+= \alpha^+ \frac{\pi}{N}$, with $\alpha^+\in ]1,2[$, satisfy the assumptions of Section \ref{sec:SPM} (see \eqref{sNvalue}); moreover $\textstyle \alpha^-= \frac{1+\upsilon}{2}$ ensures that strips like \eqref{eq:ineq:u0:depends:on:N} avoid cuts. The choice of $\alpha^+$, which must lie within $]0,2[$ by the definition of $s_N^+$, in the subinterval $]1,2[$ is relevant only for Remark \ref{sharpdeltaN+-janv26} below.

By Remark \ref{rem:largeN} (ii) we can use  the equality \eqref{equalitylemmaasyoct25}. Taking the exponential we get 
$$\hat S_N\left(\frac{1}{N}\Log(u_{0}) +z\right)  = \exp\left(\frac{N}{2i\pi}{\rm Li}_2\left(e^{\frac{1}{N}\Log(u_{0}) + z}\right)-\frac{1}{2}\Log(1-e^{\frac{1}{N}\Log(u_{0}) + z})+\frac{1}{N}{\Psi_{u_0,N}(z)}\right)$$
where $\textstyle {\Psi_{u_0,N}(z)} = \Psi_N\left(\frac{1}{N}\Log(u_{0}) +z\right)$ (see \eqref{defPsiu0N}). Using that
\begin{align}
{\rm Li}_2\left(e^{\frac{1}{N}\Log(u_{0}) + z}\right)  & = {\rm Li}_2\left(e^z\right) -\frac{\Log(u_{0})}{N}\int_0^1 \Log\left(1-e^{\frac{\Log(u_{0})}{N}t + z}\right)dt\label{approx1dec25}\\
\Log\left(1-e^{\frac{\Log(u_{0})}{N}t + z}\right) & = \Log(1-e^z)- \frac{\Log(u_{0})}{N}e^{z}\int_0^t \frac{e^{\frac{\Log(u_{0})}{N}s}}{1-e^{\frac{\Log(u_{0})}{N}s + z}}ds \notag
\end{align}
we can write
\begin{align*} \hat S_N\left( \frac{1}{N}\Log(u_{0}) +z\right)   & =  \exp\left(\frac{N}{2i\pi}{\rm Li}_2\left(e^{z}\right)\right) \exp\left({\mathcal{T}_{u_0,N}(z)}+\frac{1}{N}{\Psi_{u_0,N}(z)}\right)\\ & = \exp\left(\frac{N}{2i\pi}{\rm Li}_2\left(e^{z}\right)\right) \exp\left(-\frac{1}{2}\left(1+\frac{\Log(u_0)}{i\pi}\right)\Log(1-e^{z})\right)\\ &  \hspace*{5cm} \times \exp\left(\frac{1}{N}\left({\Psi_{u_0,N}(z)}+
	{\mathcal{I}_{u_0,N}(z)}\right)\right),
\end{align*}
where 
\begin{align*} 
{\mathcal{T}_{u_0,N}(z)}
&:=  - \frac{N}{2i\pi}{\rm Li}_2\left(e^{z}\right)
+\frac{N}{2i\pi}{\rm Li}_2\left(e^{\frac{1}{N}\Log(u_{0}) + z}\right)-\frac{1}{2}\Log(1-e^{\frac{1}{N}\Log(u_{0}) + z})
\\
	&= -\frac{1}{2}\Log\left(1-e^{\frac{1}{N}\Log(u_{0}) + z}\right) -\frac{\Log(u_{0})}{2i\pi}\int_0^1 \Log\left(1-e^{\frac{\Log(u_{0})}{N}t + z}\right)dt \\  
	&  =
-\frac{1}{2}\left(1+\frac{\Log(u_0)}{i\pi}\right)\Log(1-e^{z})\\ & \hspace*{0.5cm}+\frac{\Log(u_0)}{N}e^z\left(\frac{\Log(u_0)}{2i\pi}\int_0^1\left(\int_0^t \frac{e^{\frac{\Log(u_{0})}{N}s}}{1-e^{\frac{\Log(u_{0})}{N}s + z}}ds\right)dt + \frac{1}{2}  \int_0^1 \frac{e^{\frac{\Log(u_{0})}{N}s}}{1-e^{\frac{\Log(u_{0})}{N}s + z}}ds\right)\\  
&= -\frac{1}{2}\left(1+\frac{\Log(u_0)}{i\pi}\right)\Log(1-e^{z}) + \mathcal{I}_{u_0,N}(z)
\end{align*}
and
$$\mathcal{I}_{u_0,N}(z):=\frac{\Log(u_0)}{N}e^z\left(\frac{\Log(u_0)}{2i\pi}\int_0^1\left(\int_0^t \frac{e^{\frac{\Log(u_{0})}{N}s}}{1-e^{\frac{\Log(u_{0})}{N}s + z}}ds\right)dt + \frac{1}{2}  \int_0^1 \frac{e^{\frac{\Log(u_{0})}{N}s}}{1-e^{\frac{\Log(u_{0})}{N}s + z}}ds\right)$$
is the last summand of $\mathcal{T}_{u_0,N}(z)$ made of the integrals.

By its definition the function ${\mathcal{T}_{u_0,N}(z)}$ is holomorphic on the domain \eqref{domRN}. Then so is ${\mathcal{I}_{u_0,N}(z)}$. Applying several times the triangle inequality on  $\textstyle \vert {\mathcal{I}_{u_0,N}(z)} \vert$, and using Lemma \ref{lem:bound:exp:1-exp} as at the end of Lemma \ref{estimatehatS}, we find
\begin{equation}\label{majorjanv26_2} \vert {\mathcal{I}_{u_0,N}(z)}\vert  \leq  \dfrac{B_M}{\delta\sqrt{1- \frac{\pi^2}{24}}}
\end{equation}
for every $\textstyle z\in ]-\infty,M] + i(2\pi k + [\delta+\frac{\pi}{N}, 2\pi-\delta - \frac{\pi}{N}])$, $\textstyle \delta\in ]0,\pi[$, $k\in \mz$ (so that $\textstyle \delta \leq \mathfrak{I}\left(\frac{1}{N}\Log(u_{0})s +z\right) \leq 2\pi -\delta$ for every $s\in [0,1]$), where $B_M>0$ is a constant independent of $N$, $\delta$ and $k$.

\

\underline{Step 3: Approximating $\textstyle \prod_{k=1}^{N-a_0/2}\left(1-e^{\bl_{0,N} + z+\frac{2i\pi}{N}k}\right)$.}

Next consider the product in \eqref{prodShatdec25prooflem51}. By factoring out 
$a_0/2$
 terms, and using that $\textstyle\bl_{0,N} = -2i\pi + O_{N\ra \infty}(\frac{1}{N})$, for every $z\in C_N$ we get
\begin{align}
\prod_{k=1}^{N-a_0/2}(1-e^{\bl_{0,N} + z+\frac{2i\pi}{N}k}) & =  \prod_{k=1}^{N}(1-e^{\bl_{0,N} + z+\frac{2i\pi}{N}k}) \prod_{k=N+1-a_0/2}^{N}(1-e^{\bl_{0,N} + z+\frac{2i\pi}{N}k})^{-1}\notag\\
& =  (1-e^{N(z+\bl_{0,N})}) \prod_{k=N+1-a_0/2}^{N}\left(1-e^{z+ \frac{1}{N}O_{N\ra \infty}(1)}\right)^{-1} \notag \\
& = (1-u_0 e^{Nz}) \left(1-e^{z+ \frac{1}{N} O_{N\ra \infty}(1)}\right)^{-a_0/2} \notag \\
& = \frac{1-u_0 e^{Nz}}{(1-e^{z})^{a_0/2}} \ .\ \frac{1}{(1-\frac{e^{z}}{e^z-1}\ . \frac{1}{N}O_{N\ra \infty}(1))^{a_0/2}}\notag \\ & = \frac{1-u_0 e^{Nz}}{(1-e^{z})^{a_0/2}} \left(1+ \frac{1}{N}O_{N\ra \infty}\left(1\right)\right), \label{facterrmars25}
\end{align}
where the last $\textstyle O_{N\ra \infty}(1)$ term is a sequence of holomorphic functions on $\mc\setminus \{2i\pi k, k\in \mz\}$, where each singularity $2i\pi k$ is removable, and with moduli bounded from above by some constant independent of $N$ on any fixed compact subset of $\mc$.
\smallskip

\underline{Step 4: Approximating $\textstyle \coth\left(\frac{Nz}{2}\right)$ and $e^{\frac{N}{2i\pi} (z^2 - \bl_{1,N}z)}$.}

Finally, for every $z$ in $\textstyle C_N \setminus \{is_N,2i\pi-is_N - \frac{2i\pi}{N}\}$ we have
$$\coth\left(\frac{Nz}{2}\right) = \mathrm{sign}\left(\mathfrak{R}(z)\right) \left(1+ o_{N\ra \infty}\left(\frac{1}{N}\right)\right),$$ where the $\textstyle o_{N\ra \infty}\left(\frac{1}{N}\right)$ term is a sequence of holomorphic functions on $\C \setminus \{2i\pi k, k \in \Z\}$, where each singularity $2i\pi k$ is a simple pole, and with moduli bounded from above by some constant independent of $N$ on any compact subset of this domain. 

Moreover, since $\textstyle \bl_{1,N}= \frac{1}{N}{\bf l}'_1+\pi i a_1$, with ${\bf l}'_1 =\Log(u_{1}) +\pi ia_{1}$, and $\pi i a_1 = \ell$ mod$(2i\pi)$, with $\ell\in \{\pi i,2\pi i\}$, by the comment after \eqref{int01} for every $z\in C_N$ we can replace $e^{-\frac{N}{2i\pi}\bl_{1,N}z}$ with
$$e^{-\frac{N}{2i\pi}(\frac{1}{N}(\Log(u_{1}) +\pi ia_{1})+\ell) z} = e^{-\frac{1}{2i\pi}(\Log(u_{1}) +\pi ia_{1})z} e^{-\frac{N}{2i\pi}\ell z} = e^{-\frac{{\bf l}'_1}{2i\pi}z} e^{-\frac{N}{2i\pi}\ell z}.$$

\underline{Step 5: Putting all together towards the RHS.}

Recalling the formula \eqref{defrho0}, the approximations we have obtained 
in Steps 2 to 5
imply that for all $z\in C_N$ one has:
\begin{align*}
e^{\frac{N}{2i\pi} (z^2 - \bl_{1,N}z)} & \hat S_N(\bl_{0,N} + z)\textstyle \coth(\frac{Nz}{2}) & \\
& = 
	\mathrm{sign}\left(\mathfrak{R}(z)\right)
	\exp\left(\frac{N}{2i\pi}\left({\rm Li}_2\left(e^{z}\right)+ z^2 - \ell z\right)\right){(1-u_0 e^{Nz})}\\ &\hspace*{4cm} \times \rho(z)    \exp\left(\frac{1}{N}{\Psi_{u_0,N}(z)}\right) \left(1 + \frac{1}{N}R_N(z)\right) 
\\ 
& = \mathrm{sign}\left(\mathfrak{R}(z)\right) 
\left( e^{\frac{N}{2i\pi} \mathcal{L}_{-}(z;-2i\pi,\ell)} - u_0 e^{\frac{N}{2i\pi} \mathcal{L}_{+}(z;-2i\pi,\ell)}\right) \\ & \hspace*{4cm} \times \rho(z)   \exp\left(\frac{1}{N}{\Psi_{u_0,N}(z)}\right) \left(1 + \frac{1}{N}R_N(z)\right),
\end{align*}
where
\begin{equation}\label{defRNjanv26}
1+\frac{1}{N}R_N(z) := \exp\left(\frac{1}{N}{\mathcal{I}_{u_0,N}(z)}\right)\left(1+\frac{1}{N}O_{N\ra \infty}\left(1\right)\right)
{
\dfrac{ \coth\left(\frac{Nz}{2}\right) }{ \mathrm{sign}\left(\mathfrak{R}(z)\right)}
}
\end{equation} satisfies the expected properties (the term $\textstyle \frac{1}{N}O_{N\ra \infty}(1)$ comes from \eqref{facterrmars25}, and recall that $\textstyle \frac{ \coth\left(\frac{Nz}{2}\right) }{ \mathrm{sign}\left(\mathfrak{R}(z)\right)}= \left(1+ o_{N\ra \infty}\left(\frac{1}{N}\right)\right)$
 from Step 5). More precisely, $R_N(z)$ is of the form
$$R_N(z)= \mathcal{I}_{u_0,N}(z)+O_{N\ra \infty}\left(1\right)+o_{N\ra \infty}\left(1\right) +O_{N\ra \infty}\left(\frac{1}{N}\right),$$
where the second, third and fourth terms in the right hand side are of course functions of $z$ as before. Denote by $C^+_N$ and $C^-_N$ the intersections of $C_N$ with the right and left half-spaces. Because of the term $\textstyle \mathrm{sign}\left(\mathfrak{R}(z)\right)$ in the above expressions, we can write
\begin{multline*} \int_{C_N} e^{\frac{N}{2i\pi}(z^2 - \bl_{1,N}z)}\hat S_N(\bl_{0,N} + z)\textstyle \coth(\frac{Nz}{2}) dz = 
			  \\
\int_{C^+_N} \left( e^{\frac{N}{2i\pi} \mathcal{L}_{-}(z;-2i\pi,\ell)} - u_0 e^{\frac{N}{2i\pi} \mathcal{L}_{+}(z;-2i\pi,\ell)}\right) \rho(z) \exp\left(\frac{1}{N}{\Psi_{u_0,N}(z)}\right) \left(1 + \frac{1}{N}R_N(z)\right) dz\\
		-  \int_{C^-_N} \left( e^{\frac{N}{2i\pi} \mathcal{L}_{-}(z;-2i\pi,\ell)}  - u_0 e^{\frac{N}{2i\pi} \mathcal{L}_{+}(z;-2i\pi,\ell)}\right)\rho(z)  \exp\left(\frac{1}{N}{\Psi_{u_0,N}(z)}\right) \left(1 + \frac{1}{N}R_N(z)\right) dz.
	\end{multline*}
The integrand is holomorphic on the intersection $U$ of the domains \eqref{holdomPsiN} shifted by $\textstyle -\frac{1}{N}\Log(u_{0})$ and the domain \eqref{domRN}. Since $-\pi <\mathfrak{I}(\Log(u_0))< \pi$ one can choose $s_N^-$, $s_N^+$ in \eqref{fixsN+-} so that $s_N^\pm > \textstyle \frac{1}{N} \mathfrak{I}(\Log(u_0))$, and therefore the contours $C^+_N$ and $C^-_N$ are homotopic within $U$ to $[is_N^-,2i\pi-is_N^+]$ relatively to their endpoints, with upwards and downwards orientations respectively. So the opposite of the integral over $C_N^-$ coincides with the integral over $C_N^+$, and the last expression reduces to $2$ times the integral over $C_N^+$ on the RHS. This concludes the proof.\cvd

\begin{remark}\label{sharpdeltaN+-janv26}{\rm We can give sharp uniform bounds on $\vert {\Psi_{u_0,N}(z)}\vert$ for points $z\in C_N$, as follows. Define (with $\upsilon,\upsilon',\upsilon''$ as in \eqref{uniformu0janv26}--\eqref{fixsN+-}):
\begin{align*}
\delta^-_N & := \frac{(1-\upsilon)\pi}{2N} + \frac{\pi}{N},\\
\delta^+_N & := \frac{(\upsilon''-\upsilon')\pi}{N}.
\end{align*}
Then the inequalities \eqref{eq:ineq:u0:depends:on:N} become
\begin{equation}\label{ineqdelta+-janv26}
\delta^-_N - \frac{\pi}{N}\leq \mathfrak{I}\left(\frac{1}{N}\Log(u_{0}) +z\right) \leq 2\pi -\delta^+_N - \frac{\pi}{N},
\end{equation} and thus define strips as in Lemma \ref{estimatehatS} and Remark \ref{rem:largeN} (iii) (with the variable $\textstyle \frac{1}{N}\Log(u_{0}) +z$ in place of $z$). In that lemma take now $M$ equal to the constant $\varepsilon$ used in the definition of the contours $C_N$. Then, by \eqref{UboundPsi}, for every $z$ in the strip \eqref{ineqdelta+-janv26} satisfying  $\textstyle \mathfrak{R}(z)\leq \varepsilon'$ for some $\varepsilon'>\varepsilon$ (so that $\textstyle \mathfrak{R}(\frac{1}{N}\Log(u_{0}) +z)\leq \varepsilon$ for $N$ large enough), we have 
\begin{equation}\label{majorjanv26_1}
\vert {\Psi_{u_0,N}(z)}\vert \leq \frac{B'}{\delta_N^+}+\dfrac{\pi e^{\varepsilon/2}}{2\delta_N^+}\sqrt{1- \frac{\pi^2}{24}}+B'',
\end{equation}
for constants $B',B''>0$ independent of $N$ and of $\delta_N^+$ (note that $\delta_N^+=\min(\delta_N^-,\delta_N^+)$). Since $N\delta_N^+ = \pi (\upsilon''-\upsilon')>0$, \eqref{majorjanv26_1} shows that $\textstyle \frac{1}{N}\vert {\Psi_{u_0,N}(z)}\vert$ is bounded from above by a constant independent of $N$, $\delta_N^-$, and $\delta_N^+$ on any half-strip given by \eqref{ineqdelta+-janv26} and $\textstyle \mathfrak{R}(z)\leq \varepsilon'$.

By Remark \ref{rem:largeN} (i), the bound \eqref{majorjanv26_1} holds true also when $\delta_N^-$ in \eqref{ineqdelta+-janv26} is replaced with any $\delta\in ]0,\pi[$, as long as $\textstyle \frac{1}{N}\Log(u_{0}) +z$ is not contained in a cut of the domain \eqref{holdomPsiN} shifted by $\textstyle -\frac{1}{N}\Log(u_{0})$. In particular we can take $\delta_N^-$ of the form $\delta_N^- = \alpha'\textstyle \frac{\pi}{N}$, with $\alpha'>0$ arbitrary. Since  $N\delta_N^- = \alpha'\pi >0$, as in the proof of Lemma \ref{extboundPsiNjanv26} (ii) we find that $\textstyle \frac{1}{N}\vert {\Psi_{u_0,N}(z)}\vert$ is bounded from above by a constant independent of $N$, $\delta_N^+$, and such a $\delta_N^-$ on any half-strip given by \eqref{ineqdelta+-janv26} and $\textstyle \mathfrak{R}(z)\leq \varepsilon'$.}
\end{remark}

By Lemma \ref{qcintjui25} we have 
\begin{equation}\label{finalestimatefev26}\int_{C_N} e^{\frac{N}{2i\pi}(z^2 - \bl_{1,N} z)}\hat S_N(\bl_{0,N} + z)\textstyle \coth(\frac{Nz}{2}) dz =
 2I_{N,-}(\ell) -2u_0  I_{N,+}(\ell),
 \end{equation}
 where
\begin{equation} I_{N,\pm}(\ell)  := \int_{C_N^+} g_N(z)e^{\frac{N}{2i\pi} \mathcal{L}_{\pm}(z;-2i\pi,\ell)} dz,\label{defINpmellfev26}
\end{equation}
and we put
\begin{align}
g_N(z)& := \rho(z) \exp\left(\frac{1}{N}\Psi_{u_0,N}(z)\right)\left(1 + \frac{1}{N}R_N(z)\right)\label{defgNjanv26}\\
& =\rho(z) \exp\left(\frac{1}{N}\Psi_{u_0,N}(z)\right)
\exp\left(\frac{1}{N}\mathcal{I}_{u_0,N}(z)\right)\left(1+\frac{1}{N}O_{N\ra \infty}\left(1\right)\right)
	\dfrac{ \coth\left(\frac{Nz}{2}\right) }{ \mathrm{sign}\left(\mathfrak{R}(z)\right)}.\notag
\end{align}
Lemma \ref{lem:reformintqjui25} below extends to the integrals $I_{N,\pm}(\ell)$ in \eqref{defINpmellfev26} the asymptotics obtained for the integrals in Section \ref{sec:SPM}. The proof requires the following generalizations of Proposition \ref{prop:SPM:PV}; why we need these generalizations is explained before Lemma \ref{lem:reformintqjui25}. First, we will need a special case of Perron's method for a contour starting at a maximum.
\begin{prop} \label{Perronfev26} (i) {\rm (\cite[Theorem 1.2, case of $\mu=1$ and $S=1$]{O'Sul})} Assume that $\lambda$ is a compact piecewise $\mathcal{C}^1$ contour from $z_0$ to $z_1$ in $\mc$, and $f$ and $g$ are holomorphic functions on a domain containing $\lambda$. Assume furthermore that $f'(z_0)\ne 0$, and $\mathfrak{I}(f(z)) < \mathfrak{I}(f(z_0))$ for all $z\in \lambda$, $z\ne z_0$. Then we have 
\begin{equation}\label{1Perronmars26} \int_{\lambda} g(z) e^{n (-i) f(z)} dz = 
e^{n (-i) f(z_0)} \left (\dfrac{g(z_0)}{i f'(z_0)}\dfrac{1}{n}+ O_{n \to \infty}\left(\frac{K_g}{n^2}\right) \right ),
\end{equation} 
where $\textstyle K_g:= \sup_{z\in \lambda \cup \mathcal{B}}\vert g(z)\vert$ and $\textstyle O_{n \to \infty}\left(\frac{K_g}{n^2}\right)\leq C\cdot \frac{K_g}{n^2}$ for a constant $C>0$ independent of $g$ and $n$.\\
(ii) Suppose that the assumptions in (i) on $\lambda$ and $f$ hold, and a sequence of holomorphic functions $(g_n)$ converges uniformly to a function $g$ on a domain containing $\lambda$. Then 
\begin{equation}\label{2Perronmars26}\int_{\lambda} g_n(z) e^{n (-i) f(z)} dz =  e^{n (-i) f(z_0)} \left (\dfrac{g(z_0)}{i f'(z_0)}\dfrac{1}{n}+ o_{n \to \infty}\left(\dfrac{1}{\vert f'(z_0)\vert}\cdot \frac{1}{n}\right) \right ),
\end{equation}
where $\textstyle o_{n \to \infty}\left(\frac{1}{\vert f'(z_0)\vert}\cdot \frac{1}{n}\right) \leq \varepsilon_n \frac{1}{\vert f'(z_0)\vert}\cdot \frac{1}{n}$ for a sequence of real positive numbers $(\varepsilon_n)$ converging to $0$ and independent of $f$ and $n$.\\
(iii) The $\textstyle O_{n \to \infty}$ and $\textstyle o_{n \to \infty}$ error terms in \eqref{1Perronmars26} and \eqref{2Perronmars26} do not depend on $z_0$ when $z_0$ varies in a fixed compact set which does not contain critical points of $f$.
\end{prop}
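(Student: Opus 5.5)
The plan is to reduce to Watson's lemma by straightening $f$ near the endpoint. Only (i) needs real work; (ii) and (iii) will follow by tracking constants. Set $w(z):=i\bigl(f(z)-f(z_0)\bigr)$, so that
$$e^{n(-i)f(z)}=e^{n(-i)f(z_0)}e^{-nw(z)},\qquad \mathfrak{R}(w(z))=\mathfrak{I}(f(z_0))-\mathfrak{I}(f(z)).$$
The hypothesis then says $\mathfrak{R}(w)>0$ on $\lambda\setminus\{z_0\}$.

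Since $f'(z_0)\neq 0$, the map $w$ is a biholomorphism from a closed disk $\mathcal{B}$ centred at $z_0$ onto a neighbourhood $W$ of $0$. Choose $r>0$ small and let $z_r\in\lambda$ be the first point of $\lambda$ on $\partial\mathcal B$. The sub-arc $\lambda_0\subset\lambda$ from $z_0$ to $z_r$ has image $w(\lambda_0)$ running from $0$ to $w(z_r)$, with $\mathfrak{R}(w(z_r))>0$, and it stays in the open right half plane except at $0$. Within $W\cap\{\mathfrak{R}(w)\geq 0\}$, which is a convex set containing both $0$ and $w(z_r)$, we deform $w(\lambda_0)$ relatively to its endpoints. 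The new path is the segment $[0,\rho]\subset\mr_{\geq 0}$ followed by a path from $\rho$ to $w(z_r)$ along which $\mathfrak{R}(w)\geq c>0$; for instance, take the segment $[\rho,w(z_r)]$, where $\mathfrak{R}(w)\geq c:=\min(\rho,\mathfrak{R}(w(z_r)))$. Cauchy's theorem applies because the integrand is holomorphic on $\mathcal{B}$.

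This splits the integral into three pieces.
\begin{itemize}
\item The steepest descent piece $\int_0^\rho h(t)e^{-nt}\,dt$, where $h(t)=g(z(t))z'(t)$.
\item The connecting piece.
\item The remainder $\lambda\setminus\lambda_0$.
\end{itemize}
The last two are bounded by $(\text{length})\cdot K_g\cdot \sup|z'|\cdot e^{-nc}$. Here $c>0$ comes from the hypothesis and compactness of $\lambda\setminus\lambda_0$, on which $\mathfrak{R}(w)$ attains a positive minimum. These pieces are therefore $O(K_g n^{-2})$. For the first piece, Watson's lemma gives
$$\int_0^\rho h(t)e^{-nt}\,dt=\frac{h(0)}{n}+O\!\left(\frac{\sup_{[0,\rho]}|h'|}{n^2}\right).$$
We have $h(0)=g(z_0)z'(0)=g(z_0)/(i f'(z_0))$. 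The quantity $\sup|h'|$ is controlled by $K_g$ through Cauchy estimates for $g'$ on the slightly larger disk $\mathcal{B}$, together with bounds on $z'$ and $z''$ that depend only on $f$. This yields \eqref{1Perronmars26} with $C$ independent of $g$ and $n$.

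For (ii), apply (i) to each $g_n$, with the contour, the disk $\mathcal B$ and hence $C$ fixed, since these depend only on $f$ and $\lambda$. This gives the main term $g_n(z_0)/(if'(z_0)n)$ with error $C K_{g_n}/n^2$. Uniform convergence implies that $K_{g_n}$ is bounded and that $g_n(z_0)=g(z_0)+o(1)$. The difference $(g_n(z_0)-g(z_0))/(if'(z_0)n)$ is then of the required form $\varepsilon_n/(|f'(z_0)|n)$, and $CK_{g_n}/n^2$ is absorbed into it.

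For (iii), observe that every constant above depends only on lower bounds for $|f'(z_0)|$, upper bounds for $|f''|$ near $z_0$ (these fix the radius of $\mathcal B$ via the holomorphic inverse function theorem, and the bounds on $z',z''$), and the gap $c$. On a compact set avoiding critical points of $f$, the first two are uniform. The gap $c$ is the step I expect to be the main obstacle: its uniformity in $z_0$ is not automatic. I would handle it by a compactness argument, assuming the inequality $\mathfrak{I}f<\mathfrak{I}f(z_0)$ holds for each admissible $z_0$ and using the lower semicontinuity of $z_0\mapsto\min_{\lambda\setminus\lambda_0}\mathfrak{R}(w)$, which is positive and hence bounded below on the compact parameter set. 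The steepest-descent deformation itself is routine once the disk $\mathcal B$ is fixed.
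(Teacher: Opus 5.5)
Your proof is correct and argues differently from the paper mainly in (i). For (ii) and (iii) it follows the paper's logic:
\begin{itemize}
\item Your (ii) applies (i) to $g_n$ with the same $C$, then uses that $K_{g_n}$ is bounded and $g_n(z_0)\to g(z_0)$. This is equivalent to the paper's splitting $g_n=g+(g_n-g)$.
\item In (iii), both arguments reduce uniformity to a local inversion radius bounded below on the compact set, plus a uniform exponential gap.
\end{itemize}
The paper does not prove (i); it quotes O'Sullivan's general Perron-method theorem. For (iii) it then extracts from O'Sullivan's equations (3-10)--(3-15) the bound $\frac{16K_\phi}{|f'(z_0)|^2}\frac{K_g}{n^2}+O(K_g e^{-\varepsilon n})$, with $\varepsilon$ controlled by a radius $R_f(z_0)$. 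You instead prove (i) directly by straightening with $w=i(f-f(z_0))$ and applying Watson's lemma on $[0,\rho]$.

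Your route is self-contained and gains several things:
\begin{itemize}
\item The dependence of $C$ on $|f'(z_0)|^{-1}$, on $\sup|f''|$ near $z_0$, on the Cauchy-estimate disk and on the gap $c$ is fully explicit, so (iii) becomes bookkeeping.
\item It gives a concrete meaning to the set $\mathcal{B}$ in $K_g$, which the statement leaves undefined.
\end{itemize}
The citation route is shorter and sits inside a theory that also yields higher-order terms.

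Two details need tightening.

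First, $W\cap\{\mathfrak{R}(w)\geq 0\}$ is not convex for an arbitrary disk $\mathcal{B}$. Either shrink $\mathcal{B}$, since images of small disks under a map with nonvanishing derivative are convex. Or, more simply, take $\mathcal{B}$ to be the preimage under $w$ of a small closed disk $\overline{D(0,\rho_1)}$. Then $W$ is itself a disk, your two segments lie in it, and $\partial\mathcal{B}$ stays at a uniform distance from $z_0$.

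Second, in (iii), the function $z_0\mapsto\min_{\lambda\setminus\lambda_0}\mathfrak{R}(w)$, with $\lambda_0$ cut at the first exit point, need not be lower semicontinuous. The exit point can jump as $z_0$ moves, and the remainder can suddenly include a returning arc close to $z_0$. What is really needed is that the contours $\lambda_{z_0}$ admit parametrizations $\lambda_{z_0}(t)$, $t\in[0,1]$, jointly continuous in $(z_0,t)$. Then proceed as follows:
\begin{itemize}
\item By uniform continuity, choose $t_0$ with $\lambda_{z_0}([0,t_0])\subset\mathcal{B}$ for all $z_0$ in the compact set.
\item Cut at $t_0$ instead of at the first exit point.
\item Use that $(z_0,t)\mapsto\mathfrak{R}\bigl(w_{z_0}(\lambda_{z_0}(t))\bigr)$ is continuous and positive on the compact set $E\times[t_0,1]$, which gives a uniform gap.
\end{itemize}
The paper makes the same continuity assumption tacitly when it asserts that $\varepsilon$ ``depends only on the compact set $E$''.
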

Note that (iii) says that the expansions \eqref{1Perronmars26} and \eqref{2Perronmars26} are {\it uniform} with respect to $z_0$ in the given range; see \cite[Chapter VII]{W} for the notion of uniform asymptotic expansions.
\medskip

\noindent {\it Proof of (ii) and (iii).} The statement (ii) is a direct consequence of the proof of (i) given in \cite{O'Sul}. Indeed write $g_n = g + (g_n-g)$, and apply (i) to the integrals with integrands $g$ and $g_n-g$. The uniform convergence of $(g_n-g)$ to the $0$ function implies an asymptotic expansion like \eqref{1Perronmars26} for the integral with integrand $g_n-g$, with coefficients $\textstyle \frac{(g_n-g)(z_0)}{i f'(z_0)} $ and $K_{g_n-g}$ arbitrarily close to $0$ as $n\to +\infty$. As there is no assumption on the convergence speed of the sequence $(g_n)$, the error term is only known to be an $\textstyle o_{n \to \infty}\left(\frac{1}{\vert f'(z_0)\vert}\cdot \frac{1}{n}\right)$.\\
For the statement (iii) one uses an estimate of the constant $C$ in (i). Namely, by the equations (3-10) to (3-15) of \cite{O'Sul} in the case $\mu=1$ and $S=1$, we have a lower bound for $C$ of the following form, described below:
\begin{equation}\label{impliedCfev26}
O_{n \to \infty}\left(\frac{K_g}{n^2}\right) \leq \frac{16K_\phi}{\vert f'(z_0)\vert^2}\cdot \frac{K_g}{n^2} +  O_{n \to \infty}\left(K_ge^{-\varepsilon n} \right).
\end{equation}
Here $O_{n \to \infty}\left(K_ge^{-\varepsilon n} \right)\leq C' K_ge^{-\varepsilon n}$, with $C'>0$ a constant bounded by the length of $\lambda$, and $\varepsilon>0$ depends only on the contour $\lambda$ and a constant $R_f(z_0)>0$; precisely, $\varepsilon$ is chosen so that $\mathfrak{I}(f(z)-f(z_0))\leq -\varepsilon$ for all $z$ satisfying $\vert z-z_0\vert \geq R_f(z_0)$ and belonging to a small deformation of $\lambda$, where the deformation occurs in the disk $D(z_0,R_f(z_0)]$ (see the proof of \cite[Proposition 1.3]{O'Sul}). The constant $R_f(z_0)$ is such that the following properties hold (\cite[Proposition 2.1]{O'Sul}): (*)  the solutions $(r,\theta)$ of the equation $\mathfrak{I}(f(z_0+re^{i\theta})-f(z_0))/r=0$ have the form $(r,\theta)=(r,h_k(r))$ when $r$ belongs to on an interval containing $[0, R_f(z_0)]$, where the functions $h_k(r)$, $k\in \{0,1\}$,  are all defined and differentiable, and $\textstyle h_k(0) = -\arg(if'(z_0))+\pi(k+\frac{1}{2})$.\\
Now assume that $z_0$ varies in a compact set $E$ which does not contain critical points of $f$. Then $R_f(z_0)$ as above is defined and can be chosen continuously as a function of $z_0\in E$. We have $\textstyle R_f:= \min_{z_0\in E} (R_f(z_0))>0$, and the above properties (*) hold true uniformly for all $z_0\in E$, replacing $R_f(z_0)$ with $R_f$. Using $R_f$ to define the $\varepsilon>0$ above,  it depends only on the compact set $E$. Also, in \eqref{impliedCfev26} we have (see \cite[equations (2-1) and (3-1)]{O'Sul})
$$K_\phi := \max_{z\in D_{z_0}} \left \vert 1-\frac{f(z)-f(z_0)}{f'(z_0)(z-z_0)}\right\vert,$$
where $D_{z_0}$ is a small closed disk neighborhood of $z_0$.  Then $\textstyle \frac{16K_\phi}{\vert f'(z_0)\vert^2}$ is bounded from above by a constant independent of $z_0\in E$.\\
It follows that the error term $\textstyle e^{n (-i) f(z_0)} O_{n \to \infty}\left(K_g/n^2\right)$ in \eqref{1Perronmars26} is a $\textstyle o_{n \to \infty}$ of the leading order term $\textstyle e^{n (-i) f(z_0)} e^{i \theta} \frac{g(z_0)}{f'(z_0)}\frac{1}{n}$, independent of $z_0\in E$, as claimed.\hfill $\Box$
\medskip

The second generalization of Proposition \ref{prop:SPM:PV} that we will need is for contours passing {\it through} a maximum. It is a classical result that the general form of Perron's method, which generalizes Proposition \ref{Perronfev26} (i), implies Proposition \ref{prop:SPM:PV} (see \cite[Corollary 1.4, case of $\mu=2$ and $k_2=k_1+1$]{O'Sul}). In the situation of a sequence $(g_n)$ converging uniformly to $g$ as in Proposition \ref{Perronfev26} (ii), the proof of this classical result generalizes immediately to give the statement (ii) in the next Proposition. In particular, the arguments show that if we are in the situation of a sequence $(g_n)$ converging to $g$ as in the statement (i), the error term $\textstyle o_{n \to \infty}\left(1/\sqrt{n}\right)$ is in fact a $O_{n \to \infty}\left(1/n\right)$, as stated. We point out this particular case, as it can be found verbatim in the litterature.
\begin{prop}\label{gensaddlen} Keep the same assumptions on the path $\lambda$ and the function $f$ as in Proposition \ref{prop:SPM:PV}.\\
(i) {\rm (\cite[Remark 3.3]{Oh}, \cite[page 145, note of $\S$5]{W})} If there is a neighborhood of the critical point $z_0$ of $f$ where for all $n$ large enough $\textstyle g_n(z) = g(z)+ g_{n,1}(z)\frac{1}{n}$, with $g$ holomorphic and independent of $n$, and $\vert g_{n,1} \vert$ bounded from above by a constant independent of $n$, then we have
$$ \int_{\lambda} g_n(z) e^{n (-i) f(z)} dz = 
e^{n (-i) f(z_0)} \left ( \sqrt{2\pi} e^{i \theta} \dfrac{g(z_0)}{\sqrt{|f''(z_0)|}}\dfrac{1}{\sqrt{n}} + O_{n \to \infty}\left(\frac{1}{n}\right) \right )
$$
for some explicit constant $\theta\in \mr$.\\
(ii) Suppose $(g_n)$ is a sequence of holomorphic functions converging uniformly to a function $g$ on a domain containing $\lambda$. Then we have, with $\theta$ as in (i):
$$ \int_{\lambda} g_n(z)  e^{n (-i) f(z)} dz = e^{n (-i) f(z_0)} \left ( \sqrt{2\pi} e^{i \theta} \dfrac{g(z_0)}{\sqrt{|f''(z_0)|}}\dfrac{1}{\sqrt{n}} + o_{n \to \infty}\left(\frac{1}{\sqrt{n}}\right) \right ).$$
\end{prop}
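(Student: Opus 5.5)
The plan is to reduce both statements to the classical saddle point method, Proposition \ref{prop:SPM:PV}, and its Perron-type refinement, Proposition \ref{Perronfev26}, by localizing near the critical point $z_0$. Choose a small closed disk $D$ around $z_0$ and split $\lambda = \lambda_0\cup\lambda_1$, where $\lambda_0 := \lambda\cap D$ is a small arc through $z_0$ and $\lambda_1 := \lambda\setminus\lambda_0$.

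The first step is to dispose of $\lambda_1$. By assumption (iii) of Proposition \ref{prop:SPM:PV} and compactness of the closure of $\lambda_1$, there is $\varepsilon>0$ with $\Im f(z)\le \Im f(z_0)-\varepsilon$ on $\lambda_1$. In both cases (i) and (ii) the functions $g_n$ are uniformly bounded on $\lambda$: in (ii) by uniform convergence, and in (i) because we may assume boundedness on $\lambda$ away from the neighborhood. Hence
\[
\Bigl|\int_{\lambda_1} g_n e^{-inf}\,dz\Bigr| \le C\,\mathrm{length}(\lambda)\,e^{n\Im f(z_0)}e^{-\varepsilon n},
\]
which is exponentially small relative to $e^{n\Im f(z_0)}n^{-1}$.

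The second step handles $\lambda_0$. Since $f''(z_0)\neq0$, the holomorphic Morse lemma gives a local coordinate $w$ with $-i(f(z)-f(z_0)) = -w^2/2$. We deform $\lambda_0$, keeping its endpoints, onto the steepest descent path $w\in[-w_1,w_2]$ real; the pieces joining the endpoints stay in the region where $\Im f<\Im f(z_0)-\varepsilon'$, so they contribute exponentially small errors. The integral becomes
\[
e^{-inf(z_0)}\int_{-w_1}^{w_2} g_n(z(w))\, z'(w)\, e^{-nw^2/2}\,dw .
\]

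The third step splits the amplitude. In case (i), write $g_n = g + g_{n,1}/n$. The $g$ part is exactly the classical Laplace integral with a fixed analytic amplitude, so Watson's lemma gives $\sqrt{2\pi/n}\,g(z_0)z'(0)\,(1+O(1/n))$. Here $z'(0)=e^{i\theta}|f''(z_0)|^{-1/2}$, and the linear Taylor term of $g(z(w))z'(w)$ integrates to zero up to an exponentially small error, which is why the remainder is $O(1/n)$ rather than $O(n^{-1/2})$. The $g_{n,1}/n$ part is bounded by $\frac{C}{n}\int e^{-nw^2/2}dw = O(n^{-3/2})$. In case (ii), write $g_n = g+(g_n-g)$. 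The $g$ part gives the same leading term with an $O(1/n)$ error. The remainder is bounded by $\sup_\lambda|g_n-g|\cdot C n^{-1/2} = o(n^{-1/2})$, where we use that $z'(w)$ is bounded on the compact path and that uniform convergence holds on the deformed path, which lies in the domain.

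I expect the main obstacle to be the justification of the contour deformation uniformly in $n$. The deformed path must stay inside the domain of holomorphy of all the $g_n$, and the bounds on $g_n$ (in (i), the bound on $g_{n,1}$) must hold on it. Taking $D$ inside the given neighborhood of $z_0$ and inside the domain settles this, since only the arc $\lambda_0$ is deformed.
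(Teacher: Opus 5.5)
Your argument is correct once two implicit hypotheses are made explicit (see the second paragraph). Its key step coincides with the paper's: split $g_n=g+(g_n-g)$ (resp.\ $g+g_{n,1}/n$), and note that the second piece contributes at most $\sup|g_n-g|$ (resp.\ $C/n$) times the Gaussian mass $O(n^{-1/2})$.

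You differ from the paper in how the underlying saddle point estimate is obtained:
\begin{itemize}
\item The paper does not reprove it. It imports the estimate, with a remainder controlled by the sup norm of the amplitude, from O'Sullivan's general Perron method (case $\mu=2$). It then repeats the splitting already used for Proposition~\ref{Perronfev26}~(ii), and it takes (i) from the literature.
\item You give a self-contained proof by localization, the holomorphic Morse lemma, deformation onto the steepest descent path, and Watson's lemma.
\end{itemize}
Your route makes the linear dependence of the error on the amplitude fully explicit. It also yields the sharper remainder $O(n^{-3/2})$ in (i). The paper's route is shorter, and it stays in the single framework also used for endpoint maxima and for the uniformity claim of Proposition~\ref{Perronfev26}~(iii).

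Two things you treat as automatic should instead be stated as hypotheses.
\begin{itemize}
\item In (i), boundedness of $g_n$ on $\lambda$ away from the neighbourhood of $z_0$ is not something ``we may assume''. The statement omits it, and without some growth control there the conclusion fails: take $\lambda=[-1,1]$, $f(z)=-iz^2/2$, and $g_n(z)=1+e^{n^2(z-1/2)}$. The expansion holds near $0$, but the integral blows up.
\item Your deformation onto $w\in[-w_1,w_2]$ requires the two halves of $\lambda$ near $z_0$ to lie in opposite valleys of $\Im f$. This is implicit in Proposition~\ref{prop:SPM:PV}. If both halves lie in the same valley, condition (iii) still holds, but the two Gaussian contributions cancel.
\end{itemize}
Finally, take $D$ to be a disk in the Morse coordinate $w$, and $\lambda_0$ the component of $\lambda\cap D$ through $z_0$. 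Then the connecting arcs can be taken on $|w|=r$, where $\Re(w^2)$ increases towards the real axis.
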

We now describe in Lemma \ref{lem:reformintqjui25} some asymptotics of the integrals $I_{N,\pm}(\ell)$ in \eqref{defINpmellfev26}. The proof will follow the main lines of the analysis of Section \ref{sec:SPM}. We will use Proposition \ref{Perronfev26} because $\rho(z)$ (and hence $g_N(z)$, see \eqref{defgNjanv26}) has poles at $z=0$ and $z=2i\pi$; indeed, some subintegrals of $I_{N,\pm}(\ell)$, which were negligible in the context of Section \ref{sec:SPM} (by the results of Sections \ref{smallpijuil25}-\ref{smallIsN}), will now dominate its asymptotics in case (b) ($\ell = 2i\pi$). Proposition \ref{gensaddlen} (ii) will play the same role as Proposition \ref{prop:SPM:PV} in Section  \ref{sec:SPM}, to estimate the integrals (where now $g_N$ depends on $N$) on subcontours with endpoints away from $0$ and $2i\pi$. Recall that ${\bf l}_0' := \Log(u_{0}) +i\pi a_0$, and the endpoint $s_N^-$ of $C_N^\pm$ is written as $\textstyle \alpha^-\frac{\pi}{N}$.
\begin{lem}\label{lem:reformintqjui25} For $g_N(z)$ as in \eqref{defgNjanv26} we have
\begin{align*} \lim_{N\to +\infty} \frac{2\pi}{N}\left\vert I_{N,-}(i\pi) \right\vert & = \frac{1}{2}{\rm Vol}(M),\\
\vert I_{N,+}(i\pi) \vert & \sim_{N\to +\infty} \dfrac{N^{\frac{{\bf l}_0'}{2i\pi}-\frac{1}{2}+\frac{\alpha^-}{2}}}{\Log(N)} \left(\frac{1}{\alpha^-\pi}\right)^{\frac{{\bf l}_0'}{2i\pi}+\frac{1}{2}+\frac{\alpha^-}{2}} \vert v_N\vert,\\
\vert I_{N,+}(2i\pi) \vert & \sim_{N\to +\infty} \dfrac{N^{\frac{{\bf l}_0'}{2i\pi}-\frac{1}{2}+\frac{\alpha^-}{2}}}{\Log(N)} \left(\frac{1}{\alpha^-\pi}\right)^{\frac{{\bf l}_0'}{2i\pi}+\frac{1}{2}+\frac{\alpha^-}{2}}\vert v_N\vert,\\ 
\vert I_{N,-}(2i\pi) \vert  & \sim_{N\to +\infty} \dfrac{N^{\frac{{\bf l}_0'}{2i\pi}-\frac{1}{2}+\frac{\alpha^-}{2}}}{\Log(N)} \left(\frac{1}{\alpha^-\pi}\right)^{\frac{{\bf l}_0'}{2i\pi}+\frac{1}{2}+\frac{\alpha^-}{2}} e^{-\alpha^-\pi}\vert v_N\vert, 
\end{align*}
where $(v_N)$  (the same in the last three asymptotic formulas) is a bounded sequence, bounded from below by a constant $>0$, and we choose $a_0\geq 4$ (even, as usual) so that $\textstyle \mathfrak{R}(\frac{{\bf l}_0'}{2i\pi}-\frac{1}{2}) >-\frac{1}{2}$. 
\end{lem}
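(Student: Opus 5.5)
The plan is to redo, contour by contour, the deformations of Section \ref{sec:SPM}, now with the $N$-dependent amplitude $g_N(z)$ of \eqref{defgNjanv26}, and to isolate the contributions near the poles of $\rho$. First I would deform $C_N^+$, relative to its endpoints $is_N^-$ and $2i\pi-is_N^+$, into the contours $C^\infty$ (case (a), $f_+$), the six-piece $C'$ of Section \ref{sub:b:+:perron} (case (b), $f_+$), the five-piece $C'$ of Section \ref{sec:bIn-janv26} (case (b), $f_-$), and $i[s_N^-,2\pi-s_N^+]$ (case (a), $f_-$). This is legitimate because $g_N$ is holomorphic on the domain $U$ of Lemma \ref{qcintjui25}. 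On every piece staying at fixed distance from $0$ and $2i\pi$, Lemma \ref{extboundPsiNjanv26} (iii), the bounds on $R_N$ from Lemma \ref{qcintjui25}, and the explicit form of $\rho$ show that $g_N\to\rho$ uniformly. This is a compact subset of $U$, so I keep the deformed pieces off the cuts.

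On those pieces containing the saddle point, namely $i\pi/3$ in case (a) and $a$, $b+i\pi$ in case (b), I would apply Proposition \ref{gensaddlen} (ii) with $g=\rho$. Since $\rho(z_0)\neq0$, this gives the same exponential rates as in Section \ref{sec:SPM}. In case (a) this yields the first formula, with rate ${\rm Vol}(M)/2$; the pieces adjacent to $0$ and $2i\pi$ are only polynomially large and are absorbed by Lemma \ref{lem:PM:SE}. In case (b), and for $I_{N,+}(i\pi)$, the saddle contributions are $O(N^{-1/2})$. The pieces where $\Im f_\pm<0$ strictly, at distance from $0$ and $2i\pi$, are exponentially small, since $g_N$ is uniformly bounded there by Lemma \ref{extboundPsiNjanv26} (ii).

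The crux is the short pieces near $0$, such as $C_{s_N^-}$ and $C_1'$. On these, $\rho(z)\sim(-z)^{-\frac{{\bf l}_0'}{2i\pi}-\frac12}$ blows up, while $e^{\frac{N}{2i\pi}f_\pm}$ has modulus $e^{\frac N{2\pi}{\rm Cl}_2(t)}\approx (N/(\alpha^-\pi))^{\alpha^-/2}\cdot{\rm const}$ at $t=s_N^-$. My plan is to rescale $z=\frac{\pi}{N}w$, so that the pole factor produces $N^{\frac{{\bf l}_0'}{2i\pi}+\frac12}$ times a power of $\alpha^-\pi$. I would then apply Perron's method, Proposition \ref{Perronfev26} (ii)--(iii), from the endpoint $is_N^-$, which is the maximum of $\Im f_\pm$ along the deformed small contour. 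Since $f_\pm'(is_N^-)\approx -\Log(s_N^-)\sim\Log N$, the rule $g(z_0)/(if'(z_0)n)$ gives a factor $\frac{2\pi}{N\Log N}$. Combining the factors produces $N^{\frac{{\bf l}_0'}{2i\pi}-\frac12+\frac{\alpha^-}{2}}/\Log N$, together with the stated power of $(\alpha^-\pi)^{-1}$. The bounded, nonvanishing factor $v_N$ collects $\exp(\frac1N\Psi_{u_0,N})$, $1+R_N/N$ and the unimodular phase $e^{\frac{N}{2i\pi}\Re f(is_N^-)}$ near $is_N^-$. The extra factor $e^{-\alpha^-\pi}$ for $f_-$ with $\ell=2i\pi$ comes from the term $-2i\pi z$ evaluated at $z=is_N^-$: it gives $|e^{\frac{N}{2i\pi}(-2i\pi\cdot i\alpha^-\pi/N)}|=e^{-\alpha^-\pi}$ relative to $f_+$. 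The condition $a_0\geq4$ makes these endpoint terms dominate the $N^{-1/2}$ saddle contributions, and a similar but smaller Perron estimate controls the endpoint $2i\pi-is_N^+$.

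The main obstacle is uniformity. $\Psi_N$ and $R_N$ are only controlled on strips whose width shrinks like $1/N$, and Perron's method must be applied at a moving endpoint $z_0=is_N^-$ where $f'(z_0)\to\infty$. To handle this, I would invoke Remark \ref{sharpdeltaN+-janv26} to bound $\frac1N\Psi_{u_0,N}$ uniformly near $is_N^-$, and then check that the constants in \eqref{impliedCfev26} scale correctly in the rescaled variable $w$.
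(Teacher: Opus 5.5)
Your architecture is the paper's: the same deformed contours from Section \ref{sec:SPM}, Proposition \ref{gensaddlen}(ii) at the saddles, and a Perron estimate at the moving endpoint $is_N^-$, where $|f'_\pm(is_N^-)|\sim\Log N$ produces the $1/\Log N$. Your rescaling $z=\pi w/N$ turns the endpoint integral into a Laplace integral with linear phase $\frac{s}{2}\Log N$. That is a sounder treatment of the moving endpoint than the paper's appeal to Proposition \ref{Perronfev26}(iii), whose hypotheses fail as $is_N^-$ tends to the branch point $0$ of $f_\pm$.

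\textbf{The fourth formula does not follow.} Your computation of the factor $e^{-\alpha^-\pi}$ is wrong. Since $\frac{N}{2i\pi}\cdot(-2i\pi)\cdot\frac{i\alpha^-\pi}{N}=-i\alpha^-\pi$, the ratio $e^{\frac{N}{2i\pi}(\mathcal{L}_{-}-\mathcal{L}_{+})(is_N^-)}=e^{-i\alpha^-\pi}$ has modulus $1$. More intrinsically:
\begin{itemize}
\item on $i\mr$ one has $\mathfrak{I}f_-(it)=\mathfrak{I}f_+(it)={\rm Cl}_2(t)$;
\item the amplitude $g_N$ is the same in $I_{N,\pm}(2i\pi)$;
\item $|f'_-(is_N^-)|/|f'_+(is_N^-)|\to 1$.
\end{itemize}
So the dominant endpoint terms at $is_N^-$ have asymptotically equal moduli. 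Your method, carried out correctly, gives $|I_{N,-}(2i\pi)|\sim|I_{N,+}(2i\pi)|$, with leading terms differing by the phase $e^{-i\alpha^-\pi}$. This contradicts the fourth line with the same $v_N$. The paper's proof attributes the factor to the same source (``the change of $\mathcal{L}_+$ to $\mathcal{L}_-$''), so it contains the same slip. That line of the statement appears misstated rather than provable. Consequently the non-cancellation argument of Corollary \ref{cor:reductionintjui25}, which uses $|u_0|e^{-\alpha^-\pi}\neq 1$, has to be redone with a phase condition.

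\textbf{Your deformations are not legitimate as written.} The contours $C^\infty$ and the six-piece $C'$ pass through $0$, and $C^\infty$ also through $2i\pi$; these points are not in the domain you invoke. There:
\begin{itemize}
\item $|\rho(z)|\asymp|z|^{-\beta}$ with $\beta=\mathfrak{R}(\frac{{\bf l}_0'}{2i\pi}+\frac12)\geq 2$ for $a_0\ge 4$;
\item $\coth(Nz/2)$ has a pole;
\item $|e^{\frac{N}{2i\pi}f_\pm}|\to 1$.
\end{itemize}
So the integrals over the pieces ending at $0$ or $2i\pi$ ($C_{s_N^-}$, $C_1'$, $C_2'$, the ends of $C^\infty_0$ and $C^\infty_{2\pi}$) diverge. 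When $\mathfrak{R}\Log(u_0)>0$, the segment $[0,is_N^-]$ can also cross the shifted cut $L_1=-\frac1N\Log(u_0)+[0,+\infty[$. Hence the short pieces on which you run Perron cannot be $C_{s_N^-}$ or $C_1'$. As in the paper, you need:
\begin{itemize}
\item small arcs circumventing $0$ and $2i\pi$ (its $C_a=C_a^{-}\cup C_a^{+}$ and $\bar{C}_a^{+}+2i\pi$);
\item the sign of $\partial_x\mathfrak{I}f_+$ in the second quadrant near $0$, to get $\mathfrak{I}f_+(x+it)\le{\rm Cl}_2(t)$ on the arc, so that $is_N^-$ really is its maximum;
\item an account of the point where the arc rejoins the negative real axis. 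In case (b), $\mathfrak{I}f_+=0$ on $]-\infty,0]$, so that piece is not exponentially small. It contributes a Perron term of order $1/N$ (the paper's $C_a^{-+}$), negligible only by comparison with the endpoint term.
\end{itemize}
Your bookkeeping (``pieces where $\mathfrak{I}f_\pm<0$ strictly, at distance from $0$ and $2i\pi$'') misses these pieces.

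Finally, your holomorphy and uniform-convergence claims for $g_N$, like Steps 1 and 5 of the paper's proof, take Lemma \ref{qcintjui25} at face value. Read literally, \eqref{defgNjanv26} contains $\coth(Nz/2)$, whose poles $2i\pi k/N$, $1\le k\le N-1$, lie on $i\,]s_N^-,2\pi-s_N^+[$. Deforming $C_N^+$ across $i\mr$ therefore picks up residues that are essentially the summands of the original state sum, of moduli up to about $e^{\frac{N}{2\pi}{\rm Cl}_2(\pi/3)}$. The saddle $i\pi/3$ of case (a) also sits on that line of poles.
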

\medskip

The choice of $a_0\geq 4$ will be explained in Step 6 of the proof. When $a_0=2$ the asymptotics of the integrals $I_{N,+}(i\pi)$, $I_{N,+}(2i\pi)$ and $I_{N,-}(2i\pi)$ depend on the sign of $\mathfrak{I}(\Log(u_0))$; for instance, if it is negative, then $I_{N,+}(2i\pi)$ and $I_{N,-}(2i\pi)$ are of the form in \eqref{asyIjui25c}. The choice $a_0=2$ would not affect the conclusions of the subsequent results.

\begin{remark}{\rm One could worry about the factor $e^{-\alpha^-\pi}$ in the asymptotic formula of $I_{N,-}(2i\pi)$ but not of $I_{N,+}(2i\pi)$; indeed, by \eqref{finalestimatefev26} these formulas combine to give the leading order term as $N\to +\infty$ of the LHS integral for $\ell=\bl_{1,\infty}= 2i\pi$, which does not depend on $\alpha^-$. However, one should be careful that a full asymptotic expansion of this integral is needed to describe it faitfully. In the present situation $I_{N,-}(\ell)$ and $I_{N,+}(\ell)$ emerged from the different asymptotical behaviours of the integrand on the half-spaces $\mathfrak{R}(z)<0$ and $\mathfrak{R}(z)>0$, with terms becoming exponentially small in the former region and exponentially large in the latter (see \eqref{facterrmars25}, and also \eqref{equiv3mars26}). Therefore $\mathfrak{R}(z)=0$ is an anti-Stokes line of the asymptotics of the integrand, and exponentially small terms coming from a full asymptotic expansion of $I_{N,-}(\ell)$ and $I_{N,+}(\ell)$ are required to recover the LHS integral of \eqref{finalestimatefev26} as $N\to +\infty$.}
\end{remark}
\proof \underline{Step 1: Conditions on moving the contours.}\\
We adapt the analysis of Sections \ref{sub:SPM:a} (case (a), $\ell=i\pi$) and \ref{sub:SPM:b} (case (b), $\ell=2i\pi$) to the integrals $I_{N,\pm}(\ell)$. Recall that the integrals in Section \ref{sub:SPM:a}-\ref{sub:SPM:b} have $g_N\equiv 1$ and $C_N^-$ or $C_N^+$ replaced by $[is_N^-,2i\pi-is_N^+]$ (the two being homotopic relatively to their endpoints, with the opposite or the same orientation respectively). 

First we show that the function $g_n:z\mapsto g_N(z)$, $\textstyle n:=\frac{N}{2\pi}$, is holomorphic on a domain where we can deform $C_N^-$ or $C_N^+$ to a contour very close to those in Figure \ref{fig:contour:a:+}, Figure \ref{fig:contour:b:+:perron}, and Figure \ref{fig:contourC':b:+}. In each situation the difference between the two contours will occur on short subcontours close to $0$ or $2i\pi$, where the integrals will have polynomial growth at worst.

These deformations occur in the set $\mathcal{S}_\eta$ formed by the strip $-\eta \leq \mathfrak{I}(z)\leq 2\pi$ with cuts $[0,+\infty[$ and $2i\pi + [0,+\infty[$, where $\eta>0$ is defined in the path $C'_3$ below Figure \ref{fig:contour:b:+:perron}. By definition  $\rho(z)$ and the term $\textstyle O_{N\ra \infty}(1)$ in \eqref{defRNjanv26} are holomorphic on $\mathcal{S}_\eta$. The function $\Psi_{u_0,N}$ is holomorphic on the domain \eqref{holdomPsiN} shifted by $\textstyle -\frac{1}{N}\Log(u_{0})$, and $\mathcal{I}_{u_0,N}$ on \eqref{domRN}, whence the two on $\mathcal{S}_\eta$ minus the two half-lines (the poles $\textstyle -\frac{1}{N}\Log(u_{0})+ 2i\pi+2pi\pi/N$ are not in $S_\eta$ since $-\pi < \mathfrak{I}\left(\Log(u_{0})\right)<  \pi$):
$$ L_1:=  -\frac{1}{N}\Log(u_{0})+[0,+\infty[ \ ,\  L_2:= -\frac{1}{N}\Log(u_{0})+2i\pi + [0,+\infty[.$$
In Figure \ref{fig:contour:b:+:shifted:cuts}, these two shifted cuts are represented with teal half-lines, either dashed or dotted. The dotted part around the y-axis covers the case when $\Re\Log(u_0)>0$. For each of $L_1, L_2$, two copies are drawn, according to the sign of $\Im\Log(u_0)=\arg(u_0)$.\\

\underline{Step 2: Deformed contour in case (b), integral $I_{N,-}(2i\pi)$, or case (a), integral $I_{N,-}(i\pi)$.}\\
Figure \ref{fig:contour:b:+:shifted:cuts} actually expands Figure \ref{fig:contourC':b:+}, so let us justify the shifted cuts are where they seem to be in the corresponding case (case (b), integral $I_{N,-}(2i\pi)$). Recall that we assume that there exists some $\upsilon \in (0,1)$ such that 
$-\upsilon \pi\leq \mathfrak{I}(\Log(u_{0}))\leq \upsilon \pi$
(which makes sense even in a more general case when $u_0$ depends on $N$). In this situation, choosing $\textstyle \alpha^-=\frac{1+\upsilon}{2}$ and  $\textstyle s_N^-=\alpha^-\frac{\pi}{N}$ as in the proof of Lemma \ref{qcintjui25}, we conclude that  $\textstyle \alpha^-> \frac{1}{\pi}\vert \mathfrak{I}(\Log(u_{0})\vert$ and thus $\textstyle s_N^-> \frac{1}{N}\vert \mathfrak{I}(\Log(u_{0})\vert$. 
Hence we can deform $C_N^\pm$ to the contour shown in Figures \ref{fig:contourC':b:+} and \ref{fig:contour:b:+:shifted:cuts} without crossing the cuts $L_1$, $L_2$.

By the same arguments $g_N(z)$ is also holomorphic on a domain where we can deform $C_N^\pm$ to $[is_N^-,is_N^+]$, which is the contour occuring in case (a) for the integral $I_{N,-}(i\pi)$.

\begin{figure}[!h]
	\begin{tikzpicture}
		\begin{scope}[xshift=4cm,yshift=2cm]
			\draw[black,fill=gray!50,opacity=0.5, ] (0,0) circle (2ex); 		
			\draw[black,fill=gray!50,opacity=0.5, ] (-.6,0) circle (2.5ex);  
			\draw[black,fill=gray!50,opacity=0.5, ] (-1.2,0) circle (3ex);  
			\draw[black,fill=gray!50,opacity=0.5, ] (-1.8,0) circle (3.2ex);  
			\draw[black,fill=gray!50,opacity=0.5, ] (-2.4,0) circle (3ex);  
			\draw[black,fill=gray!50,opacity=0.5, ] (-2.9,0) circle (2.7ex);  
			\draw[black,fill=gray!50,opacity=0.5, ] (-3.4,0) circle (1.9ex);  
			\draw[black,fill=gray!50,opacity=0.5, ] (-3.65,0) circle (1.4ex);  
			\draw[black,fill=gray!50,opacity=0.5, ] (-4,0) circle (2ex);
		\end{scope}

		\draw[red] (1,3) node {$\Im f_- <0$};		
		\draw[red] (3.2,1.2) node {$\Im f_- <0$};				
		\draw[red] (3,3) node {$\Im f_- >0$};		
		\draw[red] (1,1) node {$\Im f_- >0$};				
		%
		%
		
		\draw[color=black,->] (-2,0)--(8,0);		
		\draw[color=black,->] (0,-2)--(0,5);
		
		\draw (2.25+.75,2+.3) node {$b+i\pi$};
		\draw[red,fill=red] (2.25,2) circle (.5ex); 
		\draw (-.25-.2,-.1-.25) node {$0$};
		\draw[red,fill=red] (0,2) circle (.5ex); 
		\draw (-.25-.2,0.3+.2) node {\color{blue}$i s_N^-$};
		\draw[blue,fill=blue] (0,.3) circle (.5ex); 		
		\draw (-.25-.2,2-.25) node {$i \pi$};
		\draw[red,fill=red] (0,4) circle (.5ex); 
		\draw (-0.9,4-.4) node {\small \color{blue}$2i\pi - i s_N^+$};
		\draw[blue,fill=blue] (0,4-.3) circle (.5ex); 		
		\draw (-.25-.2,4+.25+.1) node {$2i \pi$};
		\draw[color=red,very thick] (0,2)--(8,2);
		\draw[color=blue,thick] (0,0)--(0,4);
		\draw[color=red, very thick, dashed] (0,0) ..controls +(3,0) and +(3,0).. (0,4);
		
		
		\draw[red,fill=red] (0,0) circle (.5ex); 
		
		\draw[color=blue,very thick,dashed] (0,0.3)--(4,.3);
		
		\draw (4+.9,0.3+0.2) node {\color{blue}$1+i s_N^-$};
		\draw[blue,fill=blue] (4,.3) circle (.5ex); 	
		
		\draw[color=blue,very thick,dashed] (4,.3)--(4,2-.15);		
		\draw[blue,fill=blue] (4,2-.15) circle (.3ex); 			
		\draw (4+1.1,2-.3) node {\color{blue}\small $1+i (\pi-\epsilon)$};		
		\draw[color=blue,very thick,dashed] (4,2-.15)--(2.4,2-.15)--(2.15,2.15)--(0,2.15);		
		\draw[blue,fill=blue] (0,2+.15) circle (.3ex); 					
		\draw (-.95,2+.3) node {\color{blue}\small $i (\pi+\epsilon)$};

		\draw[color=violet , very thick ] (0,0)--(8,0);		
		\draw[color=violet ] (2+6.5,0) node {Cut};
		
		\draw[color=violet , very thick ] (0,0+4)--(8,0+4);		
		\draw[color=violet ] (2+6.5,4) node {Cut};		
		
		\draw[color=teal ,dotted, very thick ] (0-.5,0+.15)--(0+.5,0+.15);			
		\draw[color=teal ,dashed, very thick ] (0+.5,0+.15)--(8,0+.15);		
		\draw[color=teal ] (2+6,+.3+.15) node {\tiny Shifted cut $L_1$ (if $\arg(u_0)<0$)};

		\draw[color=teal ,dotted, very thick ] (0-.5,4+0+.15)--(0+.5,4+0+.15);					
		\draw[color=teal,dashed , very thick ] (0+.5,0+4+.15)--(8,0+4+.15);		
		\draw[color=teal ] (2+6,.3+4+.15) node {\tiny  Shifted cut $L_2$ (if $\arg(u_0)<0$)};
		
		\draw[color=teal ,dotted, very thick ] (0-.5,0-.15)--(0+.5,0-.15);					
		\draw[color=teal ,dashed, very thick ] (0+.5,0-.15)--(8,0-.15);		
		\draw[color=teal ] (2+6,-.3-.15) node {\tiny Shifted cut $L_1$ (if $\arg(u_0)>0$)};
		
		\draw[color=teal ,dotted, very thick ] (0-.5,0+4-.15)--(0+.5,0+4-.15);					
		\draw[color=teal,dashed , very thick ] (0+.5,0+4-.15)--(8,0+4-.15);		
		\draw[color=teal ] (2+6,-.3+4-.15) node {\tiny  Shifted cut $L_2$ (if $\arg(u_0)>0$)};
		
	\end{tikzpicture}
	\caption{The deformed contour of Figure \ref{fig:contourC':b:+} with the shifted cuts}\label{fig:contour:b:+:shifted:cuts}	
\end{figure}
\smallskip

\underline{Step 3: Deformed contour in case (b), integral $I_{N,+}(2i\pi)$.}\\
Consider the situation of Figure \ref{fig:contour:b:+:perron}, expanded upon in Figure \ref{fig:contour:b:+:perron:shifted:cuts} with the shifted cuts $L_1, L_2$. Since $\eta,\eta'>0$ are fixed constants, and $\eta'\geq  s_N^+$ with $\textstyle s_N^+=\alpha^+\frac{\pi}{N}$, $\alpha^+\in ]1,2[$, we have
$\textstyle \alpha^+> \vert \mathfrak{I}(\Log(u_{0}))\vert$ and then $\textstyle \eta'>\vert \frac{1}{N}\mathfrak{I}(\Log(u_{0}))\vert$; we can also take $N$ large enough so that $\textstyle \eta>\vert \frac{1}{N}\mathfrak{I}(\Log(u_{0}))\vert$. Then we can deform $C_N^\pm$ to the contour shown in Figure \ref{fig:contour:b:+:perron} without crossing $L_1$ and $L_2$, except in the case $\mathfrak{R}(\Log(u_{0}))>0$, where  the contour must circumvent $L_1$ in a small subarc $C_a$ close to $0$, joining $-i\eta$ to $i\textstyle s_N^-$ (drawn in dashed black in Figure \ref{fig:contour:b:+:perron:shifted:cuts}). 

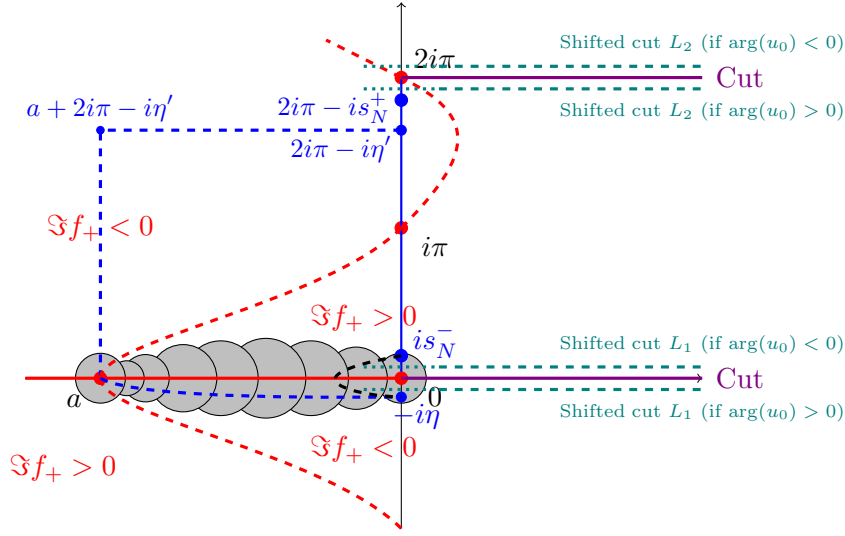
\begin{figure}[!h]
	\begin{tikzpicture}
		
		\draw[black,fill=gray!50,opacity=0.5, ] (0,0) circle (2ex); 		
		\draw[black,fill=gray!50,opacity=0.5, ] (-.6,0) circle (2.5ex);  
		\draw[black,fill=gray!50,opacity=0.5, ] (-1.2,0) circle (3ex);  
		\draw[black,fill=gray!50,opacity=0.5, ] (-1.8,0) circle (3.2ex);  
		\draw[black,fill=gray!50,opacity=0.5, ] (-2.4,0) circle (3ex);  
		\draw[black,fill=gray!50,opacity=0.5, ] (-2.9,0) circle (2.7ex);  
		\draw[black,fill=gray!50,opacity=0.5, ] (-3.4,0) circle (1.9ex);  
		\draw[black,fill=gray!50,opacity=0.5, ] (-3.65,0) circle (1.4ex);  
		\draw[black,fill=gray!50,opacity=0.5, ] (-4,0) circle (2ex);

		\draw[red] (-.5,.8) node {$\Im f_+ >0$};
		\draw[red] (-.5,-.95) node {$\Im f_+ <0$};		
		\draw[red] (-4.5,-1.2) node {$\Im f_+ >0$};		
		\draw[red] (-4,2) node {$\Im f_+ <0$};		
		
		\draw[color=black,->] (-5,0)--(4,0);		
		\draw[color=black,->] (0,-2)--(0,5);
		%
		%
		
		\draw (-4-.35,-.3) node {$a$};
		\draw[red,fill=red] (-4,0) circle (.5ex); 
		\draw (-4,+.25+4-.65) node {\small \color{blue} $a+2i\pi-i\eta'$};
		\draw[blue,fill=blue] (-4,4-.7) circle (.3ex); 		
		\draw (+.25+.2,-.25) node {$0$};
		\draw[red,fill=red] (0,2) circle (.5ex); 
		\draw (+.25+.2,0.5) node {\color{blue}$i \textstyle s_N^-$};
		\draw[blue,fill=blue] (0,.3) circle (.5ex); 		
		\draw (+.25+.2,2-.25) node {$i \pi$};
		\draw[red,fill=red] (0,4) circle (.5ex); 
		\draw (-.9,4-.4) node {\small \color{blue}$2i\pi - i \textstyle s_N^+$};
		\draw[blue,fill=blue] (0,4-.3) circle (.5ex); 		
		\draw (-.8,4-.95) node {\small \color{blue}$2i\pi - i \eta'$};
		\draw[blue,fill=blue] (0,4-.7) circle (.4ex); 		
		\draw (+.25+.2,4+.25) node {$2i \pi$};
		\draw[color=red,very thick] (0,0)--(-5,0);
		\draw[color=blue,very thick,dashed] (0,0+4-.7)--(-4,0+4-.7);
		\draw[color=blue,very thick,dashed ] (-4,0)--(-4,4-.7);
		\draw[color=blue,thick] (0,0)--(0,4);
		\draw[color=red, very thick, dashed] (-4,0) ..controls +(0,-.5) and +(-1,1).. (0,-2);
		\draw[color=red,very thick, dashed] (-4,0) ..controls +(0,.5) and +(-1,-1).. (0,2);
		\draw[color=red,very thick, dashed] (0,2) ..controls +(1,1) and +(1,-.5).. (0,4);
		\draw[color=red, very thick,dashed] (0,4) ..controls +(-1,.5) and +(1,-.5).. (-1,4.5);
		
		\draw[color=blue , very thick, dashed] (-4,0) ..controls +(0.5,-.3) and +(-.5,0).. (0,-.2-.05);
		\draw[blue,fill=blue] (0,-.2-.05) circle (.4ex); 
		\draw (+.2,-0.5) node {\color{blue}$-i \eta $};		
		
		\draw[red,fill=red] (0,0) circle (.5ex);

		\draw[color=violet , very thick ] (0,0)--(8-4,0);		
		\draw[color=violet ] (2+6.5-4,0) node {Cut};
		
		\draw[color=violet , very thick ] (0,0+4)--(8-4,0+4);		
		\draw[color=violet ] (2+6.5-4,4) node {Cut};		
		
		\draw[color=teal ,dotted, very thick ] (0-.5,0+.15)--(0+.5,0+.15);			
		\draw[color=teal ,dashed, very thick ] (0+.5,0+.15)--(8-4,0+.15);		
		\draw[color=teal ] (2+6-4,+.3+.15) node {\tiny Shifted cut $L_1$ (if $\arg(u_0)<0$)};
		
		\draw[color=teal ,dotted, very thick ] (0-.5,4+0+.15)--(0+.5,4+0+.15);					
		\draw[color=teal,dashed , very thick ] (0+.5,0+4+.15)--(8-4,0+4+.15);		
		\draw[color=teal ] (2+6-4,.3+4+.15) node {\tiny  Shifted cut $L_2$ (if $\arg(u_0)<0$)};
		
		\draw[color=teal ,dotted, very thick ] (0-.5,0-.15)--(0+.5,0-.15);					
		\draw[color=teal ,dashed, very thick ] (0+.5,0-.15)--(8-4,0-.15);		
		\draw[color=teal ] (2+6-4,-.3-.15) node {\tiny Shifted cut $L_1$ (if $\arg(u_0)>0$)};
		
		\draw[color=teal ,dotted, very thick ] (0-.5,0+4-.15)--(0+.5,0+4-.15);					
		\draw[color=teal,dashed , very thick ] (0+.5,0+4-.15)--(8-4,0+4-.15);		
		\draw[color=teal ] (2+6-4,-.3+4-.15) node {\tiny  Shifted cut $L_2$ (if $\arg(u_0)>0$)};
		
		\draw[color=black , very thick, dashed] (0,0.3) ..controls +(-1.7,-.3) and +(-.5,0).. (0,-.2-.05);

	\end{tikzpicture}
	\caption{The deformed contour of Figure \ref{fig:contour:b:+:perron} with the shifted cuts.}\label{fig:contour:b:+:perron:shifted:cuts}
\end{figure}

To be more precise, recall that $a$ is the critical point of $f_+$, and let $r,\upsilon''\in ]0,a[$ be such that $\eta,\upsilon''>r\textstyle >\frac{1}{N}|\Re(\Log(u_0))|$. We define $C_a$ as the concatenation of
\begin{itemize}
\item $C^-_a$, going from $-i\eta$ to $-\upsilon''$, while staying in the domain where $\mathfrak{I}f_+<0$ and $\mathfrak{R}(z)<0$, at a distance $\geq \textstyle r$ from $0$, and avoiding $L_1$ if $\arg(u_0)>0$;
\item $C^{+}_a$, going from $-\upsilon''$ to $is_N^-$, while staying at a distance $\geq \textstyle s_N^-$ from $0$, and avoiding $L_1$ if $\arg(u_0)<0$.
\end{itemize}

See Figure \ref{fig:contour:Ca} for an illustration of this. One should understand Figure \ref{fig:contour:Ca} as a zoomed part of Figure \ref{fig:contour:b:+:perron:shifted:cuts} around the origin.

\begin{figure}[!h]
	\begin{tikzpicture}
		
		\draw[color=black,->] (-6,0)--(2,0);		
		\draw[color=black,->] (0,-3)--(0,2+.3);


	\draw[color=black , very thick, dashed] (-4,0) ..controls +(0.5,-.3) and +(-.5,0).. (0,-2-0.5);
				\draw[black,fill=black] (-4,0) circle (.4ex); 
\draw (-4,0+.4) node {\color{black}$-\upsilon''$};
			\draw[color=black , very thick, dashed] (-4,0) ..controls +(0.5,.3) and +(-.5,0).. (0,.9);
				\draw[blue,fill=blue] (0,.9) circle (.4ex); 
\draw (+.3+.2,1) node {\color{blue}$i s_N^- $};		
				\draw[blue,fill=blue] (0,-2-.5) circle (.4ex); 
				\draw (+.3+.2,-2-0.5) node {\color{blue}$-i \eta $};

		\draw[color=violet , very thick ] (0,0)--(4,0);		
		\draw[color=violet ] (2+6.5-4,0) node {Cut};

		\draw[color=teal ,dotted, very thick ] (0-.5,0+.15+.5)--(0+.5,0+.15+.5);			
		\draw[color=teal ,dashed, very thick ] (0+.5,0+.15+.5)--(4,0+.15+.5);		
		\draw[color=teal ] (2+6-4,+.3+.15+.5) node {\tiny Shifted cut $L_1$ (if $\arg(u_0)<0$)};

		\draw[color=teal ,dotted, very thick ] (0-.5,0-.15-.5)--(0+.5,0-.15-.5);					
		\draw[color=teal ,dashed, very thick ] (0+.5,0-.15-.5)--(8-4,0-.15-.5);		
		\draw[color=teal ] (2+6-4,-.3-.15-.5) node {\tiny Shifted cut $L_1$ (if $\arg(u_0)>0$)};

		\draw[color=black] (-2+.4,-1.1) node {$C_{a}^-$};				
		\draw[color=black] (-2+.4,1.3) node {$C_{a}^{+}$};						
	\end{tikzpicture}
	\caption{The small contour $C_a$, which avoids the shifted cut $L_1$}\label{fig:contour:Ca}	
\end{figure}
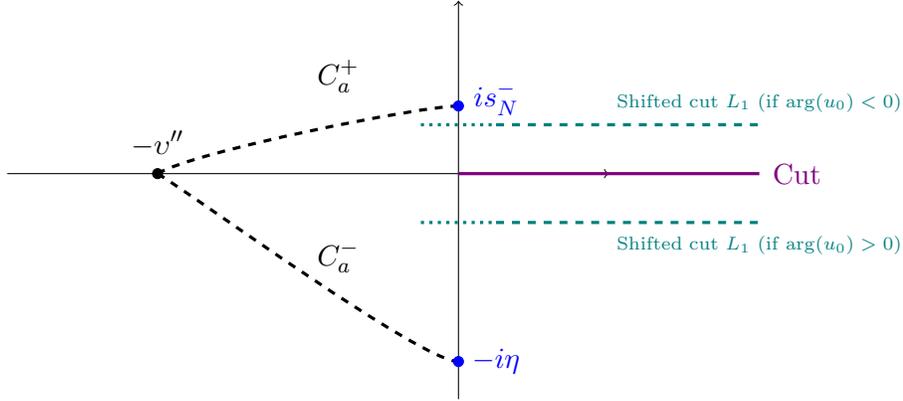
\smallskip

Let us compute asymptotics of the integral $I_{N,+}(2i\pi)$ on the subcontours $C_a^-$, $C_a^+$.  
\smallskip

\underline{On the subarc $C_a^-$:} we can use the fact that $\Im(f_+(z))< \Im(f_+(\upsilon''))=0$ for all $z$ in the interior of $C_a^-$. Let us decompose $C_a^-$ into consecutive subarcs $C_a^{--}$, $C_a^{-+}$ with initial endpoint $-i\eta$ and final endpoint $-\upsilon''$ respectively, such that along $C_a^{--}$ we have $\Im(f_+)<-c$ for some $c>0$. Then:
\begin{align}
	\left \vert \int_{C_a^{--}} g_N(z) e^{\frac{N}{2i\pi}\mathcal{L}_+(z,-2i\pi,2i\pi)}dz\right \vert 
	&\leq \max_{C_a^{--}}(|g_N(z)|)
	\int_{C_a^{--}} e^{\frac{N}{2\pi}\Im(f_+(z))}\vert dz\vert \notag\\
	&\leq  \max_{C_a^{--}}(|g_N(z)|)\int_{C_a^{--}} e^{-\frac{N}{2\pi}c}\vert dz\vert = o_{N\to\infty}\left (\frac{1}{{N}}\right ).\label{intCa-Ofev26}
\end{align}
Here we use that $\textstyle |g_N(z)|$ is bounded from above by a constant independent of $N$ on $C_a^-$. Indeed $C_{a}^-$ is compact and stays away from a fixed small disc centered at $0$, so \eqref{defrho0} implies that $\vert \rho(z)\vert$ is bounded from above by a constant independent of $N$ on $C_{a}^-$. The formula of ${\mathcal{I}_{u_0,N}(z)}$ above \eqref{majorjanv26_2} and Lemma \ref{extboundPsiNjanv26} imply this as well for $\textstyle \vert \exp\left(\frac{1}{N}{\mathcal{I}_{u_0,N}(z)}\right)\vert$ and $\textstyle \exp\left(\frac{1}{N}{\Psi_{u_0,N}(z)}\right)$, respectively, and clearly it holds for the term $\textstyle \left(1+\frac{1}{N}O_{N\ra \infty}\left(1\right)\right)\coth\left(\frac{Nz}{2}\right)$ in \eqref{defRNjanv26}. Hence $\textstyle g_N(z) = \rho(z)\exp\left(\frac{1}{N}{\Psi_{u_0,N}(z)}\right)\textstyle (1 + \frac{1}{N}R_N(z))$ has modulus bounded on $C_{a}^-$ by a constant independent of $N$.\\
Along the subarc $C_a^{-+}$ we can apply Proposition \ref{Perronfev26} (ii) with $n=N/2\pi$, $f=f_+$, and $(g_n)$ the sequence $(g_N)$; indeed, $C_a^{-+}$ is a compact subset of $U$, so Lemma \ref{extboundPsiNjanv26} (iii) and the bounds discussed above for $R_N(z)$ imply that $(g_N)$ converges uniformly to $\rho(z)$ on $C_a^{-+}$. We have $\Im(f_+(\upsilon''))=0$,  $f_+'(\upsilon'')\ne 0$, and $\rho(\upsilon'') \ne 0$, so from \eqref{2Perronmars26} we deduce 
$$\left \vert \int_{C_a^{-+}} g_N(z) e^{\frac{N}{2i\pi}\mathcal{L}_+(z,-2i\pi,2i\pi)}dz \right \vert \sim_{N\to +\infty} D\cdot \frac{1}{N}$$
for a scalar $D\ne 0$ (we will not use its actual value). 

\smallskip

\underline{Consider now the subarc $C_a^{+}$.} Unlike in the case of $C_a^-$ above, the function $g_N$ is not bounded on $C_a^+$ since its factor $\rho(z)$ satisfies $\vert \rho(z) \vert \to +\infty$ as $z:=is_N^-\to 0$. Let us see precisely what happens.\\
Part of the above discussion about $C_a^-$ is still true. Namely, by Lemma \ref{extboundPsiNjanv26} (ii) the function $\textstyle \vert \exp\left(\frac{1}{N}{\Psi_{u_0,N}}(z)\right) \vert$ is bounded from above by a constant independent of $N$ and $z\in C_{a^+}$. Also, by the comment after \eqref{facterrmars25} the same is true for obvious reasons for the modulus of the factor $\textstyle \left(1+\frac{1}{N}O_{N\ra \infty}\left(1\right)\right)\coth(Nz/2)$ in \eqref{defgNjanv26}, since $\coth(Nis_N^-/2) = \coth(i\alpha^-\pi/2)$.\\
Consider $\rho(z)$. We claim that, because $z=is_N^-$ is the point of $C_a^+$ which is the closest to the singularity $z=0$ of $\rho(z)$, it is enough to consider the behaviour of $\rho(is_N^-)$. Indeed, we can deform $C_a^+$ slightly upwards near $is_N^-$, so that it reaches a point $i\eta$ with $s_N^-< \eta< \pi$, and then goes vertically to $is_N^-$. By the same arguments as for $C_{a}^-$, $\vert \rho(z) \vert$ is bounded from above by a constant independent of $N$ on the subarc from $-\upsilon''$ to $i\eta$. For $z=it$ on the vertical subarc from $i\eta$ to $is_N^-$, it is readily checked from the formula \eqref{defrho0} that
$$\vert \rho(it) \vert = {\rm (Constant})\cdot \exp\left(-t\left(\frac{\Log\vert u_1\vert}{2\pi}+\frac{\Log\vert u_0\vert}{4\pi}\right)-\frac{\mathfrak{I}({\bf l}_0')+\pi}{2\pi}\cdot\ln(\sin(t/2))\right).$$
Our choice of $a_0\geq 2$ makes $\mathfrak{I}({\bf l}_0'+\pi)/2\pi = \mathfrak{R}(({\bf l}_0'+i\pi)/2i\pi) >0$, and the derivative of $\vert \rho(it) \vert$  is 
$$\left(-\left(\frac{\Log\vert u_1\vert}{2\pi}+\frac{\Log\vert u_0\vert}{4\pi}\right)-\frac{\mathfrak{I}({\bf l}_0')+\pi}{2\pi} {\rm cotan}\left(\frac{t}{2}\right)\right)\cdot\vert \rho(it) \vert,$$
which is negative for $t>0$ and small, as $\textstyle \lim_{t\to 0^+} {\rm cotan}\left(\frac{t}{2}\right) = +\infty$. The maximum of $\vert \rho(it) \vert$ is therefore attained at $\vert \rho(is_N^-)\vert$ on the subarc from $-\upsilon''$ to $i\eta$. This concludes the proof of our claim.\\
Now we have $1-e^{z} = \textstyle \frac{1}{N}O(1)\ne 0$ for every $\textstyle z\sim_{N\to +\infty} \frac{u}{N}$, $u\in \mc^*$, so the formula \eqref{defrho0} implies that $N^{-({\bf l}_0'+i\pi)/2i\pi}\vert \rho(is_N^-)\vert$ has modulus bounded from above by a constant independent of $N$ (note again that our choice of $a_0\geq 2$ makes $\mathfrak{R}(({\bf l}_0'+i\pi)/2i\pi) >0$, whence $N^{({\bf l}_0'+i\pi)/2i\pi}\to +\infty$ as $N\to +\infty$). A similar argument applied to the integral formula of ${\mathcal{I}_{u_0,N}(z)}$ shows that the same conclusion holds true for $\textstyle \exp\left(\frac{1}{N}{\mathcal{I}_{u_0,N}(is_N^-)}\right)$.\\
Summing up, we obtain that $N^{-({\bf l}_0'+i\pi)/2i\pi}g_N$ is bounded from above by a constant independent of $N$ on $C_{a^+}$. Then let us write
\begin{equation}\label{intWatsonjanv26}\int_{C_a^+} g_N(z) e^{\frac{N}{2i\pi}\mathcal{L}_+(z,-2i\pi,2i\pi)}dz =  N^{({\bf l}_0'+i\pi)/2i\pi} \int_{C_a^+} \frac{g_N(z)}{N^{({\bf l}_0'+i\pi)/2i\pi}} e^{\frac{N}{2\pi}\Im(f_+(z))}dz.
\end{equation}
Now, from Section \ref{sub:b:+:perron} we have
$$\dfrac{\partial}{\partial x} \Im (f_+(x+it))  = 2t - \arg\left (1-e^{x+it}\right ).$$
Since $\arg\left (1-e^{x+it}\right )<0$ when $x<0$ and $t>0$ is close to $0$, it follows that, if one initially chooses $\upsilon''$ small enough (which, by the definition of $C_a^+$ and $C_a^-$ is always possible by letting $N$ be large enough), then $\textstyle \frac{\partial}{\partial x} \Im (f_+(z))>0$ for any $z\in C_a^{+}$, and thus for any such $z=x+it$ we have 
\begin{equation}\label{boundf+caseQbjanv26}
e^{\frac{N}{2\pi}\Im(f_+(z))} \leq e^{\frac{N}{2\pi}\Im(f_+(it))}=e^{\frac{N}{2\pi}{\rm Cl}_2(t)}.
\end{equation}
Then, a simple upper bound of the modulus of the RHS integral in \eqref{intWatsonjanv26} is
\begin{align}
\max_{C_a^+}\left(\frac{|g_N(z)|}{N^{({\bf l}_0'+i\pi)/2i\pi}}\right) \int_{C_a^+} e^{\frac{N}{2\pi}\Im(f_+(z))}dz\ & \leq  \max_{C_a^+}\left(\frac{|g_N(z)|}{N^{({\bf l}_0'+i\pi)/2i\pi}}\right) \cdot  O_{N\to\infty}(1) \cdot \int_{0}^{s_N} e^{\frac{N}{2\pi}{\rm Cl}_2(t)}dt \notag \\ &= O_{N\to\infty}\left (\frac{1}{N^{1-\frac{\alpha^-}{2}}}\right )=  o_{N\to\infty}\left (\frac{1}{\sqrt{N}}\right ),\label{domCa+1fev26}
\end{align}
where we used \eqref{boundf+caseQbjanv26} in the inequality, and the asymptotics of the last integral computed in Section \ref{smallIsN} in the equality. Note that the factor $O_{N\to\infty}(1)$ on the RHS of the inequality comes from the change of measure from the (compact) contour $C^{+}_a$ to the vertical contour $[0,i s_N]$.\\
This estimate is not enough for our purposes, because it only implies that the integral \eqref{intWatsonjanv26} has order $\leq \textstyle N^{({\bf l}_0'+i\pi)/2i\pi}o_{N\to\infty}\left (\frac{1}{\sqrt{N}}\right )$, whereas the integral over the subpath $C_3'\cup C_4'$ (going from $a+2i\pi-i\eta'$ to $-i\eta$, see Figure \ref{fig:contour:b:+:perron:shifted:cuts}) will have order exactly $\textstyle \frac{1}{\sqrt{N}}$ as in the classical case of Section \ref{sub:b:+:perron} (see Step 5). There could be cancellations between these two integrals (the ones over the other subpaths in Figure \ref{fig:contour:b:+:perron:shifted:cuts} will have asymptotics of order $\textstyle o_{\to \infty}(\frac{1}{\sqrt{N}})$), and therefore we have to compute more precisely the order of the integral on the RHS of \eqref{intWatsonjanv26}. We do this by using Proposition \ref{Perronfev26}.\\ 
Let us deform the arc $C_a^+$ slightly downwards near its endpoint $is_N^-$, so that it reaches the axis $i\mr$ in a point $is_N'$, where $s_N':= \textstyle \frac{\alpha'}{N}<s_N^-$, and then goes vertically upwards to $is_N^-$. Denote by $C_a^{+-}$ the subarc from $-\upsilon''$ to $is_N'$, and write
\begin{multline}\label{decompintfev26}
\int_{C_a^+} \frac{g_N(z)}{N^{({\bf l}_0'+i\pi)/2i\pi}} e^{\frac{N}{2\pi}\Im(f_+(z))}dz = \int_{C_a^{+-}} \frac{g_N(z)}{N^{({\bf l}_0'+i\pi)/2i\pi}} e^{\frac{N}{2\pi}\Im(f_+(z))}dz \\ +  \int_{[is_N',is_N^-]} \frac{g_N(z)}{N^{({\bf l}_0'+i\pi)/2i\pi}} e^{\frac{N}{2\pi}\Im(f_+(z))}dz.
\end{multline}
Instead of the integral over $[is_N',is_N^-]$ on the RHS, let us consider for a while the same integral but with the contour $[i\eta_1,i\eta_2]$, with $\textstyle \frac{\pi}{3}>\eta_2>\eta_1>0$ fixed. We know that for $z=it$, we have $\Im(f_+(z)) := \Im(\mathcal{L}_+(z,-2i\pi,2i\pi))= \Im({\rm Li}_2(e^{it}) -t^2) = {\rm Cl}_2(t)$, and the function ${\rm Cl}_2(t)$ is strictly increasing and ${\rm Cl}_2'(t)= -\Log\vert 2\sin(t/2)\vert \ne 0$ for $t\in \textstyle ]0,\frac{\pi}{3}[$. In particular $\Im(f_+(z)) <\Im(f_+(i\eta_2))$ for every $z\in [i\eta_1,i\eta_2]$. All the other assumptions of Proposition \ref{Perronfev26}(ii) are clearly satisfied (the uniform convergence of $(g_N)$ to $\rho$ on $[i\eta_1,i\eta_2]$ follows from Lemma \ref{extboundPsiNjanv26} and the properties of $R_N$ discussed in Lemma \ref{qcintjui25}), so this result implies (the sign in front of the RHS is because $[i\eta_1,i\eta_2]$ {\it ends} at the maximum $i\eta_2$ of $\Im(f_+(i\eta_2))$):
\begin{multline}\label{estimateshortstep3fev26}\int_{[i\eta_1,i\eta_2]} \frac{g_N(z)}{N^{({\bf l}_0'+i\pi)/2i\pi}} e^{\frac{N}{2\pi}\Im(f_+(z))}dz \\ \sim_{N\to +\infty} -\dfrac{e^{\frac{N}{2i\pi}( {\rm Li}_2(e^{i\eta_2}) -\eta_2^2)}}{N^{({\bf l}_0'+i\pi)/2i\pi}} \dfrac{\rho(i\eta_2)}{-i\Log\vert 2\sin(\eta_2/2)\vert}\cdot \dfrac{1}{N}.  
\end{multline}
Now we come back to $\eta_2:=s_N^- = \textstyle \alpha^-\frac{\pi}{N}$, $\eta_1 := s_N'=\textstyle \frac{\alpha'}{N}< s_N^-$. By Proposition \ref{Perronfev26} (iii) the equivalent \eqref{estimateshortstep3fev26} is uniform in $i\eta_2$ as long as it stays a maximum of $\Im(f_+)$ along the contour, it varies in a compact set not containing a critical point of $f_+$, and $(g_N)$ converges uniformly to $\rho$ on this compact set. As noted above, the first two conditions are satisfied here. As for the convergence of $(g_N)$, $i\eta_2=is_N^-$ leaves any compact subset of $\textstyle ]0,i\frac{\pi}{3}[$ as $N\to +\infty$, but the bounds \eqref{boundPsiNseqstripjanv26} and \eqref{majorjanv26_2} (with $\delta:=\alpha\textstyle \frac{\pi}{N}$) on any strip $\textstyle (\alpha+1)\frac{\pi}{N}< \mathfrak{I}(z)< 2\pi-(\alpha+1)\frac{\pi}{N}$, $\alpha>0$, imply that $g_N(\textstyle \alpha^-\frac{\pi}{N}) = \rho(\textstyle \alpha^-\frac{\pi}{N})v_N$, $\textstyle v_N:= \exp\left(\frac{1}{N}{\Psi_{u_0,N}(\alpha^-\frac{\pi}{N})}\right)\textstyle (1 + \frac{1}{N}R_N(\alpha^-\frac{\pi}{N}))$, where $(v_N)$ is a bounded sequence, bounded from below by a constant $>0$. Then, an immediate adaptation of the proof of Proposition \ref{Perronfev26} (ii) implies that \eqref{estimateshortstep3fev26} remains valid with $\eta_2:=s_N^-$ and $\eta_1 := s_N'$, by multiplying $\rho(i\eta_2)=\rho(is_N^-)$ with $v_N$.\\
The result is as follows. We have ${\rm Cl}_2'(s_N^-) = -\Log\vert 2\sin(\alpha^-\pi N/2)\vert \sim_{N\to \infty} \Log(N)$, and hence $\textstyle o_{n \to \infty}\left(1/n\vert f'(z_0)\vert \right) = o_{n \to \infty}\left(1/N\Log(N)\right)$ in the asymptotic formula of Proposition \ref{Perronfev26} (ii). Also ${\rm Cl}_2(s_N^-)\sim_{N\to +\infty} -s_N^-\Log(s_N^-)$, $e^{-\frac{N}{2i\pi} (s_N^-)^2}\sim_{N\to +\infty} 1$, and
$$\frac{\rho(is_N^-) }{-i\Log\vert 2\sin(s_N^-/2)\vert} \sim_{N\to +\infty} 
\frac{1}{i}\cdot e^{-\frac{{\bf l}_1'\alpha^-}{2N}}\left(\frac{Ni}{\alpha^-\pi}\right)^{\frac{{\bf l}_0'}{2i\pi}+\frac{1}{2}}\cdot \frac{1}{\Log(\frac{N}{\alpha^-\pi})}.$$
Therefore, taking the bounded sequence $(v_N)$ into account as explained above, from \eqref{estimateshortstep3fev26} we get
\begin{align}
\int_{[is_N',is_N^-]} & \frac{g_N(z)}{N^{({\bf l}_0'+i\pi)/2i\pi}} e^{\frac{N}{2\pi}\Im(f_+(z))}dz \notag \\ & \sim_{N\to +\infty} i^{\frac{{\bf l}_0'}{2i\pi}+\frac{3}{2}}e^{\frac{N}{2\pi}(\alpha^-\frac{\pi}{N}\cdot \Log(\frac{N}{\alpha^-\pi}))} e^{-\frac{{\bf l}_1'\alpha^-}{2N}}\left(\frac{1}{\alpha^-\pi}\right)^{\frac{{\bf l}_0'}{2i\pi}+\frac{1}{2}}v_N\cdot \frac{1}{\Log(\frac{N}{\alpha^-\pi})}\cdot \dfrac{1}{N}
\notag
\\ & \sim_{N\to +\infty} \dfrac{i^{\frac{{\bf l}_0'}{2i\pi}+\frac{3}{2}}}{N^{1-\frac{\alpha^-}{2}}\Log(N)}\left(\frac{1}{\alpha^-\pi}\right)^{\frac{{\bf l}_0'}{2i\pi}+\frac{1}{2}+ \frac{\alpha^-}{2}}v_N.\label{equiv6fev26}
\end{align}
We can now conclude. Consider the integral over $C_a^{+-}$ on the RHS of \eqref{decompintfev26}. As in \eqref{domCa+1fev26} one finds
\begin{align*}\left \vert \int_{C_a^{+-}} g_N(z)e^{\frac{N}{2i\pi}\mathcal{L}_+(z,-2i\pi,2i\pi)}dz\right \vert = O_{N\to\infty}\left (\frac{1}{N^{1-\frac{\alpha'}{2}}}\right ).\end{align*}
Since $\alpha'< \alpha^-$, it is a $o_{N\to +\infty}$ of \eqref{equiv6fev26}. From this and \eqref{intWatsonjanv26} we deduce
\begin{multline}\label{intCa+8fev26}
\int_{C_a^+} g_N(z) e^{\frac{N}{2i\pi}\mathcal{L}_+(z,-2i\pi,2i\pi)}dz \\ \sim_{N\to +\infty} i^{\frac{{\bf l}_0'}{2i\pi}+\frac{3}{2}}\cdot \dfrac{N^{\frac{{\bf l}_0'}{2i\pi}-\frac{1}{2}+\frac{\alpha^-}{2}}}{\Log(N)} \left(\frac{1}{\alpha^-\pi}\right)^{\frac{{\bf l}_0'}{2i\pi}+\frac{1}{2}+ \frac{\alpha^-}{2}}v_N.
\end{multline}}
\smallskip

This concludes Step 3.

\

\underline{Step 4: Deformed contour in case (a), integral $I_{N,+}(i\pi)$.}\\
Similarly, in the situation of Figure \ref{fig:contour:a:+} we can deform $C_N^\pm$ to the contour shown in the figure except in the case $\mathfrak{R}(\Log(u_{0}))>0$, where the contour must circumvent $L_1$ in a small subarc close to $0$ and joining a point $-\upsilon''$, $\upsilon''>0$, to $is_N^-$, if $\mathfrak{I}(\Log(u_{0}))<0$, or the contour must circumvent $L_2$ in a small subarc close to $2i\pi$ and joining $-\upsilon''+2i\pi$ to $2i\pi - is_N^+$, if $\mathfrak{I}(\Log(u_{0}))>0$. Indeed, we can take such subarcs to be $C^+_a$ in the first case, and $\bar{C}^+_a +2i\pi$ in the second case, with $C^+_a$ defined as in Step 3, and $\bar{C}^+_a$ the complex conjugate of $C^+_a$ but with endpoint $-is_N^+$ instead of $-is_N^-$. 

Here we get the following horizontal derivative:
$$\dfrac{\partial}{\partial x} \Im (f_+(x+it))  
= 2t - \arg\left (1-e^{x+it}\right )+\pi,$$
which is positive when $t>0$. Thus the integrals of $e^{\frac{N}{2\pi}\Im(f_+(z))}$ over the subarcs $C^+_a$ or $\bar{C}^+_a +2i\pi$ will be dominated by a constant times the same integrals over the projections of these sub-arcs on the vertical axis, where we know that $\Im(f_+(z)) = {\rm Cl}_2(t)$. By the same arguments as in Step 3 for $C_a^+$ one proves that $\textstyle \max_{\bar{C}^+_a +2i\pi}\left(\frac{|g_N(z)|}{N^{({\bf l}_0'+i\pi)/2i\pi}}\right)<+\infty$. Then, the same reasoning as from \eqref{intWatsonjanv26} to the inequality in \eqref{domCa+1fev26} gives
\begin{align}
	\left \vert \int_{\bar{C}^+_a +2i\pi} g_N(z)\right.
 &\left. e^{\frac{N}{2i\pi}\mathcal{L}_+(z,-2i\pi,i\pi)}dz\right \vert \notag
	\\ &\leq  N^{({\bf l}_0'+i\pi)/2i\pi} \max_{\bar{C}^+_a +2i\pi}\left(\frac{|g_N(z)|}{N^{({\bf l}_0'+i\pi)/2i\pi}}\right) \cdot  O_{N\to\infty}(1) \cdot 
	\int_{2\pi-s_N}^{2\pi} e^{\frac{N}{2\pi}{\rm Cl}_2(t)}dt \notag \\
	&\quad \quad \quad \quad = N^{({\bf l}_0'+i\pi)/2i\pi} O_{N\to\infty}\left (\frac{1}{N}\right ).\label{intbarCa+step48fev26}
\end{align}
where the asymptotics of the last integral were obtained in Section \ref{smallpijuil25}. On the other hand, as in \eqref{intCa+8fev26} we have:
\begin{equation}\label{intCa+step48fev26}
 \int_{C_a^+}g_N(z) e^{\frac{N}{2i\pi}\mathcal{L}_+(z,-2i\pi,i\pi)}dz \sim_{N\to +\infty} i^{\frac{{\bf l}_0'}{2i\pi}+\frac{3}{2}}\cdot \dfrac{N^{\frac{{\bf l}_0'}{2i\pi}-\frac{1}{2}+\frac{\alpha^-}{2}}}{\Log(N)} \left(\frac{1}{\alpha^-\pi}\right)^{\frac{{\bf l}_0'}{2i\pi}+\frac{1}{2}+ \frac{\alpha^-}{2}}v_N.
\end{equation}
For future reference in Step 6, we note that \eqref{intbarCa+step48fev26} is negligible with respect to \eqref{intCa+step48fev26}, since $\alpha^->0$ and $(v_N)$ is bounded from below by a constant $>0$, and \eqref{intCa+step48fev26} tends to $+\infty$ as $N\to +\infty$, since $\textstyle \mathfrak{R}(\frac{{\bf l}_0'}{2i\pi}-\frac{1}{2}) >0$ by our choice $a_0\geq 2$.

\

\underline{Step 5: Checking the assumptions of Proposition \ref{gensaddlen} away from the ends $is_N^-$, $is_N^+$.}\\
By Steps 2 to 4 the function $g_N(z)e^{\frac{N}{2i\pi}\mathcal{L}_\pm(z,-2i\pi,\ell)}$ is holomorphic on a domain where we can deform the contour $C_N^-$ or $C_N^+$ to those described in Step 2 (Figure \ref{fig:contour:b:+:shifted:cuts} or $[is_N^-,is_N^+]$), Step 3 (Figure \ref{fig:contour:b:+:perron:shifted:cuts}), and Step 4.  By Cauchy's theorem the integral $I_{N,\pm}(\ell)$ is unchanged if one replaces its contour by any of these ones.\\ 
By following the analysis of Sections \ref{sub:SPM:a}--\ref{sub:SPM:b} we now check that the hypothesis of Proposition \ref{gensaddlen}(ii) are satisfied by the integral $I_{N,\pm}(\ell)$, restricted to subcontours that stay away from the ends $is_N^-$ and $is_N^+$ in the situations of Figures \ref{fig:contour:b:+:shifted:cuts} and \ref{fig:contour:b:+:perron:shifted:cuts}, and by the integral $I_{N,-}(i\pi)$ on a subcontour of $[is_N^-,is_N^+]$. Namely, these subcontours go from $2i\pi-i\eta'$ to $-i\eta$ in Figure \ref{fig:contour:b:+:perron:shifted:cuts}, and from $i\eta$ to $2i\pi - i\eta'$ in Figure \ref{fig:contour:b:+:shifted:cuts}, where $\eta>s_N^-$, $\eta'>s_N^+$, and one deformes slightly the contour near $s_N^-$ and $2i\pi - is_N^+$ so that it goes first vertically to $i\eta$ and finishes vertically from $2i\pi - i\eta'$ to $2i\pi - is_N^+$, respectively. The subcontour goes from $i\eta$ to $2i\pi-i\eta'$ vertically in the situation of the integral $I_{N,-}(i\pi)$.\\
Recall that the critical point $z_0$ of $\mathcal{L}_{\pm}(z,-2i\pi,\ell)$, where $\ell=i\pi$ or $2i\pi$, is: $\textstyle z_0= i\frac{\pi}{3}$ for $\mathcal{L}_{-}(z;-2i\pi,i\pi)$, $\textstyle z_0= \Log((\sqrt{5}-1)/2)$ for $\mathcal{L}_{+}(z;-2i\pi,2i\pi)$, and $\textstyle z_0= \Log((\sqrt{5}+1)/2+i\pi$ for $\mathcal{L}_{-}(z;-2i\pi,2i\pi)$.

In each case $g_N(z)e^{\frac{N}{2i\pi}\mathcal{L}_\pm(z,-2i\pi,\ell)}$ is holomorphic in a neighborhood of $z_0$, and by the formulas \eqref{defrho0} and \eqref{defgNjanv26} we see immediately that it is non vanishing at $z=z_0$, since $\rho(z_0)\ne 0$. In Lemma \ref{extboundPsiNjanv26} we proved that $\textstyle \exp\left(\frac{1}{N}\Psi_{u_0,N}(z_0)\right)$ converges uniformly to $1$ on compact neighborhoods of $z_0$ in $U$, and from \eqref{defRNjanv26} and the defining formula of $\mathcal{I}_{u_0,N}(z)$ one finds easily an upper bound for $\vert R_N(z)\vert$, independent of $N$, in a sufficiently small compact neighborhood of $\textstyle z_0$. Hence the hypothesis of Proposition \ref{gensaddlen}(ii) are satisfied in each case.  

\

\underline{Step 6: Comparing the asymptotics on subcontours, and conclusion.}\\
Here we combine the Steps $1$ to $5$ to describe asymptotics of the integrals $I_{N,\pm}(\ell)$.\\
Consider the integral $I_{N,+}(i\pi)$. The analysis of Section \ref{sec:case(a)In+} can be repeated verbatim for the integral restricted to the subcontour from $2i\pi-\upsilon''$ to $-\upsilon''$ in Step 4 (Figure \ref{fig:contour:a:+}). This analysis shows again that this integral is a $O_{N\to +\infty}(1)$ summand of $I_{N,+}(i\pi)$. By Step 4 the integrals \eqref{intbarCa+step48fev26} and \eqref{intCa+step48fev26}, along the arcs $\bar{C}^+_a +2i\pi$ and $C_a^+$, have distinct growths, and \eqref{intCa+step48fev26} dominates \eqref{intbarCa+step48fev26} and is not bounded. Then the integral \eqref{intCa+step48fev26} dominates the asymptotics of $I_{N,+}(i\pi)$, which are therefore of the form stated in Lemma \ref{lem:reformintqjui25}.

Consider the integral $I_{N,-}(i\pi)$. By Step 5 and Proposition \ref{gensaddlen}(ii) the analysis of Section \ref{sec:case(a)In+} can be repeated verbatim for the integral restricted to the subcontour $[i\eta,2i\pi-i\eta']$. This analysis shows again that this integral grows exponentially, with growth rate equal to $\textstyle \frac{1}{2}{\rm Vol}(M)$. The integrals on the subcontours $[is_N^-,i\eta]$ and $[2i\pi-i\eta', 2i\pi-is_N^+]$ have polynomial or subpolynomial growth, like in \eqref{intbarCa+step48fev26} and \eqref{intCa+step48fev26}, so the asymptotics of $I_{N,-}(i\pi)$ are of the form stated in Lemma \ref{lem:reformintqjui25}.

A similar reasoning applies to the integrals $I_{N,-}(2i\pi)$ and $I_{N,+}(2i\pi)$ by using the asymptotics computed in Step 3 for these integrals restricted to the small subcontours near $0$ or $2i\pi$, and the asymptotics computed from Step 5 for the integrals restricted to the subcontour from $i\eta$ to $2i\pi - i\eta'$ in Figure \ref{fig:contour:b:+:shifted:cuts}, and to the subcontour from $2i\pi-i\eta'$ to $-i\eta$ in Figure \ref{fig:contour:b:+:perron:shifted:cuts}. These asymptotics from Step 5 have equivalents of order $1/\sqrt{N}$, as in \eqref{asyIjui25c}. The integral $I_{N,+}(2i\pi)$ restricted to the small subcontour $C_a^+$ has the asymptotics \eqref{intCa+8fev26}, and the integral $I_{N,-}(2i\pi)$ restricted to the small subcontour $[is_N^-,i\eta]$ has the asymptotics \eqref{intCa+8fev26} multiplied with $-e^{-\alpha^-\pi}$ (the sign is because the contour starts at $is_N^-$, and the factor $e^{-\alpha^-\pi}$ comes for the change of $\mathcal{L}_+(z,-2i\pi,2i\pi)$ to $\mathcal{L}_-(z,-2i\pi,2i\pi)$). 
Since $a_0\geq 4$, we have $\textstyle \mathfrak{R}(\frac{{\bf l}_0'}{2i\pi}-\frac{1}{2}) >-\frac{1}{2}$, and therefore these small integrals dominate the asymptotics of $I_{N,+}(2i\pi)$ and $I_{N,-}(2i\pi)$, respectively. This concludes the proof.\cvd
\medskip

Recall Lemma \ref{rem:reduction:sigma:integral}: the two cases (a) and (b) correspond to $\bl_{1,\infty}^\bx=i\pi$, $\bl_{0,\infty}^\bx=-2i\pi$ and $\bl_{1,\infty}^\bx=2i\pi$, $\bl_{0,\infty}^\bx=-2i\pi$ respectively, for both $\bx=\bu$ and $\bv^*$.
\begin{cor}\label{cor:reductionintjui25} We have
$$\lim_{N\rightarrow +\infty}\dfrac{2\pi}{N}\Log \left \vert \int_{C_N} e^{\frac{N}{2i\pi}(z^2 - \bl_{1,N}^\bx z)}\hat S_N(\bl_{0,N}^\bx + z)\textstyle \coth(\frac{Nz}{2})dz \right \vert  = \left \{\begin{matrix}
\dfrac{1}{2}{\rm Vol}(M) & \text{in\ case} \ (a),\\
0  & \text{in\ case}\  (b),
\end{matrix}
\right.$$
and 
$$\lim_{N\rightarrow +\infty}\dfrac{2\pi}{N}\Log \left \vert \Sigma_N(\bx_{0,N},\bx_{1,N})\right \vert = \lim_{N\rightarrow +\infty}\dfrac{2\pi}{N}\Log \left \vert \int_{C_N} e^{\frac{N}{2i\pi}(z^2 - \bl_{1,N}^\bx z)}\hat S_N(\bl_{0,N}^\bx + z)\textstyle \coth(\frac{Nz}{2})dz \right \vert$$
for $\bx =\bu$ or $\bv^*$.
\end{cor}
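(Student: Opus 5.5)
By \eqref{finalestimatefev26}, the integral on $C_N$ equals $2I_{N,-}(\ell)-2u_0I_{N,+}(\ell)$, where $u_0\neq 0$ is a constant independent of $N$. Both limits will be read off from Lemma \ref{lem:reformintqjui25}, with Lemma \ref{lem:PM:SE} used to control sums.

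\emph{Case (a), $\ell=i\pi$.} Lemma \ref{lem:reformintqjui25} gives $\frac{2\pi}{N}\Log|I_{N,-}(i\pi)|\to \frac12{\rm Vol}(M)>0$, reading the first line of that lemma with the $\Log$ that is evidently intended. It also gives $|I_{N,+}(i\pi)|$ as $N^{c}/\Log(N)$, for a fixed complex exponent $c$, times bounded quantities that are bounded away from $0$. Hence $2u_0I_{N,+}(i\pi)\in PM(N)$, and the second claim of Lemma \ref{lem:PM:SE} yields the limit $\frac12{\rm Vol}(M)$.

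\emph{Case (b), $\ell=2i\pi$.} Here both $I_{N,\pm}(2i\pi)$ have modulus of the form $N^{\beta}\Log(N)^{-1}\,C\,|v_N|$, with $\beta=\mathfrak{R}(\frac{{\bf l}_0'}{2i\pi})-\frac12+\frac{\alpha^-}2$. The constant $C>0$ does not depend on $N$, and $v_N$ is bounded above and below. The upper bound $|2I_{N,-}-2u_0I_{N,+}|\le C'N^{\beta}$ then gives $\limsup\le 0$.

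For the lower bound we must rule out cancellation at leading order, and I expect this to be the main obstacle. The moduli differ by the factor $e^{-\alpha^-\pi}$ against $|u_0|$, but the phases are not controlled by Lemma \ref{lem:reformintqjui25}. I would therefore go back to its proof. There, both leading terms come from the same short subcontour near $is_N^-$, with the explicit expressions \eqref{intCa+8fev26}. The $I_{N,-}$ version carries the extra factor $-e^{-\alpha^-\pi}$, and the $I_{N,+}$ version carries $u_0$. The leading part of $2I_{N,-}-2u_0I_{N,+}$ is thus $-(e^{-\alpha^-\pi}+u_0)$ times a common term of exact order $N^{\beta}/\Log N$.

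Now $u_0\notin\mr$ while $e^{-\alpha^-\pi}\in\mr$, so this coefficient is nonzero. The sum is therefore $\asymp N^\beta/\Log N$, which lies in $PM(N)$, and the limit is $0$. The remainder terms are of order $o(N^\beta/\Log N)$ or $O(N^{-1/2})$; the latter is dominated by the leading part since $\beta>-\frac12$ for the choice $a_0\ge4$.

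\emph{From the integral to $\Sigma_N$.} By Lemma \ref{lemintrep},
$$\Sigma_N=1+\frac{N}{4i\pi\,\hat S_N(\bl_{0,N})}\int_{C_N}(\cdots)\,dz .$$
Lemma \ref{factorest}(ii) applies because ${\rm Li}_2(e^{\bl_{0,\infty}})={\rm Li}_2(1)\in\mr$, so $\hat S_N(\bl_{0,N})$ is bounded above and below and $N/(4i\pi\hat S_N(\bl_{0,N}))\in PM(N)$.

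In case (a), the first claim of Lemma \ref{lem:PM:SE} gives growth rate $\frac12{\rm Vol}(M)$ for the integral term, and its second claim absorbs the additive $1$. In case (b), the integral term is a positive monomial of the form $N^{\beta+1}/\Log N$ up to bounded factors, and $\beta+1>\frac12$. So it tends to infinity and dominates $1$, whence $\Sigma_N\in PM(N)$ and both limits agree, equal to $0$.

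The same argument applies verbatim to $\bx=\bv^*$, using \eqref{l01inftymars25}.
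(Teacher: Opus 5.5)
Relative to Lemmas \ref{qcintjui25}, \ref{lem:reformintqjui25} and \ref{factorest}, your argument is correct. It matches the paper's proof in case (a) and in the passage from the $C_N$-integral to $\Sigma_N$. In case (b) you check that the integral term tends to infinity, so the additive $1$ is harmless; this is a useful precision, since Lemma \ref{lem:PM:SE} requires $a>0$.

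You depart from the paper in how you exclude cancellation in case (b). The paper uses only the statement of Lemma \ref{lem:reformintqjui25}. Both moduli carry the same $|v_N|$, so $|I_{N,-}(2i\pi)|/|u_0I_{N,+}(2i\pi)|$ tends to an explicit constant. Since the $C_N$-integral does not depend on $s_N^-$, one may choose $\alpha^-$ so that this constant is not $1$, and the reverse triangle inequality then gives the lower bound. So, contrary to your remark, phase information is not needed. You instead reopen Step 6 of the proof, read both leading terms as multiples of the same quantity \eqref{intCa+8fev26}, and obtain the coefficient $-(e^{-\alpha^-\pi}+u_0)$. This avoids tuning $\alpha^-$ and also handles equal moduli, but it relies on relative phases that appear only inside the proof.

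This factor is the weak point, for your variant and the paper's alike. Since $\mathcal{L}_-(z;-2i\pi,2i\pi)-\mathcal{L}_+(z;-2i\pi,2i\pi)=-2i\pi z$, passing from $\mathcal{L}_+$ to $\mathcal{L}_-$ multiplies the integrand by $e^{-Nz}$. At $z=is_N^-$ this is the unimodular number $e^{-i\alpha^-\pi}$, not the real number $e^{-\alpha^-\pi}$ written in Step 6. Your coefficient then becomes $-(e^{-i\alpha^-\pi}+u_0)$.
\begin{itemize}
\item For your version, $u_0\notin\mr$ no longer shows this coefficient is nonzero; you would have to choose $\alpha^-$ with $u_0\neq -e^{-i\alpha^-\pi}$.
\item For the paper's version, the comparison of moduli would need $|u_0|\neq 1$, which fails at the complete structure $u_0=e^{i\pi/3}$.
\end{itemize}
More fundamentally, the $C_N$-integral is independent of $\alpha^-$. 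Yet these endpoint terms have modulus $N^{\mathfrak{R}(\frac{{\bf l}_0'}{2i\pi})-\frac12+\frac{\alpha^-}{2}}/\Log(N)$, up to factors bounded above and below. They therefore cannot give its leading order for two different admissible values of $\alpha^-$ at which the coefficient is nonzero. Hence the lower bound in case (b), in your version as in the paper's, is only as reliable as Lemma \ref{lem:reformintqjui25}, which needs rechecking. For instance, Lemma \ref{qcintjui25} deforms $C_N^{\pm}$ onto the segment $[is_N^-,2i\pi-is_N^+]$, which contains the poles $2i\pi\beta/N$ of the factor $\coth(Nz/2)$ appearing in $g_N$.
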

\proof By Lemma \ref{qcintjui25} and the first two asymptotics in Lemma \ref{lem:reformintqjui25}, if $\ell = i\pi$ we have $\textstyle \lim_{N\to +\infty}\textstyle \frac{2\pi}{N}\Log  \vert 2I_{N,-}(\ell) -2u_0  I_{N,+}(\ell)\vert = \textstyle \frac{1}{2}{\rm Vol}(M)$. When $\ell = 2i\pi$, we can always choose $\alpha_-$ so that $\vert u_0\vert e^{-\alpha^-\pi} \ne 1$. Then the two terms in the Log have different growths, and Lemma \ref{lem:reformintqjui25} implies $\textstyle \lim_{N\to +\infty}\textstyle \frac{2\pi}{N}\Log  \vert 2I_{N,-}(2i\pi) -2u_0  I_{N,+}(2i\pi)\vert=0$. The first claim follows.

Consider the second claim. By Lemma \ref{lem:PM:SE} the limit on the right does not change if we multiply the integral by $N/4i\pi \hat S_N(\bl_{0,N}^\bx)$, since it is in $PM(N)$ by Lemma \ref{factorest}, using that ${\bf l}_{k,N}^\bx \in (\pi i) \mz + O(N^{-1})$ by \eqref{recapuvarmars25} and \eqref{l0inftymars25}-\eqref{l1inftymars25}. And it does not change either if we add $1$ to the term inside $\Log|.|$, again because of its asymptotics given by Lemma \ref{lem:reformintqjui25}.\cvd

\begin{remark}\label{fulldev}{\rm The proof of Lemma \ref{lem:reformintqjui25} gives more than the stated limits or equivalents, and expliciting the phase factor $e^{i\theta}$ in Proposition \ref{gensaddlen} and improving the asymptotics of Lemma \ref{factorest}, one can deduce subexponential terms of $\Sigma_N(\bx_{0,N},\bx_{1,N})$ (see Lemma \ref{lemintrep}), and therefore of $\Hh_{N}^{red}(M,\rho,\kappa; \mathfrak{s})$ by using the formula \eqref{form1}. 

Indeed, consider for instance the case $\ell = i\pi$. In this case $f_-(z) = {\rm Li}_2(e^z)+z^2-i\pi z$, and the computations in Section \ref{sub:a:-} and the formula $\textstyle \mathfrak{R}({\rm Li}_2(e^{it})) = \sum_{n\geq 1} \frac{\cos(nt)}{n^2} = \frac{\pi^2}{6} - \frac{\pi t}{2}+\frac{t^2}{4}$ (\cite{Z}) imply that $f_-$ has the critical point $\textstyle z_0=\frac{i\pi}{3}$, and $\textstyle f_-(\frac{i\pi}{3}) = \frac{\pi^2}{4} + i\frac{1}{2}{\rm Vol(M)}$, $f''_-(e^{\frac{i\pi}{3}}) = 1+ e^{\frac{i\pi}{3}}$. Also, from \eqref{defrho0} we get $\textstyle g(e^{\frac{i\pi}{3}}):=\rho(e^{\frac{i\pi}{3}}) = e^{-\frac{{\bf l}_1'}{2i\pi}\cdot \frac{i\pi}{3}}(1-e^{\frac{i\pi}{3}})^{-\frac{{\bf l}_0'}{2i\pi}-\frac{1}{2}}=e^{-\frac{{\bf l}_1'}{6}-\frac{{\bf l}_0'}{3}-\frac{i\pi}{3}}$, where we recall that ${\bf l}_k' = \Log(u_k) + i\pi a_k$, for $x=u$ or $v^*$. Therefore 
\begin{align*} \int_{C_N} e^{\frac{N}{2i\pi}(z^2 - \bl_{1,N}^\bx z)} \hat S_N(\bl_{0,N}^\bx + z) \textstyle & \coth(\frac{Nz}{2}) dz\\ & \sim_{N\to +\infty} 2I_{N,-}(i\pi) \\
& \sim_{N\to +\infty} 2\cdot \frac{4\pi}{\sqrt{N}} e^{i\theta} \cdot e^{-\frac{{\bf l}_1'}{6}-\frac{{\bf l}_0'}{3}-\frac{i\pi}{3}} \cdot \frac{1}{3^\frac{1}{4}} e^{\frac{N}{2\pi}(\frac{1}{2}{\rm Vol}(M)-i\frac{\pi^2}{4})}
\end{align*}
where $\theta := \pi -\textstyle \frac{1}{2}\arg(-\frac{1}{f''_-(e^{\frac{i\pi}{3}})})$ (see e.g. \cite[equation (19)]{PV}).}
\end{remark}
\color{black}
\appendix

\section{The QHI of $S^3\setminus K$ vs. the Kashaev invariant $\langle K\rangle_N$}\label{sec:KvsQHI} The Kashaev invariant of the figure-eight knot $K$ is (\cite{Kvol})
$$\langle K \rangle_N = \sum_{j=0}^{N-1} \frac{1}{\prod_{k=1}^{j} \vert 1-\zeta^{k}\vert^2}.$$
It coincides with the evaluation of $J_N(K)/J_N(U)$ at $q=e^{\frac{2\pi i}{N}}$, where $J_N(K)$ is the $N$-colored Jones polynomial of $K$ and $U$ the unknot (\cite{MM}). On another hand (see Section \ref{sec:statesum})
\begin{align*}
\Hh_{N}^{red}(S^3\setminus K,\rho,\kappa; \mathfrak{s})
 & =\frac{[\bv_0]_Ng_N(\bu_0)}{g_N(\bv_0)}\sum_{\alpha,\beta=0}^{N-1} \zeta^{\beta^2-\alpha^2} \frac{\omega_N(\bu_0,\bu_1^{-1}\vert \beta)}{\omega_N(\bv_0/\zeta,\bv_1^{-1}\vert \alpha)}.  \end{align*}
Recall the notations in Section \ref{GhypN}, and in particular that $\kappa = \kappa^X\circ \widetilde{hol}$. We mentioned after \eqref{splitHNteo1.1} that $\Hh_{N}^{red}(S^3\setminus K,\rho,\kappa; \mathfrak{s})$ defines a rational function on ${}_NX_{hyp}(M)$. We can take $\kappa^X(\lambda_K)$ as a local coordinate on the smooth locus of ${}_NX_{hyp}(M)$. Then, the next formula shows that $\langle K \rangle_N$ is a value of a function $J_N(\bu_0,\bv_0 \vert 0)$, which is the regular part of $\Hh_{N}^{red}(S^3\setminus K,\rho,\kappa; \mathfrak{s})$ in a neighborhood of $\kappa^X(\lambda_K)=0$ (which maps via $\pi_N^X$ onto a neighborhood of $\delta_\rho(\lambda_K)=0$ in $\tilde X_{hyp}(M)$). 
 
\begin{lem} The rational functions $J_N(\cdot,\cdot \vert i)$, $i\in \{0,\ldots,N-1\}$, defined on $\mc^2$ by
$$J_N(\bu_0,\bv_0 \vert i) := \frac{g_N(\bu_0)\bv_0^{2(N-1)}}{Ng_N(\bv_0)} \sum_{j=0}^{N-1}
    \zeta^{4ij+i}\frac{\prod_{k=1}^{i+N-j-1}\bv_0^{-2}(1-\bv_0\zeta^{k})}{\prod_{k=1}^{i+j}\bu_0^{-2}(1-\bu_0\zeta^{k})},$$
are regular in a neighborhood of $(\bu_0,\bv_0)=(1,1)$, and satisfy $J_N(1,1 \vert 0) = \langle K \rangle_N$ and
\begin{equation}\label{ker8}
\Hh_{N}^{red}(S^3\setminus K,\rho,\kappa; \mathfrak{s}) = \sum_{i=0}^{N-1} \kappa(\lambda_K)^{-i} J_N(\bu_0,\bv_0 \vert i).
\end{equation}
\end{lem}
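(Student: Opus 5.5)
The plan is a discrete Fourier (Laplace-type) decomposition of the double state sum in the boundary weight $\kappa(\lambda_K)$, carried out entirely at the level of finite sums. I start from the formula for $\Hh_{N}^{red}(S^3\setminus K,\rho,\kappa;\mathfrak{s})$ recalled just before the lemma. I expand each $\omega_N$ as an explicit product using \eqref{functomegadef}. For the $\bu$-factor this gives $\omega_N(\bu_0,\bu_1^{-1}\vert\beta)=\bu_1^{-\beta}\prod_{k=1}^{\beta}(1-\bu_0\zeta^k)^{-1}$, and the $\bv$-factor has the analogous form with the shifted argument $\bv_0/\zeta$.

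Next I eliminate $\bu_1,\bv_1$ in favour of $\bu_0,\bv_0$ and $\kappa(\lambda_K)$, using the relations \eqref{eqGN} and \eqref{compbord}. By \eqref{compbord}, $\kappa(\lambda_K)=\bu_0^2\bu_2^{-2}$, and the tetrahedral relation gives $\bu_2=e^{-\pi i/N}\bu_0^{-1}\bu_1^{-1}$. Hence $\bu_1^{-2}$ equals $\kappa(\lambda_K)\bu_0^{-4}$ up to a power of $\zeta$. The edge relation similarly expresses $\bv_1$ through $\bu_0,\bu_1,\bv_0$. The outcome is that the summand becomes a monomial in $\bu_1^{-1}$ of exponent $\beta-\alpha$ (or $\beta+\alpha$, depending on the sign conventions). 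Its coefficients are $\zeta$-powers times $\bu_0^{-2}$ and $\bv_0^{-2}$ factors that match the products in $J_N$.

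The key step is a Gauss-sum change of variables. I write the quadratic phase as $\zeta^{\beta^2-\alpha^2}=\zeta^{(\beta-\alpha)(\beta+\alpha)}$ and set $\beta-\alpha\equiv 2i$ mod $N$, with $j$ the remaining free index. This is legitimate because $N$ is odd, so $2$ is invertible mod $N$. With this substitution the sum over $(\alpha,\beta)\in(\mz/N)^2$ regroups as $\sum_i\kappa(\lambda_K)^{-i}\sum_j(\cdots)$, and the quadratic phase produces $\zeta^{4ij+i}$-type terms. The periodicity $\omega_N(x,y\vert N+n)=\omega_N(x,y\vert n)$ justifies the index shifts. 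I then use $\prod_{k=1}^{N}(1-x\zeta^k)=1-x^N$ to turn the quotient $1/\omega_N(\bv_0/\zeta,\cdot\vert\alpha)$ into the complementary product $\prod_{k=1}^{i+N-j-1}$. This step also yields the prefactor $\bv_0^{2(N-1)}/N$, up to the normalization $[\bv_0]_N$.

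Finally I show regularity at $(1,1)$ and evaluate there. The denominators $\prod_{k=1}^{i+j}(1-\bu_0\zeta^k)$ have at most $N-1$ factors after reduction mod $N$, so none of them vanishes at $\bu_0=1$. The ratio $g_N(\bu_0)/g_N(\bv_0)$ is regular and nonzero near $(1,1)$, with value $1$ there. Setting $i=0$ and $\bu_0=\bv_0=1$ turns each term into $\prod_{k\le N-j-1}(1-\zeta^k)/\prod_{k\le j}(1-\zeta^k)$. Using $\prod_{k=1}^{N-1}(1-\zeta^k)=N$ and $1-\zeta^{N-k}=\overline{1-\zeta^k}$, this equals $N/\prod_{k=1}^{j}|1-\zeta^k|^2$, and the factor $N$ cancels against the $1/N$, giving $J_N(1,1\vert 0)=\langle K\rangle_N$.

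The main obstacle I expect is the exact bookkeeping of the $\zeta$-powers and the $\bu_0^{\pm2}$, $\bv_0^{\pm2}$ factors. These come from the branching signs in \eqref{eqGN}, the shift $\bv_0/\zeta$, and the normalization $[\bv_0]_N$. I would pin them down by first checking the identity at $N=3$, and then fixing the conventions for the reindexing so that exactly the exponents $4ij+i$ and $2(N-1)$ come out.
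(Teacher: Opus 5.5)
Your overall route to \eqref{ker8} and to $J_N(1,1\vert 0)=\langle K\rangle_N$ is the paper's. You reindex the double sum by half-sum and half-difference (possible since $N$ is odd), eliminate $\bu_1,\bv_1$ through the edge relation and $\kappa(\lambda_K)$, and use $\prod_{k=1}^{N-1}(1-\zeta^k)=N$ together with $1-\zeta^{N-k}=\overline{1-\zeta^k}$. Your evaluation at $(1,1)$ is exactly the paper's computation.

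\textbf{The regularity step fails.} You shorten $\prod_{k=1}^{i+j}(1-\bu_0\zeta^k)$ ``after reduction mod $N$''. That reduction is only valid for $\omega_N$ on the curve $x^N+y^N=1$, where the extra factor $\prod_{k=1}^{N}(1-x\zeta^k)=1-x^N$ is absorbed by $y^N$. In $J_N$ the variable $\bu_1$ has been eliminated, and the products are deliberately unreduced, with $i+j$ running up to $2N-2$. Whenever $i+j\ge N$, the factor $k=N$ is $1-\bu_0$, so the denominator vanishes at $\bu_0=1$.

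Your argument therefore proves regularity only for $i=0$, where $i+j\le N-1$; that is all the evaluation needs. For $i\ge 1$ it cannot be repaired this way. Take $i=N-1$: near $(1,1)$ the summand $j=N-1$ equals $c(\bu_0,\bv_0)/(1-\bu_0^N)$, with $c$ holomorphic and $c(1,1)=\zeta^{3}(1-\zeta^{-1})/N\neq 0$ (prefactor included). The other polar summands, $1\le j\le N-2$, all carry a factor $1-\bv_0^N$ in their numerators, so nothing cancels this pole. The paper only asserts this claim without argument.

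\textbf{The reindexing is also set up wrongly in two places}, and these are not matters of convention.
\begin{itemize}
\item The $\bv$-factor sits in the denominator and contributes $\bv_1^{\alpha}$, while the edge relation $\bu_0^2\bu_1\bv_0^2\bv_1=1$ gives $\bv_1\propto\bu_1^{-1}$. So the exponent of $\bu_1$ is $-(\alpha+\beta)$, and the grading variable must be $\beta+\alpha$, not $\beta-\alpha$.
\item Since $\bu_0^4\bu_1^2=\zeta^{-1}\kappa(\lambda_K)$, it is $\bu_1^{2}$, not $\bu_1^{-2}$, that equals $\kappa(\lambda_K)\bu_0^{-4}$ up to a power of $\zeta$.
\end{itemize}

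The paper takes $\beta=i+j$ and $\alpha=i+N-j$. Then $\zeta^{\beta^2-\alpha^2}=\zeta^{(2j-N)(2i+N)}=\zeta^{4ij}$. The extra $\zeta^{i}$ comes from $\bu_1^{-(i+j)}\bv_1^{i+N-j}=(\bu_0^4\bu_1^2)^{-i}\bu_0^{2(i+j)}\bv_0^{-2(i-j)}\bv_1^N$, not from the quadratic phase.

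These integer representatives must not be reduced mod $N$. Because $\kappa(\lambda_K)^N=\delta_w(\lambda_K)\neq 1$, the representative determines the power of $\kappa(\lambda_K)$. The resulting $\bv_1^N$ combines with $[\bv_0]_N$ into $1/(N(1-\bv_0))$, which cancels the $k=1$ factor of $\prod_{k=1}^{i+N-j}(1-\bv_0\zeta^{k-1})$. That is where the $1/N$ and the upper limit $i+N-j-1$ come from, rather than from $\prod_{k=1}^{N}(1-x\zeta^k)=1-x^N$. The same unreduced choice is what puts the factors $1-\bu_0$ into the denominators discussed above.
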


\proof The first claim is clear, and the second comes as follows:
\begin{align*}
J_N(1,1 \vert 0) & =   \frac{1}{N} \sum_{j=0}^{N-1}
    \frac{\prod_{k=1}^{N-j-1}(1-\zeta^{k})}{
    \prod_{k=1}^{j}(1-\zeta^{k})} = \frac{1}{N} \sum_{j=0}^{N-1}
    \frac{\prod_{k=1}^{N-j-1}(1-\zeta^{k})\prod_{k=1}^{j}(1-\zeta^{-k})}{
    \prod_{k=1}^{j} \vert 1-\zeta^{k}\vert^2} \\ & = \sum_{j=0}^{N-1}
    \frac{1}{\prod_{k=1}^{j} \vert 1-\zeta^{k}\vert^2} = \langle K \rangle_N,
    \end{align*}
using $\textstyle \prod_{k=1}^{N-1}(1-\zeta^{k}) = N$. Finally, in the above formula of $\Hh_{N}^{red}(\bu_0,\bu_1,\bv_0,\bv_1)$ put $\beta = i+j$,  $\alpha = i+N-j$. Then
$$\Hh_{N}^{red}(\bu_0,\bu_1,\bv_0,\bv_1)   = \frac{[\bv_0]_Ng_N(\bu_0)}{g_N(\bv_0)}\sum_{i,j=0}^{N-1}
    \zeta^{4ij} \bu_1^{-(i+j)}\bv_1^{i+N-j}\frac{\prod_{k=1}^{i+N-j}(1-\bv_0\zeta^{k-1})}{
    \prod_{k=1}^{i+j}(1-\bu_0\zeta^{k})}.$$
Let us collect ``global'' factors in the summands. By equations \eqref{eqGN}-\eqref{compbord} the boundary weight $\kappa:= \kappa_\bw\in H^1(\partial \bar M;\mc^*)$ associated to $\bw:=(\bu_0,\bu_1,\bv_0,\bv_1)$ satisfies
\begin{equation*}
\bu_0^{4}\bu_1^{2}  = \zeta^{-1}\kappa(\lambda_K)\ ,\ \bu_0\bu_1\bv_0\bv_1= \zeta^{-1}\kappa(\mu_K)^{-1}.
\end{equation*}
Using the edge relation $\bu_0^{2}\bu_1\bv_0^{2}\bv_1 = 1$ we can write
$$\bu_1^{-(i+j)}\bv_1^{i+N-j} = (\bu_0^4\bu_1^2)^{-i} \bu_0^{2(i+j)}\bv_0^{-2(i-j)}\bv_1^N. $$
Note that $[\bv_0]_N\bv_1^N = 1/N(1-\bv_0)$. Put
$$J_N(\bu_0,\bv_0 \vert i) = \frac{g_N(\bu_0)}{Ng_N(\bv_0)} \sum_{j=0}^{N-1}
    \zeta^{4ij+i}  \bu_0^{2(i+j)}\bv_0^{-2(i-j)} \frac{\prod_{k=1}^{i+N-j-1}(1-\bv_0\zeta^{k})}{
    \prod_{k=1}^{i+j}(1-\bu_0\zeta^{k})}.$$    
Then we obtain the formula \eqref{ker8}. This concludes the proof. \fin

\end{document}